\RequirePackage{fix-cm}
\documentclass[11pt]{amsart}
\usepackage{graphicx, amssymb, amsmath,amsthm}

\usepackage{enumerate}
\usepackage{graphicx}
\usepackage{geometry}
\usepackage{mathtools}
\usepackage{color}
\usepackage[mathscr]{euscript}
\usepackage{amssymb}
\usepackage{bbm}
\usepackage[colorlinks,
            linkcolor=blue,
            anchorcolor=blue,
            citecolor=blue]{hyperref}

\setlength{\unitlength}{1cm}

\setlength{\topmargin}{-0.1in} \setlength{\textheight}{9in} \setlength{\textwidth}{6.7in}
\setlength{\oddsidemargin}{-0.2in} \setlength{\evensidemargin}{-0.2in} \setlength{\unitlength}{1cm}
\topmargin=0pt

   \def \r{\rho}

\newtheorem{theorem}{Theorem}[section]

\newtheorem{remark}{Remark}[section]

\newtheorem{lemma}[theorem]{Lemma}
\newtheorem{proposition}[theorem]{Proposition}

\newtheorem{corollary}[theorem]{Corollary}


\numberwithin{equation}{section}

\begin{document}

\subjclass[2000]{Primary:60H15, 37A25, 60F05, 76D05} \keywords{Navier-Stokes Equations, degenerate noise, time periodic forcing, unique ergodicity, mixing,  time periodic invariant measures, time periodic solutions, limit theorems.}


 \author{Rongchang Liu} \address[Rongchang Liu] {  Department of Mathematics\\
 Brigham Young University\\Provo, Utah 84602, USA}
 \email[R.~Liu]{rongchang@mathematics.byu.edu}

\author{Kening Lu} \address[Kening Lu] {  Department of Mathematics\\
 Brigham Young University\\
 Provo, Utah 84602, USA}
\email[k.~Lu]{klu@math.byu.edu}

\title[Statistical Properties of 2D stochastic Navier-Stokes Equations with time-periodic forcing and degenerate stochastic forcing]{Statistical Properties of 2D Navier-Stokes Equations with time-periodic forcing and degenerate stochastic forcing}

\pagestyle{plain}

\begin{abstract} {We consider the incompressible 2D Navier-Stokes equations with periodic boundary conditions driven by a deterministic time periodic forcing and a degenerate stochastic forcing.
We show that  the system possesses a unique ergodic periodic invariant measure which is exponentially mixing under a Wasserstein metric.
We also prove the weak law of large numbers for the continuous time inhomogeneous solution process. In addition, we obtain the weak law of large numbers and central limit theorem by restricting the inhomogeneous solution process to periodic times.
The results are independent of the strength of the noise and  hold true for any value of viscosity with a lower bound $\nu_1$ characterized by the Grashof number $G_1$ associated with the deterministic forcing. In the laminar case, there is a larger lower bound $\nu_2$ of the viscosity characterized by the Grashof number $G_2$ associated with both the deterministic and random forcing. We prove that in this laminar case, the system has  trivial dynamics  for any viscosity larger than $\nu_2$ by demonstrating the existence of a unique globally exponentially stable random periodic solution that supports the unique  periodic invariant measure.
} \end{abstract}

\maketitle

\baselineskip 14pt

\tableofcontents
\section{Introduction}
We study the statistic  properties of the incompressible 2D Navier-Stokes equations on the torus $\mathbb{T}^2=[-\pi, \pi]^2$ driven by a deterministic periodic forcing and  a degenerate stochastic forcing  using the vorticity formulation
\begin{align}\label{NS}
dw(t, x) + B(\mathcal{K} w, w) (t, x)d t = \nu\mathrm{\mathrm{\Delta}} w(t, x)dt  + f(t, x)dt + GdW(t), \quad t>s, \quad w(s) = w_0,
\end{align}
where $s \in \mathbb{R}$, $w(t, x) \in \mathbb{R}$ denotes the value of the scalar vorticity field,  $\mathrm{\mathrm{\Delta}}$ is the Laplacian operator with periodic boundary conditions, $B(\mathcal{K}w, w)= (\mathcal{K}w)\cdot\nabla w$ is the nonlinear term and $\mathcal{K}w$ is the velocity that is recovered from $w$ by the Biot-Savart integral operator $\mathcal{K}$.  The deterministic initial condition $w_0$ lies in  the state space of system \eqref{NS},  which is chosen as $H:= \left\{w \in \mathrm{L}^2\left(\mathbb{T}^{2}, \mathbb{R}\right): \int_{\mathbb{T}^2} w d x=0\right\}$, where the norm is denoted by $\left\|\cdot\right\|$ and the inner product is $\langle\cdot,\cdot\rangle$. We also define the interpolation spaces $H_{s}=\left\{w \in H^{s}\left(\mathbb{T}^{2}, \mathbb{R}\right): \int w d x=0\right\}$ and the corresponding norms $\left\|\cdot\right\|_s$ by $\|w\|_{s}=\left\|\left(-\mathrm{\Delta}\right)^{s/2} w\right\|$.
Here $W(t) $ is a standard $d$ dimensional two-sided  Brownian motion obtained as follows.  Let $W^{\pm}(t)$ be two independent standard $d$ dimensional Brownian motion, then  define
\begin{align*}
W(t) :=\left\{\begin{array}{rr}
W^{+}(t) \text{ , } t\geq 0,\\
W^{-}(-t) \text{ , } t < 0 .\\
\end{array}
\right.
\end{align*}
The sample space is denoted by  $(\mathrm{\mathrm{\Omega}} ,\mathcal{F},\mathbf{P})$, where $\mathrm{\Omega} = \left\{\omega\in C(\mathbb{R}, \mathbb{R}^d): \omega (0)=0\right\}$ endowed with the compact open topology, $\mathcal{F}$ is the Borel $\sigma$-algebra and $\mathbf{P}$  is the Wiener measure associated with the Brownian motion $W$. Denote by $\mathcal{F}_t$ the filtration of $\sigma$-algebras  generated by  $W(t)$.  The coefficient of the noise is a bounded linear operator $G: \mathbb{R}^d\rightarrow H_{\infty}:= \bigcap_{s>0} H_{s}$, such that $Ge_{i}= g_i$ , where $\{e_i\}$ is the standard basis of $\mathbb{R}^d$ and $g_i\in H_{\infty}$ for $i=1,2,\cdots,d$.  Then the noise can be expressed as $GW(t) = \sum_{i=1}^{d} g_{i}W_{i}(t)$. Also
for integer $k\geq 0$,  let $\mathcal{B}_{k}:=\sum_{i=1}^d\|g_i\|_{k}^2$ be the various norms of the energy input from the noise.

\vskip0.05in

We assume that the deterministic force $f(t, x)$ is periodic in $t$ with period $\mathcal{T}>0$ and $f\in C\left(\mathbb{R}, H_1\right)\cap \mathrm{L}_{\mathrm{loc}}^{2}\left(\mathbb{R}, H_2\right)$. The regularity condition imposed on $f$ is to ensure the spatial regularity of the solution that is needed to show the asymptotic smoothing effect of the dynamics. Under the above conditions, it is well known  that there exists a unique solution $w(t,\omega;s, w_0)$ to equation \eqref{NS}, which generates a Markov transition operator $\mathcal{P}_{s, t}: B_{b}(H)\rightarrow B_{b}(H)$ defined by
\[\mathcal{P}_{s, t}\varphi(w_0): = \mathbf{E}[\varphi(w(t;s, w_0))], \quad \varphi\in B_{b}(H),\]
where $B_{b}(H)$  is the space of bounded Borel measurable functions on $H$ with the  supremum norm. We denote the transition probabilities as $\mathcal{P}_{s, t}(w, A) : = \mathcal{P}_{s, t}\mathbb{I}_{A}(w) $ for $A\in\mathfrak{B}$, the Borel $\sigma$-algebra of $H$, where $\mathbb{I}_{A}$ is the characteristic function of $A$.
By duality, the transition operator $\mathcal{P}_{s, t}^*$ acts on the space $\mathcal{P}(H)$ of probability measures on $H$ by
\[\mathcal{P}_{s, t}^*\mu (A) = \int_{H} \mathcal{P}_{s, t}(w, A)\mu(dw), \text{ for } \mu\in \mathcal{P}(H), A\in \mathfrak{B}.\]

\vskip0.05in

 In this setting, a map $\mu_{\cdot}: t\rightarrow \mu_t$ from $\mathbb{R}$ to the space $\mathcal{P}(H)$ satisfying
\begin{align}\label{evolutionmeasures}
\int_{H} \mathcal{P}_{s, t} \varphi(w) \mu_{s}(d w)=\int_{H} \varphi(w) \mu_{t}(d w), \quad s \leq t,
\end{align}
for every $\varphi\in B_{b}(H)$ is called an {\bf invariant measure} for $\mathcal{P}_{s,t}$, which is also called an evolution system of meaures \cite{DD08} or entrance law \cite{Dyn65,Dyn71}.  It  is called  a {\bf $\mathcal{T}$-periodic invariant measure} if it is invariant and $\mu_{t+\mathcal{T}}=\mu_{t}$ for $t\in \mathbb{R}$.   $\mu_t$ is {\bf ergodic} if it is a disintegration of an ergodic invariant measure of the corresponding homogenized Markov semigroup. As it was shown in \cite{DD08}, the unique ergodicity of system \eqref{NS} is characterized by the uniqueness of the continuous $\mathcal{T}$-periodic  invariant measure $\mu_t$ for $\mathcal{P}_{s,t}$, where the continuity of $\mu_t$ is defined by endowing $\mathcal{P}(H)$ with the topology of weak convergence.

\vskip0.05in

In this paper, we show the unique ergodicity and mixing properties of the periodic invariant measure for system \eqref{NS}.  We also prove the weak law of large numbers for the time inhomogeneous solution process of system \eqref{NS} and  show the weak law of large numbers and  central limit theorem for the restriction of the time inhomogeneous solution process to periodic times. To state our main results, as in \cite{BM07, HM11}, we define recursively the following sets $\{A_k\}_{k=1}^{\infty}$ formed by the symmetrized nonlinear term $\widetilde{B}(u,w)=-B(\mathcal{K}u,w)-B(\mathcal{K}w,u)$. Set $A_1=\{g_l : 1\leq l \leq d\}$, $A_{k+1} = A_{k}\cup \{\widetilde{B}(h, g_{l}): h\in A_{k}, g_l\in A_1\}$, and $A_{\infty} = \overline{\mathrm{span}(\cup_{k\geq 1} A_{k})}$. These sets reflect how noise propagates to the state space through the nonlinear term, which plays an important role when proving the asymptotic regularizing effect of the transition operator.  For $\eta>0$ small, recall the metric $\rho$ on $H$ weighted by the Lyapunov function $e^{\eta\|w\|^2}$ as in \cite{HM08},
\begin{align}\label{rho}
\rho(w_1, w_2) = \inf_{\gamma}\int_0^1e^{\eta\|\gamma(t)\|^2}\|\dot{\gamma}(t)\|dt, \quad\forall w_1, w_2 \in H,
\end{align}
where the infimum is taken over all differentiable path $\gamma$ connecting $w_1$ and $w_2$.

This metric naturally induces a Wasserstein metric on $\mathcal{P}(H)$ by
\[\rho(\mu_1, \mu_2) = \inf_{\mu\in\mathcal{C}(\mu_1, \mu_2)}\int_{H\times H} \rho(u, v)\mu(dudv),\]
where $\mathcal{C}(\mu_1, \mu_2)$ is the set of couplings of $\mu_1, \mu_2\in \mathcal{P}(H)$.  The subset
\begin{align}\label{P1H}
\mathcal{P}_1(H) : = \{\mu\in\mathcal{P}(H): \rho(\mu,\delta_0)<\infty\}
\end{align}
 is complete under the metric $\rho$ \cite{Ch04,Vil08}, where $\delta_0$ is the Dirac measure at $0$. Denote by $C(\mathbb{R}, \mathcal{P}_1(H))$ the set of all continuous maps from $\mathbb{R}$ to $\mathcal{P}_1(H)$.
Let $c_0$  be  the constant from the Ladyzhenskaya's inequality
\[||w||_{\mathrm{L}^4(\mathbb{T}^2)}^2 \leq c_0\|w\|_1\|w\|.
\] Let $\displaystyle\|f\|_{\infty}:=\sup_{t\in\mathbb{R}}\|f(t)\|$, and define the Grashof number $ G_1 = \|f\|_{\infty}/\nu^2$ associated with the deterministic equation corresponding to system \eqref{NS}. Also let $G_2 =\sqrt{ G_1^2+\mathcal{B}_0/\nu^3}$ be the Grashof number for the whole system \eqref{NS}. These two different numbers  characterize different scales of the viscosity for which system \eqref{NS} has different dynamics.

\vskip0.05in

Our main results are summarized as the following:
\vskip0.05in

\noindent {\bf Main Theorem}. \label{mainthm}{\it

\begin{enumerate}
\item Assume $G_1<1/c_0$.
\begin{enumerate}[(i)]
\item If $A_{\infty} = H$, then the system \eqref{NS} has a unique $\mathcal{T}$-periodic invariant measure $\{\mu_t\}_{t\in\mathbb{R}} \in C(\mathbb{R}, \mathcal{P}_1(H))$, which is exponentially mixing in the sense that, under the Wasserstein metric $\rho$, any trajectory of $\mathcal{P}_{s, t}^*$ acting on $\mathcal{P}(H)$ is exponentially attracted by the periodic path $\{\mu_t\}_{t\in\mathbb{R}}$.
\item There are two cases in which $A_{\infty} = H$ can fail as in \cite{HM06}. In each case there is a unique exponentially mixing  $\mathcal{T}$-periodic invariant measure supported on $A_{\infty}$ when $A_{\infty}$ is an invariant subspace of the system \eqref{NS}.

\item In the above case (i) and  (ii),  the weak law of large numbers for the continuous time inhomogeneous solution process holds.  Also the weak law of large numbers and central limit theorem hold when restricting the inhomogeneous solution process to periodic times.

\end{enumerate}
\item If  $G_2<1/c_0$, then the unique $\mathcal{T}$-periodic invariant measure $\mu_t$ is supported on a unique random periodic solution of equation \eqref{NS}, that exponentially attracts any bounded set in $H$.  In words, the dynamics is trivial in this case. Here the random periodic solution is   periodic in a path wise manner defined by the stochastic flow (or random dynamical system) generated by the solution map of system \eqref{NS}, see \cite{FZZ11}.
\end{enumerate}

}

 A sufficient condition on the driven noise that guarantees  $A_{\infty} = H$ was characterized by Hairer and Mattingly in \cite{HM06}. In particular, the noise does not need to act on all determining modes and  can be extremely degenerate, for example it is allowed  to excite only in four directions. We stress here that part $(1)$ of the  Main Theorem does not depend on the strength of the noise and holds true for any viscosity satisfying $G_1<1/c_0$, which is purely deterministic.

 In the case when $f=0$, the unique ergodicity was proved in \cite{HM06}, and the weak law of large numbers and the central limit theorem for continuous time homogeneous solution process was shown in \cite{KW12}.  When $f$ is nonzero and independent of time,  the exponentially mixing property was shown in \cite{HM08} under the range condition $f\in\mathrm{range}(G)$. This range condition allows  one to apply the Girsanov's fromula to
 reduce the problem of the weak irreducibility for nonzero $f\in\mathrm{range}(G)$ to the case $f=0$ that has been proved in \cite{HM06}.  In the absence of the range condition,  we believe that the condition on the Grashof number is necessary to obtain our results in part $(1)$. Since even in the case that $f$ is independent of time, when the Grashof number $G_1\geq1/c_0$  is large, the dynamics on the attractor of the deterministic problem could become complicated \cite{CT79,CT81,CT83}, hence if there is no range condition, the noise is absent in certain directions on which the force $f$ acts,  so that the effects of the force $f$ may not be compensated by the noise instantly and system \eqref{NS} might fail to be uniquely ergodic.

\vskip0.05in

Comparing  the two parts of the Main Theorem, we see that the dynamics under the condition $G_1<1/c_0\leq G_2$ is uniquely ergodic and mixing but the structure of the attractor remains unknown. When $f\equiv 0$, this condition becomes $0<\nu<\nu_0$ for some $\nu_0$ determined by $c_0$ and $\mathcal{B}_0$ as above. And the comparison  becomes that of the results from \cite{HM06,HM08} and \cite{Mat99}, where the dynamics is uniquely ergodic and mixing for $0<\nu<\nu_0$ but the structure of the  attractor  is expected to be nontrivial for sufficiently small value of viscosity \cite{Fri95,HM06}. It is worthwhile to mention that in the theory of dynamical systems, there are indeed examples of uniquely ergodic system such as horocycle flow that have chaotic behavior. Therefore the unique ergodicity does not necessarily imply that the system has a trivial attractor and the dynamics of system \eqref{NS} are expected to be complicated  if $G_1<1/c_0\leq G_2$.

\vskip0.05in

In the case when the deterministic force is independent of $t$, i.e., $f(t, x)\equiv f(x)$, the solution of equation \eqref{NS} forms a time homogeneous Markov process, which generates a Markov semigroup $(\mathcal{P}_t)_{t\geq 0}$.
In this context, the statistical properties of the stochastic Navier-Stokes system have been extensively studied over the decades, see for example \cite{BKL01,BKL02,EMS01,EH01,FM95,Fer97,Hai02,KS00,KS01,Kuk02,Mat02b,MS05,MY02,Shi06} and references therein.  However, the results in the cited papers  require the random forcing to act on all determining modes. In particular, the dimension of the random forcing goes to infinity as viscosity approaches to zero. A major breakthrough was made by Hairer and Mattingly in their seminal work \cite{HM06}, where they introduced the  asymptotic strong Feller property to show unique ergodicity of the Navier-Stokes equation driven by an extremely degenerate noise. To obtain this asymptotic smoothing property, they developed a theory of infinite dimensional Malliavin calculus and established an infinite dimensional  Hörmander type theorem.  This asymptotic smoothing effect also allowed them to develop an infinite dimensional  Harris-like theorem in \cite{HM08}, to prove the exponential convergence to the unique invariant measure under the Wasserstein metric $\r$.  The result in  \cite{HM08} in turn led to a proof of the weak law of large numbers and the central limit theorem in \cite{KW12} for non-stationary Markov processes. It is notable  that the statistical results in \cite{HM06,HM08,KW12} are independent of the strength of the noise and the viscosity but require a range condition $f\in\mathrm{range}(G)$ when the time independent force $f(x)$ is nonzero. Our Main Theorem also implies the same statistical results for equation \eqref{NS} with a time independent deterministic forcing $f(x)$ without assuming the range condition, but requiring the condition  $G_1<1/c_0$ on the Grashof number.

\vskip0.05in

A natural question is then whether the similar results hold  for equation \eqref{NS} with  a nonzero deterministic force $f(t, x)$ that depends on $t$.
The setting in this case has a different flavor since the solution is a time inhomogeneous Markov process, which generates a Markov transition operator $(\mathcal{P}_{s, t})_{s\leq t}$ instead of a semigroup. And the concept of an invariant measure is replaced by a family of measures defined by equation \eqref{evolutionmeasures}. The first result in this direction was obtained by Da Prato and Debussche in \cite{DD08}. By generalizing the theorem of Doob and Khasminskii and applying a coupling argument, they proved the unique ergodicity and exponentially mixing for system \eqref{NS}, provided that the driven noise is nondegenerate. They also generalized the concept of asymptotic strong Feller property in \cite{HM06} to the time inhomogeneous setting. But the irreducibility condition proposed there is not easy to verify so that the unique ergodicity under extremely degenerate noise remains unproved. To the best of our knowledge, there is no results analogue to those in \cite{HM06,HM08,KW12} for system \eqref{NS} with an extremely degenerate noise. The present work is an attempt to give such results in the time inhomogeneous setting  for equation \eqref{NS}.

\vskip0.05in

To obtain our main results, we take an approach that is different from \cite{DD08}. Our approach is mainly inspired by the Harris-like theorem for infinite dimensional systems developed in \cite{HM08}, the theory of non-autonomous dynamical systems \cite{CS08,CV02a,CV02b} especially on the structure of attractors for evolution equations, and the limit theorems for additive functionals of non-stationary homogeneous Markov processes \cite{KW12}.

\vskip0.05in

To be specific, we first show that  under the Wasserstein metric $\rho$, the transition operator $\mathcal{P}_{s, t}^*$ has a strong contraction property uniform in the initial time $s$ when acting on $\mathcal{P}(H)$. Then, we characterize $\mathcal{P}_{s, t}^*$ as a cocycle over the hull $H(f)$ of the periodic force $f(t, x)$ equipped with the Bebutov shift flow.  Due to the Lyapunov structure, we only know that the cocycle is continuous in $h\in H(f)$  when acting on those probability measures under which the Lyapunov function is integrable. Therefore we choose to work with the closed subsets $\mathcal{P}_{R}$ of $\mathcal{P}_{1}(H)$ consisting of those measures for which the integral of the Lyapunov function is uniformly bounded by $R>0$.  Then by a pull back procedure we associate the cocycle with a semigroup $S^t$ acting on $C(H(f), \mathcal{P}_{R})$, the space of continuous mappings from $H(f)$ to $\mathcal{P}_{R}$, whose fixed point gives the desired $\mathcal{T}$-periodic invariant measure. However, for fixed $R>0$, the semigroup $S^t$ is not necessarily contracting on $C(H(f), \mathcal{P}_{R})$ for all $t>0$. As $t$ becomes smaller, we need larger $R$ to ensure that $S^t$ is contracting on $C(H(f), \mathcal{P}_{R})$.  Hence we choose to apply the fixed point theorem  on a family of nested closed subsets instead of the whole space $C(H(f), \mathcal{P}_1(H))$, and show the existence of a common  periodic invariant measure in all of the nested closed subsets.  The uniqueness and exponentially mixing then follows simultaneously from the contraction of $\mathcal{P}_{s, t}^*$ on $\mathcal{P}(H)$ and the invariance of the periodic invariant measure.

\vskip0.05in

The limit theorems are proved  after one shows the strong contraction of $\mathcal{P}_{s, t}^*$.  Here we first prove a weak law of large numbers for the continuous time inhomogeneous solution process with arbitrary initial condition. However it is unclear whether a central limit theorem holds or not for the continuous time inhomogeneous solution process. Fortunately the idea of the Poincare section behind the periodic structure enables us to show the weak law of large numbers and the central limit theorem by considering the restriction of  the solution process to periodic times. In particular, the proof for the central limit theorem follows from a modified martingale approximation approach from \cite{KW12}, where the idea dates back to the early works of  Gordin \cite{Gor69}, Kipnis and Varadhan \cite{KV86}.

\vskip0.05in

The proof for the contraction of $\mathcal{P}_{s, t}^*$ on $\mathcal{P}(H)$ involves several uniform in initial time  estimates that reflect the contraction of the dynamics at different scales.  Among those estimates, one is on the weak irreducibility that was introduced in \cite{HM08}, and here we adapt  it to our time inhomogeneous and periodic setting.
The weak irreducibility is a property about the existence of an accessible point $v$ in the state space such that for any neighborhood of $v$, there is a positive probability for the solutions starting from different initial points to enter into that neighborhood at some large time. For system \eqref{NS}, when $f(t, x) = 0$, the accessible point is the global stable equilibrium point of the corresponding deterministic system \cite{HM06}.  If  the deterministic force $f(t,x)\equiv f(x)$ is nonzero and time independent,  then one can reduce the problem to the case when $f(x) = 0$ as long as $f(x)$ satisfies the range condition \cite{HM08}. When the nonzero deterministic forcing $f(t, x)$ depends on $t$, the range condition is indeed much more restrictive, hence we do not require the range condition on the periodic force $f(t, x)$, but the trade off is the constraint $G_1<1/c_0$ on the Grashof number.  Under this constraint, there is a unique global stable  periodic solution \cite{CV02b}  of the deterministic equation corresponding  to \eqref{NS}. This allows us to show the weak irreducibility for the time inhomogeneous system \eqref{NS} since the stable  periodic solution is accessible in this case.
As a byproduct, we show that this type of irreducibility condition, together with the asymptotic strong Feller property proposed in \cite{DD08} that is verified in our work, gives an applicable Doob and Khasminskii type argument that shows system \eqref{NS} can only have at most one periodic invariant measure.
Another two estimates are the Lyapunov structure  that is used to compensate the non-uniformity of the dynamics of equation \eqref{NS} and the gradient inequality that reveals the asymptotic regularizing effect of the transition operator $\mathcal{P}_{s, t}$.
Since the estimates about the solution of equation \eqref{NS} are uniform in the initial time, hence by carrying over the proof from \cite{HM08} and \cite{HM11}, we obtain uniform in the initial time estimates for the Lyapunov structure and gradients inequality.


\vskip0.05in

In applications, it is more convenient to represent the mixing result by the action of transition operator on observables. In the time homogeneous case as in \cite{HM08}, the exponentially mixing that involves observables is  given by proving the quasi-equivalence of a particular norm $\|\cdot\|_{\eta}$ on the observables with an appropriate norm $\|\cdot\|_{\rho, \mu}$ on the space of Lipschitz functions on $H$ under the metric $\rho$, where the later norm depends on the invariant measure $\mu$. While in the present work, the periodic invariant measure $\{\mu_t\}_{t\in\mathbb{R}}$ depends on time. Hence we choose a family of norms $\{\|\cdot\|_{\rho, \mu_s}\}_{s\in\mathbb{R}}$ on the space of Lipschitz functions that depends on $\rho$ and the values of the periodic invariant measure at initial times. The mixing involving observables in our time inhomogeneous case is then obtained by showing the quasi-equivalence of $\|\cdot\|_{\eta}$ with $\|\cdot\|_{\rho, \mu_s}$ under the transition operator $\mathcal{P}_{s, t}$ for all $s\in \mathbb{R}$. Here the dependence on initial time of the norm  can be regarded as a property adapted to the time inhomogeneity, to yield a uniform contraction under the transition operator.

\vskip0.05in

The proof for the second part of our Main Theorem  is based on the approach from \cite{Mat99} of constructing the unique attracting stationary solution, where the author used the contraction property of the dynamics on the whole state space when the viscosity is large. We will use the contraction property  as well, which is the case if $G_2<1/c_0$. The main difference is that we need to incorporate the periodic structure here to construct the unique random periodic solution.  Note when $f(t, x) = 0$,  the second part of our theorem becomes the result proved by Mattingly in \cite{Mat99}.

\section{Preliminaries and main results}\label{setting}
Under the conditions given in the introduction, the existence and uniqueness of the solution to equation \eqref{NS}  is well known, see for example \cite{BM07} and references therein. To be specific,  for any initial time $s\in \mathbb{R}, T> s$ and any $w_0\in H$, equation \eqref{NS} has a unique strong solution $w(t, \omega; s, w_0)$, adapted to the filtration $\mathcal{F}_{t}$, which generates a stochastic flow $\mathrm{\Phi}(t, \omega; s, \cdot): H\rightarrow H$ such that $\mathrm{\Phi}(t, \omega; s, w_0) = w(t, \omega; s, w_0)$ for $s\leq t, w_0\in H$ and
\[w\in C\left([s,T]; H\right)\cap C\left((s,T];H_3\right),\,\, \mathbf{P}\text{-}\mathrm{a.s}..\]
Here by a stochastic flow $\mathrm{\Phi}(t, \omega; s, w_0)$, we mean that it is a modification of the solution of equation \eqref{NS} satisfying the following conditions:
\begin{enumerate}
\item It is adapted to $\mathcal{F}_t$ and for almost all $\omega$, $\mathrm{\Phi}(t, \omega; s, w_0)$ is continuous in $(t, s, w_0)$ and $\mathrm{\Phi}(s, \omega; s, w_0) = w_0.$
\item For almost all $\omega$,
\[\mathrm{\Phi}(t+\tau, \omega; s, w_0) = \mathrm{\Phi}(t+\tau, \omega; t ,\mathrm{\Phi}(t, \omega; s, w_0)),\]
for $s\leq t$, $\tau>0$ and $w_0\in H$.
\end{enumerate}
Throughout the work, we will also use $w_{s, t}(\omega, w_0)$ or $\Phi_{s, t}(\omega, w_0)$ to represent the solution $w(t, \omega; s, w_0)$.

\vskip0.05in

A random periodic solution of period $\mathcal{T}$ for system \eqref{NS} is a stochastic process  $w^*:\mathbb{R}\times\mathrm{\Omega}\rightarrow H$ that is progressively measurable and satisfies
\begin{align}\label{randomperiodic}
w^*(t+\mathcal{T}, \omega) = w^*(t, \theta_{\mathcal{T}}\omega)  \text{  and  } \mathrm{\Phi}(\tau, \omega; t, w^*(t, \omega)) = w^*(\tau, \omega),
\end{align}
for all $t\in\mathbb{R}$, $\tau\geq t$ and almost every $\omega$. Here $\theta: \mathbb{R}\times\mathrm{\Omega}\rightarrow\mathrm{\Omega}$ given by
\[\theta_{t} \omega(\cdot): =\omega(t+\cdot)-\omega(t)\]
is the Wiener shift.
\subsection{Periodic invariant measures}
By the well-posedness of equation \eqref{NS}, one can define the associated Feller transition operator $\mathcal{P}_{s,t}\left(s\leq t\right)$ by
\begin{align*}
\mathcal{P}_{s, t} \varphi(w_0)=\mathbf{E}[\varphi(w(t; s, w_0))], \quad \varphi \in B_{b}(H).
\end{align*}
It was shown in \cite{DD08} that this transition operator is a $\mathcal{T}$-periodic evolution operator, i.e.,  $\mathcal{P}_{s, t}=\mathcal{P}_{s, r} \mathcal{P}_{r, t}$  and $\mathcal{P}_{s, t} = \mathcal{P}_{s+\mathcal{T}, t+\mathcal{T}}$ for all $s \leq r \leq t$.  Following \cite{DR06,DD08}, it is natural to consider the invariant measure  for $\mathcal{P}_{s, t}$ as a map $\mathbb{R} \ni t \mapsto \mu_{t} \in \mathcal{P}(H)$, such that for every $\varphi \in B_{b}(H)$,
\begin{align*}
\int_{H} \mathcal{P}_{s, t} \varphi (w) \mu_{s}(d w)=\int_{H} \varphi(w) \mu_{t}(d w), \quad s \leq t,
\end{align*}
or equivalently
\begin{align*}
\mathcal{P}_{s,t}^*\mu_s = \mu_t, \quad s \leq t.
\end{align*}

Given $\mathcal{T}>0$, it is called $\mathcal{T}$-periodic if
\[\mu_{t+\mathcal{T}}=\mu_{t}, \quad t \in \mathbb{R}.\]

\subsection{Main results}
In this subsection, we formulate the main results of the present paper in details.  Recall that $\mathcal{P}(H)$ is the space of probability measures on $H$. For $\eta>0, 0<r\leq1$, define the metric $\rho_r$ on $H$ weighted by the Lyapunov function $V(w) = e^{\eta\|w\|^2}$ as in \cite{HM08},
\begin{align}\label{familymetrics}
\rho_r(w_1, w_2) = \inf_{\gamma}\int_0^1V^r(\gamma(t))\|\dot{\gamma}(t)\|dt, \quad\forall w_1, w_2 \in H,
\end{align}
where the infimum is taken over all differentiable path $\gamma$ in $H$ connecting $w_1$ and $w_2$. The family of metrics $\rho_r$ will be used in Section \ref{sectionergodicmixing}. When $r=1$ we have $\rho_r =\rho$ as given in \eqref{rho}. Denote by $\mathrm{Lip}_{\rho}(H)$ the space of Lipschitz functions on $H$ endowed with the metric $\rho$ and by $\mathrm{Lip}_{\rho}(\phi)$ the Lipschitz constant for $\phi\in \mathrm{Lip}_{\rho}(H)$.  The metric naturally induces a Wasserstein metric on $\mathcal{P}(H)$ by
\[\rho(\mu_1, \mu_2) = \inf_{\mu\in\mathcal{C}(\mu_1, \mu_2)}\int_{H\times H} \rho(u, v)\mu(dudv),\]
where $\mathcal{C}(\mu_1, \mu_2)$ is the set of couplings of $\mu_1, \mu_2\in \mathcal{P}(H)$.
The reason for working with the Wasserstein metric is that the transition probabilities in infinite dimensional systems are likely to be mutually singular, especially when the strong Feller property does not hold \cite{HM08}. Hence the convergence to the invariant measure often fails under the total variation metric and one would like to replace it by a weaker Wasserstein metric.
For $\eta>0$, let $C_{\eta}^{1}: = \{\phi\in C^1(H): \|\phi\|_{\eta}<\infty\}$, where
\begin{align}\label{coneeta}
\|\phi\|_{\eta}: =\sup_{w\in H}e^{-\eta\|w\|^2}\big(|\phi(w)|+ \|\nabla\phi(w)\|\big).
\end{align}
Recall from the introduction that the set $A_{\infty}$ is defined by setting $A_1=\{g_l : 1\leq l \leq d\}$, $A_{k+1} = A_{k}\cup \{\widetilde{B}(h, g_{l}): h\in A_{k}, g_l\in A_1\}$, and $A_{\infty} = \overline{\mathrm{span}(\cup_{k\geq 1} A_{k})}$, where $\widetilde{B}(u,w)=-B(\mathcal{K}u,w)-B(\mathcal{K}w,u)$ is the the symmetrized nonlinear term.

\vskip0.05in

The following Theorem \ref{ergodicmixing}-\ref{thmlimittheorem} contain the  results for  the case when $G_1<1/c_0$ and $A_{\infty} = H$, where system \eqref{NS} is uniquely ergodic and mixing on the whole space. The case when $A_{\infty} = H$ fails is formulated in Theorem \ref{subspace}, where the periodic invariant measure is unique and mixing when restricting system \eqref{NS} on the proper subspace $A_{\infty}$.  The laminar case  $G_2<1/c_0$  where system \eqref{NS} has trivial dynamics is given in Theorem \ref{thmperiodicattraction}.  Note that the metric $\rho$ weighted by the Lyapunov function $V(w) = e^{\eta\|w\|^2}$ depends on the parameter $\eta>0$.

\vskip0.05in

The first result is the following unique ergodicity and exponentially mixing of the periodic invariant measure under the Wasserstein metric.
\begin{theorem}\label{ergodicmixing}
(Ergodicity and mixing)
Assume $G_1<1/c_0$, $A_{\infty} = H$. Then  there is a constant $\eta_0>0$, such that for every $\eta\in (0, \eta_0]$, there exist constants $C, \gamma>0$ independent of initial time $s$, and a unique $\mathcal{T}$-periodic path $\{\mu_{t}\}_{t\in \mathbb{R}}\in C(\mathbb{R}, \mathcal{P}_1(H))$ such that $\mathcal{P}_{s,t}^{*}\mu_s = \mu_t$ and
\begin{align}\label{mixing-PH}
\rho(\mathcal{P}_{s, s + t}^{*}\mu, \mu_{s+t})\leq Ce^{-\gamma t}\rho(\mu, \mu_s),\quad \forall s\in\mathbb{R},  t\geq 0, \mu\in\mathcal{P}(H).
\end{align}
In an equivalent form that involves the transition operator acting on observables, we have for every $\phi\in C_{\eta}^{1}$,
\begin{align}\label{mixing-observables2}
\|\mathcal{P}_{s, s+t}\phi - \int_{H}\phi(w)\mu_{s+t}(dw)\|_{\eta}\leq C e^{-\gamma t}\|\phi\|_{\eta}.
\end{align}
\end{theorem}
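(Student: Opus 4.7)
The plan is to follow the Harris-type framework of Hairer--Mattingly \cite{HM08}, adapted to the time-inhomogeneous periodic setting. The strategy reduces to establishing three ingredients, each \emph{uniform in the initial time} $s\in\mathbb{R}$: (i) a Foster--Lyapunov estimate for $V(w)=e^{\eta\|w\|^2}$ of the form $\mathbf{E}\,V(w_{s,s+t}(w_0))\le\alpha V(w_0)+\beta$ with $\alpha<1$ for $t\ge t_0$, obtained from the standard enstrophy/energy inequalities applied pathwise; (ii) an asymptotic gradient inequality
\begin{equation*}
\|\nabla\mathcal{P}_{s,s+t}\phi(w)\|\le C\,V^{r}(w)\bigl(\|\phi\|_{\infty}+e^{-\gamma t}\|\nabla\phi\|_{\infty}\bigr)
\end{equation*}
for smooth bounded $\phi$, expressing the asymptotic regularization coming from the infinite-dimensional Malliavin/H\"ormander machinery of \cite{HM06} together with $A_{\infty}=H$; and (iii) weak irreducibility, i.e.\ the existence of a point accessible with positive probability, uniformly from bounded sets and uniformly in $s$. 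Since the energy estimates and the Malliavin calculus of \cite{HM06,HM08,HM11} only see $\|f\|_{\infty}$ and the coefficients $\mathcal{B}_k$, they extend verbatim to give the $s$-independent versions of (i) and (ii).

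The Grashof condition $G_1<1/c_0$ enters only in the proof of (iii), and this is the main obstacle. Because the deterministic non-autonomous equation associated with \eqref{NS} then admits a unique globally exponentially stable $\mathcal{T}$-periodic solution $\bar w(t)$ \cite{CV02b}, the plan is to take $\bar w(s+T_0)$ as the accessible point: combining the deterministic attraction with a controllability/Girsanov-type argument on the interval $[s,s+T_0]$, one steers the driving Brownian motion so that the stochastic solution starting from any $w_0$ with $V(w_0)\le R$ stays close to $\bar w$, yielding
\begin{equation*}
\inf_{s\in\mathbb{R}}\ \inf_{V(w_0)\le R}\ \mathcal{P}_{s,s+T_0}\bigl(w_0,\,B_\delta(\bar w(s+T_0))\bigr)\ge\kappa(R,\delta)>0
\end{equation*}
for every $R,\delta>0$. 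This substitutes for the range-condition reduction to $f=0$ exploited in \cite{HM08}; it is the crucial place where the constraint on the Grashof number cannot be dispensed with, since without a globally attracting deterministic skeleton the trajectories of \eqref{NS} need not concentrate near any single accessible point.

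With (i)--(iii) in hand, the weighted-metric contraction argument of \cite{HM08}, carried out on the family $\rho_r$ from \eqref{familymetrics} and iterated across windows of length $\mathcal{T}$, yields the uniform-in-$s$ contraction
\begin{equation*}
\rho(\mathcal{P}_{s,s+t}^{*}\mu,\mathcal{P}_{s,s+t}^{*}\nu)\le C\,e^{-\gamma t}\,\rho(\mu,\nu),\qquad s\in\mathbb{R},\ t\ge 0,\ \mu,\nu\in\mathcal{P}(H).
\end{equation*}
To manufacture the $\mathcal{T}$-periodic family $\{\mu_t\}$, I lift $\mathcal{P}_{s,t}^{*}$ to a cocycle over the hull $H(f)$ under the Bebutov shift $\tau_t$, and pull it back to a semigroup $S^t$ on $C(H(f),\mathcal{P}_R)$, where $\mathcal{P}_R:=\{\mu\in\mathcal{P}_1(H):\int V\,d\mu\le R\}$ is $\rho$-closed. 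For $t$ large enough (depending on $R$), the contraction above makes $S^t$ a strict contraction of $C(H(f),\mathcal{P}_R)$; because the required $R$ grows as $t$ shrinks, I apply Banach's fixed point theorem on the nested family $\{\mathcal{P}_R\}_{R\ge R_0}$ and verify that the fixed points agree, producing a single $\Psi\in C(H(f),\mathcal{P}_1(H))$. Setting $\mu_t:=\Psi(\tau_t f)$ delivers the periodic invariant family, and uniqueness plus \eqref{mixing-PH} follow by taking $\nu=\mu_s$ in the contraction. Finally, the observable form \eqref{mixing-observables2} is obtained by establishing the quasi-equivalence of the norm $\|\phi\|_\eta$ with the time-dependent norm $\|\phi\|_{\rho,\mu_s}:=\int|\phi|\,d\mu_s+\mathrm{Lip}_\rho(\phi)$ along $\mathcal{P}_{s,s+t}$, which converts the Wasserstein contraction into the claimed exponential decay in $\|\cdot\|_\eta$.
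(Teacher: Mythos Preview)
Your proposal follows essentially the same architecture as the paper: uniform-in-$s$ Lyapunov structure, gradient inequality, and weak irreducibility feed into the Harris-type contraction of \cite{HM08}; the periodic invariant family is then produced by the cocycle/Bebutov lift and a fixed-point argument on the nested sets $\mathcal{P}_R$; and the observable form comes from the quasi-equivalence of $\|\cdot\|_\eta$ with the time-dependent norm built from $\mu_s$.

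The one substantive divergence is in step~(iii). You invoke a ``controllability/Girsanov-type argument'' to steer the Brownian motion so that the stochastic solution shadows $\bar w$. But Girsanov is precisely the device the paper is avoiding: in \cite{HM08} it reduces nonzero $f$ to $f=0$ \emph{under the range condition} $f\in\mathrm{range}(G)$, and without that condition (which is the whole point of replacing it by the Grashof hypothesis $G_1<1/c_0$), Girsanov can only shift the drift within $\mathrm{span}\{g_i\}$ and cannot compensate a generic time-periodic $f(t)$. The paper's irreducibility argument is instead a direct small-noise event argument: condition on $\Omega_{\delta,T}=\{\sup_{[s,T]}|W|\le\delta\}$, which has positive probability; on this event the Ornstein--Uhlenbeck convolution $V_t$ satisfies $\sup_{[s,T]}\|V_t\|_1\le\varepsilon_{\delta,T}\to 0$; then an energy estimate on $u_t=w_{s,t}-V_t-z_t$ (with $z_t$ the deterministic $\mathcal{T}$-periodic attractor) shows $\|u_T\|\le\sigma/2$ uniformly over $\|w_0\|\le R$, for $T$ large enough. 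Evaluating at $T=s+n\mathcal{T}$ and using $z(s+n\mathcal{T})=z(s)$ gives the uniform-in-$s$ accessibility of $z(s)$ at periodic times. So the right replacement for Girsanov here is not controllability but simply ``noise stays small with positive probability''; once you make that substitution, your outline coincides with the paper's proof.
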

Theorem \ref{ergodicmixing} will be proved in Section \ref{sectionergodicmixing}, by combining  a fixed point argument with the following contraction property proved in the same section.
\begin{theorem}\label{contractiontransition}
$($Contraction on $\mathcal{P}(H)$$)$ Assume $G_1<1/c_0$, $A_{\infty} = H$. Then  there exists $\eta_0>0$ such that for $\eta\in(0,\eta_0]$, there are positive constants $C$ and $\gamma$ such that
\begin{align}\label{eqcontraction}
\rho(\mathcal{P}_{s, s+t}^{*}\mu_1,\mathcal{P}_{s, s+t}^{*}\mu_2)\leq Ce^{-\gamma t}\rho(\mu_1,\mu_2),
\end{align}
for every $s\in \mathbb{R}$, $t\geq 0$, and any $\mu_1, \mu_2\in \mathcal{P}(H)$.
\end{theorem}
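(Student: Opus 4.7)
The plan is to adapt the infinite-dimensional weak Harris theorem of Hairer--Mattingly \cite{HM08} to our periodic, time-inhomogeneous setting. Three structural estimates, each required to hold \emph{uniformly in the initial time} $s\in\mathbb{R}$, form the backbone of the proof: (a) a Lyapunov bound of the form $\mathcal{P}_{s,s+t}V\le \gamma_t V+K_t$ for $V(w)=e^{\eta\|w\|^2}$, with $\gamma_t<1$ once $t$ exceeds a threshold; (b) an asymptotic gradient inequality
\[
\|\nabla \mathcal{P}_{s,s+t}\phi(w)\|\le C e^{\eta\|w\|^2}\bigl(\|\phi\|_{\infty}+e^{-\gamma t}\|\nabla\phi\|_{\infty}\bigr);
\]
and (c) weak irreducibility: an accessible point $v^{*}\in H$ such that, uniformly in $s$, neighborhoods of $v^{*}$ are charged with positive probability by $\mathcal{P}_{s,t}^{*}\delta_{w_0}$ at some large $t$, from any $w_0$ in a prescribed bounded set. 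Because $\mathcal{P}_{s,t}=\mathcal{P}_{s+\mathcal{T},t+\mathcal{T}}$, once each of (a)--(c) is pointwise established with $s$-independent constants, periodicity upgrades the bounds to true uniformity.

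Estimate (a) follows from a standard It\^o expansion of $\|w(t)\|^{2}$ together with the dissipative control provided by the Grashof hypothesis $G_1<1/c_{0}$ (which ensures that the deterministic energy balance has a strict contraction margin uniformly in $t$, since $\|f\|_{\infty}$ is finite). Estimate (b) is obtained along the lines of the infinite-dimensional Malliavin calculus argument of \cite{HM06}: the hypothesis $A_{\infty}=H$ plays the role of the H\"ormander-type spanning condition that makes the Malliavin matrix almost surely invertible on the relevant directions, and that computation carries over to our setting essentially verbatim because the random vector fields and their iterated brackets, which define the $A_{k}$, do not involve the drift $f(t,x)$ at all. Estimate (c) is the most delicate point and is where the absence of a range condition bites: we cannot invoke Girsanov to reduce to the free-forcing case $f=0$ as in \cite{HM08}. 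My replacement is to take $v^{*}=v^{*}(s)$ to be the value at time $s+t_{0}$ (for suitable $t_{0}$) of the unique globally exponentially attracting $\mathcal{T}$-periodic solution $\bar w(t)$ of the deterministic equation, whose existence under $G_1<1/c_{0}$ is classical \cite{CV02b}. Since $\bar w(t)$ attracts every deterministic trajectory, a support-theorem argument for the stochastic flow $\mathrm{\Phi}(\cdot,\omega;s,\cdot)$ shows that, on any bounded time interval, the stochastic trajectory tracks the deterministic one with positive probability, yielding accessibility of a neighborhood of $\bar w(s+t_{0})$ uniformly in $s$ by periodicity of $\bar w$.

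With (a)--(c) in hand the contraction is obtained by the weak Harris mechanism. Ingredients (b) and (c) together imply that for a sufficiently large fixed time $t^{*}$ every sublevel set $\{V\le R\}\subset H$ is ``$\rho_{r}$-small'' for $\mathcal{P}_{s,s+t^{*}}^{*}$: for $w_1,w_2$ in such a set, the Wasserstein distance between $\mathcal{P}_{s,s+t^{*}}^{*}\delta_{w_1}$ and $\mathcal{P}_{s,s+t^{*}}^{*}\delta_{w_2}$ is bounded by a definite fraction of $\rho_{r}(w_1,w_2)$, with the constant independent of $s$; this is the point at which (b) is read as asymptotic equivalence of close trajectories and (c) provides the overlap of transition probabilities. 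Combined with the Lyapunov descent from (a), this yields, for $\eta\le\eta_{0}$ small and for a suitable exponent $r\in(0,1]$ in \eqref{familymetrics},
\[
\rho_{r}(\mathcal{P}_{s,s+t^{*}}^{*}\mu_1,\mathcal{P}_{s,s+t^{*}}^{*}\mu_2)\le\alpha\,\rho_{r}(\mu_1,\mu_2),\qquad \alpha<1,
\]
uniformly in $s\in\mathbb{R}$. Iterating via the evolution property $\mathcal{P}_{s,s+nt^{*}}^{*}=\mathcal{P}_{s+(n-1)t^{*},s+nt^{*}}^{*}\circ\cdots\circ\mathcal{P}_{s,s+t^{*}}^{*}$ converts this into exponential contraction at discrete times $nt^{*}$, and a one-step bound on $\mathcal{P}_{s,s+\tau}^{*}$ for $\tau\in[0,t^{*}]$ interpolates to all $t\ge 0$; comparing $\rho_{r}$ with $\rho$ on the class of measures with finite $V$-moment then produces \eqref{eqcontraction}. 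The main obstacle, as stressed above, is step (c): constructing the accessible point without the range condition, and in particular proving the small-noise controllability estimate with constants uniform in $s$.
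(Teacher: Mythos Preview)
Your proposal is correct and follows essentially the same architecture as the paper's proof: the three ingredients (a) Lyapunov structure, (b) asymptotic gradient inequality via Malliavin calculus under $A_{\infty}=H$, and (c) weak irreducibility through the unique globally attracting $\mathcal{T}$-periodic solution of the deterministic equation (available precisely because $G_1<1/c_0$), are combined via the weak Harris mechanism of \cite{HM08}, with all constants uniform in $s$ thanks to periodicity. The paper implements the Harris step by introducing an auxiliary metric $d(w_1,w_2)=(1\wedge\delta^{-1}\rho_r(w_1,w_2))+\beta\rho(w_1,w_2)$ equivalent to $\rho$ and splitting into three regimes (large $\rho$, small $\rho_r$, intermediate), and for (c) it does not invoke an abstract support theorem but argues directly: on the event $\{\sup_{[s,T]}|W|\le\delta\}$ of positive probability the Ornstein--Uhlenbeck perturbation stays small in $H_1$, and an explicit energy estimate then shows the stochastic trajectory is attracted to the deterministic periodic orbit $z(t)$, so the accessible point is $z(s)$ revisited at the discrete periodic times $s+n\mathcal{T}$---this last point (working at multiples of $\mathcal{T}$ so that the target $z(s+n\mathcal{T})=z(s)$ is fixed) is the one piece of your sketch that could be made more explicit.
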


The second result is the weak law of large numbers  (WLLN) and the central limit theorem (CLT) for the solution process. The proof will be given in Section \ref{sectionlimittheorem}. Note that the restriction of  the solution process starting at $(s,w_0)\in\mathbb{R}\times H$ to periodic times $\{s+n\mathcal{T}\}_{n\geq 0}$ forms a time homogeneous Markov chain  $\{w_{s, s+n\mathcal{T}}(w_0)\}_{n\geq 0}$.
\begin{theorem}\label{thmlimittheorem}
Assume $G_1<1/c_0$, $A_{\infty} = H$. Then we have the following:

\vskip0.05in

1. (WLLN for the time inhomogeneous solution process) For any $s\in\mathbb{R}$, $w_0\in H$,
\[\lim_{T\rightarrow\infty}\frac{1}{T}\int_{0}^T\psi(w_{s, s+t}(w_0))dt = \int_{\mathbb{S}_{\mathcal{T}}}\int_{H}\psi(w)\mu_s(dw)\lambda(ds)\]
in probability.  Here  $\mathbb{S}_{\mathcal{T}} = \mathbb{R}/\mathcal{T}\mathbb{Z}$, and $\lambda$  is the normalized Lebesgue measure on $\mathbb{S}_{\mathcal{T}}$.

\vskip0.05in

2. (WLLN for the Markov chain) For any $s\in\mathbb{R}, w_0 \in H$,
\[\lim_{N\rightarrow\infty}\frac{1}{N}\sum_{k=0}^{N-1}\psi\left(w_{s, s+k\mathcal{T}}(w_0)\right) = \int_{H}\psi(w)\mu_s(dw)\]
in probability.

\vskip0.05in

3. (CLT for the Markov chain) Let $\widetilde{\psi}_s(w): = \psi(w) -  \int_{H}\psi(w)\mu_s(dw)$. Then for any $s\in\mathbb{R}, w_0 \in H$,
\begin{align*}
\lim_{N\rightarrow\infty}\mathbf{P}\left(\frac{1}{\sqrt{N}}\sum_{k=0}^{N-1}\widetilde{\psi}_s\left(w_{s, s+k\mathcal{T}}(w_0)\right)<\xi\right) = \Phi_{\sigma}(\xi),\quad \forall \xi\in\mathbb{R},
\end{align*}
where $\Phi_{\sigma}(\cdot)$ is the distribution function of a normal random variable with variance $\sigma^2$, and
\[\sigma^2 = \lim_{N\rightarrow\infty}\frac1N\mathbf{E}\left[\sum_{k=0}^{N-1}\widetilde{\psi}_s\left(w_{s, s+k\mathcal{T}}(w_0)\right)\right]^2,\]
where $\sigma: = \sigma(s)$ is $\mathcal{T}$-periodic in $s$.
\end{theorem}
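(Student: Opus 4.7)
The plan is to exploit the time-homogeneous structure hidden in the periodic sampling, and then to invoke the martingale-approximation machinery for exponentially mixing time-homogeneous Markov chains developed in \cite{KW12}. By the $\mathcal{T}$-periodicity of $f$, the transition operator satisfies $\mathcal{P}_{s+k\mathcal{T},s+(k+1)\mathcal{T}}=\mathcal{P}_{s,s+\mathcal{T}}=:P$ for every $k\in\mathbb{Z}$, so the sampled sequence $X_k:=w_{s,s+k\mathcal{T}}(w_0)$ is a time-homogeneous Markov chain on $H$ with transition operator $P$. Moreover $P^{*}\mu_s=\mu_{s+\mathcal{T}}=\mu_s$ by Theorem \ref{ergodicmixing}, and the contraction estimate \eqref{eqcontraction} specialized to $t=n\mathcal{T}$ gives
\[
\rho(P^{*n}\delta_w,\mu_s)\leq Ce^{-\gamma n\mathcal{T}}\rho(\delta_w,\mu_s),\qquad w\in H,\ n\in\mathbb{N},
\]
so $\{X_k\}$ is geometrically ergodic under the Wasserstein metric $\rho$ from an arbitrary deterministic start.

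For parts (2) and (3) I would follow the argument of \cite{KW12}. Assume $\psi\in\mathrm{Lip}_{\rho}(H)$. The WLLN in (2) follows from the usual second-moment bound: after an initial Lyapunov adjustment the correlations satisfy $|\mathbf{E}_{w_0}[\widetilde{\psi}_s(X_j)\widetilde{\psi}_s(X_k)]|\leq Ce^{-\gamma (k-j)\mathcal{T}}$, so the variance of $\frac{1}{N}\sum_{k=0}^{N-1}\widetilde{\psi}_s(X_k)$ is $O(N^{-1})$ and Chebyshev yields convergence in probability. For the CLT in (3) I would define the corrector
\[
\chi(w):=\sum_{n=0}^{\infty}(P^{n}\widetilde{\psi}_s)(w),
\]
which converges in $\mathrm{Lip}_{\rho}(H)$ because Theorem \ref{contractiontransition} implies $\mathrm{Lip}_{\rho}(P^{n}\widetilde{\psi}_s)\leq Ce^{-\gamma n\mathcal{T}}\mathrm{Lip}_{\rho}(\widetilde{\psi}_s)$. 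Since $\chi-P\chi=\widetilde{\psi}_s$, the Gordin--Kipnis--Varadhan decomposition
\[
\sum_{k=0}^{N-1}\widetilde{\psi}_s(X_k)=\sum_{k=0}^{N-1}\bigl(\chi(X_{k+1})-(P\chi)(X_k)\bigr)+\chi(X_0)-\chi(X_N)
\]
expresses the left side as a martingale $M_N$ adapted to $\{\mathcal{F}_{s+k\mathcal{T}}\}$ plus a telescoping remainder that is $o(\sqrt{N})$ in probability. The martingale CLT applied to $M_N/\sqrt{N}$, together with the WLLN for its quadratic variation, gives convergence to a centered Gaussian with the claimed $\sigma^2$; the starting distribution $\delta_{w_0}$ is replaced by the stationary $\mu_s$ by the exponential contraction of Theorem \ref{contractiontransition}, so the limit does not depend on $w_0$. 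The $\mathcal{T}$-periodicity of $s\mapsto\sigma(s)$ follows from $\mu_{s+\mathcal{T}}=\mu_s$ and the equality in law of the chains started at $s$ and $s+\mathcal{T}$.

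Part (1) is derived from (2) by averaging over one period. Writing $T=N\mathcal{T}+r$ with $0\leq r<\mathcal{T}$, the Markov property and periodicity give
\[
\int_{k\mathcal{T}}^{(k+1)\mathcal{T}}\psi(w_{s,s+t}(w_0))\,dt=\int_0^{\mathcal{T}}\psi\bigl(w_{s+k\mathcal{T},s+k\mathcal{T}+u}(X_k)\bigr)\,du.
\]
Set $F(w):=\int_0^{\mathcal{T}}(\mathcal{P}_{s,s+u}\psi)(w)\,du$; a straightforward use of the invariance identity $\int \mathcal{P}_{s,s+u}\psi\,d\mu_s=\int\psi\,d\mu_{s+u}$ and the $\mathcal{T}$-periodicity of $u\mapsto\mu_{s+u}$ yields
\[
\int_H F\,d\mu_s=\int_0^{\mathcal{T}}\!\int_H \psi(w)\,\mu_{s+u}(dw)\,du=\mathcal{T}\!\int_{\mathbb{S}_{\mathcal{T}}}\!\int_H \psi(w)\,\mu_s(dw)\,\lambda(ds).
\]
Decomposing
\[
\frac{1}{T}\int_0^T\psi(w_{s,s+t}(w_0))\,dt=\frac{1}{N\mathcal{T}}\sum_{k=0}^{N-1}F(X_k)+\frac{1}{N\mathcal{T}}\sum_{k=0}^{N-1}D_k+O(N^{-1}),
\]
where $D_k:=\int_{k\mathcal{T}}^{(k+1)\mathcal{T}}\psi(w_{s,s+t}(w_0))\,dt-F(X_k)$ forms a martingale-difference sequence with respect to $\{\mathcal{F}_{s+(k+1)\mathcal{T}}\}$, the first sum converges in probability to the announced limit by part (2) applied to $F$, while the martingale sum has $L^2$ norm of order $\sqrt{N}$ by the uniform-in-$s$ Lyapunov estimates and hence vanishes after the $1/(N\mathcal{T})$-scaling.

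The main obstacle is translating the Wasserstein-metric statement of Theorem \ref{contractiontransition} into geometric-rate decay of $P^{n}\widetilde{\psi}_s$ in the Lipschitz seminorm on $(H,\rho)$ uniformly in $s$, and controlling $F$ and $\chi$ in $\mathrm{Lip}_{\rho}(H)$ together with their integrability against the family $\{\mu_s\}$. This relies on the uniform-in-initial-time Lyapunov and gradient estimates developed in the earlier sections; once those are in hand, the Komorowski--Walczuk martingale approach of \cite{KW12} adapts essentially verbatim to $\{X_k\}$.
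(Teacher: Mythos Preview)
Your treatment of parts (2) and (3) is essentially the paper's own: the sampled chain $X_k=w_{s,s+k\mathcal{T}}(w_0)$ is time-homogeneous with $P^*\mu_s=\mu_s$, the corrector $\chi=\sum_{n\geq 0}P^n\widetilde{\psi}_s$ converges in $\mathrm{Lip}_\rho(H)$ by Theorem \ref{contractiontransition}, and the Gordin--Kipnis--Varadhan decomposition feeds into the martingale CLT of \cite{KW12}. The paper does exactly this (its corrector is called $\alpha$ and it verifies the conditions (M1)--(M3) explicitly). Your correlation-decay argument for the WLLN in (2) is actually a bit cleaner than the paper's, which instead shows $\mathbf{E}Y_N\to v_s$ and $\mathbf{E}Y_N^2\to v_s^2$ via tightness and uniform convergence on compacta; both are correct, yours is shorter.

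For part (1) you take a genuinely different route. The paper proves it directly, without using part (2): it shows $\mathbf{E}Y_T\to v_*$ from the Wasserstein contraction and then $\mathbf{E}Y_T^2\to v_*^2$ by rewriting the double integral via the Markov property, invoking tightness of $\frac1T\int_0^T\mathcal{P}_{s,s+t}^*\delta_{w_0}\,dt$ and uniform convergence on compacts (Lemma \ref{thm6.8-lemma01}), then truncating to pass from bounded to general $\psi\in\mathrm{Lip}_\rho(H)$. Your reduction to part (2) via the period-average observable $F(w)=\int_0^{\mathcal{T}}\mathcal{P}_{s,s+u}\psi(w)\,du$ and the martingale-difference remainder $D_k$ is correct and more economical: $F\in\mathrm{Lip}_\rho(H)$ by Theorem \ref{contractiontransition}, $\mathbf{E}[D_k\mid\mathcal{F}_{s+k\mathcal{T}}]=0$ by periodicity of the transition operator, and $\sup_k\mathbf{E}D_k^2<\infty$ by the uniform Lyapunov bound of Lemma \ref{CLTLyapunov}, so $\frac{1}{N\mathcal{T}}\sum D_k\to 0$ in $L^2$. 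The advantage of your approach is that it makes the periodic structure do the work and avoids the compactness/tightness arguments; the paper's direct proof has the virtue of being self-contained and not relying on part (2).
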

The third result shows that if the viscosity is larger than that in the previous two results, then the dynamics of the system \eqref{NS} is actually trivial. The following theorem will be proved in Section \ref{sectiontrivialattractor}.

\begin{theorem}\label{thmperiodicattraction}
 If  $G_2<1/c_0$, then there exists a random $\mathcal{T}$-periodic solution $w^*(t, x)$ of equation \eqref{NS} in the sense of  the definition given by equation \eqref{randomperiodic}. Moreover, $w^*(t, x)$ exponentially attracts all other solutions both in forward and pullback times. The law of $w^*(t, x)$ gives the unique $\mathcal{T}$-periodic invariant measure.
\end{theorem}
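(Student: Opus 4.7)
The plan is to adapt Mattingly's \cite{Mat99} construction of a globally attracting stationary solution under strong viscosity to the present $\mathcal{T}$-periodic setting; the random $\mathcal{T}$-periodic solution will be obtained as a pullback limit along periods, with convergence provided by a pathwise synchronous contraction.

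The first and key step is the pathwise contraction estimate. For two solutions $w_i(t) = \Phi_{s, t}(\omega, w_i^0)$, $i = 1, 2$, driven by the same Brownian path, the difference $v := w_1 - w_2$ is both noise- and $f$-free:
\begin{align*}
\partial_t v + B(\mathcal{K}v, w_1) + B(\mathcal{K}w_2, v) = \nu \Delta v.
\end{align*}
An $L^2$ energy estimate using antisymmetry of $B$ and Ladyzhenskaya's inequality yields $\tfrac{d}{dt}\|v\|^2 \leq (-\nu + \tfrac{c_0^2}{\nu}\|w_1\|_1^2)\|v\|^2$, hence
\begin{align*}
\|v(t)\|^2 \leq \|v(s)\|^2 \exp\left(-\nu(t-s) + \tfrac{c_0^2}{\nu}\int_s^t \|w_1(r)\|_1^2\, dr\right).
\end{align*}
The It\^o energy identity for $w_1$, after absorbing $2\langle f, w_1\rangle \leq \|f\|_\infty^2/\nu + \nu\|w_1\|_1^2$, gives
\begin{align*}
\nu \int_s^t \|w_1(r)\|_1^2 \, dr \leq \|w_1(s)\|^2 + (\|f\|_\infty^2/\nu + \mathcal{B}_0)(t-s) + M_s^t,
\end{align*}
with $M_s^t$ a mean-zero stochastic integral. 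Recognizing $(\|f\|_\infty^2/\nu + \mathcal{B}_0)/\nu^2 = \nu G_2^2$, the exponent becomes asymptotically $\nu(c_0^2 G_2^2 - 1)(t-s) < 0$ precisely when $G_2 < 1/c_0$, and a Burkholder--Davis--Gundy control of $M_s^t$ upgrades this to the almost sure estimate
\begin{align*}
\|\Phi_{s, t}(\omega, w_1^0) - \Phi_{s, t}(\omega, w_2^0)\| \leq C(\omega, s, w_1^0, w_2^0)\|w_1^0 - w_2^0\|\, e^{-\alpha(t-s)},
\end{align*}
with a prefactor whose moments are bounded uniformly in $s$ thanks to the $\mathcal{T}$-periodic structure.

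I would then construct $w^*$ by a pullback Cauchy argument. Fix $t \in \mathbb{R}$ and $w_0 \in H$; contraction applied from time $t - (n+k)\mathcal{T}$ up to time $t$ shows that $\{\Phi(t, \omega; t - n\mathcal{T}, w_0)\}_n$ is almost surely Cauchy in $H$, and I set $w^*(t, \omega) := \lim_{n\to\infty} \Phi(t, \omega; t - n\mathcal{T}, w_0)$. The contraction forces independence of $w_0$, and $\mathcal{F}_t$-measurability passes to the limit. Taking $n \to \infty$ in the cocycle identity $\Phi(\tau, \omega; t, \Phi(t, \omega; t - n\mathcal{T}, w_0)) = \Phi(\tau, \omega; t - n\mathcal{T}, w_0)$ yields the invariance $\Phi(\tau, \omega; t, w^*(t, \omega)) = w^*(\tau, \omega)$ for $\tau \geq t$. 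The periodicity of $f$ combined with the pathwise identity $\theta_\mathcal{T}\omega(\cdot) = \omega(\cdot + \mathcal{T}) - \omega(\mathcal{T})$ gives the cocycle shift $\Phi_{s + \mathcal{T}, t + \mathcal{T}}(\omega, w_0) = \Phi_{s, t}(\theta_\mathcal{T}\omega, w_0)$, which upon pullback yields the random periodicity $w^*(t + \mathcal{T}, \omega) = w^*(t, \theta_\mathcal{T}\omega)$.

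Finally, forward exponential attraction of any bounded set follows from the contraction applied to the pair $(w_0, w^*(s_0, \omega))$ at time $s_0$, invariance giving that the second trajectory is exactly $w^*(\cdot, \omega)$; pullback attraction is built in by construction. The law $\mu_t := \mathrm{Law}(w^*(t, \cdot))$ is a $\mathcal{T}$-periodic invariant measure, and for any other such $\nu_\cdot$ one couples $X_{-n} \sim \nu_{s - n\mathcal{T}} = \nu_s$ with $Y_{-n} \sim \mu_{s - n\mathcal{T}} = \mu_s$ arbitrarily at time $s - n\mathcal{T}$ and evolves both under the same noise to time $s$; pathwise contraction gives $\mathbf{E}\|\Phi_{s - n\mathcal{T}, s}(\omega, X_{-n}) - \Phi_{s - n\mathcal{T}, s}(\omega, Y_{-n})\| \to 0$ as $n \to \infty$, forcing $\nu_s = \mu_s$ in Wasserstein distance and hence as measures. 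The main obstacle is producing the pathwise contraction with an integrable prefactor uniformly in the initial time $s$: the stochastic integral $M_s^t$ must be controlled so that the Grashof threshold $G_2 < 1/c_0$ delivers genuinely pathwise exponential decay, not merely decay in expectation, which calls for a careful use of exponential martingale estimates adapted to the $\mathcal{T}$-periodic structure.
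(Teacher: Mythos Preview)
Your proposal is correct and follows essentially the same strategy as the paper: derive a pathwise contraction estimate from the energy identity, construct $w^*$ as a pullback limit along periods, and verify random periodicity via the shift identity $\Phi_{s+\mathcal{T},t+\mathcal{T}}(\omega,\cdot)=\Phi_{s,t}(\theta_{\mathcal{T}}\omega,\cdot)$. The only tactical differences are that the paper pulls back from the fixed initial datum $0$ (which gives uniform moment bounds on the approximants for free) and packages the martingale control not as an integrable random prefactor but as random waiting times $n^*(s,\delta)$, $n_*(s,\delta)$ with finite moments, using Doob's maximal inequality forward and a backward maximal inequality for the pullback direction, rather than BDG.
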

We make remarks on the above results.

\vskip0.05in

First of all, the driven noise here can be extremely degenerate as in \cite{HM06}, where Hairer and Mattingly  give an algebraic characterization  of the condition $A_{\infty}=H$. To state this  characterization  we recall the following real Fourier basis of $H$ consisting of eigenvectors of the Laplacian as in \cite{HM06,MP06}.  Set  $\mathbb{Z}_{+}^{2}=\left\{\left(k_{1}, k_{2}\right) \in \mathbb{Z}^{2} \,\middle\vert\, k_{2}>0\right\} \cup\left\{\left(k_{1}, 0\right) \in \mathbb{Z}^{2} \,\middle\vert\, k_{1}>0\right\}$, $\mathbb{Z}_{-}^{2}= - \mathbb{Z}_{+}^{2}$ and $\mathbb{Z}_0^{2}=\mathbb{Z}_{+}^{2} \cup \mathbb{Z}_{-}^{2}$. And define for $k\in \mathbb{Z}_0^{2}$,
\begin{align*}
\gamma_{k}(x)=\left\{\begin{array}{ll}
\sin (k \cdot x) & \text { if } k \in \mathbb{Z}_{+}^{2}, \\
\cos (k \cdot x) & \text { if } k \in \mathbb{Z}_{-}^{2}.
\end{array}\right.
\end{align*}
Let $\mathcal{Z}_0: = \{k_l: 1\leq l\leq d_0\}\subset \mathbb{Z}_0^{2}$, where $d_0\in\mathbb{N}$. The following theorem was proved in \cite{HM06}:
 \begin{theorem}\cite{HM06}
Let $\mathcal{Z}_{0}$ satisfy  the following conditions:\\
A1. There exist at least two elements in $\mathcal{Z}_{0}$ with different Euclidean norms.\\
A2. Integer linear combinations of elements of $\mathcal{Z}_{0}$ generate $\mathbb{Z}^{2}$.\\
If $\{\gamma_k: k\in \mathcal{Z}_0\}\subset \mathrm{span}\{g_i: 1\leq i\leq d\}$, then $A_{\infty}=H$.
\end{theorem}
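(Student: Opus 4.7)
The plan is to show that the set of wavevectors $\mathcal{Z}_\infty := \{k\in\mathbb{Z}_0^2 : \gamma_k\in A_\infty\}$ is all of $\mathbb{Z}_0^2$; since $\{\gamma_k\}_{k\in\mathbb{Z}_0^2}$ is an orthogonal basis of $H$, this gives $A_\infty=H$. The argument is an algebraic--combinatorial induction on the wavevectors reachable by iterated application of the symmetrized bracket $\widetilde{B}(\,\cdot\,,\gamma_\ell)$ with $\ell\in\mathcal{Z}_0$.

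First I would compute the action of $\widetilde{B}$ on pairs of Fourier modes explicitly. Writing $\mathcal{K}\gamma_k = \mp\frac{k^\perp}{|k|^2}\gamma_k^{\sharp}$ (where $\gamma_k^\sharp$ denotes the sine/cosine partner of $\gamma_k$) and expanding via the product-to-sum identities, one obtains for nonparallel $j,k$ with $j\pm k\in\mathbb{Z}_0^2$ a formula of the schematic form
\[
\widetilde{B}(\gamma_j,\gamma_k) = c(j,k)\bigl(\pm\gamma_{j+k}\pm\gamma_{j-k}\bigr),\qquad c(j,k) = \tfrac{1}{2}\langle j^\perp,k\rangle\Bigl(\tfrac{1}{|j|^2}-\tfrac{1}{|k|^2}\Bigr).
\]
The crucial observation is that $c(j,k)\neq 0$ precisely when $j$ and $k$ are not parallel and have distinct Euclidean norms. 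Call such a pair \emph{admissible}. Applying a second bracket $\widetilde{B}(\,\cdot\,,\gamma_m)$ with a suitably chosen $m\in\mathcal{Z}_0$ to the linear combination $\gamma_{j+k}+\gamma_{j-k}$ produces another mixture from which, by solving a small linear system in $A_\infty$, one can isolate $\gamma_{j+k}$ and $\gamma_{j-k}$ individually. Hence admissibility of $(j,k)$ propagates: if $\gamma_j,\gamma_k\in A_\infty$ and $(j,k)$ is admissible, then $j+k,\,j-k\in\mathcal{Z}_\infty$.

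Next I would bootstrap. Condition A1 supplies seed vectors $j_0,k_0\in\mathcal{Z}_0$ with $|j_0|\neq|k_0|$. Iterating the admissible-pair rule on the seeds yields an infinite family in $\mathcal{Z}_\infty$ realizing infinitely many distinct Euclidean norms, so at every subsequent stage there are many admissible pairs available. Condition A2 then allows any target $n\in\mathbb{Z}_0^2$ to be written as a finite integer combination of elements of $\mathcal{Z}_0$; adding these summands one at a time and invoking the propagation rule at each step puts $\gamma_n$ into $\mathcal{Z}_\infty$.

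The main obstacle is the last step: at some intermediate stage the current partial sum may be parallel to the next summand or share its Euclidean norm, causing $c$ to vanish and blocking the induction. I expect to handle this by the standard detour trick --- perturb the partial sum by an auxiliary vector $v\in\mathcal{Z}_\infty$ produced in the bootstrap phase so that the perturbed partial sum is generic relative to the next summand, propagate through the admissible pair, and then subtract $v$ off at the end using one further admissible bracket. Making this precise requires the full strength of both A1 (supplying detour vectors of many distinct norms, so a generic perturbation always exists) and A2 (guaranteeing that every $n\in\mathbb{Z}^2$ can be reached by such a controllably ordered integer combination); this is the essence of the Hörmander-type bracket combinatorics underlying the theorem.
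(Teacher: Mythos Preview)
The paper does not prove this theorem; it is quoted verbatim from \cite{HM06} and used as a black box. So there is no ``paper's own proof'' to compare against.

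Your sketch is essentially the argument that appears in \cite{HM06}. The explicit computation of $\widetilde{B}(\gamma_j,\gamma_k)$ in the real Fourier basis, with coefficient proportional to $\langle j^\perp,k\rangle\bigl(|k|^{-2}-|j|^{-2}\bigr)$, is exactly their Proposition~4.4, and the inductive propagation of reachable wavevectors using A1/A2 is their Lemma~4.5. One point where your sketch is looser than the original: you propose to isolate $\gamma_{j+k}$ and $\gamma_{j-k}$ separately by ``solving a small linear system'' after a second bracket. In \cite{HM06} this is handled more directly by tracking the parity (sine vs.\ cosine) of the modes---the signs in $\pm\gamma_{j+k}\pm\gamma_{j-k}$ depend on whether $j,k$ lie in $\mathbb{Z}_+^2$ or $\mathbb{Z}_-^2$, and by choosing both $k$ and $-k$ (both in $\mathcal{Z}_0$ once one is, since $\gamma_{-k}$ is the cosine partner) one gets two independent linear combinations immediately, without an auxiliary $m$. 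Your detour trick for the degenerate (parallel or equal-norm) cases is also how \cite{HM06} proceeds, though there the bookkeeping is organized by first showing $\mathcal{Z}_\infty$ contains wavevectors of arbitrarily large norm along at least two non-collinear directions, which supplies the needed generic perturbations. Overall your plan is correct and matches the cited source.
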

In particular, the random forcing is allowed to excite only on four directions. For example, one can take $\mathcal{Z}_{0}=\{(1,0),(-1,0),(1,1),(-1,-1)\}$.

\vskip0.05in

The case when $A_{\infty} \neq H$  is best characterized by $\mathcal{Z}_0$ as in \cite{HM06}. To do so, we set $Q_1=\{\gamma_l : l\in\mathcal{Z}_0\}$, $Q_{j+1} = Q_{j}\cup \{\widetilde{B}(h, \gamma_{l}): h\in Q_{j}, \gamma_l\in Q_1\}$ and $Q_{\infty} =\overline{ \mathrm{span}(\cup_{j\geq 1} Q_{j})}$. If the action of the noise is diagonal, then $A_{\infty}$ coincides with $Q_{\infty}$. However the noise here does not necessarily act on system \eqref{NS} in a diagonal way,  therefore we assume $A_{\infty} = Q_{\infty}$ in order to characterize $A_{\infty}$ by $\mathcal{Z}_0$ and ensure the invariance of $A_{\infty}$. Then there are two cases in which $A_{\infty} = H$ can fail \cite{HM06}:
\begin{enumerate}
\item \textbf{Case 1}: The elements of $\mathcal{Z}_{0}$ are all collinear or of the same Euclidean length. Then  $A_{\infty}$ is the finite-dimensional space spanned by $\left\{\gamma_{k} \,\middle\vert\, k \in \mathcal{Z}_{0}\right\}$.
\item \textbf{Case 2}:  Let $\mathcal{G}$ be the smallest subgroup of $\mathbb{Z}^{2}$ containing $\mathcal{Z}_{0}$. Then  $A_{\infty}$ is the space spanned by $\left\{\gamma_{k} \,\middle\vert\, k \in \mathcal{G} \backslash\{(0,0)\}\right\} .$ Let $k_{1}, k_{2}$ be two generators for $\mathcal{G}$ and define $v_{i}=2 \pi k_{i} /\left|k_{i}\right|^{2},$ then $A_{\infty}$ is the space of functions that are periodic with respect to the translations $v_{1}$ and $v_{2}$.
\end{enumerate}

In this situation, we have the following theorem, which is a counterpart of Theorem 2.3 from \cite{HM06} in our time inhomogeneous setting.
\begin{theorem}\label{subspace}
Assume $G_1<1/c_0$, $A_{\infty} = Q_{\infty}$ and $f(t)\in A_{\infty}$ for $t\in[0,\mathcal{T}]$. Then in each of the above two cases,  $A_{\infty}$ is a closed proper linear subspace of $H$ which is invariant under system \eqref{NS}. When restricting the dynamics of equation \eqref{NS} on $A_{\infty}$,  there is a unique $T$-periodic invariant measure supported on $A_{\infty}$, for which Theorem \ref{ergodicmixing}-\ref{thmperiodicattraction} hold.

\vskip0.05in

In particular, in \textbf{Case 1} the dynamics restricted to  $A_{\infty}$ is that of a periodic Ornstein-Uhlenbeck process \cite{DL07}.
\end{theorem}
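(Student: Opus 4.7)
The plan is three-step: (i) show $A_\infty$ is a closed proper subspace of $H$ that is pathwise invariant under the flow of \eqref{NS}; (ii) apply Theorems \ref{ergodicmixing}--\ref{thmperiodicattraction} with $A_\infty$ replacing $H$ as the ambient phase space; (iii) in \textbf{Case 1}, recognize the restricted equation as a linear SPDE.

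For (i), closedness is built into the definition of $A_\infty$, and properness in each of the two cases is immediate from the Fourier description of $A_\infty$ recalled just before the theorem. Pathwise invariance reduces to showing that each term on the right-hand side of \eqref{NS} maps $A_\infty$ into $A_\infty$. The Laplacian does so because $\{\gamma_k\}$ is its eigenbasis; the noise stays in $A_\infty$ since $g_i\in A_1\subset A_\infty$; the forcing lies in $A_\infty$ by hypothesis. The one genuine point is the nonlinearity. Using $B(\mathcal Kw,w)=-\tfrac12\widetilde B(w,w)$ and bilinearity, it suffices to verify $\widetilde B(\gamma_k,\gamma_j)\in A_\infty$ for $k,j$ in the generating set $\mathcal Z_0$. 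A direct Fourier calculation gives $\widetilde B(\gamma_k,\gamma_j)=(k^\perp\!\cdot j)\bigl(|j|^{-2}-|k|^{-2}\bigr)\,\gamma_k^*\gamma_j^*$ (up to sign), where $\gamma_k^*\gamma_j^*$ produces modes at $k\pm j$. In \textbf{Case 1} this expression vanishes identically, either because $k^\perp\!\cdot j=0$ (collinear) or because $|k|=|j|$; hence the nonlinearity is identically zero on $A_\infty$. In \textbf{Case 2}, the modes produced by $\gamma_k\gamma_j$ have wavevectors $k\pm j\in\mathcal G$, which remain in the span of $\{\gamma_m:m\in\mathcal G\setminus\{0\}\}=A_\infty$. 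Pathwise invariance then follows from the mild formulation of \eqref{NS} and the closedness of $A_\infty$.

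Once invariance is established, the restricted process is a stochastic Navier--Stokes system on the Hilbert space $A_\infty$ with the same viscosity, the same periodic force, and a noise whose Grashof number $G_1$ is unchanged. The bracket-generating condition, which read $A_\infty=H$ in the ambient problem, becomes the tautology $A_\infty=A_\infty$ with respect to the new phase space. All the a priori estimates underlying Theorems \ref{ergodicmixing}--\ref{thmperiodicattraction}--Lyapunov bounds, the gradient inequality, asymptotic strong Feller, weak irreducibility, and the $\rho$-contraction of $\mathcal P_{s,t}^*$--depend only on intrinsic quantities ($\nu$, $\|f\|_\infty$, $\mathcal B_k$, $c_0$) and on couplings and test functions that respect the invariant subspace, so their proofs transfer verbatim. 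This yields the unique $\mathcal T$-periodic invariant measure supported on $A_\infty$, exponential mixing in the $\rho$-Wasserstein metric on $\mathcal P(A_\infty)$, the limit theorems of Theorem \ref{thmlimittheorem}, and, whenever $G_2<1/c_0$, the unique random periodic solution of Theorem \ref{thmperiodicattraction}. Finally, the vanishing of the nonlinearity in \textbf{Case 1} reduces \eqref{NS} on the finite-dimensional $A_\infty$ to the linear SPDE
\begin{equation*}
dw(t)=\nu\Delta w(t)\,dt+f(t)\,dt+G\,dW(t),
\end{equation*}
which is exactly a periodic Ornstein--Uhlenbeck process as in \cite{DL07}.

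The main obstacle is the nonlinear closure step in \textbf{Case 2}: one must combine the subgroup structure of $\mathcal G$ with the explicit Fourier formula for $\widetilde B$ and confirm that the continuity used to pass from the dense span $\bigcup_k A_k$ to its closure $A_\infty$ is compatible with the functional-analytic setting in which \eqref{NS} is solved. A related, more subtle, technical point is that the infinite-dimensional Malliavin-calculus framework of \cite{HM06,HM11} underlying the asymptotic strong Feller property should be reverified with $A_\infty$ as the ambient space; the key observation is that the Malliavin matrix is full rank on $A_\infty$ precisely because $A_\infty$ is, by construction, the smallest closed subspace generated by iterated brackets of the noise directions with the nonlinearity.
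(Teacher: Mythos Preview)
Your approach is essentially the same as the paper's: the paper's argument (given in the Remark following the theorem) consists precisely of establishing invariance of $A_\infty$ by rewriting \eqref{NS} in Fourier space as in \cite{HM06}, and then noting that the proofs of Theorems \ref{ergodicmixing}--\ref{thmperiodicattraction} carry over verbatim to the restricted system. Your write-up is considerably more explicit than the paper's one-paragraph remark; the only imprecision is that for invariance in \textbf{Case 2} you must check $\widetilde B(\gamma_k,\gamma_j)\in A_\infty$ for all $k,j\in\mathcal G$, not merely $k,j\in\mathcal Z_0$, though your subsequent group-closure argument handles this correctly.
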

\begin{remark}
The condition $A_{\infty} = Q_{\infty}$ enables us to characterize $A_{\infty}$ by $\mathcal{Z}_{0}$. Moreover, by rewriting equation \eqref{NS} in Fourier space \cite{HM06}, and combining with the assumption $f(t)\in A_{\infty}$ for $t\in[0,\mathcal{T}]$, the condition $A_{\infty} = Q_{\infty}$ also ensures the invariance of $A_{\infty}$ under the dynamics of  system \eqref{NS}. By considering equation \eqref{NS} on the invariant subspace $A_{\infty}$,  Theorem \ref{subspace} follows from the same arguments for proving corresponding  results in the case $A_{\infty}=H$. If $A_{\infty} \neq Q_{\infty}$, then in general we do not know if $A_{\infty}$ is invariant and there is no results similar to Theorem \ref{subspace}.
\end{remark}

\vskip0.05in

Secondly, once we have established the contraction \eqref{contractiontransition} of the transition operator $\mathcal{P}_{s, t}^*$,  then the existence, uniqueness and mixing of the periodic invariant measure follows simultaneously by a fixed point argument in Theorem \ref{fixedpoint}. In the time inhomogeneous setting, this is different from the approach in \cite{DD08}. The authors there showed the existence by first applying the Krylov-Bogoliubov  procedure to the associated homogenized Markov semigroup, and then obtained the  periodic invariant measure  as a continuous version of the  disintegration of a invariant measure for the homogenized Markov semigroup. They also generalized the concept of asymptotic strong Feller property to the time inhomogeneous case and established a Doob and Khasminskii type argument to show the uniqueness of periodic invariant measure. Unfortunately the irreducibile type condition proposed there is not easy to verify for system \eqref{NS}. As a byproduct of the present work, we verified the  asymptotic strong Feller property in Corollary \ref{ASF} from subsection \ref{gradientinequality}, and by introducing a type of weak irreducibility in subsection \ref{weakLya}, we provide a Doob and Khasminskii type argument to show that system \eqref{NS} has at most one periodic invariant measure, see Corollary \ref{DoobKha}.

\vskip0.05in

Lastly, we prove both WLLN and CLT for the corresponding time homogeneous Markov chain, but only WLLN for the time inhomogeneous solution process. It seems not easy to prove or even formulate a CLT for the continuous time inhomogeneous solution process. We leave this as a future work.

\section{Contraction on $\mathcal{P}(H)$: ergodicity and mixing}\label{sectionergodicmixing}

In this section, we will prove Theorem \ref{ergodicmixing} and Theorem \ref{contractiontransition}. More precisely,  we first show Theorem \ref{ergodicmixing} in subsection \ref{section-ergodicmixing} by assuming the contraction result of Theorem \ref{contractiontransition}.  The proof is accomplished by viewing the transition operator as a non-autonomous dynamical system on $\mathcal{P}(H)$ and employing a fixed point argument. This gives the unique ergodicity and mixing of the Navier-Stokes system \eqref{NS}.

\vskip0.05in

The proof for the contraction \eqref{contractiontransition} in Theorem \ref{contractiontransition} will be given in subsection \ref{section-contraction}, after we establish the necessary estimates in our time inhomogeneous setting.  In particular, we will prove the weak irreducibility in \ref{weakLya}.

\subsection{Ergodicity and mixing by contraction}\label{section-ergodicmixing}
In this subsection, we will prove Theorem \ref{ergodicmixing} by assuming the contraction \eqref{contractiontransition} of Theorem \ref{contractiontransition}.  In the following Theorem \ref{fixedpoint}, we show the existence of a unique $\mathcal{T}$-periodic invariant measure that has the mixing property \eqref{mixing-PH}. The mixing \eqref{mixing-observables2} in terms of the action on observables is given in Theorem \ref{mixingob} below.

\vskip0.05in

In the time homogeneous case \cite{HM08}, if the Markov semigroup is a contraction on  $\mathcal{P}(H)$, the existence of a unique  invariant measure follows directly by applying the fixed point theorem on the complete subset $\mathcal{P}_{1}(H)$ defined as \eqref{P1H}.  However, this is not an obvious task when considering the time inhomogeneous Markov transition operator $\mathcal{P}_{s, t}^*$. Inspired by the work in \cite{CS08} and \cite{CV02a} on non-autonomous dynamical systems,  we consider the transition operator as a cocycle  over the hull $H(f)$ of the periodic force $f$ equipped with the Bebutov shift flow. Then we associate the cocycle with a  semigroup acting on a family of nested closed subsets of $C\left(H(f), \mathcal{P}_{1}(H)\right)$, whose fixed point is the periodic invariant measure for $\mathcal{P}_{s, t}$. As we mentioned in the introduction, due to the Lyapunov structure, the contraction of the semigroup is not uniform on the whole complete space $C\left(H(f), \mathcal{P}_{1}(H)\right)$.  Hence we choose to apply the fixed point theorem  on a family of nested closed subsets instead of the whole space $C(H(f), \mathcal{P}_1(H))$ to adapte to this non-uniformity.
 In what follows we will elaborate the ideas and give the proof for Theorem \ref{fixedpoint}.
\vskip0.05in

We first state a result which shows that the system \eqref{NS} has a uniform in the initial time Lyapunov structure , which was first introduced in \cite{HM08} in a time homogeneous setting. Since our estimates about the solution of equation \eqref{NS} is uniform in the initial time, the proof of the Lyapunov structure in \cite{HM08} carries over to the time inhomogeneous setting here. For the sake of completeness, we give the proof in Appendix \ref{subsectionLyapunovstructure}.
\begin{proposition}[The Lyapunov Structure]\label{pre_Lya}
Let $V(w) = \exp(\eta\|w\|)$. Then there exists $\eta_0>0$ such that for any $\eta\in(0,\eta_0]$, we have the following\\
1. There are $\kappa> 1$ and $C>0$ such that $\|w\|V(w)\leq CV^{\kappa}(w) $ for all $w\in H$. \\
2. There is a constant $C>0$ such that for any fixed $0< r_0 <1$
\begin{align}\label{pre_eqLya01}
\mathbf{E}V^r(\Phi_{s,s+t}(w))(1+\|\nabla\Phi_{s, s+t}(w))\xi\|)\leq CV^{r\alpha(t)}(w)
\end{align}
for every $w, \xi\in H$ with $\|\xi\|=1$, $r\in[r_0, 2\kappa]$ and every $t\in [0,\mathcal{T}], s\in \mathbb{R}$, where $\alpha(t) = e^{-\frac{\nu}{4}t}.$ \\
3. There is a constant $C$ (possibly larger) such that
\begin{align} \label{pre_eqLya02}
\mathbf{E}V^r(\Phi_{s,s+t}(w))\leq C V^{r\alpha(t)}(w),
\end{align}
for every $t\geq 0$ and $r\in [r_0, 2\kappa]$. Here $\alpha(t)= \alpha(\mathcal{T})^{k}\alpha(\beta)$ is an extension to $\mathbb{R}$ of the function $\alpha(t)$ in \eqref{pre_eqLya01},  where $k\in \mathbb{N}$ and $\beta\in [0,\mathcal{T})$ are unique numbers such that $t = k\mathcal{T}+\beta$.
\end{proposition}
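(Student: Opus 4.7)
The plan is to transcribe the autonomous Lyapunov estimates of Hairer--Mattingly \cite{HM08} with only cosmetic modifications, the key point being that the only time-dependent object in \eqref{NS} is $f(t,x)$, and $\|f\|_{\infty}<\infty$ because $f$ is continuous and $\mathcal{T}$-periodic; consequently every constant that appears below is independent of $s\in\mathbb{R}$. Part 1 is purely algebraic: $\|w\|V(w)\leq CV^{\kappa}(w)$ is equivalent to $\|w\|\leq C\exp((\kappa-1)\eta\|w\|^{2})$, and the right-hand side dominates any polynomial in $\|w\|$ for any $\kappa>1$, so it suffices to fix any such $\kappa$.

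For the pure moment bound \eqref{pre_eqLya02} on $[0,\mathcal{T}]$, I would apply It\^o's formula to $V^{r}(w(t))=\exp(r\eta\|w(t)\|^{2})$ and use the cancellation $\langle B(\mathcal{K}w,w),w\rangle=0$ to reduce the drift to
\[
V^{r}\cdot r\eta\Bigl[-2\nu\|w\|_{1}^{2}+2\langle w,f(t)\rangle+\mathcal{B}_{0}+2r\eta\,|G^{*}w|^{2}\Bigr].
\]
Applying the Poincar\'e inequality $\|w\|_{1}^{2}\geq\|w\|^{2}$, Young's inequality to $2\langle w,f(t)\rangle$ (bounded by $\nu\|w\|^{2}+\nu^{-1}\|f\|_{\infty}^{2}$), and $|G^{*}w|^{2}\leq\mathcal{B}_{0}\|w\|^{2}$, I would choose $\eta_{0}$ small enough that $\nu/2-2r\eta\mathcal{B}_{0}>0$ for every $r\in[r_{0},2\kappa]$. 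The drift is then bounded above by $V^{r}[C_{1}-c_{1}\|w\|^{2}]=V^{r}[C_{1}-c_{1}(r\eta)^{-1}\log V^{r}]$; taking expectation, applying Jensen's inequality to the convex map $x\mapsto x\log x$, and solving the resulting Gronwall inequality in $\log\mathbf{E}V^{r}$ yields \eqref{pre_eqLya02} on $[0,\mathcal{T}]$ with $\alpha(t)=e^{-\nu t/4}$. To incorporate the Jacobian $J_{s,t}\xi:=\nabla\Phi_{s,s+t}(w)\xi$ required in \eqref{pre_eqLya01}, I would use its linearised equation
\[
\partial_{t}J_{s,t}\xi+\nu(-\Delta)J_{s,t}\xi+B(\mathcal{K}w,J_{s,t}\xi)+B(\mathcal{K}J_{s,t}\xi,w)=0,\qquad J_{s,s}\xi=\xi,
\]
whose energy identity (again exploiting $\langle B(\mathcal{K}w,J_{s,t}\xi),J_{s,t}\xi\rangle=0$ and standard $L^{4}$ bounds on the remaining bilinear term) gives $\|J_{s,t}\xi\|^{2}\leq\|\xi\|^{2}\exp\bigl(C\int_{s}^{s+t}\|w(r)\|_{1}^{2}\,dr\bigr)$. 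The integrated enstrophy is controlled via the energy identity by boundary values of $\|w\|^{2}$ plus a martingale, and a Cauchy--Schwarz against the moment bound at the doubled exponent absorbs the stretched exponential; this is precisely why \eqref{pre_eqLya02} is needed all the way up to $r=2\kappa$.

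Part 3 then follows by iteration: the $\mathcal{T}$-periodicity $\mathcal{P}_{s,t}=\mathcal{P}_{s+\mathcal{T},t+\mathcal{T}}$ together with the Markov property lets me decompose $\Phi_{s,s+t}$ into $k=\lfloor t/\mathcal{T}\rfloor$ pieces of length $\mathcal{T}$ followed by a final piece of length $\beta=t-k\mathcal{T}\in[0,\mathcal{T})$; each application of the $[0,\mathcal{T}]$ bound contracts the exponent by the factor $\alpha(\mathcal{T})$, yielding the stated $\alpha(\mathcal{T})^{k}\alpha(\beta)$. The main obstacle is the simultaneous tuning of $\eta_{0}$: it must be small enough that (i) the quadratic-variation term $2r^{2}\eta^{2}|G^{*}w|^{2}$ is absorbed by the dissipation for every $r\in[r_{0},2\kappa]$, and (ii) the exponent produced by the Cauchy--Schwarz step against the Jacobian estimate still fits inside the admissible exponential-moment regime. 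Once this single tuning has been carried out, uniformity in $s\in\mathbb{R}$ requires no separate argument.
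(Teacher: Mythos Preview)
Your treatment of Part 1 and Part 3 matches the paper's, and your It\^o/Gronwall argument for the pure moment bound $\mathbf{E}V^{r}\leq CV^{r\alpha(t)}$ is a legitimate alternative to the semimartingale lemma the paper invokes. The gap is in the Jacobian-weighted bound \eqref{pre_eqLya01}. Your Cauchy--Schwarz step gives
\[
\mathbf{E}\bigl[V^{r}(w(t))\|J_{s,s+t}\xi\|\bigr]\leq \bigl(\mathbf{E}V^{2r}(w(t))\bigr)^{1/2}\bigl(\mathbf{E}\|J_{s,s+t}\xi\|^{2}\bigr)^{1/2},
\]
and while the first factor is $\leq CV^{r\alpha(t)}(w_{0})$, the second one is \emph{never} bounded uniformly in $w_{0}$: the pathwise Jacobian bound $\|J\|\leq C\exp(\eta'\int\|w\|_{1}^{2})$ together with the enstrophy estimate only yields $\mathbf{E}\|J\|^{2}\leq C\exp(c\eta'\|w_{0}\|^{2})$ for some $c>0$, however small $\eta'$ is taken. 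The product therefore carries an unavoidable extra factor $\exp(\epsilon\|w_{0}\|^{2})$, i.e.\ you obtain $CV^{r\alpha(t)+\epsilon/\eta}(w_{0})$ rather than $CV^{r\alpha(t)}(w_{0})$. Since $r\alpha(t)\to r$ as $t\to 0$, no fixed $\epsilon>0$ can be absorbed uniformly over $t\in[0,\mathcal{T}]$, and the argument does not close. (Incidentally, the range $r\leq 2\kappa$ is there for downstream applications such as Proposition~\ref{boundedbothsides}, not to create room for this Cauchy--Schwarz.)

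The paper avoids the decoupling altogether. It applies a semimartingale lemma (Lemma~\ref{lemmasemimartingale}, taken from \cite{HM08}) to $U(t)=\eta\|w_{s,t}\|^{2}$ and obtains the \emph{joint} estimate
\[
\mathbf{E}\exp\Bigl(\eta\|w_{s,s+t}\|^{2}+\tfrac{\nu}{4}e^{-\nu t/4}\,\eta\!\int_{s}^{s+t}\|w_{s,r}\|_{1}^{2}\,dr\Bigr)\leq C\exp\bigl(\eta e^{-\nu t/4}\|w_{0}\|^{2}\bigr).
\]
Because this controls $\|w(t)\|^{2}$ and $\int\|w\|_{1}^{2}$ inside a single exponential, one can choose the free parameter in the Jacobian bound so that $\|J\|\leq C\exp\bigl(\tfrac{\nu\eta}{4}e^{-\nu\mathcal{T}/4}\int\|w\|_{1}^{2}\bigr)$ and then the product $V^{r}(w(t))\|J\|$ is dominated \emph{pointwise} by the integrand above (with $\eta$ replaced by $r\eta$), giving \eqref{pre_eqLya01} directly with no Cauchy--Schwarz loss. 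If you want to stay closer to your It\^o approach, the fix is to apply It\^o not to $V^{r}$ alone but to a functional that already carries the running integral $\int\|w\|_{1}^{2}$ in the exponent, which is precisely what Lemma~\ref{lemmasemimartingale} packages.
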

We view the stochastic flow generated by equation \eqref{NS} as a family of systems over the hull $H(f): = \mathrm{cl}\{f(s+\cdot )|s\in\mathbb{R}\}$ of the time dependent force $f$, where the closure is taking with respect to the uniform convergence topology on $C_b(\mathbb{R}, H)$, the space of bounded continuous maps. For each fixed $u\in H(f)$, there is a stochastic flow $\Phi_{s, t, u}(w_0)$ obtained by solving the equation
\begin{align*}
dw(t, x) + B(\mathcal{K} w, w) (t, x)d t = \nu\mathrm{\mathrm{\Delta}} w(t, x)dt  + u(t, x)dt + GdW(t), \quad t>s, \quad w(s) = w_0,
\end{align*}
 which in turn induces a transition operator $\mathcal{P}_{s, t, u}\phi(w_0):=\mathbf{E}[\phi(\Phi_{s, t, u}(w_0))]$ indexed by $u$. Here we consider the case when $f$ is $\mathcal{T}$-periodic, so $H(f)$ can be identified with the circle $\mathbb{S}_{\mathcal{T}} = \mathbb{R}/\mathcal{T}\mathbb{Z} = [0,\mathcal{T})$ of length $\mathcal{T}$, where each $y\in \mathbb{S}_{\mathcal{T}}$ is identified with $f(y+\cdot)$. We denote by $\sigma$ the rotation over the circle $\mathbb{S}_{\mathcal{T}}$ induced from the Bebutov shift on the hull $H(f)$. It is defined  by $\sigma(t)y = t+y \bmod\mathcal{T}$ for $y\in \mathbb{S}_{\mathcal{T}}$ and $t\in \mathbb{R}$. Note that for $y=0$, the transition operator is $\mathcal{P}_{s, t}$ corresponding to $f$.
 \begin{theorem}\label{fixedpoint}
Assume $G_1<1/c_0$, $A_{\infty} = H$. Then there is a constant $\eta_0>0$, such that for every $\eta\in (0, \eta_0]$, there is a unique $\mathcal{T}$-periodic path $\{\mu_{t}\}_{t\in \mathbb{R}}\in C(\mathbb{R}, \mathcal{P}_1(H))$ that is invariant, i.e.,  $\mathcal{P}_{s,t}^{*}\mu_s = \mu_t$ and  there exist constant $C, \gamma>0$ such that
\begin{align}\label{fixedpointmixing}
\rho(\mathcal{P}_{s,t}^{*}\nu, \mu_t)\leq Ce^{-\gamma(t-s)}\rho(\nu, \mu_s),
\end{align}
for any $s\leq t$ and $\nu\in\mathcal{P}(H)$. Also $\int_H \exp\left(2\kappa\eta\|w\|^2\right)\mu_s(dw)\leq C$ for all $s\in\mathbb{R}$, where $\kappa\geq 1$  is a constant.
\end{theorem}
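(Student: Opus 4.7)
The strategy is a Banach fixed point argument on the time-$\mathcal{T}$ Poincar\'e operator $\Psi := \mathcal{P}_{0,\mathcal{T}}^{*}$, combined with the semigroup property and the $\mathcal{T}$-periodicity identity $\mathcal{P}_{s+\mathcal{T},t+\mathcal{T}} = \mathcal{P}_{s,t}$ to extend the resulting fixed point into the full periodic orbit $\{\mu_t\}_{t\in\mathbb{R}}$. To produce a complete, forward-invariant metric space on which Banach applies, I would first introduce the closed sublevel sets
\[
\mathcal{P}_R := \Big\{\mu\in\mathcal{P}_1(H) : \textstyle\int_H V^{2\kappa}(w)\,\mu(dw) \leq R\Big\},
\]
where $V(w)=e^{\eta\|w\|^2}$ and $\kappa$ come from Proposition \ref{pre_Lya}. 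Each $\mathcal{P}_R$ is closed in $(\mathcal{P}_1(H),\rho)$ by the lower semicontinuity of $\mu\mapsto\int V^{2\kappa}\,d\mu$ under weak convergence (and $\rho$-convergence implies weak convergence), hence complete. Applying the Lyapunov bound \eqref{pre_eqLya02} with $r=2\kappa$ and $t=\mathcal{T}$ gives $\mathbf{E}V^{2\kappa}(\Phi_{s,s+\mathcal{T}}(w))\leq CV^{2\kappa\alpha(\mathcal{T})}(w)$ with $\alpha(\mathcal{T})<1$, which, after splitting the state space above and below a threshold of $\|w\|$, yields a drift inequality $\int V^{2\kappa}\,d(\Psi\mu)\leq K+\delta\int V^{2\kappa}\,d\mu$ for some $\delta<1$; hence for $R_0=K/(1-\delta)$ one has $\Psi(\mathcal{P}_R)\subset\mathcal{P}_R$ for every $R\geq R_0$, uniformly in the starting time thanks to the uniform-in-$s$ form of Proposition \ref{pre_Lya}.

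Using $\mathcal{P}_{s+\mathcal{T},t+\mathcal{T}}=\mathcal{P}_{s,t}$ and the Chapman--Kolmogorov identity one checks $\Psi^n = \mathcal{P}_{0,n\mathcal{T}}^{*}$, so Theorem \ref{contractiontransition} gives $\rho(\Psi^n\mu_1,\Psi^n\mu_2)\leq Ce^{-\gamma n\mathcal{T}}\rho(\mu_1,\mu_2)$; choosing $n$ large enough that $Ce^{-\gamma n\mathcal{T}}<1$ makes $\Psi^n$ a strict contraction on the complete metric space $\mathcal{P}_{R_0}$. Banach's theorem produces a unique fixed point $\mu_0\in\mathcal{P}_{R_0}$ of $\Psi^n$, and since $\Psi\mu_0\in\mathcal{P}_{R_0}$ is also a fixed point of $\Psi^n$, uniqueness forces $\Psi\mu_0=\mu_0$. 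Setting $\mu_t := \mathcal{P}_{0,t}^{*}\mu_0$ for $t\in[0,\mathcal{T})$ and extending $\mathcal{T}$-periodically to all of $\mathbb{R}$ yields $\mu_{t+\mathcal{T}}=\mu_t$ and $\mathcal{P}_{s,t}^{*}\mu_s=\mu_t$ for every $s\leq t$. Continuity of $t\mapsto\mu_t$ in $\rho$ follows from the Feller-continuity of the flow in $t$ combined with the uniform exponential moment $\int V^{2\kappa}\,d\mu_s\leq C$, itself inherited from $\mu_0=\lim_n\Psi^n\delta_0$ and the estimate $\int V^{2\kappa}\,d(\Psi^n\delta_0)=\mathbf{E}V^{2\kappa}(\Phi_{0,n\mathcal{T}}(0))\leq CV^{2\kappa\alpha(n\mathcal{T})}(0)\to C$.

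The mixing bound \eqref{fixedpointmixing} is then immediate from Theorem \ref{contractiontransition}, since
\[
\rho(\mathcal{P}_{s,t}^{*}\nu,\mu_t) \;=\; \rho(\mathcal{P}_{s,t}^{*}\nu,\mathcal{P}_{s,t}^{*}\mu_s) \;\leq\; Ce^{-\gamma(t-s)}\rho(\nu,\mu_s),
\]
and uniqueness of the periodic path in $C(\mathbb{R},\mathcal{P}_1(H))$ follows by applying this bound to any other candidate $\{\tilde\mu_t\}$: the $\mathcal{T}$-periodicity of both sides forces $\rho(\tilde\mu_t,\mu_t)\leq Ce^{-\gamma n\mathcal{T}}\rho(\tilde\mu_t,\mu_t)$ for every $n$, hence $\tilde\mu_t=\mu_t$. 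The main technical obstacle I anticipate is the coupling between contraction and moment control emphasized in the introduction: Theorem \ref{contractiontransition} supplies contraction on all of $\mathcal{P}(H)$ under $\rho$, but Banach requires both completeness and forward invariance, and $\mathcal{P}_1(H)$ is only forward invariant once a suitably large exponential moment is present. For fixed $R$ the pullback semigroup on $C(\mathbb{S}_{\mathcal{T}},\mathcal{P}_R)$ is only contracting for sufficiently large $t$, so one is forced either to iterate $\Psi$ as above, or to argue simultaneously on the nested family $\{\mathcal{P}_R\}_{R\geq R_0}$, verifying that the fixed points obtained at different $R$ coincide so that a single periodic orbit is selected and carries the exponential moment uniformly in $s$.
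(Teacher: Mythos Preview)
Your approach is correct and in fact more direct than the paper's. Both arguments rest on the same three ingredients---the uniform-in-$s$ contraction of Theorem~\ref{contractiontransition}, the Lyapunov structure of Proposition~\ref{pre_Lya}, and the closedness of the sublevel sets $\mathcal{P}_R$---but they are assembled differently. You work with the single Poincar\'e operator $\Psi=\mathcal{P}_{0,\mathcal{T}}^{*}$ on $\mathcal{P}_{R_0}$, obtain its fixed point $\mu_0$, and then \emph{propagate} it forward via $\mu_t:=\mathcal{P}_{0,t}^{*}\mu_0$; the paper instead lifts the problem to the space of continuous sections $C(\mathbb{S}_{\mathcal{T}},\mathcal{P}_R)$ over the hull $H(f)\cong\mathbb{S}_{\mathcal{T}}$, builds a pullback semigroup $S^t$ on it via the cocycle identity $\mathcal{P}_{\tau+h,t+h,y}=\mathcal{P}_{\tau,t,\sigma(h)y}$, and finds the whole periodic orbit at once as a fixed point $\Gamma\in C(\mathbb{S}_{\mathcal{T}},\mathcal{P}_R)$. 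The paper's route has the advantage that continuity of $t\mapsto\mu_t$ comes for free (the fixed point already lives in a space of continuous sections), and it is written in a way that would generalize beyond the periodic setting to almost-periodic or recurrent forcing; your route is leaner in the periodic case but requires you to verify continuity separately. Your one-line justification (``Feller-continuity of the flow in $t$ combined with the uniform exponential moment'') is correct in spirit but would need a short argument of the type the paper gives for continuity in the hull parameter (inequality~\eqref{continuousiny}): for $\nu$ with $\int V^{2\kappa}\,d\nu<\infty$ one bounds $\rho(\mathcal{P}_{t,t+h}^{*}\nu,\nu)$ by $\int_H\mathbf{E}\rho(w_{t,t+h}(w),w)\,\nu(dw)$ and controls the integrand via~\eqref{continuous02}, stochastic continuity of $w_{t,t+h}$, and the moment bound. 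A second minor difference is your derivation of forward invariance: you pass from $\mathbf{E}V^{2\kappa}(\Phi_{s,s+\mathcal{T}}(w))\leq CV^{2\kappa\alpha(\mathcal{T})}(w)$ to a linear drift inequality $\int V^{2\kappa}\,d(\Psi\mu)\leq K+\delta\int V^{2\kappa}\,d\mu$, whereas the paper uses Jensen to get $\int V^{2\kappa}\,d(\mathcal{P}_{s,s+t}^{*}\mu)\leq C\big(\int V^{2\kappa}\,d\mu\big)^{\alpha(t_0)}$ and solves $R=CR^{\alpha(t_0)}$; both lead to an invariant $\mathcal{P}_{R_0}$.
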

\begin{proof}
We first show that the transition operator satisfies the following  translation identity, which is a conclusion by the uniqueness of solution:
\begin{align}\label{thm3.1translationid}
\mathcal{P}_{\tau+h, t+h, y} = \mathcal{P}_{\tau, t, \sigma(h) y}, \quad \forall \tau\leq t, h\in\mathbb{R}, y\in\mathbb{S}_{\mathcal{T}}.
\end{align}
Noting $Y_t: = \Phi_{t+h, \tau+h, y}(\omega, w_0)$ solves the equation
\begin{align*}
Y_t &= w_0 + \int_{\tau+h}^{t+h}\left(\nu\Delta Y_{r-h} - B(\mathcal{K}Y_{r-h}, Y_{r-h})\right)dr + \int_{\tau+h}^{t+h}f(y+r)dr + G(W(t+h, \omega) - W(\tau+h, \omega))\\
&= w_0 + \int_{\tau}^{t}\left(\nu\Delta Y_{r} - B(\mathcal{K}Y_{r}, Y_{r})\right)dr + \int_{\tau}^{t}f(y+r+h)dr + G(W(t, \theta_h\omega) - W(\tau, \theta_h\omega)).
\end{align*}
And $w_t = \Phi_{t, \tau,  \sigma(h)y}(\omega, w_0) $ solves the equation
\begin{align*}
w_t = w_0 + \int_{\tau}^t \left(\nu\Delta w_r - B(\mathcal{K}w_r, w_r)\right)dr +\int_{\tau}^t f(y+r+h)dr + G(W(t, \omega) - W(\tau, \omega)).
\end{align*}
It then follows from the uniqueness of solution that
\begin{align}\label{translationidentity}
\Phi_{t+h, \tau+h, y}(\omega, w_0) = \Phi_{t, \tau,  \sigma(h)y}(\theta_{h}\omega,w_0).
\end{align}
The fact that $\mathbf{P}$ is invariant under the Wiener shift $\theta_t$ implies that
\[\mathcal{P}_{\tau+h, t+h, y} = \mathcal{P}_{\tau, t, \sigma(h) y},\]
which is the desired translation identity.

\vskip0.05in

Let $R>0$ and $g(w) = V^{2\kappa}(w) = \exp\left({2\kappa\eta}\|w\|^2\right)$, where the Lyapunov function $V(w) = e^{\eta\|w\|^2}$ and $\kappa, \eta$ are from Proposition \ref{pre_Lya}. To apply the fixed point theorem, we first consider a family of  closed subsets $\mathcal{P}_R$ of  $\mathcal{P}_1(H)$ indexed by $R>0$, which is defined by
\[\mathcal{P}_R: =\{\mu\in\mathcal{P}(H): \int_{H}g(w)\mu(dw)\leq R\}, \quad R>0.\]
For each fixed $R$, $\mathcal{P}_R$ is indeed a closed subset of the complete space $\mathcal{P}_1(H)$ defined as in \eqref{P1H}. For any $\mu\in\mathcal{P}_R$, one has
\[\rho(\mu, \delta_0) = \int_H \rho(w, 0)\mu(dw)\leq \int_H \|w\|e^{\eta\|w\|^2}\mu(dw)\leq C \int_Hg(w)\mu(dw)<\infty,\]
so that $\mu\in\mathcal{P}_1(H)$. Let $\mu_n$ be a Cauchy sequence in $\mathcal{P}_R$ under the metric $\rho$. Then there is a unique $\mu\in\mathcal{P}_1(H)$ such that $\rho(\mu_n, \mu)\rightarrow 0 $ as $n\rightarrow\infty$. So $\mu_n$ converges to $\mu$ weakly. For $N>0$,  let $g_N(w) = \min\{g(w), N\}$, then $g_N \in C_b(H)$, and
\[\int_H g_N(w)\mu(dw) = \lim_{n\rightarrow\infty}\int_H g_N(w)\mu_n(dw)\leq \lim_{n\rightarrow\infty}\int_H g(w)\mu_n(dw)\leq R.\]
Hence by the monotone convergence theorem, one has $\int_H g(w)\mu(dw) \leq R$.  Therefore $\mu\in \mathcal{P}_R$, which shows that $\mathcal{P}_R$ is closed.

\vskip0.05in

Next we show that for any $t_0>0$, there is some $R_{t_0}>0$ such that for any $t\geq t_0$, $\mathcal{P}_{s, s+t}^*$ maps $\mathcal{P}_{R_{t_0}}$ to itself.  It follows from Proposition \ref{pre_Lya} that  for $\mu\in \mathcal{P}_R$,
\begin{align*}
\int_H g_N(w)\mathcal{P}_{s, s+t}^*\mu(dw)&\leq \int_H\mathcal{P}_{s, s+t} g(w)\mu(dw)\\
 & = \int_H \mathbf{E} g(\mathrm{\Phi}_{s, s+t}(w))\mu(dw)\leq C \int_H g^{\alpha(t)}(w)\mu(dw)\\
&\leq C \left( \int_H g(w)\mu(dw)\right)^{\alpha(t)}\leq CR^{\alpha(t_0)},
\end{align*}
where we used Jensen's inequality in the penultimate step. Hence
\begin{align}\label{nonuniformabsorbing}
\int_H g(w)\mathcal{P}_{s, s+t}^*\mu(dw) \leq CR^{\alpha(t_0)}
\end{align}
 again by monotone convergence theorem. In words, for any $R>0$, and $t\geq t_0$, the transition operator $\mathcal{P}_{s, s+t}^*$ maps  $\mathcal{P}_{R}$ to $\mathcal{P}_{CR^{\alpha(t_0)}}$.  If we choose $R = R_{t_0} := C^{\frac{1}{1-\alpha(t_0)}}$, then $CR_{t_0}^{\alpha(t_0)} = R_{t_0}$, therefore $\mathcal{P}_{s, s+t}^*$ maps $\mathcal{P}_{R_{t_0}}$ to itself.

\vskip0.05in

Now we show that for fixed $s\in \mathbb{R}, t>0$ and $\mu$ satisfying $\int_H g(w)\mu(dw)<\infty$, $\mathcal{P}_{s, s+t, y}^*\mu$ is continuous in $y$. Let $f_1, f_2\in H(f)$ and $w_{s, s+t}^{f_i}(w)$ be the corresponding unique solution of equation \eqref{NS} with $f$ replaced by $f_i$, then by the bound \eqref{continuousonhull}, there is constant $\eta_0>0$ so that for $\eta\in (0, \eta_0]$, there are constants $C, \gamma>0$ such that
\begin{align}\label{continuous01}
\mathbf{E}\|w_{s, s+t}^{f_1}(w)-w_{s, s+t}^{f_2}(w)\|^2\leq C e^{\gamma t}e^{\eta\|w\|^2}\|f_1-f_2\|_{\infty}^2.
\end{align}
From the definition of $\rho$ as in \eqref{rho}, one has
\begin{align}\label{continuous02}
\r(w_1, w_2)\leq \|w_1-w_2\|\left(e^{\eta\|w_1\|^2}+e^{\eta\|w_2\|^2} \right), \quad \forall w_1,w_2\in H.
\end{align}
It is known \cite{Ch04,Vil08} that for any $\mu_1, \mu_2\in \mathcal{P}(H)$,
\begin{align}\label{continuous03}
\rho(\mu_1, \mu_2) = \inf\mathbf{E}\rho(X_1, X_2),
\end{align}
where the infimum is taken over all couplings $(X_1, X_2)$ for $(\mu_1, \mu_2)$.
Combining \eqref{continuous01}-\eqref{continuous03} with bounds \eqref{eq: est1}, it follows that
\begin{align*}
&\rho(\mathcal{P}_{s, s+t, f_1}^*\delta_{w},\mathcal{P}_{s, s+t, f_2}^*\delta_{w} )\leq \mathbf{E}\rho(w_{s, s+t}^{f_1}(w), w_{s, s+t}^{f_2}(w))\\
&\leq \left(\mathbf{E}\|w_{s, s+t}^{f_1}(w)-w_{s, s+t}^{f_2}(w)\|^2\right)^{\frac12}\left(2\mathbf{E}\left[\exp({2\eta w_{s, s+t}^{f_1}(w)})+\exp({2\eta w_{s, s+t}^{f_2}(w)})\right]\right)^{\frac12}\\
&\leq C e^{\gamma t}g(w)\|f_1-f_2\|_{\infty}.
\end{align*}
Therefore by the Markov property,
\begin{align}\label{continuousiny}
\rho(\mathcal{P}_{s, s+t, f_1}^*\mu,\mathcal{P}_{s, s+t, f_2}^*\mu)\leq Ce^{\gamma t}\int_Hg(w)\mu(dw)\|f_1-f_2\|_{\infty}.
\end{align}
Since $\int_H g(w)\mu(dw)<\infty$, we see that $\mathcal{P}_{s, s+t, y}^*\mu$ is continuous in $y$.
However, it is not clear if $\mathcal{P}_{s, s+t, y}^*\mu$ is continuous in $y$ for any fixed $\mu\in \mathcal{P}_{1}(H)$.
In addition, in view of \eqref{nonuniformabsorbing}, we know that for fixed $R>0$, the transition operator $\mathcal{P}_{s, s+t}^*$ is not necessarily contracting on $\mathcal{P}_{R}$ for all $t>0$.  Hence we choose to work with the family of closed subsets $\left\{\mathcal{P}_{R}\right\}_{R>0}$.

\vskip0.05in

Observe that by Theorem \ref{contractiontransition}, for any $s\in\mathbb{R}$ and $t\geq 0$, $\mathcal{P}_{s, s+t}^*$ maps $\mathcal{P}_1(H)$  to itself. So by the translation identity \eqref{thm3.1translationid}, $\mathcal{P}_{s, s+t, y}^*$ maps $\mathcal{P}_1(H)$ to itself since for any $y\in \mathbb{S}_{\mathcal{T}}$ there is some $s_y\in\mathbb{R}$ such that $y = \sigma(s_y)0$. Denote for convenience
\[\varphi:  \mathbb{R}_{+}\times\mathcal{P}_1(H)\times \mathbb{S}_{\mathcal{T}}\rightarrow \mathcal{P}_1(H), \text{ by }\varphi(t, \mu, y) = \mathcal{P}_{0, t, y}^{*}\mu.\]
By the contraction property in Theorem \ref{contractiontransition} and translation identity \eqref{thm3.1translationid},  we have that for any $R>0$ and $\mu\in \mathcal{P}_{R}$,  $\varphi$ is continuous in $\mu$, uniformly with respect to $y$. And by inequality \eqref{continuousiny},  it is continuous in $y$ uniformly for $\mu$. Hence $\varphi$ is joint continuous in $(\mu, y)\in\mathcal{P}_{R}\times\mathbb{S}_{\mathcal{T}}$. Moreover, $\varphi$ has the cocycle property over the base dynamical system $(\mathbb{S}_{\mathcal{T}}, \mathbb{R}, \sigma)$ since for all $\tau, t\geq 0 $ and $y\in \mathbb{S}_{\mathcal{T}}$, $\mu\in \mathcal{P}_1(H)$,
\[\mathcal{P}_{0, t+\tau, y}^{*}\mu= \mathcal{P}_{t, t+\tau, y}^{*}\mathcal{P}_{0, t, y}^{*}\mu = \mathcal{P}_{0, \tau,\sigma(t)y}^{*}\mathcal{P}_{0, t, y}^{*}\mu.\]
Inspired by  \cite{CS08},  we consider the set of all continuous sections $C(\mathbb{S}_{\mathcal{T}}, \mathcal{P}_1(H))$, on which we define the metric
\[p(\gamma_1, \gamma_2) := \max_{y\in{\mathbb{S}_{\mathcal{T}}}}\rho(\gamma_1(y), \gamma_2(y)),\]
for $\gamma_1,\gamma_2 \in C(\mathbb{S}_{\mathcal{T}}, \mathcal{P}_1(H))$, which is complete under this metric since $(\mathcal{P}_1(H),\rho)$ is complete.

\vskip0.05in

 Fix $t_0\in(0,1)$. For $t\geq t_0$ and $\gamma\in C(\mathbb{S}_{\mathcal{T}}, \mathcal{P}_{R_{t_0}})$, define $S^t : C(\mathbb{S}_{\mathcal{T}}, \mathcal{P}_{R_{t_0}}) \rightarrow C(\mathbb{S}_{\mathcal{T}}, \mathcal{P}_{R_{t_0}}) $ by
\[S^t(\gamma)(y) = \varphi(t, \gamma(\sigma(-t)y), \sigma(-t)y)).\]
Since $\varphi$  maps $\mathcal{P}_{R_{t_0}}$ to itself,  $S^t$ is well defined for $t\geq t_0$.
The cocycle property of $\varphi$ in turn implies that $S^{t_1}S^{t_2} = S^{t_1+t_2}$ for $t_1, t_2\geq t_0$. Indeed, if we denote $\gamma'=S^{t_2}(\gamma)$, then one has
\begin{align*}
S^{t_1}S^{t_2}\gamma(y) &= \varphi(t_1, \gamma'(\sigma(-t_1)y),\sigma(-t_1)y)\\
& = \varphi(t_1, \varphi(t_2, \gamma(\sigma(-t_1-t_2)y), \sigma(-t_1-t_2)y)),\sigma(t_2)\sigma(-t_1-t_2)y)\\
&= \varphi(t_1+t_2, \gamma(\sigma(-t_1-t_2)y), \sigma(-t_1-t_2)y) = S^{t_1+t_2}\gamma(y),
\end{align*}
where in the penultimate step we used the cocycle property.

\vskip0.05in

Now note that by Theorem \ref{contractiontransition} and translation identity \eqref{thm3.1translationid}, one has
\begin{align*}
p(S^{t}\gamma_1,S^{t}\gamma_2) &= \max_{y\in \mathbb{S}_{\mathcal{T}}}\rho(\varphi(t, \gamma_1(\sigma(-t)y),\sigma(-t)y),\varphi(t, \gamma_2(\sigma(-t)y),\sigma(-t)y))\\
&= \max_{y\in \mathbb{S}_{\mathcal{T}}}\rho(\mathcal{P}_{0,t,y}^{*}\gamma_1(\sigma(-t)y), \mathcal{P}_{0,t,y}^{*}\gamma_2(\sigma(-t)y))\\
&\leq Ce^{-\gamma t}\max_{y\in \mathbb{S}_{\mathcal{T}}}\rho(\gamma_1(\sigma(-t)y),\gamma_2(\sigma(-t)y)) = Ce^{-\gamma t}p(\gamma_1,\gamma_2).
\end{align*}
Therefore for large $T>t_0$, $p(S^{T}\gamma_1,S^{T}\gamma_2)\leq c  p(\gamma_1,\gamma_2)$ for some $c\in(0,1)$. Fix such a $T$, then $S^T$ is a contraction over the complete metric space $C(\mathbb{S}_{\mathcal{T}}, \mathcal{P}_{R_{t_0}})$, so there is a unique fixed point  $\Gamma_{t_0}\in C(\mathbb{S}_{\mathcal{T}}, \mathcal{P}_{R_{t_0}})$ of $S^T$.  Noting for any $t\geq t_0$, $S^t$ maps $C(\mathbb{S}_{\mathcal{T}}, \mathcal{P}_{R_{t_0}})$ to itself, hence
\[S^T(S^t\Gamma_{t_0}) = S^t(S^T\Gamma_{t_0}) = S^t(\Gamma_{t_0})\]
implies that $S^t(\Gamma_{t_0}) = \Gamma_{t_0}$ by uniqueness of the fixed point, which shows that $\Gamma_{t_0}$ is a fixed point of $S^t$ for $t\geq t_0$.  For $0<t_1\leq t_0$, one has $R_{t_1}\geq R_{t_0}$, so $\mathcal{P}_{R_{t_0}}\subset \mathcal{P}_{R_{t_1}}$. And for the same $T>0$, $S^T$ is a contraction on $C(\mathbb{S}_{\mathcal{T}}, \mathcal{P}_{R_{t_1}})$, which has a unique fixed point $\Gamma_{t_1}$. By uniqueness, $\Gamma_{t_1} = \Gamma_{t_0}$, hence $\Gamma_{t_0}$ is also a fixed point of $S^t$ for $t\geq t_1$. Since $t_1$ is arbitrary, we see that $\Gamma: = \Gamma_{t_0}$ is a fixed point of $S^t$ for $t\geq 0$, that is, $\varphi(t, \Gamma(\sigma(-t)y),\sigma(-t)y) = \Gamma(y)$ for all $y\in Y$. Replacing $y$ with $\sigma(t)y$ we have $\varphi(t, \Gamma(y),y) = \Gamma(\sigma(t)y)$ which by definition is
\[\mathcal{P}_{0, t, y}^{*}\Gamma(y)=\Gamma(\sigma(t)y).\]
Replacing $y$ by $\sigma(s)0$, and using the translation identity \eqref{translationidentity}, we have $\mathcal{P}_{s, s+t}^{*}\Gamma(\sigma(s)0) = \Gamma(\sigma(s+t)0)$ for $s\in\mathbb{R}$ and $t\geq 0$. Hence $\mu_s:=\Gamma(\sigma(s)0) $ is a periodic invariant measure for the transition operator $\mathcal{P}_{s,t}$.
If there is another periodic invariant measure $\widetilde{\mu}_s$, then by Theorem \ref{contractiontransition}, we have that for any $s\in\mathbb{R}, n\in\mathbb{N}$,
\begin{align*}
\r(\mu_s, \widetilde{\mu}_s) = \r(\mu_{s+n\mathcal{T}}, \widetilde{\mu}_{s+n\mathcal{T}}) =\r(\mathcal{P}_{s, s+n\mathcal{T}}^{*}\mu_s, \mathcal{P}_{s, s+n\mathcal{T}}^{*}\widetilde{\mu}_s)\leq Ce^{-\gamma n\mathcal{T}}\r(\mu_s, \widetilde{\mu}_s).
\end{align*}
Letting $n\rightarrow\infty$, we see that $\r(\mu_s, \widetilde{\mu}_s) = 0$ and hence $\mu_s = \widetilde{\mu}_s$ for all $s\in\mathbb{R}$. So the periodic invariant measure is unique.
The exponentially contraction \eqref{fixedpointmixing} under the Wasserstein metric $\rho$ then follows from Theorem \ref{contractiontransition} and the invariance of $\mu_s$.
\end{proof}


The following result is equivalent to  Theorem \ref{fixedpoint} but represented by the action of the transition operator on Lipschitz observables. From the unique periodic invariant measure, one can define a family of norms $\|\cdot\|_{\rho,s}$ by
\begin{align}\label{normr1}
\|\phi\|_{\rho, s} = \mathrm{Lip}_{\rho}(\phi)+\left|\mu_s(\phi)\right|, \quad s\in \mathbb{R},
\end{align}
where  $\mathrm{Lip}_{\rho}(\phi) = \sup_{u\neq v}\frac{|\phi(u)-\phi(v)|}{\rho(u, v)}$ is the Lipschitz constant and  $\mu(\phi) = \int_{H}\phi(u)\mu(du)$ for $\mu\in\mathcal{P}(H)$ and $\phi\in\mathrm{Lip}_{\rho}(H)$. Unlike in \cite{HM08}, the norm here depends on the initial time to adapt to the time inhomogeneity.
\begin{corollary}\label{Lipmixing}
Assume $G_1<1/c_0$, $A_{\infty} = H$,  there is a constant $\eta_0>0$, such that for every $\eta\in (0, \eta_0]$. Then there exist constants $C, \gamma>0$ such that
\[\|\mathcal{P}_{s, t}\phi-\mu_t(\phi)\|_{\rho, s}\leq Ce^{-\gamma(t-s)}\|\phi-\mu_s(\phi)\|_{\rho, s},\]
for every Fréchet differentiable $\phi: H\rightarrow \mathbb{R}$ and $s\leq t$.
\end{corollary}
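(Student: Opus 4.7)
The plan is to deduce Corollary \ref{Lipmixing} directly from Theorem \ref{contractiontransition} via the Kantorovich--Rubinstein duality between the Wasserstein metric $\rho$ and Lipschitz observables. The key observation is that the norm $\|\cdot\|_{\rho,s}$ splits into a Lipschitz seminorm plus a linear functional $|\mu_s(\cdot)|$, and both pieces behave nicely under $\mathcal{P}_{s,t}$: the linear part is killed by invariance, while the Lipschitz part contracts by the Wasserstein estimate applied to Dirac masses.

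First I would observe that by the invariance $\mathcal{P}_{s,t}^{*}\mu_s = \mu_t$ from Theorem \ref{fixedpoint}, one has $\mu_s(\mathcal{P}_{s,t}\phi) = \mu_t(\phi)$, so
\[
\mu_s\bigl(\mathcal{P}_{s,t}\phi - \mu_t(\phi)\bigr) = 0.
\]
Since $\mathrm{Lip}_\rho$ is invariant under adding constants, this reduces the left side of the claimed inequality to $\mathrm{Lip}_\rho(\mathcal{P}_{s,t}\phi)$. Similarly, $\|\phi - \mu_s(\phi)\|_{\rho,s} = \mathrm{Lip}_\rho(\phi)$. Therefore the Corollary amounts to showing
\[
\mathrm{Lip}_\rho(\mathcal{P}_{s,t}\phi) \leq C e^{-\gamma(t-s)} \mathrm{Lip}_\rho(\phi),
\]
with the same constants as in Theorem \ref{contractiontransition}.

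Second, to establish this Lipschitz contraction, fix $u, v \in H$ and any coupling $\pi \in \mathcal{C}(\mathcal{P}_{s,t}^{*}\delta_u, \mathcal{P}_{s,t}^{*}\delta_v)$. Since $\phi$ is Lipschitz with respect to $\rho$,
\[
\bigl|\mathcal{P}_{s,t}\phi(u) - \mathcal{P}_{s,t}\phi(v)\bigr|
= \left|\int_{H\times H} \bigl(\phi(x) - \phi(y)\bigr)\,\pi(dx\,dy)\right|
\leq \mathrm{Lip}_\rho(\phi)\int_{H\times H}\rho(x,y)\,\pi(dx\,dy).
\]
Taking the infimum over all such couplings yields
\[
\bigl|\mathcal{P}_{s,t}\phi(u) - \mathcal{P}_{s,t}\phi(v)\bigr|
\leq \mathrm{Lip}_\rho(\phi)\,\rho(\mathcal{P}_{s,t}^{*}\delta_u, \mathcal{P}_{s,t}^{*}\delta_v).
\]
Applying Theorem \ref{contractiontransition} with $\mu_1 = \delta_u$ and $\mu_2 = \delta_v$ gives $\rho(\mathcal{P}_{s,t}^{*}\delta_u, \mathcal{P}_{s,t}^{*}\delta_v) \leq C e^{-\gamma(t-s)}\rho(u,v)$, since $\rho(\delta_u,\delta_v) = \rho(u,v)$ from the definition of the Wasserstein distance. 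Dividing by $\rho(u,v)$ and taking the supremum over $u \neq v$ produces the desired bound on $\mathrm{Lip}_\rho(\mathcal{P}_{s,t}\phi)$.

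There is no substantive obstacle here beyond a careful check that the $s$-dependent norm $\|\cdot\|_{\rho,s}$ interacts with the invariance relation in exactly the right way; the inclusion of $\mu_s(\phi)$ rather than, say, $\mu_0(\phi)$ is precisely what makes the linear piece vanish and produces a clean contraction with constants independent of $s$. Once the duality between Lipschitz observables and couplings is invoked, the corollary is just a transcription of the Wasserstein contraction on Dirac masses, so the whole argument comes down to two short calculations and no new estimates beyond Theorems \ref{contractiontransition} and \ref{fixedpoint}.
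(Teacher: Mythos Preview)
Your proof is correct and follows essentially the same approach as the paper: reduce both sides of the inequality to the Lipschitz seminorm via invariance $\mu_s(\mathcal{P}_{s,t}\phi)=\mu_t(\phi)$, then bound $|\mathcal{P}_{s,t}\phi(u)-\mathcal{P}_{s,t}\phi(v)|$ by $\mathrm{Lip}_\rho(\phi)\,\rho(\mathcal{P}_{s,t}^*\delta_u,\mathcal{P}_{s,t}^*\delta_v)$ and apply Theorem~\ref{contractiontransition}. The only cosmetic difference is that the paper invokes the supremum side of the Monge--Kantorovich duality while you use the coupling (infimum) side, but these are equivalent.
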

\begin{proof}
By the Monge-Kantorovich daulity \cite{Ch04,Vil08} and the contraction on $\mathcal{P}(H)$ from Theorem \ref{contractiontransition}, we have
\begin{align*}
|\mathcal{P}_{s, t}\phi(u)-\mathcal{P}_{s, t}\phi(v)|&\leq \mathrm{Lip}_{\rho}(\phi)\sup_{\mathrm{Lip}_{\rho}(\varphi)\leq 1}\left|\int_{H}\varphi(z)\mathcal{P}_{s, t}^{*}\delta_{u}(dz)-\int_{H}\varphi(z)\mathcal{P}_{s, t}^{*}\delta_{v}(dz)\right|\\
& = \mathrm{Lip}_{\rho}(\phi) \rho(\mathcal{P}_{s, t}^{*}\delta_{u}, \mathcal{P}_{s, t}^{*}\delta_{v})\leq \mathrm{Lip}_{\rho}(\phi) Ce^{-\gamma(t-s)}\rho(u,v),
\end{align*}
for any $u, v\in H$. By the invariance of the periodic invariant measure from Theorem \ref{fixedpoint}, $\int_{H}(\mathcal{P}_{s, t}\phi-\mu_t(\phi))\mu_s(du) = 0$.  Therefore
\[\|\mathcal{P}_{s, t}\phi-\mu_t(\phi)\|_{\rho, s}=\mathrm{Lip}_{\rho}(\mathcal{P}_{s, t}\phi)\leq \mathrm{Lip}_{\rho}(\phi) Ce^{-\gamma(t-s)} =Ce^{-\gamma(t-s)}\|\phi-\mu_s(\phi)\|_{\rho, s}. \]
\end{proof}
In the following we will prove the convergence \eqref{mixing-observables2}, i.e., the following
\begin{theorem}\label{mixingob}
Assume $G_1<1/c_0$, $A_{\infty} = H$. Then  there is a constant $\eta_0>0$, such that for every $\eta\in (0, \eta_0]$, there exist constants $C, \gamma>0$ independent of initial time $s$, such that for every $\phi\in C_{\eta}^{1}$ as in \eqref{coneeta},
\begin{align}
\|\mathcal{P}_{s, s+t}\phi - \int_{H}\phi(w)\mu_{s+t}(dw)\|_{\eta}\leq C e^{-\gamma t}\|\phi\|_{\eta},
\end{align}
for any $s\in\mathbb{R}, t\geq 0$.
\end{theorem}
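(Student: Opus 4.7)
The plan is to use Corollary \ref{Lipmixing} to obtain Lipschitz contraction under $\rho$, and then upgrade this to the $\|\cdot\|_\eta$ norm by combining the gradient inequality of subsection \ref{gradientinequality} with the Lyapunov moment bound $\int e^{2\kappa\eta\|w\|^2}\mu_s(dw)\leq C$ from Theorem \ref{fixedpoint}. This is the time-inhomogeneous analogue of the quasi-equivalence argument in \cite{HM08}.

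\textbf{Step 1 (From $\|\cdot\|_\eta$ to $\|\cdot\|_{\rho, s}$).} For $\phi \in C^1_\eta$ and any differentiable path $\gamma$ joining $w_1, w_2$,
\[
|\phi(w_1) - \phi(w_2)| \leq \int_0^1 \|\nabla\phi(\gamma(t))\|\,\|\dot\gamma(t)\|\,dt \leq \|\phi\|_\eta \int_0^1 e^{\eta\|\gamma(t)\|^2}\|\dot\gamma(t)\|\,dt,
\]
so $\mathrm{Lip}_\rho(\phi)\leq\|\phi\|_\eta$, while $|\mu_s(\phi)| \leq \|\phi\|_\eta \int e^{\eta\|w\|^2}\mu_s(dw) \leq C\|\phi\|_\eta$ by the moment estimate from Theorem \ref{fixedpoint}. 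Hence $\|\phi\|_{\rho, s} \leq C\|\phi\|_\eta$ uniformly in $s$. Setting $\Psi_{s, t} := \mathcal{P}_{s, s+t}\phi - \mu_{s+t}(\phi)$, Corollary \ref{Lipmixing} yields $\mathrm{Lip}_\rho(\Psi_{s, t}) \leq Ce^{-\gamma t}\|\phi\|_\eta$.

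\textbf{Step 2 (Sup bound of $\Psi_{s, t}$).} The invariance identity $\mathcal{P}_{s, s+t}^* \mu_s = \mu_{s+t}$ forces $\mu_s(\Psi_{s, t}) = 0$, so for every $w \in H$,
\[
|\Psi_{s, t}(w)| = \left|\int\bigl(\Psi_{s, t}(w)-\Psi_{s, t}(v)\bigr)\mu_s(dv)\right|\leq \mathrm{Lip}_\rho(\Psi_{s, t}) \int \rho(w, v)\, \mu_s(dv).
\]
Using the elementary pointwise comparison $\rho(w, v) \leq \|w-v\|\bigl(e^{\eta\|w\|^2} + e^{\eta\|v\|^2}\bigr)$ together with the moment bound on $\mu_s$, and absorbing the linear factor $\|w\|$ into $e^{\eta\|w\|^2}$ by a slight shrinking of $\eta$, the integral is bounded by $C e^{\eta\|w\|^2}$, giving $|\Psi_{s, t}(w)| \leq C e^{-\gamma t}\|\phi\|_\eta e^{\eta\|w\|^2}$.

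\textbf{Step 3 (Gradient bound of $\Psi_{s, t}$).} This is the heart of the argument and relies on the gradient inequality of subsection \ref{gradientinequality}, the uniform-in-$s$ analogue of the HM06/HM08 estimate, schematically of the form
\[
\|\nabla\mathcal{P}_{r, r+\tau}\varphi(w)\|^2 \leq C V(w)\bigl(\mathcal{P}_{r, r+\tau}\|\nabla\varphi\|^2(w) + e^{-\lambda\tau} \mathcal{P}_{r, r+\tau}\varphi^2(w)\bigr).
\]
I would split $\mathcal{P}_{s, s+t} = \mathcal{P}_{s+t-1, s+t}\mathcal{P}_{s, s+t-1}$ and apply this to $\widetilde\Psi := \mathcal{P}_{s, s+t-1}\phi - \mu_{s+t-1}(\phi)$. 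From Step 2 applied at time $t-1$, $|\widetilde\Psi(v)| \leq C e^{-\gamma t}\|\phi\|_\eta e^{\eta\|v\|^2}$; from $\mathrm{Lip}_\rho(\widetilde\Psi) \leq Ce^{-\gamma t}\|\phi\|_\eta$, differentiating along straight-line paths in the definition \eqref{rho} yields $\|\nabla\widetilde\Psi(v)\| \leq \mathrm{Lip}_\rho(\widetilde\Psi) e^{\eta\|v\|^2}\leq C e^{-\gamma t}\|\phi\|_\eta e^{\eta\|v\|^2}$. Substituting these bounds into the gradient inequality and controlling the $\mathcal{P}_{s+t-1, s+t}$-averages by Proposition \ref{pre_Lya} gives $\|\nabla\Psi_{s, t}(w)\| \leq C e^{-\gamma t}\|\phi\|_\eta e^{\eta\|w\|^2}$, possibly after harmlessly shrinking $\eta$. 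Combined with Step 2 this yields $\|\Psi_{s, t}\|_\eta \leq Ce^{-\gamma t}\|\phi\|_\eta$. The principal difficulty is Step 3: keeping all constants uniform in the initial time $s$ and arranging that the composite Lyapunov factors produced by the gradient inequality and by the moments of $\mu_s$ collapse into a single $e^{\eta\|w\|^2}$ with $\eta$ in the admissible range $(0,\eta_0]$.
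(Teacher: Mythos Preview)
Your overall strategy --- pass from $\|\cdot\|_\eta$ to $\|\cdot\|_{\rho,s}$ via Step 1, contract there using Corollary \ref{Lipmixing}, then return to $\|\cdot\|_\eta$ --- is exactly the paper's plan. But the return trip (your Steps 2--3) has a genuine gap that ``shrinking $\eta$'' does not close.

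\textbf{The exponent mismatch.} In Step 2 you obtain $|\Psi_{s,t}(w)|\leq \mathrm{Lip}_\rho(\Psi_{s,t})\int\rho(w,v)\mu_s(dv)$ and then need $\int\rho(w,v)\mu_s(dv)\leq Ce^{\eta\|w\|^2}$. But $\rho(w,0)\leq \|w\|e^{\eta\|w\|^2}$, so after dividing by $e^{\eta\|w\|^2}$ an unbounded factor $\|w\|$ survives. Writing $\|w\|e^{\eta\|w\|^2}\leq C_\varepsilon e^{(\eta+\varepsilon)\|w\|^2}$ gives a bound in $\|\cdot\|_{\eta+\varepsilon}$, not $\|\cdot\|_\eta$; since $\|\cdot\|_{\eta+\varepsilon}\leq\|\cdot\|_\eta$, this is strictly weaker than what the theorem asserts. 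The same overshoot appears in Step 3 once you feed the Step 2 bound into the gradient inequality. (Two minor issues also: the evolution property is $\mathcal{P}_{s,s+t}=\mathcal{P}_{s,s+t-1}\mathcal{P}_{s+t-1,s+t}$, so the outer operator acting on $\widetilde\Psi$ must cover the \emph{initial} segment; and in Proposition \ref{pre_gradientinequalityprop} the factor $e^{-at}$ multiplies the gradient term, not the function term.)

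\textbf{How the paper closes the loop.} The paper does not try to absorb the loss pointwise; instead it uses the Lyapunov \emph{decay} to recover it. It introduces the scale of norms $\|\cdot\|_{V^r}$ for $r\in[r_0,2\kappa]$, proves the two-sided comparison $C^{-1}\|\phi\|_{V^{\kappa r}}\leq\|\phi\|_{\rho_r,s}\leq C\|\phi\|_{V^r}$ (Proposition \ref{boundedbothsides}), and then shows in Theorem \ref{adaptedcontraction} that $\mathcal{P}_{s,s+t}$ maps $\|\cdot\|_{V^r}$ into $\|\cdot\|_{V^{r\alpha(t)}}$ with $\alpha(t)<1$. The crucial step is Corollary \ref{coroadaptedcontraction}: by waiting $m\mathcal{T}$ periods with $m$ large enough that $\alpha(\mathcal{T})^m\kappa<1$, one gets $\|\mathcal{P}_{s,s+m\mathcal{T}}\phi\|_{V^r}\leq C\|\phi\|_{\rho_r,\tau}$, i.e.\ the $\kappa$-loss from Proposition \ref{boundedbothsides} is exactly cancelled by the Lyapunov contraction. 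The proof of Theorem \ref{mixingob} then writes $\mathcal{P}_{s,s+t}=\mathcal{P}_{s,s+m\mathcal{T}}\mathcal{P}_{s+m\mathcal{T},s+t}$ for $t\geq m\mathcal{T}$, applies Corollary \ref{coroadaptedcontraction} followed by Corollary \ref{Lipmixing}, and handles $t\leq m\mathcal{T}$ separately by Theorem \ref{adaptedcontraction}. Your Step 3 can be salvaged along these lines by splitting at a large time $\tau$ (not $\tau=1$) chosen so that $p+\kappa\alpha(\tau)\leq 1$, but this is precisely the content of the paper's $r$-family machinery, and your write-up does not supply it.
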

The proof of the theorem will be given at the end of this section by combining  Corollary \ref{Lipmixing} and the quasi-equivalence of $\|\cdot\|_{\rho, s}$ with $\|\cdot\|_{\eta}$ that will be proved below.

\vskip0.05in

In the time homogeneous case as in \cite{HM08}, the exponentially mixing in an equivalent form that involves observables similar to \eqref{mixing-observables2} is  given by proving the quasi-equivalence under the Markov semigroup of $\|\cdot\|_{\eta}$  with an appropriate norm $\|\cdot\|_{\rho}$ on $\mathrm{Lip}_{\rho}(H)$. The norm  $\|\cdot\|_{\rho}$  is actually a combination of $\mathrm{Lip}_{\rho}(\phi)$ for $\phi\in \mathrm{Lip}_{\rho}(H)$ and the integral of $\phi$ with respect to the unique invariant measure.  In the present time inhomogeneous  setting, the periodic invariant measure depends on time. Therefore, to show the mixing property \eqref{mixing-observables2}, we choose the norm $\|\cdot\|_{\rho, s}$ on  $\mathrm{Lip}_{\rho}(H)$ defined as above. This is natural since $\|\cdot\|_{\rho, s}$ is quasi-equivalent to $\|\cdot\|_{\eta}$ under the transition operator $\mathcal{P}_{s, t}^{}$ and $\mathcal{P}_{s, t}^{}$ has a similar contraction property as in the time homogeneous case proved in Theorem 4.3 \cite{HM08}. The dependence on initial time of the norm  $\|\cdot\|_{\rho, s}$ can be regarded as a property that adapts to the time inhomogeneity, to yield a uniform contraction under the action of the  transition operator, see Theorem \ref{adaptedcontraction} below.

\vskip0.05in

To begin with, we first define a family of auxiliary norms for $r\in [0, 1]$. The first  involves the Lipschitz constant in terms of the metric $\rho_r$ given in \eqref{familymetrics}. Define
\[\|\phi\|_{\rho_r, s}: = \mathrm{Lip}_{\rho_r}(\phi)+\left|\mu_s(\phi)\right|, \quad s\in \mathbb{R},\]
where $\mathrm{Lip}_{\rho_r}(\phi) = \sup_{u\neq v}\frac{|\phi(u)-\phi(v)|}{\rho_r(u, v)}$. When $r=1$, it is the norm given as \eqref{normr1}. The second one is a norm weighted by the  Lyapunov function $V(w) = e^{\eta\|w\|}$, which was introduced in \cite{HM08}.
\begin{align}\label{normVr}
\|\phi\|_{V^r}: =\sup_{w\in H} \frac{|\phi(w)|+ \|\nabla\phi(w)\|}{V^r(w)}.
\end{align}
Note when $r=1$, $\|\cdot\|_{V^r} = \|\cdot\|_{\eta}$. We first show that $\|\cdot\|_{\rho_r, s}$ can be bounded by $\|\cdot\|_{V^r}$ from both sides with different values of $r$.
\begin{proposition}\label{boundedbothsides}
There is a constant $C>0$ such that
\begin{align}\label{eqboundbothsides}
C^{-1}\|\phi\|_{V^{\kappa r}}\leq \|\phi\|_{\rho_r,s}\leq C\|\phi\|_{V^r},
\end{align}
for $r\in [r_0, 1]$, $s\in \mathbb{R}$ and $\phi\in C^1(H)$, where the constants $0<r_0<1$ and $\kappa>1$ are taken from the Lyapunov structure in Proportion \ref{pre_Lya}.
\end{proposition}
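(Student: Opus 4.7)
The plan is to prove both inequalities independently by exploiting the structure of the Wasserstein-like metric $\rho_r$ together with the Lyapunov integrability of $\mu_s$ provided by Theorem \ref{fixedpoint}, namely $\int_H V^{2\kappa}\, d\mu_s \leq C$.

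\textbf{Upper bound.} For the estimate $\|\phi\|_{\rho_r,s}\leq C\|\phi\|_{V^r}$, I would argue as follows. Given $w_1,w_2\in H$ and any differentiable path $\gamma$ joining them, the fundamental theorem of calculus gives
\begin{equation*}
|\phi(w_1)-\phi(w_2)|\leq \int_0^1 \|\nabla\phi(\gamma(t))\|\,\|\dot\gamma(t)\|\,dt \leq \|\phi\|_{V^r}\int_0^1 V^r(\gamma(t))\|\dot\gamma(t)\|\,dt.
\end{equation*}
Taking the infimum over $\gamma$ yields $\mathrm{Lip}_{\rho_r}(\phi)\leq \|\phi\|_{V^r}$. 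For the remaining piece, since $|\phi(w)|\leq \|\phi\|_{V^r}V^r(w)$ and $r\leq 1\leq 2\kappa$, the Lyapunov bound gives $|\mu_s(\phi)|\leq \|\phi\|_{V^r}\int_H V^r\,d\mu_s \leq C\|\phi\|_{V^r}$.

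\textbf{Lower bound.} For $C^{-1}\|\phi\|_{V^{\kappa r}}\leq \|\phi\|_{\rho_r,s}$, the strategy is to bound $|\phi(w)|$ and $\|\nabla\phi(w)\|$ separately by $\|\phi\|_{\rho_r,s}V^{\kappa r}(w)$. Writing $\phi(w)-\mu_s(\phi)=\int_H(\phi(w)-\phi(v))\,\mu_s(dv)$ and using $|\phi(w)-\phi(v)|\leq \mathrm{Lip}_{\rho_r}(\phi)\,\rho_r(w,v)$, it suffices to show $\int_H \rho_r(w,v)\,\mu_s(dv)\leq C V^r(w)$. To estimate $\rho_r(w,v)$, I would use the concatenation of two radial paths: from $w$ to $0$ along $\gamma_1(t)=(1-t)w$ and from $0$ to $v$ along $\gamma_2(t)=tv$. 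The first contributes
\begin{equation*}
\|w\|\int_0^1 \exp\!\bigl(r\eta(1-t)^2\|w\|^2\bigr)dt=\int_0^{\|w\|}e^{r\eta s^2}\,ds,
\end{equation*}
and a standard calculation (splitting the integral at $s=1$ and using $e^{as^2}=(2as)^{-1}\frac{d}{ds}e^{as^2}$ for $s\geq 1$) gives $\int_0^{M}e^{as^2}\,ds\leq C(1+a^{-1}e^{aM^2})$ for any $a\geq r_0\eta$. Hence each piece is bounded by $CV^r(w)$ or $CV^r(v)$ respectively, producing $\rho_r(w,v)\leq C\bigl(V^r(w)+V^r(v)\bigr)$. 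Integrating against $\mu_s$ and invoking the Lyapunov bound yields $\int_H\rho_r(w,v)\mu_s(dv)\leq CV^r(w)$, so
\begin{equation*}
|\phi(w)|\leq C\,\mathrm{Lip}_{\rho_r}(\phi)V^r(w)+|\mu_s(\phi)|\leq C\|\phi\|_{\rho_r,s}V^r(w).
\end{equation*}

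For the gradient term, for any unit vector $\xi\in H$ and $h>0$, the straight-line path from $w$ to $w+h\xi$ gives $\rho_r(w,w+h\xi)\leq h\int_0^1 V^r(w+th\xi)\,dt$; dividing by $h$ and letting $h\to 0^+$ produces $|\langle\nabla\phi(w),\xi\rangle|\leq \mathrm{Lip}_{\rho_r}(\phi)V^r(w)$, hence $\|\nabla\phi(w)\|\leq \|\phi\|_{\rho_r,s}V^r(w)$. Combining both estimates and using $V\geq 1$ and $\kappa\geq 1$, so that $V^r\leq V^{\kappa r}$, we conclude $\|\phi\|_{V^{\kappa r}}\leq C\|\phi\|_{\rho_r,s}$.

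\textbf{Main obstacle.} The only genuinely technical point is the elementary inequality $\int_0^M e^{as^2}\,ds\leq C V^a(M)$ with a constant $C$ uniform for $a\in[r_0\eta,\eta]$ and $M\geq 0$. The $\kappa$-factor loss on the left side of \eqref{eqboundbothsides} is actually slack (since we have the cleaner bound with $V^r$ itself), and the authors presumably state it with $\kappa r$ because that exponent is what is needed when this estimate is fed into the subsequent quasi-equivalence argument under the transition operator $\mathcal{P}_{s,t}$.
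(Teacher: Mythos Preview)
Your proof is correct, and you actually obtain a sharper version of the lower bound than the paper does. The main methodological difference is in how $\rho_r(w,0)$ is estimated. The paper first proves an auxiliary lemma (Lemma~\ref{lemmaderivative}) identifying $\mathrm{Lip}_{\rho_r}(\phi)=\sup_w\|\nabla\phi(w)\|/V^r(w)$, and then for the lower bound uses the crude estimate
\[
\rho_r(w,0)\leq \int_0^1 V^r(\tau w)\,\|w\|\,d\tau\leq \|w\|V^r(w)\leq C V^{\kappa r}(w),
\]
where the last inequality invokes item~1 of Proposition~\ref{pre_Lya} and is precisely where the factor $\kappa$ enters. You instead bypass Lemma~\ref{lemmaderivative} and compute the radial path integral $\int_0^{\|w\|}e^{r\eta s^2}\,ds\leq C V^r(w)$ directly, uniformly in $r\in[r_0,1]$, which yields $\|\phi\|_{V^r}\leq C\|\phi\|_{\rho_r,s}$ and hence the stated inequality as the weaker consequence $\|\phi\|_{V^{\kappa r}}\leq \|\phi\|_{V^r}$. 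Your remark that the $\kappa$ is slack is correct; the paper carries $V^{\kappa r}$ because that is the exponent naturally produced by their cruder bound and because it dovetails with the exponent bookkeeping in Corollary~\ref{coroadaptedcontraction}, but your sharper estimate would serve equally well there. Your approach is slightly more self-contained (no need for the identification lemma) at the cost of the elementary Gaussian-integral computation; the paper's approach is shorter once Lemma~\ref{lemmaderivative} is in hand but incurs the harmless $\kappa$-loss.
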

Before giving the proof, we need a lemma that connects the norm $\|\cdot\|_{\rho_r,s}$ with the derivative part of $\|\cdot\|_{V^r}$.
\begin{lemma}\label{lemmaderivative}
For every $\phi\in C^1(H)$, we have
\[\|\phi\|_{\rho_r,s} = \sup_{w\in H} \frac{\|\nabla\phi(w)\|}{V^r(w)} + \left|\mu_s(\phi)\right|, \quad s\in \mathbb{R}. \]
\end{lemma}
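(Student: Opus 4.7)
The claim unpacks to the identity $\mathrm{Lip}_{\rho_r}(\phi)=\sup_{w\in H}\|\nabla\phi(w)\|/V^r(w)$, since the $|\mu_s(\phi)|$ term is common to both sides of the display by definition of $\|\phi\|_{\rho_r,s}$. So the plan is to establish this identity in two directions, using the path-integral definition \eqref{familymetrics} of $\rho_r$ and the chain rule on differentiable paths in $H$.

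For the $\leq$ direction, set $L:=\sup_{w}\|\nabla\phi(w)\|/V^r(w)$; the inequality is trivial if $L=\infty$, so assume $L<\infty$. For any $w_1,w_2\in H$ and any differentiable path $\gamma:[0,1]\to H$ joining them, the fundamental theorem of calculus together with Cauchy--Schwarz on $H$ yields
\[
|\phi(w_2)-\phi(w_1)|=\Bigl|\int_0^1 \langle\nabla\phi(\gamma(t)),\dot\gamma(t)\rangle\,dt\Bigr|\le \int_0^1 \|\nabla\phi(\gamma(t))\|\,\|\dot\gamma(t)\|\,dt\le L\int_0^1 V^r(\gamma(t))\|\dot\gamma(t)\|\,dt.
\]
Taking the infimum over all admissible $\gamma$ gives $|\phi(w_2)-\phi(w_1)|\le L\,\rho_r(w_1,w_2)$, and hence $\mathrm{Lip}_{\rho_r}(\phi)\le L$.

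For the reverse $\ge$, I would test against short straight-line paths. Fix $w\in H$ and a unit vector $\xi\in H$, and for $\varepsilon>0$ consider the path $\gamma_\varepsilon(t)=w+t\varepsilon\xi$. This path is differentiable with $\|\dot\gamma_\varepsilon(t)\|=\varepsilon$, so
\[
\rho_r(w,w+\varepsilon\xi)\le \int_0^1 V^r(w+t\varepsilon\xi)\,\varepsilon\,dt\longrightarrow V^r(w)\quad\text{as }\varepsilon\downarrow 0,
\]
after dividing by $\varepsilon$, by continuity of $V^r$. On the other hand, differentiability of $\phi$ gives $|\phi(w+\varepsilon\xi)-\phi(w)|/\varepsilon\to|\langle\nabla\phi(w),\xi\rangle|$. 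Combining,
\[
|\langle\nabla\phi(w),\xi\rangle|=\lim_{\varepsilon\downarrow 0}\frac{|\phi(w+\varepsilon\xi)-\phi(w)|}{\varepsilon}\le \mathrm{Lip}_{\rho_r}(\phi)\cdot\limsup_{\varepsilon\downarrow 0}\frac{\rho_r(w,w+\varepsilon\xi)}{\varepsilon}\le \mathrm{Lip}_{\rho_r}(\phi)\,V^r(w).
\]
Taking the supremum over unit $\xi$ yields $\|\nabla\phi(w)\|\le \mathrm{Lip}_{\rho_r}(\phi)\,V^r(w)$, and then the supremum over $w$ gives $L\le \mathrm{Lip}_{\rho_r}(\phi)$.

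The argument is essentially the standard fact that the intrinsic (path) Finsler distance associated to the weight $V^r$ gives the supremum-of-gradient characterization of the Lipschitz constant, so no estimate is really difficult here. The one point to be a little careful about is ensuring that the short straight-line paths lie in the class of paths admissible in the infimum defining $\rho_r$ (they do, being smooth and $H$-valued) and that continuity of $V$ lets us pull $V^r(w)$ out of the integral in the limit; both are immediate from the definition of $V$ and the local boundedness of $t\mapsto V^r(w+t\varepsilon\xi)$.
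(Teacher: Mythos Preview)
Your proof is correct and follows essentially the same approach as the paper: the $\leq$ direction via the fundamental theorem of calculus along paths, and the $\geq$ direction via straight-line paths and continuity of $V^r$. The paper additionally establishes the pointwise limit $\lim_{\varepsilon\to 0}\sup_{\|u-v\|<\varepsilon}|\phi(u)-\phi(v)|/\rho_r(u,v)=\|\nabla\phi(v)\|/V^r(v)$ as an intermediate claim (including a lower bound on $\rho_r$ showing near-optimal paths stay close to $v$), but this stronger local statement is not actually needed for the lemma, and your more direct argument suffices.
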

\begin{proof}
We first claim that  for $v\in H$,
\[\lim_{\varepsilon\rightarrow 0}\sup_{u: \|u-v\|<\varepsilon}\frac{|\phi(u)-\phi(v)|}{\rho_r(u, v)} = \frac{\|\nabla\phi(v)\|}{V^r(v)}.\]
By definition of $\r_r$ as in \eqref{familymetrics},  \[\rho_r(u, v)\leq \int_{0}^{1}V^r\big((1-\tau)u+\tau v\big)\|u-v\|d\tau,\]
hence we have
\[\sup_{u: \|u-v\|<\varepsilon}\frac{|\phi(u)-\phi(v)|}{\rho_r(u, v)} \geq \sup_{u: \|u-v\|<\varepsilon}\frac{|\phi(u)-\phi(v)|}{\|u-v\|}\left( \sup_{u: \|u-v\|<\varepsilon}\int_{0}^{1}V^r\big((1-\tau)u+\tau v\big)d\tau\right)^{-1}. \]
Therefore by taking limit,
\begin{align}\label{LHS>RHS}
\lim_{\varepsilon\rightarrow 0}\sup_{u: \|u-v\|<\varepsilon}\frac{|\phi(u)-\phi(v)|}{\rho_r(u, v)} \geq \frac{\|\nabla\phi(v)\|}{V^r(v)}.
\end{align}
Next we prove the reverse inequality of \eqref{LHS>RHS}. For fixed $v\in H$ and any $u$ satisfying $\|u-v\|<\varepsilon$, let $R>0$ large such that $u, v \in B_R(0)$, the ball in $(H, \|\cdot\|)$ with radius $R$ centered at $0$. For any $w_1, w_2\in B_R(0) $, one has the equivalence of metrics
\[\|w_1-w_2\|\leq \rho_r(w_1, w_2)\leq V(R)\|w_1-w_2\|.\]
Let $K = V(R)$. Then for any $\delta>0$, there exists a differentiable path $\gamma$ connecting $u, v$ such that
\[\rho_r(u, v)\leq \int_{0}^1V^r(\gamma(\tau))\|\dot{\gamma}(\tau)\|d\tau \leq \rho_r(u, v)+ \delta\leq K\varepsilon+\delta.\]
Now for any $t\in [0,1]$,
\[\|\gamma(t) -v\| =\left \|\int_{0}^t\dot{\gamma}(\tau)d\tau\right\|\leq \int_{0}^1V^r(\gamma(\tau))\|\dot{\gamma}(\tau)\|d\tau\leq K\varepsilon +\delta,\]
which means that $\gamma(t)$ never leaves the ball of radius $K\varepsilon + \delta$ centered at $v$. Therefore,
\[\rho_r(u,v)\geq \int_0^1 V^r(\gamma(\tau))\|\dot{\gamma}(\tau)\|d\tau -\delta\geq \inf_{w:\|w-v\|\leq K\varepsilon +\delta}V^r(w)\|u-v\|-\delta.\]
Hence  by taking $\delta = \varepsilon\|u-v\|$ above, we have
\begin{align*}
\frac{|\phi(u)-\phi(v)|}{\rho_r(u, v)}\leq \frac{|\phi(u)-\phi(v)|}{\|u-v\|}\left(\inf_{w:\|w-v\|\leq (K+\varepsilon)\varepsilon}V^r(w)-\varepsilon\right) ^{-1}.
\end{align*}
By taking limit we have
\[\lim_{\varepsilon\rightarrow 0}\sup_{u: \|u-v\|<\varepsilon}\frac{|\phi(u)-\phi(v)|}{\rho_r(u, v)} \leq \frac{\|\nabla\phi(v)\|}{V^r(v)},\]
which finishes the proof of the claim.

\vskip0.05in

It then follows from the claim that
\begin{align}\label{eqLlessH}
\mathrm{Lip}_{\rho_r}(\phi) = \sup_{u\neq v}\frac{|\phi(u)-\phi(v)|}{\rho_r(u, v)}\geq \sup_{w\in H}\frac{ \|\nabla\phi(w)\|}{V^r(w)}.
\end{align}
Hence $\|\phi\|_{\rho_r,s} \geq \sup_{w\in H} \frac{\|\nabla\phi(w)\|}{V^r(w)} + \left|\mu_s(\phi)\right|$.  It remains to show the reverse inequality of \eqref{eqLlessH}. Without loss of generality we can assume $\phi(0)=0$ and $\mathrm{Lip}_{\rho_r}(\phi) =1$. There is nothing to show if for some $w$, $1\leq  \frac{\|\nabla\phi(w)\|}{V^r(w)}$. So we assume $\|\nabla\phi(w)\| \leq V^r(w)$ for all $w$.  Then for any $w_1, w_2\in H$,
\[|\phi(w_1)-\phi(w_2)| = \int_0^1\left\langle \nabla\phi(\gamma(\tau)), \dot{\gamma}(\tau)\right\rangle d\tau\leq \sup_{w\in H}\frac{\|\nabla\phi(w)\|}{V^r(w)}\int_0^1V^r(\gamma(\tau))\|\dot{\gamma}(\tau)\| d\tau. \]
By taking infimum over all differentiable $\gamma$ connecting $w_1, w_2$, we have
\[\frac{|\phi(w_1)-\phi(w_2)| }{\rho_r(w_1, w_2)}\leq \sup_{w\in H}\frac{\|\nabla\phi(w)\|}{V^r(w)}. \]
The proof is then complete.
\end{proof}
\begin{proof}[Proof of Proposition \ref{boundedbothsides}]
By Theorem \ref{fixedpoint}, there is a constant $C>0$, independent of the initial time $s$ such that
\begin{align}\label{mu_s_phi}
|\mu_s(\phi)| \leq \int_{H}V^r(w)\frac{|\phi(w)|}{V^r(w)}\mu_s(dw)\leq \|\phi\|_{V^r}\mu_s(V^r)\leq C\|\phi\|_{V^r}.
\end{align}
Combining \eqref{mu_s_phi} with Lemma \ref{lemmaderivative} and definition of the norm $\|\cdot\|_{r}$ as \eqref{normVr}, we have
\[\|\phi\|_{\rho_r,s}\leq \sup_{w\in H} \frac{\|\nabla\phi(w)\|}{V^r(w)}+ C\|\phi\|_{V^r} \leq \widetilde{C}\|\phi\|_{V^r}.\]
To show the first inequality of \eqref{eqboundbothsides}, we fix $\phi$ with $\|\phi\|_{\rho_r,s} =1.$ Then by Proposition \ref{verifyLyapunov},
\[|\phi(w) - \phi(0)|\leq \mathrm{Lip}_{\rho_r}(\phi)\rho_r(w, 0) \leq \int_0^1V^r(\tau w)\|w\|d\tau\leq \|w\|V^r(w)\leq CV^{\kappa r}(w). \]
Also by Theorem \ref{fixedpoint} and noting that $r\leq 1$,  we have
\[\int_{H}\rho_r(w,0)\mu_s(dw)\leq\int_{H}\rho(w,0)\mu_s(dw)\leq \int_{H} C V^{\kappa}(w)\mu_s(dw)\leq \widetilde{C}.\]

Hence
\begin{align*}
|\phi(0)|&\leq \left|\int_{H} \phi(w) \mu_s(dw) - \phi(0)\right| + \left|\int_{H} \phi(w) \mu_s(dw) \right|\\
&\leq \int_{H}|\phi(w)-\phi(0) |\mu_s(dw)+\left|\int_{H} \phi(w) \mu_s(dw) \right| \leq \widetilde{C}+1,
\end{align*}
where $\left|\int_{H} \phi(w) \mu_s(dw) \right|\leq 1$ since $\|\phi\|_{\rho_r,s} =1.$ It then follows that
\[|\phi(w)|\leq |\phi(0)|+|\phi(w) - \phi(0)| \leq \widetilde{C} V^{\kappa r}(w).\]
Note that $\|\phi\|_{\rho_r,s} =1$ also implies that $\sup_{w\in H} \frac{\|\nabla\phi(w)\|}{V^r(w)}\leq 1$, therefore
\[\|\phi\|_{V^{\kappa r}}\leq \sup_{w\in H} \frac{\|\nabla\phi(w)\|}{V^{\kappa r}(w)} + \sup_{w\in H} \frac{|\phi(w)|}{V^{\kappa r}(w)}\leq 1+\widetilde{C}\leq C\|\phi\|_{\rho_{r},s}.\]
The proof is complete.
\end{proof}

The following result shows that the transition operator has a contraction property under the norms $\|\cdot\|_{V^r}$  and $\|\cdot\|_{\rho_r, s}$.
\begin{theorem}\label{adaptedcontraction}
There are constants $C, \gamma>0$ such that
\begin{align}\label{eqcontraction1}
\|\mathcal{P}_{s, s+t}\phi\|_{V^{r(t)}}\leq Ce^{\gamma t}\|\phi\|_{V^r},\\
\|\mathcal{P}_{s, s+t}\phi\|_{\rho_{r(t),s}}\leq Ce^{\gamma t}\|\phi\|_{\rho_{r, s+t}},\label{eqcontraction2}
\end{align}
for $r\in[r_0, 2\kappa]$, $s\in \mathbb{R}, t>0$ and $\phi\in C^{1}(H)$, where $r(t) = \max\{r\alpha(t), r_0\}$ and $\alpha(t) $ is from Proposition \ref{verifyLyapunov} .
\end{theorem}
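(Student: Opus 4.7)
The plan is to prove both inequalities by controlling $|\mathcal{P}_{s,s+t}\phi(w)|$ and $\|\nabla\mathcal{P}_{s,s+t}\phi(w)\|$ pointwise via the Lyapunov estimates from Proposition \ref{pre_Lya}, and then dividing by $V^{r(t)}(w)$ and taking the supremum. Lemma \ref{lemmaderivative} reduces the Lipschitz constant $\mathrm{Lip}_{\rho_r}(\phi)$ to a weighted gradient sup, so the two inequalities end up sharing the same core calculation.

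For \eqref{eqcontraction1}, I would start from $\mathcal{P}_{s,s+t}\phi(w) = \mathbf{E}[\phi(\Phi_{s,s+t}(w))]$ and, differentiating under the expectation,
\[
\nabla\mathcal{P}_{s,s+t}\phi(w)\cdot\xi = \mathbf{E}\langle\nabla\phi(\Phi_{s,s+t}(w)),\nabla\Phi_{s,s+t}(w)\xi\rangle.
\]
Using the pointwise bound $|\phi|+\|\nabla\phi\|\leq \|\phi\|_{V^r}V^r$, part (3) of Proposition \ref{pre_Lya} gives $|\mathcal{P}_{s,s+t}\phi(w)|\leq C\|\phi\|_{V^r}V^{r\alpha(t)}(w)$, while part (2)—which is designed precisely to absorb the Jacobian $\nabla\Phi_{s,s+t}(w)\xi$ into a Lyapunov factor—yields $\|\nabla\mathcal{P}_{s,s+t}\phi(w)\|\leq C\|\phi\|_{V^r}V^{r\alpha(t)}(w)$. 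Since $V(w)\geq 1$ and $r(t)=\max\{r\alpha(t),r_0\}\geq r\alpha(t)$, the ratio $V^{r\alpha(t)}/V^{r(t)}\leq 1$, and summing the two estimates yields $\|\mathcal{P}_{s,s+t}\phi\|_{V^{r(t)}}\leq C\|\phi\|_{V^r}$, which is even stronger than claimed.

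For \eqref{eqcontraction2}, invariance of the periodic measure (Theorem \ref{fixedpoint}) gives $\mu_s(\mathcal{P}_{s,s+t}\phi)=\mu_{s+t}(\phi)$, so the constant part of $\|\mathcal{P}_{s,s+t}\phi\|_{\rho_{r(t),s}}$ equals $|\mu_{s+t}(\phi)|\leq \|\phi\|_{\rho_{r,s+t}}$. For the Lipschitz part, Lemma \ref{lemmaderivative} reduces the task to bounding $\|\nabla\mathcal{P}_{s,s+t}\phi(w)\|/V^{r(t)}(w)$, and the same lemma provides $\|\nabla\phi(u)\|\leq \mathrm{Lip}_{\rho_r}(\phi)V^r(u)$. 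Repeating the derivative computation from the previous paragraph with $\mathrm{Lip}_{\rho_r}(\phi)$ in place of $\|\phi\|_{V^r}$ gives $\mathrm{Lip}_{\rho_{r(t)}}(\mathcal{P}_{s,s+t}\phi)\leq C\mathrm{Lip}_{\rho_r}(\phi)\leq C\|\phi\|_{\rho_{r,s+t}}$, and adding this to the bound on the constant part finishes the proof.

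The only real care required is keeping the exponent $r(t)$ inside the admissible range $[r_0,2\kappa]$: the upper bound $2\kappa$ is inherited from Proposition \ref{pre_Lya}, while the floor $r_0$ in $r(t)=\max\{r\alpha(t),r_0\}$ is precisely what prevents the exponent from dropping out of the admissible range as $\alpha(t)\downarrow 0$ for large $t$. Once this is respected, the bounds above actually hold with a uniform constant, which is consistent with the growth factor $e^{\gamma t}$ allowed in the statement.
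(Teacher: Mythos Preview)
Your argument for $t\in[0,\mathcal{T}]$ is correct and matches the paper, but there is a genuine gap when you extend to all $t>0$. Part (2) of Proposition \ref{pre_Lya} is stated only for $t\in[0,\mathcal{T}]$; part (3) holds for all $t\geq 0$ but does \emph{not} include the Jacobian factor $\|\nabla\Phi_{s,s+t}(w)\xi\|$. Consequently your claimed pointwise bound $\|\nabla\mathcal{P}_{s,s+t}\phi(w)\|\leq C\|\phi\|_{V^r}V^{r\alpha(t)}(w)$ with a \emph{uniform} constant $C$ is only justified for $t\in[0,\mathcal{T}]$. For large $t$ you cannot absorb the Jacobian directly: in the proof of Proposition \ref{verifyLyapunov} the time integral $\int_s^{s+t}\|w_{s,r}\|_1^2\,dr$ appearing in the Jacobian bound \eqref{Jacobian} is controlled only after multiplication by the decaying prefactor $\tfrac{\nu}{4}e^{-\nu t/4}$, which becomes useless as $t\to\infty$.

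The paper closes this gap by iteration. Once \eqref{eqcontraction1} is established for $t\in[0,\mathcal{T}]$ with a fixed constant $C$, one writes $t=k\mathcal{T}+\beta$, uses the evolution property $\mathcal{P}_{s,s+t}=\mathcal{P}_{s,s+k\mathcal{T}}\mathcal{P}_{s+k\mathcal{T},s+k\mathcal{T}+\beta}$, and proves by induction that $\|\mathcal{P}_{s,s+n\mathcal{T}}\phi\|_{V^{r(n\mathcal{T})}}\leq C^n\|\phi\|_{V^r}$. The induction requires checking that the successive exponents $r(\mathcal{T}),\,r(2\mathcal{T}),\ldots$ stay in $[r_0,2\kappa]$ so that the one-period estimate can be reapplied, and that the composed exponent dominates $r(n\mathcal{T})$; this bookkeeping is where the floor $r_0$ in $r(t)=\max\{r\alpha(t),r_0\}$ is actually used. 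The factor $C^n\sim e^{\gamma t}$ is therefore not slack in the statement---it is exactly what the iteration produces. Your proposal is missing this entire step.
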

\begin{proof}
We first prove the inequalities for $t\in[0, \mathcal{T}]$. By Proposition \ref{pre_Lya}, we have
\begin{align}\notag
\|\nabla \mathcal{P}_{s, s+t}\phi(w)\|&\leq \mathbf{E}\|\nabla \phi(\Phi_{s, s+t}(w))\|\|\nabla\Phi_{s,s+t}(w)\|\\\notag
&\leq \sup_{w\in H} \frac{\|\nabla\phi(w)\|}{V^{ r}(w)}\mathbf{E}V^r(\Phi_{s, s+t}(w))\|\nabla\Phi_{s,s+t}(w)\|\\\label{adaptedcontraction-1}
& \leq C\sup_{w\in H} \frac{\|\nabla\phi(w)\|}{V^{ r}(w)}V^{r\alpha(t)}(w)\leq C\|\phi\|_{V^r} V^{r\alpha(t)}(w).
\end{align}
It follows from the penultimate step that
\begin{align}\label{adaptedcontraction1}
\sup_{w\in H}\frac{\|\nabla \mathcal{P}_{s, s+t}\phi(w)\|}{V^{r\alpha(t)}(w)}\leq C \sup_{w\in H} \frac{\|\nabla\phi(w)\|}{V^{ r}(w)}.
\end{align}
Also
\begin{align}\label{adaptedcontraction-3}
|\mathcal{P}_{s,s+t}\phi(w)|\leq \mathbf{E}|\phi|(\Phi_{s,s+t}(w))\leq \sup_{w\in H} \frac{|\phi(w)|}{V^{ r}(w)}\mathbf{E}V^r(\Phi_{s, s+t}(w))\leq C\|\phi\|_{V^r} V^{r\alpha(t)}(w).
\end{align}
Combining the above two estimates \eqref{adaptedcontraction-1} and \eqref{adaptedcontraction-3}, we see that $\|\mathcal{P}_{s, s+t}\phi\|_{V^{r(t)}}\leq C\|\phi\|_{V^r}$ with $r(t) = r\alpha(t)$. Lemma \ref{lemmaderivative} and the invariance of the unique periodic invariant measure, together with inequality \eqref{adaptedcontraction1} imply that
\begin{align*}
\|\mathcal{P}_{s, s+t}\phi\|_{\rho_{r(t)}, s}& = \sup_{w\in H} \frac{\|\nabla\mathcal{P}_{s, s+t}\phi(w)\|}{V^{r(t)}(w)} + \left|\int_{H}\mathcal{P}_{s, s+t}\phi(w)\mu_s(dw)\right|\\
&\leq C \sup_{w\in H} \frac{\|\nabla\phi(w)\|}{V^{ r}(w)} + \left|\int_{H}\phi(w)\mu_{s+t}(dw)\right|\leq C\|\phi\|_{\rho_r, s+t}, \quad t\in [0, \mathcal{T}].
\end{align*}

\vskip 0.05in

The case for $t> \mathcal{T}$ follows by iteration. From Proposition \ref{pre_Lya}, for $n\in\mathbb{N}$, one has $\alpha(n\mathcal{T}) = \alpha(\mathcal{T})^n$, so that $r(n\mathcal{T}) = \max\{r\alpha(\mathcal{T})^n, r_0\}$. By induction, we can show that
\begin{align}\label{contractionNtimes}
\|\mathcal{P}_{s, s+n\mathcal{T}}\phi\|_{V^{r(n\mathcal{T})}}\leq C^n\|\phi\|_{V^r}.
\end{align}
Indeed, the base case for $n=1$ has been proved. In particular,   replacing $s$ by $s+k\mathcal{T}$, it follows that
\begin{align}\label{contraction1times}
\|\mathcal{P}_{s+k\mathcal{T}, s+(k+1)\mathcal{T}}\phi\|_{V^{r(\mathcal{T})}}\leq C\|\phi\|_{V^r}.
\end{align}
Assume that for $n=k$, inequality \eqref{contractionNtimes} is true for all $s\in\mathbb{R}$ and $r\in[r_0, 2\kappa]$. Then since $r(\mathcal{T}) \in [r_0, 2\kappa]$, one has
\begin{align}\label{contractionktimes}
\|\mathcal{P}_{s, s+k\mathcal{T}}\phi\|_{V^{\max\{r(\mathcal{T})\alpha(\mathcal{T})^{k}, r_0\}}}\leq C^k\|\phi\|_{V^{r(\mathcal{T})}}.
\end{align}
It follows from \eqref{contraction1times}, \eqref{contractionktimes} and the evolution property of the transition operator  that
\begin{align*}\label{precontractionk+1times}
&\|\mathcal{P}_{s, s+(k+1)\mathcal{T}}\phi\|_{V^{\max\{r(\mathcal{T})\alpha(\mathcal{T})^{k}, r_0\}}} = \|\mathcal{P}_{s, s+k\mathcal{T}}\mathcal{P}_{s+k\mathcal{T}, s+(k+1)\mathcal{T}}\phi\|_{V^{\max\{r(\mathcal{T})\alpha(\mathcal{T})^{k}, r_0\}}}\\
&\leq C^k \|\mathcal{P}_{s, s+k\mathcal{T}}\phi\|_{V^{r(\mathcal{T})}}\leq C^{k+1}\|\phi\|_{V^r}.
\end{align*}
Since $r(\mathcal{T}) = \max\{r\alpha(\mathcal{T}), r_0\}$, we find that $r((k+1)\mathcal{T})=\max\{r\alpha(\mathcal{T})^{k+1}, r_0\}\geq {\max\{r(\mathcal{T})\alpha(\mathcal{T})^{k}, r_0\}}$ always holds. Hence
\[\|\mathcal{P}_{s, s+n\mathcal{T}}\phi\|_{V^{r((k+1)\mathcal{T})}}\leq \|\mathcal{P}_{s, s+(k+1)\mathcal{T}}\phi\|_{V^{\max\{r(\mathcal{T})\alpha(\mathcal{T})^{k}, r_0\}}}\leq C^{k+1}\|\phi\|_{V^r}.\]
This completes the induction step. Hence \eqref{contractionNtimes} is true for all $n\in\mathbb{N}$.  For any $t\geq \mathcal{T}$, there are unique $k\in \mathbb{N}$ and $\beta\in [0,\mathcal{T})$ such that $t = k\mathcal{T}+\beta$. Since
$r(\beta) \in[r_0, 2\kappa]$, it follows from \eqref{contractionNtimes} that
\begin{align}\label{contractionktimeswithbeta}
\|\mathcal{P}_{s, s+k\mathcal{T}}\phi\|_{V^{\max\{r(\beta)\alpha(\mathcal{T})^{k},r_0\}}}\leq C^{k}\|\phi\|_{V^{r(\beta)}}.
\end{align}
Combining \eqref{contractionktimeswithbeta} with \eqref{eqcontraction1} for $\beta\in [0, \mathcal{T})$, and the fact that
\[r(t) = \max\{r\alpha(\beta)\alpha(\mathcal{T})^{k}, r_0\}\geq \max\{r(\beta)\alpha(\mathcal{T})^{k},r_0\}, \]
we obtain \eqref{eqcontraction1} for $t\geq \mathcal{T}$,
\begin{align*}
\|\mathcal{P}_{s, s+t}\phi\|_{V^{r(t)}}&\leq \|\mathcal{P}_{s, s+k\mathcal{T}}\mathcal{P}_{s+k\mathcal{T},s+k\mathcal{T}+\beta }\phi\|_{\max\{r(\beta)\alpha(\mathcal{T})^{k},r_0\}}\\
&\leq C^{k}\|\mathcal{P}_{s+k\mathcal{T},s+k\mathcal{T}+\beta }\phi\|_{V^{r(\beta)}}\\
&\leq C^{k+1}\|\phi\|_{V^r}\leq Ce^{\gamma t}\|\phi\|_{V^r},
\end{align*}
by choosing appropriate constants $C, \gamma>0$ since $k = \frac{t-\beta}{\mathcal{T}}$. The proof for \eqref{eqcontraction2} in the case $t\geq \mathcal{T}$ is similar.

\end{proof}
\begin{corollary}\label{coroadaptedcontraction}
There exist $m>0$, $C = C(m)>0$ such that
\[\|\mathcal{P}_{s, s+m\mathcal{T}}\phi\|_{V^r}\leq C\|\phi\|_{\rho_{r},\tau}\]
for all $\phi\in C^1(H)$, $r\in (1-\alpha(\mathcal{T}), 1]$ and $s, \tau \in \mathbb{R}$.
\end{corollary}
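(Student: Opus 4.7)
The plan is to chain the contraction estimate \eqref{eqcontraction1} from Theorem \ref{adaptedcontraction} with the two-sided bound in Proposition \ref{boundedbothsides}, exploiting the pointwise monotonicity $V^{r'}\leq V^r$ whenever $r'\leq r$ (which holds because $V\geq 1$). The nontrivial point is that the target left-hand side uses the same index $r$ as the right-hand side: one has to absorb the $\kappa$-factor coming from Proposition \ref{boundedbothsides} (which yields $V^{\kappa r}$ from $\rho_r$) by driving the output index of the contraction from $\kappa r$ back down to at most $r$. That is exactly the purpose of the iterate count $m$.

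Concretely, I would first fix the Lyapunov parameter $r_0\in(0,1-\alpha(\mathcal{T}))$ in Proposition \ref{pre_Lya}, so that every $r\in(1-\alpha(\mathcal{T}),1]$ automatically satisfies $r_0<r$. Since $\alpha(m\mathcal{T})=\alpha(\mathcal{T})^m\to 0$, I would then choose $m\in\mathbb{N}$ large enough that $\kappa\alpha(m\mathcal{T})\leq 1$, which guarantees $\kappa r\,\alpha(m\mathcal{T})\leq r$ for all $r\leq 1$. With these choices, applying \eqref{eqcontraction1} with input index $\kappa r\in[r_0,2\kappa]$ produces output index
\[
r_1:=\max\{\kappa r\,\alpha(m\mathcal{T}),\,r_0\}\leq r,
\]
so that by monotonicity
\[
\|\mathcal{P}_{s,s+m\mathcal{T}}\phi\|_{V^r}\;\leq\;\|\mathcal{P}_{s,s+m\mathcal{T}}\phi\|_{V^{r_1}}\;\leq\;Ce^{\gamma m\mathcal{T}}\,\|\phi\|_{V^{\kappa r}}.
\]
To close the loop, the left inequality of Proposition \ref{boundedbothsides} applied at initial time $\tau$ yields $\|\phi\|_{V^{\kappa r}}\leq C\|\phi\|_{\rho_r,\tau}$, and the constant is uniform in $\tau$ because $\mu_\tau(V^\kappa)$ is uniformly bounded by Theorem \ref{fixedpoint}. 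Composing these two displays gives the claim with $C(m)=C^2 e^{\gamma m\mathcal{T}}$.

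The main bookkeeping obstacle, and essentially the only delicate point, is arranging the three index constraints to be simultaneously satisfied: $r\in[r_0,1]$ so that Proposition \ref{boundedbothsides} applies; $\kappa r\in[r_0,2\kappa]$ so that Theorem \ref{adaptedcontraction} applies with input $\kappa r$; and $r_1\leq r$ so that the $V^{r_1}$-to-$V^{r}$ monotonicity step is valid. The hypothesis $r\in(1-\alpha(\mathcal{T}),1]$ is precisely the window in which one can choose $r_0<1-\alpha(\mathcal{T})$ and an $m$ with $\kappa\alpha(m\mathcal{T})\leq 1$ so that all three conditions hold; this is the reason the stated range cannot be relaxed by the present argument.
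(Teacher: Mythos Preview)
Your proof is correct and follows essentially the same approach as the paper: apply \eqref{eqcontraction1} with input index $\kappa r$, then use monotonicity of $\|\cdot\|_{V^a}$ in $a$ together with the left inequality of Proposition \ref{boundedbothsides}. The only cosmetic difference is that the paper additionally arranges $r_0<\alpha(\mathcal{T})^m\kappa r$ so that the max in $r(t)$ is realized by $\alpha(\mathcal{T})^m\kappa r$, whereas you simply bound both branches of the max by $r$; your version is slightly cleaner and yields the same conclusion.
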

\begin{proof}
Let  $r_n = r_0 + \alpha(\mathcal{T})^n\kappa r$, where $r_0$ is from Proposition \ref{pre_Lya} and can be chosen to be arbitrarily close to $0$.  Since $\alpha(\mathcal{T})<1$, we can choose a large $m$ such that $ \alpha(\mathcal{T})^n\kappa r<1$. Fix such an $m$. Then we can choose $r_0$ small such that $r_m\leq r$ and $r_0< \alpha(\mathcal{T})^m\kappa r$.  As a result, we have by Theorem \ref{adaptedcontraction} and Proposition \ref{boundedbothsides}, that for $r\in (1-\alpha(\mathcal{T}), 1]$,
\[\|\mathcal{P}_{s, s+m\mathcal{T}}\phi\|_{V^r}\leq \|\mathcal{P}_{s, s+m\mathcal{T}}\phi\|_{V^{r_m}}\leq \|\mathcal{P}_{s, s+m\mathcal{T}}\phi\|_{V^{\alpha(\mathcal{T})^m\kappa r}}\leq C(m)\|\phi\|_{\kappa r}\leq C(m)\|\phi\|_{\rho_r,\tau},\]
where in the penultimate step we use inequality \eqref{eqcontraction1} with
\[r(t) = \max\{\kappa r\alpha(m\mathcal{T}), r_0\} = \max\{\alpha(\mathcal{T})^m\kappa r, r_0\} = \alpha(\mathcal{T})^m\kappa r.\]
\end{proof}

We are now in a position to prove Theorem \ref{mixingob}.
\begin{proof}[Proof of Theorem \ref{mixingob}]
By Corollary \ref{Lipmixing}, and Proposition \ref{boundedbothsides}, we have
\[\|\mathcal{P}_{s, s+t}\phi-\mu_{s+t}(\phi)\|_{\rho, s}\leq Ce^{-\gamma t}\|\phi-\mu_{s}(\phi)\|_{\rho, s}\leq Ce^{-\gamma t}\|\phi-\mu_{s}(\phi)\|_{\eta},\]
for $s\in\mathbb{R}$ and $t\geq 0$.  By Corollary \ref{coroadaptedcontraction}, there exists $m>0$ such that
\[\|\mathcal{P}_{s, s+m\mathcal{T}}\phi\|_{\eta}\leq C(m)\|\phi\|_{\rho,\tau}.\]
Replacing $\phi$ by $\mathcal{P}_{s+m\mathcal{T}, s+m\mathcal{T}+t}\phi - \mu_{s+m\mathcal{T}+t}(\phi)$ and letting $\tau = s+m\mathcal{T}$ on the right hand side of the above inequality, we  have
\begin{align*}
\|\mathcal{P}_{s, s+m\mathcal{T}+t}\phi - \mu_{s+m\mathcal{T}+t}(\phi)\|_{\eta}&\leq C(m)\|\mathcal{P}_{s+m\mathcal{T}, s+m\mathcal{T}+t}\phi - \mu_{s+m\mathcal{T}+t}(\phi)\|_{\rho, s+m\mathcal{T}}\\
&\leq C(m)e^{-\gamma t}\|\phi - \mu_{s+m\mathcal{T}}(\phi)\|_{\eta}.
\end{align*}
Combining the above estimate with \eqref{mu_s_phi}, one has for $t\geq m\mathcal{T}$,
\begin{align*}
\|\mathcal{P}_{s, s+t}\phi - \mu_{s+t}(\phi)\|_{\eta}\leq C e^{-\gamma(t-m\mathcal{T})}\left(\|\phi\|_{\eta}+|\mu_{s+m\mathcal{T}}(\phi)|\right)\leq Ce^{-\gamma t}\|\phi\|_{\eta},
\end{align*}
where $C$ depends on the constants $m, \mathcal{T}$.

By Theorem \ref{adaptedcontraction}, we have for all $t>0$,
\[\|\mathcal{P}_{s, s+t}\phi\|_{\eta} = \|\mathcal{P}_{s, s+t}\phi\|_{V^1}\leq \|\mathcal{P}_{s, s+t}\phi\|_{V^{\max\{\alpha(t), r_0\}}}\leq Ce^{\gamma t}\|\phi\|_{\eta}.\]
So for $0\leq t\leq m\mathcal{T}$, $\|\mathcal{P}_{s, s+t}\phi\|_{\eta}\leq  Ce^{\gamma m\mathcal{T}}\|\phi\|_{\eta}$. Replacing $\phi$ by $\phi-\mu_{s+t}(\phi)$, we have
\begin{align*}
\|\mathcal{P}_{s, s+t}\phi - \mu_{s+t}(\phi)\|_{\eta}\leq Ce^{\gamma m\mathcal{T}}\|\phi-\mu_{s+t}(\phi)\|_{\eta}\leq Ce^{\gamma m\mathcal{T}}\|\phi\|_{\eta}\leq Ce^{-\gamma t}\|\phi\|_{\eta},
\end{align*}
by choosing the last constant $C$ larger. The proof is complete.
\end{proof}

\subsection{The weak irreducibility}\label{weakLya}
In this subsection, we show that the Navier-Stokes system \eqref{NS} has the following form of weak irreducibility,
which is necessary to obtain the desired  contraction on $\mathcal{P}(H)$.
 \begin{proposition}[Weak Irreducibility]\label{contractionirreducible}
If the Grashof number $G_1<1/c_0$,  then for any $R, \varepsilon>0, r\in (0,1]$, there exists $n_0\in\mathbb{N}$ such that for any $n\in\mathbb{N}$ and $n\geq n_0$, there exits $a>0$ so that
\[\inf_{\|w_1\|, \|w_2\|\leq R}\sup_{\Gamma\in \mathcal{C}\left(\mathcal{P}_{s, s+n\mathcal{T}}^{*}\delta_{w_1},\mathcal{P}_{s, s+n\mathcal{T}}^{*}\delta_{w_2}\right)}\Gamma\left\{(w_1', w_2')\in H\times H : \rho_{r}(w_1',w_2')<\varepsilon\right\}\geq a,\]
for all $s\in\mathbb{R}$, where the metric $\rho_r$ is given by \eqref{familymetrics}.
\end{proposition}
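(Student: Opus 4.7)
The plan is to exhibit an accessible point for the stochastic dynamics, namely the unique globally exponentially stable $\mathcal{T}$-periodic solution $w^*(t)$ of the corresponding deterministic equation, and then to build the desired coupling from the product of two marginal hitting estimates.

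\emph{Step 1 (deterministic attractor).} Under $G_1 < 1/c_0$, a standard energy estimate using Ladyzhenskaya's inequality shows that the deterministic Navier--Stokes equation
\[
\pa_t v + B(\mc K v,v)=\nu\Delta v + f
\]
has a unique $\mathcal{T}$-periodic solution $w^*\in C(\mb R, H_1)$, and every solution $v(t;s,w_0)$ satisfies
\[
\|v(t;s,w_0)-w^*(t)\|\le e^{-\l(t-s)}\|w_0-w^*(s)\|,
\]
for some $\l=\l(\nu, G_1)>0$. In particular $M:=\sup_t\|w^*(t)\|<\infty$ is finite and independent of $s$.

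\emph{Step 2 (hitting a neighbourhood of $w^*$ with positive probability).} I will prove that, given any $\d>0$ and $R>0$, there exist $n_1=n_1(\d,R)\in\mb N$ and $p_0=p_0(\d,R,n)>0$ such that for all $n\ge n_1$,
\[
\inf_{s\in\mb R}\inf_{\|w_0\|\le R}\mathbf P\bigl(\|w_{s,s+n\mc T}(w_0)-w^*(s+n\mc T)\|<\d\bigr)\ge p_0.
\]
Decompose $w=v+z$ where $v$ is the deterministic solution with the same initial datum. Step 1 gives $\|v(s+n\mc T)-w^*(s+n\mc T)\|<\d/2$ for $n$ large. For $z$, I will use the fact that $z$ solves a linear-like equation with coefficients bounded by $\|v\|_{H_1}$ (itself controlled uniformly on $[s,s+n\mc T]$ by the Grashof estimate) and driven by $GdW$. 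A Gronwall-type enstrophy estimate against $v$ yields, on the small-ball event
\[
A_\xi:=\Bigl\{\sup_{t\in[s,s+n\mc T]}\|GW(t)-GW(s)\|_1\le \xi\Bigr\},
\]
a bound $\sup_{t\in[s,s+n\mc T]}\|z(t)\|\le \Psi(\xi,n,R)$ with $\Psi\to 0$ as $\xi\to 0$. The classical small-ball probability for Brownian motion together with the shift-invariance of the Wiener measure give $\mathbf P(A_\xi)\ge p(\xi, n\mc T)>0$ uniformly in $s$. Choosing $\xi$ so that $\Psi<\d/2$ finishes this step.

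\emph{Step 3 (coupling and the weighted metric).} For $w_1,w_2$ with $\|w_i\|\le R$, consider the product coupling $\Gamma_{\mathrm{ind}}:=\mc P^*_{s,s+n\mc T}\d_{w_1}\otimes\mc P^*_{s,s+n\mc T}\d_{w_2}$. By Step 2 applied to each marginal,
\[
\Gamma_{\mathrm{ind}}\bigl(\|w_i'-w^*(s+n\mc T)\|<\d,\ i=1,2\bigr)\ge p_0^2.
\]
On this event $\|w_i'\|\le M+\d$, so taking a straight-line path in \eqref{familymetrics} yields
\[
\rho_r(w_1',w_2')\le e^{r\eta(M+\d)^2}\|w_1'-w_2'\|\le 2C_{M,\eta}\d.
\]
Choosing $\d$ with $2C_{M,\eta}\d<\e$ gives $\Gamma_{\mathrm{ind}}\{\rho_r<\e\}\ge p_0^2=:a$, with $n_0=n_1(\d,R)$.

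\emph{Main obstacle.} The delicate point is the Gronwall bound in Step 2: the nonlinearity $B(\mc K\cdot,\cdot)$ is not Lipschitz in $H$, so a naive comparison of $w$ and $v$ blows up. The remedy is to exploit the \emph{a priori} enstrophy control on $v$ coming from $G_1<1/c_0$ (precisely the same contractive mechanism that gives Step 1) and to integrate by parts carefully to absorb the transport term $B(\mc K z, v)$ into the dissipation $\nu\|z\|_1^2$. This is the place where the Grashof hypothesis is used in an essential way; without it, long-time smallness of $\|GW\|$ need not imply long-time smallness of $\|z\|$, and the whole irreducibility argument collapses. Uniformity in $s$ is then automatic from $\mc T$-periodicity of $f$ and $w^*$ and from stationarity of the Wiener increments.
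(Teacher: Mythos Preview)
Your overall strategy matches the paper's: the accessible point is the globally stable $\mathcal{T}$-periodic solution of the deterministic equation (the paper calls it $z_t$), the coupling is the product measure, and the conversion from an $H$-ball around $z(s)$ to a $\rho_r$-ball in Step~3 is exactly the paper's final paragraph.

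The difference, and the gap, is in Step~2. You decompose $w=v+z$ with $v$ the deterministic solution from $w_0$, so $z$ solves an SPDE driven by $G\,dW$ with $z(s)=0$. But a direct energy estimate on $\|z\|^2$ via It\^o's formula produces the correction $\mathcal{B}_0\,dt$ and a martingale term $2\langle z,G\,dW\rangle$; neither is controlled pathwise by $\sup_t\|GW(t)-GW(s)\|_1$ on the small-ball event $A_\xi$, so the bound $\sup_t\|z(t)\|\le\Psi(\xi,n,R)\to 0$ does not follow from the Gronwall argument you sketch. Your ``main obstacle'' paragraph correctly flags the nonlinearity, but misses that $z$ is still a genuine It\^o process, which is the more basic obstruction to a pathwise estimate.

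The paper's remedy is to subtract the Ornstein--Uhlenbeck convolution $V_t=\int_s^t e^{\nu\Delta(t-r)}G\,dW_r$ first: its Lemma~3.11 gives $\sup_{t\in[s,T]}\|V_t\|_1\le\varepsilon_{\delta,T}\to 0$ on the small-ball event for $W$, and then $Z_t:=w_{s,t}-V_t$ solves a \emph{random PDE} with no stochastic integral. The paper then compares $Z_t$ directly to the periodic solution $z_t$ (rather than to your $v$); the difference $u_t=Z_t-z_t$ satisfies a random PDE perturbed by the small coefficient $V_t$, and a genuine pathwise Gronwall (Lemma~3.12) applies. The Grashof hypothesis enters exactly where you say, through the time-averaged enstrophy $\int\|z_\tau\|_1^2\,d\tau$ of the periodic solution, yielding a positive exponent $\beta=\nu-c_0^2\|f\|_\infty^2/(\alpha\nu^3)$. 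Your decomposition can be repaired by inserting the same OU subtraction into $z$, after which it essentially collapses to the paper's argument.
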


\vskip0.05in

As we mentioned in the introduction, to ensure the weak irreducibility in our work, we impose a condition on the Grashof number $G_1$ instead of the range condition. Under this condition $G_1<1/c_0$,  there is a unique stable  periodic solution $z(t)$ of the deterministic equation corresponding  to equation \eqref{NS}, see \cite{CV02b}. From this fact we can show that any two solutions of equation \eqref{NS} starting from different initial points at the same initial time $s$  have a positive probability  to be attracted by the periodic solution $z(t)$.  Since the driven force $f(t, x)$ is periodic, the solutions will recurrently enter into smaller neighborhood of the point $z(s)$ at periodic times $s+n\mathcal{T}$ for large $n\in\mathbb{N}$.  This gives us the weak irreducibility as in Proposition \ref{contractionirreducible}, which is an adaptation of the weak irreducibility introduced in \cite{HM08} to the time inhomogeneous and periodic setting. The proof  is based on the following Lemma \ref{usmall} and Lemma \ref{access}.

\vskip0.05in

For $\delta>0, T>s$, we let
\[\mathrm{\Omega}_{\delta, T} = \left\{\omega\in \mathrm{\Omega}: |W(\omega,t)|\leq\delta, \quad\text{for all}\quad  t\in[s, T]\right\},\]
where $|\cdot|$ denote the usual norm in $\mathbb{R}^d$.
Then by the property of Wiener process, there exists $\delta^* = \delta^*(\delta,T)$ such that
\begin{equation}
\mathbf{P}(\mathrm{\Omega}_{\delta, T} )\geq \delta^*>0.
\end{equation}\label{PB}
Let $V_t$ be the Ornstein-Uhlenbeck process given by
\[\partial_tV_t = \nu \mathrm{\Delta} V_t + GdW_t, \quad V(s) = 0.\]
By viewing the process $V_t$ as a stochastic convolution,  one has  the following
\begin{lemma}\cite{DZ14}\label{usmall}
For any $\delta>0, T>s$, there exists a deterministic constant $\varepsilon_{\delta,T}>0$ such that $\varepsilon_{\delta,T}\rightarrow 0 $ as $\delta\rightarrow0$ for $T$ fixed, and
\begin{equation}
\sup_{t\in[s, T]}\left\|V(t,\omega)\right\|_1\leq\varepsilon_{\delta,T},\text{ for all} \quad \omega\in \mathrm{\Omega}_{\delta, T}.
\end{equation}\label{smallconvolution}
\end{lemma}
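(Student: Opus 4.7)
The plan is to reduce $V(t)$ to a completely pathwise expression via integration by parts in the stochastic convolution, and then estimate each resulting term on the good event $\Omega_{\delta,T}$ using the input bound $|W(r,\omega)|\leq\delta$. Starting from the variation of constants representation
\[ V(t) = \sum_{i=1}^d \int_s^t e^{\nu(t-r)\Delta} g_i \, dW_i(r), \]
I would observe that each integrand $r\mapsto e^{\nu(t-r)\Delta}g_i$ is deterministic, takes values in $H_\infty$ and is continuously differentiable in $r$, so Itô's formula applied to $r\mapsto e^{\nu(t-r)\Delta}g_i\, W_i(r)$ gives the integration by parts identity
\[ \int_s^t e^{\nu(t-r)\Delta} g_i \, dW_i(r) = g_i W_i(t) - e^{\nu(t-s)\Delta} g_i W_i(s) + \int_s^t \nu \Delta e^{\nu(t-r)\Delta} g_i \, W_i(r)\, dr. \]
Summing over $i$ produces the pathwise identity
\[ V(t,\omega) = GW(t,\omega) - e^{\nu(t-s)\Delta} GW(s,\omega) + \int_s^t \nu\Delta e^{\nu(t-r)\Delta} GW(r,\omega)\, dr. \]

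Next, I would bound the $H_1$ norm of each term on $\Omega_{\delta,T}$, where every component satisfies $|W_i(r,\omega)|\leq|W(r,\omega)|\leq\delta$ for all $r\in[s,T]$. Since $\{e^{\nu\tau\Delta}\}_{\tau\geq 0}$ is a contraction semigroup on each interpolation space $H_k$, one has $\|e^{\nu(t-s)\Delta}g_i\|_1\leq\|g_i\|_1$. Moreover, because $g_i\in H_\infty\subset H_3$, the Laplacian commutes through the semigroup and
\[ \|\nu\Delta e^{\nu(t-r)\Delta}g_i\|_1 = \|e^{\nu(t-r)\Delta}\nu\Delta g_i\|_1 \leq \nu\|g_i\|_3. \]
Combining these bounds yields
\[ \sup_{t\in[s,T]}\|V(t,\omega)\|_1 \leq 2\delta\sum_{i=1}^d\|g_i\|_1 + \delta(T-s)\nu\sum_{i=1}^d\|g_i\|_3 =: \varepsilon_{\delta,T}, \]
for every $\omega\in\Omega_{\delta,T}$. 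This is a deterministic quantity that vanishes as $\delta\to 0$ with $T-s$ fixed, which is exactly what the lemma asserts.

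The only delicate point is justifying the pathwise integration by parts rigorously for an $H_\infty$-valued deterministic integrand against scalar Brownian motion; this is the classical factorization / integration by parts scheme for stochastic convolutions covered in \cite{DZ14}. Because $G$ sends $\mathbb{R}^d$ into $H_\infty$ and $e^{\nu\tau\Delta}$ is uniformly bounded on each $H_k$, no singularity arises at $r=t$ and no further regularity hypothesis is needed, so the estimate above is valid with the specific choice of state space $H_1$ appearing in the lemma. One could alternatively carry out the proof via the factorization method (representing $V$ as a fractional integral of a smoothed stochastic convolution), but the direct integration by parts above is shortest given the regularity of $G$ assumed in this paper.
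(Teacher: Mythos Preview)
Your proof is correct. The paper does not actually prove this lemma; it simply records it as a known fact with a citation to \cite{DZ14}, remarking only that $V_t$ is to be viewed as a stochastic convolution. Your integration-by-parts argument is a clean, self-contained derivation that exploits the high regularity $g_i\in H_\infty$ assumed in the paper to avoid any singularity at $r=t$, and it yields an explicit constant $\varepsilon_{\delta,T}$ linear in $\delta$. This is in fact one of the standard proofs underlying the cited result, so there is no substantive divergence in approach to report---you have simply filled in what the paper left to the reference.
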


Recall that $w_{s, t}: = w(t,\omega; s, w_0)$ denotes the solution to the Navier-Stokes equation \eqref{NS}.  Observe that  $Z_t := w_{s, t} - V_t$ solves the random PDE perturbed by the Ornstein-Uhlenbeck process $V_t$
\[\partial_t Z = \nu\mathrm{\Delta} Z - B\left(\mathcal{K}\left(Z + V_t\right), Z + V_t\right) +f(t), \quad Z(s)= w_0.\]
When the Grashof number $G_1<1/c_0$, there exists  a unique $\mathcal{T}$-periodic solution $z_t$ (see \cite{CV02b}) of the corresponding unperturbed deterministic equation
\begin{align}\label{eqperiodic}
\partial_t z = \nu\mathrm{\Delta} z - B(\mathcal{K}z, z) + f(t), \quad z(s) = z_s.
\end{align}
Then $u_t := Z_t - z_t$ solves the equation
\[ \partial_t u = \nu\mathrm{\Delta} u - B\left(\mathcal{K}\left(Z_t + V_t\right), Z_t + V_t\right) + B(\mathcal{K}z_t, z_t), \quad u(s) = w_0 - z_s \]
Note $- B\left(\mathcal{K}\left(Z_t + V_t\right), Z_t + V_t\right) + B(\mathcal{K}z_t, z_t) =- B\left(\mathcal{K}\left(u + V_t \right), u+ V_t + z_t\right) - B(\mathcal{K}z_t, u+V_t)$. So the equation for $u_t$ is actually
\begin{align}\label{equ}
 \partial_tu = \nu\mathrm{\Delta} u - B\left(\mathcal{K}\left(u + V_t \right), u+ V_t + z_t\right) - B(\mathcal{K}z_t, u+V_t), \quad u(s) = w_0 - z_s.
\end{align}
In what follows, we will estimate the $H$ norm of $u$ from the above equation so that one can see the attraction property of $z_t$ when the perturbation $V_t$ is sufficiently small.

\vskip0.05in

The following lemma shows that there is a positive probability such that, uniformly for initial values in any bounded region,  the process $u_t$  can eventually enter the ball  centered at the origin  with arbitrary small radius.
\begin{lemma}\label{access}
Suppose $G_1<1/c_0$. For arbitrary $R, \sigma>0$, there exists $\delta = \delta(R,\sigma)>0, T_0 = T_0(R, \sigma, \delta)>s$ such that, for $T\geq T_0$,
\begin{equation}
\left\|u(T, u_s)\right\|\leq \frac{\sigma}{2},  \quad  \text{on} \quad \mathrm{\Omega}_{\delta, T},
\end{equation}\label{decay}
for any $u_s\in B_R(0)\subset H$.
\end{lemma}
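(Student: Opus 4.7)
\smallskip

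\noindent\textbf{Proof proposal for Lemma \ref{access}.} The plan is to perform an $L^{2}$ energy estimate on equation \eqref{equ} for $u_{t}$, exploit the antisymmetry $\langle B(\mathcal{K}a,b),b\rangle=0$ to cancel the dangerous pure--$u$ cubic term, and then show that the residual nonlinear contributions involving $z_{t}$ and $V_{t}$ cannot destroy dissipation, precisely because $G_{1}<1/c_{0}$ pins $\|z_{t}\|_{1}$ below the dissipative threshold $\nu/c_{0}$, while $\|V_{t}\|_{1}$ can be made uniformly small on $\Omega_{\delta,T}$ via Lemma \ref{usmall}.

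First I would take $\langle \cdot, u\rangle$ in \eqref{equ}, split the nonlinear term as
\begin{align*}
B(\mathcal{K}(u+V_t),u+V_t+z_t)+B(\mathcal{K}z_t,u+V_t)
= B(\mathcal{K}(u+V_t),u) + B(\mathcal{K}z_t,u) + R(u,V_t,z_t),
\end{align*}
where $R$ collects the remaining terms that are either linear in $u$ or independent of $u$. The identity $\langle B(\mathcal{K}a,b),b\rangle=0$ kills the first two contributions when tested against $u$, so the energy identity reduces to
\begin{align*}
\tfrac{1}{2}\tfrac{d}{dt}\|u\|^{2} + \nu\|u\|_{1}^{2}
= -\langle B(\mathcal{K}u,z_t),u\rangle + \mathcal{E}(u,V_t,z_t),
\end{align*}
where $\mathcal{E}$ is at most linear in $u$ in each of its bilinear pieces. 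The key nonlinear term $\langle B(\mathcal{K}u,z_t),u\rangle$ is handled by Ladyzhenskaya, the identity $\langle B(\mathcal{K}u,z_t),u\rangle=-\langle B(\mathcal{K}u,u),z_t\rangle$, and $\|\mathcal{K}u\|_{1}\le\|u\|$, yielding $|\langle B(\mathcal{K}u,z_t),u\rangle|\le c_{0}\|z_t\|_{1}\|u\|\,\|u\|_{1}$.

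Next I would recall the standard a priori bound on the unique $\mathcal{T}$-periodic solution $z_t$ of \eqref{eqperiodic}, namely $\sup_{t}\|z_t\|_{1}\le \|f\|_{\infty}/\nu=G_{1}\nu$. Together with $\|u\|\le\|u\|_{1}$ (Poincar\'e on $\mathbb{T}^{2}$) this gives
\begin{align*}
\tfrac{d}{dt}\|u\|^{2} + 2\bigl(\nu - c_{0}\|z_t\|_{1} - c_{0}\|V_t\|_{1}\bigr)\|u\|_{1}^{2}
\le C\bigl(\|V_t\|_{1}^{2}+\|V_t\|_{1}\|z_t\|_{1}\bigr),
\end{align*}
after absorbing the $u$-linear pieces of $\mathcal{E}$ into a small fraction of $\nu\|u\|_{1}^{2}$ by Cauchy--Schwarz and Young. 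The hypothesis $G_{1}<1/c_{0}$ gives $\nu-c_{0}\|z_t\|_{1}\ge \nu(1-c_{0}G_{1})=:\beta>0$, so by choosing $\delta$ small enough that the $\varepsilon_{\delta,T}$ in Lemma \ref{usmall} satisfies $c_{0}\varepsilon_{\delta,T}\le\beta/2$, the coefficient stays bounded below by $\beta/2$ on $\Omega_{\delta,T}$, and the right-hand side is bounded by $C\varepsilon_{\delta,T}$.

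Finally, Gronwall's lemma yields $\|u(T)\|^{2}\le e^{-\beta(T-s)}\|u_s\|^{2}+C\varepsilon_{\delta,T}$ on $\Omega_{\delta,T}$ uniformly in $u_s\in B_{R}(0)$. For given $R,\sigma$, first fix $\delta_{1}$ so that $c_{0}\varepsilon_{\delta,T}\le\beta/2$ and $C\varepsilon_{\delta,T}\le\sigma^{2}/8$, then take $T_{0}$ so large that $e^{-\beta(T_{0}-s)}R^{2}\le \sigma^{2}/8$; shrinking $\delta$ further if necessary so the first condition holds on the interval $[s,T]$ for all $T\ge T_{0}$, we obtain $\|u(T)\|\le\sigma/2$ on $\Omega_{\delta,T}$. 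The main obstacle is the bookkeeping in the previous step: one must verify that the bound $\varepsilon_{\delta,T}\to0$ as $\delta\to0$ for \emph{each} fixed $T$ still allows a \emph{joint} choice of $(\delta,T_{0})$ with $T\ge T_{0}$; this is handled by first fixing a sufficiently large $T_{0}$ based on $\beta$, $R$, $\sigma$, and then shrinking $\delta$ relative to $T_{0}$, using that $\sup_{t\in[s,T]}\|V_t\|_{1}$ is monotone in $T$ for fixed $\delta$ only in the qualitative sense encoded by Lemma \ref{usmall}.
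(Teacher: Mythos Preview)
Your energy estimate and overall strategy are sound, but the argument hinges on the pointwise bound $\sup_t\|z_t\|_1\le \|f\|_\infty/\nu=G_1\nu$, and this bound is not justified. Obtaining it from the equation for $z_t$ would require testing \eqref{eqperiodic} against $-\Delta z$, and then you need $\langle B(\mathcal{K}z,z),\Delta z\rangle=0$. In the 2D vorticity formulation this identity is \emph{false} in general: it would amount to conservation of palinstrophy $\|\nabla w\|^2$ under 2D Euler transport, which does not hold (the classical 2D identity $b(u,u,Au)=0$ in velocity variables translates, after taking curl, only to $\int(u\cdot\nabla w)\,w=0$, not to $\int(u\cdot\nabla w)\,\Delta w=0$). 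The paper in fact only cites a qualitative bound $\|z_t\|_1\le R'$ from \cite{CV02a}, without the sharp constant you need; what \emph{is} available with the right constant is the time-integrated estimate $\int_s^t\|z_\tau\|_1^2\,d\tau\le \nu^{-3}\|f\|_\infty^2+\nu^{-2}(t-s)\|f\|_\infty^2$, coming directly from the $L^2$ energy balance for $z$.

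This is exactly why the paper does not absorb $c_0\|z_t\|_1$ directly into the dissipation. Instead it applies Young's inequality to $c_0\|u\|\,\|u\|_1\|z_t\|_1$ with a parameter $\alpha\in(0,1)$, obtaining a term $\frac{c_0^2}{\alpha\nu}\|z_t\|_1^2\|u\|^2$ that is linear in $\|u\|^2$; Gronwall with this time-dependent coefficient then reduces the question to controlling $\int_s^t\|z_\tau\|_1^2\,d\tau$, and the hypothesis $G_1<1/c_0$ enters as $\nu-\frac{c_0^2\|f\|_\infty^2}{\alpha\nu^3}>0$ for $\alpha$ close enough to $1$. Your route can be repaired by making this same switch: replace the step ``use $\|u\|\le\|u\|_1$ and the pointwise $\|z_t\|_1$ bound'' by ``use Young to trade $\|u\|_1$ for $\|u\|$ and then Gronwall against $\int\|z_\tau\|_1^2$''. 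After that, your treatment of the $V_t$ terms and the joint choice of $(\delta,T_0)$ goes through.
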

\begin{proof}
Taking $H$ inner product of equation \eqref{equ} with $u$, and using the fact that $\langle B(\mathcal{K}u, v),v\rangle = 0$  and $\left|\langle B(\mathcal{K}u, v),w\rangle\right|\leq c_0\|\mathcal{K}u\|_1\|v\|_1\|w\|_1=c_0\|u\|\|v\|_1\|w\|_1$ for any $u, v, w\in H$, and the Poincare inequality $\|u\|\leq \|u\|_1$ we have
\begin{align*}
\frac{1}{2}\partial_t\|u\|^2 &= -\nu\|\nabla u\|^2- \langle B(\mathcal{K}u,V_t+z_t),u\rangle - \langle B(\mathcal{K}V_t,V_t+z_t),u)-\langle B(\mathcal{K}z_t,V_t),u)\\
&\leq-\nu ||u||_1^2+ c_0\|u\|\|u\|_1\|z_t\|_1+c_0\|u\|_1^2\|V_t\|_1+c_0\|V_t\|_1^2\|u\|_1+2c_0\|V_t\|_1\|z_t\|_1\|u\|_1.
\end{align*}
By Lemma \ref{usmall}, $||V_t||_1\leq\varepsilon :=\varepsilon_{\delta,T} $ for $\delta, T$ that will be determined later. And from \cite{CV02a}, it is known that $\|z_t\|_1\leq R' $ for any $t\in \mathbb{R}$ , where $R'$ is a constant depending on $\nu$ and $\|f\|_{\infty}$. Therefore there exists a constant $C = C(\nu,c_0,\|f\|_{\infty})$ such that for fixed $\alpha\in(0,1)$,
\begin{align*}
\partial_t\|u\|^2 +2\nu ||u||_1^2&\leq 2c_0\|u\|\|u\|_1\|z_t\|_1 + 2c_0\varepsilon^2\|u\|_1+2 c_0\varepsilon\|u\|_1^2+4c_0\varepsilon\|z_t\|_1\|u\|_1\\
&\leq c_0^2(\alpha\nu)^{-1}\|u\|^2\|z_t\|_1^2 + \left(\alpha\nu+C\varepsilon \right)\|u\|_1^2+ C\varepsilon.
\end{align*}
Choosing $\delta$ sufficiently small so that $\alpha\nu+C\varepsilon\leq \nu$, then we have
\begin{align*}
\partial_t\|u\|^2 \leq -\left(\nu -  c_0^2(\alpha\nu)^{-1}\|z_t\|_1^2 \right)\|u\|^2+ C\varepsilon.
\end{align*}
By Gronwall's inequality,
\begin{align*}
\|u(t)\|^2&\leq \|u_s\|^2 \exp\left(-\int_s^t\left(\nu -   c_0^2(\alpha\nu)^{-1}\|z_\tau\|_1^2\right)d\tau\right) + C \varepsilon \int_s^t\exp\left(-\int_\tau^t\left(\nu -   c_0^2(\alpha\nu)^{-1}\|z_r\|_1^2\right)dr\right) d\tau\\
&: = I_1 + I_2.
\end{align*}
Multiplying equation \eqref{eqperiodic} by $z_t$ and integrating over the torus $\mathbb{T}^2$, one has
\begin{align}\label{periodicestimate}
\partial_t\|z_t\|^2\leq -\nu\|z_t\|_1^2 + \nu^{-1}\|f(t)\|^2.
\end{align}
Hence we deduce the energy inequality
\begin{align}\label{energyinequality}
\|z_t\|^2+\nu\int_s^t\|z_\tau\|_1^2d\tau\leq \|z_s\|^2+\nu^{-1}\int_s^t\|f(\tau)\|^2d\tau.
\end{align}
Again applying  Gronwall's inequality to \eqref{periodicestimate}, one obtains
\[\|z_t\|^2\leq e^{-\nu(t-s)}\|z_s\|^2+\frac{\|f\|_{\infty}^2}{\nu^2}\left(1- e^{-\nu(t-s)}\right),\]
which implies by using the periodicity of $z(t)$ that $\|z_t\|^2\leq \frac{\|f\|_{\infty}^2}{\nu^2}$ for every $t$.
Now it follows from the energy inequality \eqref{energyinequality}  that
\begin{align*}
\int_s^t\|z_\tau\|_1^2d\tau \leq \nu^{-1}\|z_s\|^2+\nu^{-2}(t-s)\|f\|_{\infty}^2 \leq \nu^{-3}\|f\|_{\infty}^2+\nu^{-2}(t-s)\|f\|_{\infty}^2.
\end{align*}
Therefore
\[I_1 \leq C \|u_s\|^2 e^{-\beta(t-s)},\]
where the constant $C = \exp\left(\frac{c_0^2\|f\|_{\infty}^2}{\alpha\nu^4}\right)$, and $\beta = \nu -\frac{c_0^2\|f\|_{\infty}^2}{\alpha\nu^3}$. Since we have assumed  $G_1= \|f\|_{\infty}/\nu^2<1/c_0$, one can choose $\alpha\in(0,1)$ such that $G_1<\sqrt{\alpha}/c_0$, hence $\beta>0$.
The estimate for $I_1$ in turn implies that
\[I_2\leq \frac{C\varepsilon}{\beta}\left(1- e^{-\beta(t-s)}\right).\]
The proof  is  then complete by letting $\delta$ sufficiently small and $T$ sufficiently large so that $I_1+ I_2<\frac{\sigma}{2}$ for all $t\geq T$.
\end{proof}

Now we can show the weak irreducibility.
\begin{proof}[Proof of Proposition \ref{contractionirreducible}.]
By Lemma \ref{usmall} and Lemma \ref{access}, we know that  for any $\sigma> 0$ and $R>0$,  there is small enough $\delta>0$  and  $T_0(\sigma, R, \delta)>s$ such that for $T>T_0$, we have for any $w_0\in B_R(0)$,
\[\mathbf{P}\left(\|w(T,s,w_0)-z(T)\|<\sigma\right)\geq \mathbf{P}\left(\|u(T,u_0)\|<\frac{\sigma}{2} \quad\text{and}\quad \|V_{T}\|<\frac{\sigma}{2}\right )\geq \mathbf{P}(\mathrm{\Omega}_{\delta,T})>0,\]
since $\left\|w_{s, t} - z_t\right\|\leq \left\|w_{s, t} - z_t - V_t\right\| + \left\|V_t\right\| = \left\|u_t\right\| + \left\|V_t\right\|$. Let $v = z(s)$. Then $z(s+nT) = v$ for $n\in\mathbb{Z}$ since $z_t$ is a $\mathcal{T}$-periodic solution. By replacing $T$ with $s + n\mathcal{T}$ for  $n\in\mathbb{R}$, there is $n_0>0$ such that for all $n\geq n_0$,  by choosing $\delta^* = \mathbf{P}(\mathrm{\Omega}_{\delta,s+n\mathcal{T}})>0$, we have
\begin{align}\label{prop3.3}
\inf_{\|w_0\|\leq R}\mathcal{P}_{s, s+n\mathcal{T}}(w_0, B_{\sigma}(v))\geq \delta^*>0,
\end{align}
where $B_{\sigma}(v) $ denotes the ball in $(H, \|\cdot\|)$ with radius $\sigma$ and centered at $v$.
Now for any $\varepsilon>0$,  we choose $\sigma = \frac{\varepsilon}{2V(\|v\|+\varepsilon)}$, here $V(w) = e^{\eta\|w\|^2}$ for $w\in H$ is the Lyapunov function. Then  $\widetilde{B}: = B_{\sigma}(v)\times B_{\sigma}(v)\subset \{(w_1', w_2')\in H\times H|\rho_r(w_1', w_2')<\varepsilon\}$. It now suffices to show that there is a $n_0>0$  such that for $n>n_0$, there is $a>0$, and
\[\inf_{\|w_1\|, \|w_2\|\leq R}\mathcal{P}_{s, s+n\mathcal{T}}^{*}\delta_{w_1}\otimes\mathcal{P}_{s, s+n\mathcal{T}}^{*}\delta_{w_2}(\widetilde{B})\geq a.\]
But this is equivalent to $\inf_{\|w_1\|, \|w_2\|\leq R}\mathcal{P}_{s, s+n\mathcal{T}}(w_1,B(v, \sigma))\mathcal{P}_{s, s+n\mathcal{T}}(w_2, B(v, \sigma))\geq a$, which is a consequence of \eqref{prop3.3}.
\end{proof}
As a byproduct, we have the following corollary, which is another form of weak irreducibility. By combining it with the asymptotic strong Feller property for time inhomogeneous system proposed in \cite{DD08}, one can show uniqueness of periodic invariant measures. This is a generalization of the Doob and Khasminskii type argument from the seminal work \cite{HM06,HM11} to the time inhomogenunous case.
\begin{corollary}\label{DoobKha}
Assume $G_1<1/c_0$. Then for any $w_1, w_2\in H$ and $\tau\in\mathbb{R}$, there exists a $v\in H$ such that  for any  $\varepsilon>0$, there exist $n_1, n_2\in\mathbb{N}$  such that $\mathcal{P}_{\tau, \tau+n_i\mathcal{T}}(w_i, B_{\varepsilon}(v))>0$. Furthermore, if $\mathcal{P}_{s, t}$ satisfies the asymptotic strong Feller property as in \cite{DD08}, then system \eqref{NS} has at most one periodic invariant measure.
\end{corollary}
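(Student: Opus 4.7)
For the first assertion, the plan is to take $v := z(\tau)$, where $z(\cdot)$ is the unique globally stable $\mathcal{T}$-periodic solution of the deterministic Navier-Stokes equation \eqref{eqperiodic} (whose existence under the hypothesis $G_1 < 1/c_0$ was recalled in the discussion before Lemma \ref{access}). With this choice, the accessibility statement is a direct consequence of the proof of Proposition \ref{contractionirreducible}. Specifically, fix $R > 0$ with $\|w_i\| \le R$ for $i=1,2$ and fix $\varepsilon > 0$. Applying Lemmas \ref{usmall} and \ref{access} with $\sigma = \varepsilon$ and initial time $s = \tau$ produces a threshold $n_0 = n_0(R, \varepsilon, \tau)$ and a constant $\delta^* > 0$ such that for every $n \ge n_0$,
\[
\mathcal{P}_{\tau,\tau + n\mathcal{T}}(w_i, B_\varepsilon(v)) \;\ge\; \delta^* \;>\; 0,
\]
because $z(\tau + n\mathcal{T}) = z(\tau) = v$ by $\mathcal{T}$-periodicity of $z$. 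Taking $n_1 = n_2 := n_0$ gives the claim.

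For the second assertion, the plan is to combine this accessibility with the asymptotic strong Feller property (established in Corollary \ref{ASF}) and run the Doob-Khasminskii argument of \cite{HM06,HM11} adapted to the periodic setting as in \cite{DD08}. Fix $\tau \in \mathbb{R}$ and suppose $\{\mu_t\}$, $\{\widetilde{\mu}_t\}$ are two periodic invariant measures for $\mathcal{P}_{s,t}$. The restriction of the solution process to the sampled times $\{\tau + n\mathcal{T}\}_{n\ge 0}$ defines a time-homogeneous Markov chain on $H$ with transition kernel $\mathcal{Q} := \mathcal{P}_{\tau,\tau + \mathcal{T}}$, and by the periodic invariance $\mathcal{P}_{\tau,\tau+\mathcal{T}}^*\mu_\tau = \mu_{\tau+\mathcal{T}} = \mu_\tau$, both $\mu_\tau$ and $\widetilde{\mu}_\tau$ are $\mathcal{Q}$-invariant. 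Since $\mu_t = \mathcal{P}_{\tau,t}^*\mu_\tau$ for $t \ge \tau$ and periodicity extends this to all of $\mathbb{R}$, it suffices to prove $\mu_\tau = \widetilde{\mu}_\tau$.

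The key observation is that $v = z(\tau)$ lies in the topological support of every $\mathcal{Q}$-invariant probability measure. Indeed, for any such measure $\mu$, pick $R$ large enough that $\mu(B_R(0)) > 0$; then for every $\varepsilon > 0$ and every $n \ge n_0(R,\varepsilon,\tau)$,
\[
\mu(B_\varepsilon(v)) \;=\; \int_H \mathcal{P}_{\tau,\tau + n\mathcal{T}}(w, B_\varepsilon(v))\,\mu(dw) \;\ge\; \delta^* \mu(B_R(0)) \;>\; 0.
\]
So $v$ is a common point of $\mathrm{supp}(\mu_\tau) \cap \mathrm{supp}(\widetilde{\mu}_\tau)$. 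Now the asymptotic strong Feller property of $\mathcal{P}_{s,t}$ from \cite{DD08} transfers to $\mathcal{Q}$ (this is immediate, since ASF is preserved when sampling at a fixed time). By the standard consequence of ASF (Theorem 3.16 of \cite{HM06}), any two distinct ergodic $\mathcal{Q}$-invariant probability measures have disjoint topological supports. Decomposing $\mu_\tau$ and $\widetilde{\mu}_\tau$ into ergodic components, each component must contain $v$ in its support and hence no two components can be distinct; thus $\mu_\tau = \widetilde{\mu}_\tau$, proving uniqueness.

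The main obstacle I anticipate is the clean execution of the last step: transferring ASF and its uniqueness consequence from the continuous-time family $\mathcal{P}_{s,t}$ to the discrete chain $\mathcal{Q}$, and verifying that the ergodic decomposition of periodic invariant measures proceeds as in \cite{DD08} so that disjointness of supports of ergodic components is available in the form needed here. Everything else is a bookkeeping combination of the accessibility estimate \eqref{prop3.3} with periodicity.
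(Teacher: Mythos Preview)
Your proof of the first assertion is the same as the paper's: both take $v=z(\tau)$ and invoke the estimate \eqref{prop3.3} from the proof of Proposition~\ref{contractionirreducible}.

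For the second assertion your route differs genuinely from the paper's. The paper argues by contradiction at the level of the \emph{homogenized} continuous-time semigroup: it takes two distinct ergodic invariant measures for $(\mathcal{P}_\tau)_{\tau\ge 0}$, passes to their $\mathcal{T}$-periodic disintegrations $(\mu_t^1),(\mu_t^2)$, and invokes a time-inhomogeneous disjoint-support result (Theorem 4.32 of \cite{RM13}) to conclude $\operatorname{supp}(\mu_t^1)\cap\operatorname{supp}(\mu_t^2)=\emptyset$ for every $t$; it then picks $w_i\in\operatorname{supp}(\mu_\tau^i)$, uses the Feller property to thicken the accessibility of $v$ to a $\delta$-ball around each $w_i$, and derives $\mu_\tau^i(B_\varepsilon(v))>0$, a contradiction. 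You instead reduce everything to the \emph{discrete} time-homogeneous chain $\mathcal{Q}=\mathcal{P}_{\tau,\tau+\mathcal{T}}$, show directly that $v$ lies in the support of every $\mathcal{Q}$-invariant measure by integrating the uniform lower bound $\delta^*$ over $B_R(0)$, and then appeal to Theorem~3.16 of \cite{HM06}. Your route is more self-contained (it avoids both the homogenized semigroup and the Feller thickening step), at the cost of the transfer you flagged.

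That transfer is indeed the only point needing care, and it is not quite ``immediate'' as you wrote. The hypothesis is ASF in the sense of \cite{DD08} for the two-parameter family $\mathcal{P}_{s,t}$; you need ASF in the sense of \cite{HM06} for the one-parameter discrete chain $\mathcal{Q}^n=\mathcal{P}_{\tau,\tau+n\mathcal{T}}$. In the present paper this follows because the gradient estimate of Proposition~\ref{pre_gradientinequalityprop} (hence Corollary~\ref{ASF}) holds uniformly in the initial time and for every $t\ge 0$, so specializing to $t=n\mathcal{T}$ yields exactly the \cite{HM06}-type ASF bound for $\mathcal{Q}$. If you are assuming only the abstract \cite{DD08} ASF, you should state explicitly that sampling at the periodic grid $\{\tau+n\mathcal{T}\}$ yields a totally separating sequence for $\mathcal{Q}$; this is true here but is a statement about the specific form of the ASF estimate, not a formal tautology.
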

\begin{proof}

Suppose there are more than one $\mathcal{T}$-periodic invariant measure. Then there are at least two invariant measures for the associated homogenized semigroup. Let $\mu^1$ and $\mu^2$ be two distinct ergodic measures of $\left(\mathcal{P}_{\tau}\right)_{\tau \geq 0}$ and denote the corresponding $\mathcal{T}$-periodic continuous disintegrations by $\left(\mu_{t}^1\right)_{t \in \mathbb{R}}$ and  $\left(\mu_{t}^2\right)_{t \in \mathbb{R}}$. Since $\left(\mathcal{P}_{s, t}\right)_{s \leq t}$ is asymptotically strong Feller, it follows from Theorem 4.32 in \cite{RM13} that $v \notin \operatorname{supp}\left(\mu_{t}^1\right) \cap \operatorname{supp}\left(\mu_{t}^2\right)$ for any $t \in \mathbb{R}$ and $v \in H,$ i.e., $\operatorname{supp}\left(\mu_{t}^1\right) \cap$ $\operatorname{supp}\left(\mu_{t}^2\right)=\emptyset$ for all $t \in \mathbb{R}$.
Now fix a $\tau\in \mathbb{R}$ and let $w_i\in \operatorname{supp}\left(\mu_{\tau}^i\right)$ for $i=1, 2$. By the weak irreducibility, there is a $v\in H $ such that for any $\varepsilon>0$, there exist $n_1,n_2\in\mathbb{N}$ with $\mathcal{P}_{\tau, \tau+n_i\mathcal{T}}(w_i, B_{\varepsilon}(v))>0$. Since $\left(\mathcal{P}_{s, t}\right)_{s \leq t}$ is Feller, and $B_{\varepsilon}(v)$ is open, there exists $\delta>0$ such that \[\inf_{u\in B_{\delta}(w_i)}\mathcal{P}_{\tau, \tau+ n_i\mathcal{T}}(u, B_{\varepsilon}(v))>0.\]
Since $w_i\in \operatorname{supp}\left(\mu_{\tau}^i\right)$, we have $\mu_{\tau}^i(B_{\delta}(w_i))>0$. Note $\mu_{\tau+n_i\mathcal{T}}^i = \mu_{\tau}^i$ by the $\mathcal{T}$-periodicity. Therefore
\begin{align*}
\mu_{\tau}^i(B_{\varepsilon}(v))& = \mu_{\tau+n_i\mathcal{T}}^i(B_{\varepsilon}(v)) = \int_{H}\mathbbm{I}_{B_{\varepsilon}(v)}(u)\mu_{\tau+n_i\mathcal{T}}^i(du)=\int_{H}\mathcal{P}_{\tau, \tau+n_i\mathcal{T}}\mathbbm{I}_{B_{\varepsilon}(v)}(u)\mu_{\tau}^i(du)\\
&\geq \int_{B_{\delta}(w_i)}\mathcal{P}_{\tau, \tau+n_i\mathcal{T}}\mathbbm{I}_{B_{\varepsilon}(v)}(u)\mu_{\tau}^i(du)\geq \mu_{\tau}^i(B_{\delta}(w_i))\inf\limits_{u\in B_{\delta}(w_i)}\mathcal{P}_{\tau, \tau+n_i\mathcal{T}}(u, B_{\varepsilon}(v))>0
\end{align*}
Since $\varepsilon$ is arbitrary, this shows that $v \in \operatorname{supp}\left(\mu_{\tau}^1\right) \cap \operatorname{supp}\left(\mu_{\tau}^2\right)$, which gives the desired contradiction.
\end{proof}
\begin{remark}
 The weak irreducibility in this subsection is inspired by that in the time homogeneous case \cite{HM08,HM11}. The difference is that here we need to work with the time inhomogeneity and periodicity. The asymptotic strong Feller property proposed in \cite{DD08} will be proved in Corollary \ref{ASF}.
\end{remark}

\subsection{Contraction on $\mathcal{P}(H)$}\label{section-contraction}
In this subsection, we prove Theorem \ref{contractiontransition}, namely, the strong contraction property \eqref{eqcontraction} of  the transition operator $\mathcal{P}_{s, t}^{*}$  when acting on $\mathcal{P}(H)$.  The result can be regarded as an extension of the work of Hairer and Mattingly \cite{HM08} to the time inhomogeneous setting, where they proved the existence of a unique invariant measure that is exponentially mixing under the Wasserstein metric $\rho$. The idea behind their result  dates back to the early work of Dobelin \cite{Doe37} and Harris \cite{Har56} for finite dimensional systems.
The proof here will be accomplished through a combination of the weak irreducibility from subsection \ref{weakLya}, the Lyapunov structure as in Proposition \ref{pre_Lya}, and the following gradient estimate.
\begin{proposition}\label{pre_gradientinequalityprop}
Assume $A_{\infty}=H$. Then there exists constant $\eta_0>0$ so that for every $\eta\in (0, \eta_0]$ and  $a>0$
there exists constants $ C = C(\eta, a)>0$ and $p\in(0,1)$ such that
\begin{align*}
\left\|\nabla \mathcal{P}_{s, s+t} \phi(w)\right\| \leq C \exp(p\eta\|w\|^2)\left(\sqrt{\left(\mathcal{P}_{s, s+t}|\phi|^{2}\right)(w)}+ e^{-at} \sqrt{\left(\mathcal{P}_{s, s+t}\|\nabla \phi\|^{2}\right)(w)}\right)
\end{align*}
for every  Frechet differentiable function $\phi$, every $w \in H, s\in\mathbb{R}$ and  $t\geq 0$. Here $ C(\eta, a)$ does not depend on initial condition $(s, w)$ and $\phi$.
\end{proposition}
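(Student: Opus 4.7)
\vskip 0.05in
\noindent\textbf{Proof proposal.} The plan is to follow the Malliavin calculus strategy of Hairer--Mattingly \cite{HM06,HM11}, adapted to our time-inhomogeneous setting. The starting point is the variation formula
\[
\nabla\mathcal{P}_{s,s+t}\phi(w)\,\xi = \mathbf{E}\bigl[\langle\nabla\phi(\Phi_{s,s+t}(w)),\, J_{s,s+t}\xi\rangle\bigr], \quad \xi\in H,
\]
where $J_{s,r}\xi := (\nabla\Phi_{s,r}(w))\xi$ is the linearization of the flow. Denote by $\mathcal{A}_{s,r}$ the random operator $v\mapsto \int_s^r J_{\tau,r}G\,v(\tau)\,d\tau$, so that $\mathcal{A}_{s,s+t}v$ equals the Malliavin derivative $\mathcal{D}_v\Phi_{s,s+t}(w)$ in the direction $v\in L^2([s,s+t],\mathbb{R}^d)$, and let $\mathcal{M}_{s,s+t}:=\mathcal{A}_{s,s+t}\mathcal{A}_{s,s+t}^{*}$ be the associated Malliavin covariance. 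Since the noise coefficient $G$ does not depend on time and all estimates on $J_{s,r}$ and its second variation depend on $f$ only through time-averaged integrals that are uniformly bounded thanks to the periodicity of $f$, every key estimate from \cite{HM06,HM11} can be arranged to be uniform in $s\in\mathbb{R}$.

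The next step is to construct, for any desired decay rate $a>0$, a control $v$ adapted to $\mathcal{F}_{\cdot}$ with $\mathcal{A}_{s,s+t}v \approx J_{s,s+t}\xi$. Split $[s,s+t]$ into unit subintervals $I_n=[s+n,s+n+1]$, and on each $I_n$ use the regularized pseudoinverse of the Malliavin matrix: define
\[
v|_{I_n} := \mathcal{A}_{s+n,s+n+1}^{*}(\mathcal{M}_{s+n,s+n+1}+\beta_n)^{-1} J_{s,s+n}\xi - (\text{correction from past}),
\]
and set $\rho_n := J_{s,s+n+1}\xi - \mathcal{A}_{s,s+n+1}v|_{[s,s+n+1]}$. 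The inductive construction arranges $\rho_n = \beta_n(\mathcal{M}_{s+n,s+n+1}+\beta_n)^{-1}J_{s,s+n}\,\xi$ (propagated forward by $J$), whose size is controlled by the inverse Malliavin spectrum together with a Lyapunov bound on $\|J_{s,s+n}\xi\|$. Choosing $\beta_n \to 0$ at a rate governed by the spectral bounds on $\mathcal{M}$ (available under $A_\infty=H$), and using the uniform-in-$s$ Jacobian estimate \eqref{pre_eqLya01}, one obtains
\[
\mathbf{E}\|\rho_t\|^{2} \le C\,e^{-2at}\exp(2p\eta\|w\|^{2}), \qquad \mathbf{E}\int_s^{s+t}|v(r)|^{2}dr \le C\exp(2p\eta\|w\|^{2}),
\]
for some $p\in(0,1)$ provided $\eta\in(0,\eta_0]$ for some small $\eta_0>0$.

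With $J_{s,s+t}\xi = \mathcal{A}_{s,s+t}v+\rho_t$ in hand, Malliavin integration by parts yields
\[
\nabla\mathcal{P}_{s,s+t}\phi(w)\,\xi = \mathbf{E}\!\left[\phi(\Phi_{s,s+t}(w))\!\int_s^{s+t}\!v(r)\,dW(r)\right] + \mathbf{E}\bigl[\langle\nabla\phi(\Phi_{s,s+t}(w)),\rho_t\rangle\bigr].
\]
Two applications of Cauchy--Schwarz, together with the It\^o isometry and the two bounds on $v$ and $\rho_t$ above, give exactly the claimed inequality with constant $C=C(\eta,a)$ independent of $s$, $w$, and $\phi$.

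The main obstacle is the spectral step: controlling $(\mathcal{M}_{s+n,s+n+1}+\beta)^{-1}$ on a set of high probability, which is where the condition $A_\infty=H$ is essential. This is the infinite-dimensional H\"ormander--type theorem of \cite{HM06}, and verifying that its quantitative conclusions persist uniformly in the initial time $s$ is the nontrivial technical point. All other ingredients---Lyapunov and Jacobian bounds, the construction of $v$, the iteration and the telescoping of $\rho_n$---transfer mechanically from \cite{HM08,HM11} because the stochastic part of the equation is autonomous and the forcing $f$ enters only through uniformly bounded moments of $\|\Phi_{s,\cdot}(w)\|$ and $\|J_{s,\cdot}\xi\|$.
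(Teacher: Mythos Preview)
Your strategy is the same as the paper's---Malliavin integration by parts, a recursively constructed control $v$ via a Tikhonov-regularized Malliavin matrix, and the H\"ormander-type spectral bound on $\mathcal{M}$ under $A_\infty=H$---and the paper confirms that all estimates carry over uniformly in $s$.

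There are, however, a few technical slips in your write-up worth correcting. First, the control $v$ is \emph{not} adapted: on each active interval it involves $\mathcal{M}_{s+n,s+n+1}$, which depends on the Brownian path over that whole interval. Consequently the stochastic integral $\int v\,dW$ is a Skorohod integral, and the ``It\^o isometry'' you invoke does not apply; one needs the Malliavin-calculus bound on the Skorohod integral (this is where the second-variation operator $K$ enters, cf.\ \cite{HM06}). Second, the paper (following \cite{HM06}) alternates: the control acts only on $[s+2n,s+2n+1)$ and is set to zero on $[s+2n+1,s+2n+2)$, which is what makes the recursion well-posed and the Skorohod-integral estimate tractable. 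Third, the regularization parameter $\beta$ is \emph{fixed} (chosen small once, depending on the target decay rate), not sent to zero along $n$; the exponential decay of the residual $\mathfrak{R}_{s+2n}$ comes from iterating a one-step contraction $\mathbf{E}(\|\mathfrak{R}_{s+2(n+1)}\|^p\mid\mathcal{F}_{s+2n})\le \gamma e^{\eta\|w_{s,s+2n}\|^2}\|\mathfrak{R}_{s+2n}\|^p$, not from $\beta_n\to 0$. Finally, the control on each block should act on the current residual $\mathfrak{R}_{s+2n}$, not on $J_{s,s+n}\xi$; your ``correction from past'' is precisely this replacement, and it is essential since $\|J_{s,s+n}\xi\|$ typically grows while $\|\mathfrak{R}_{s+2n}\|$ is what decays.
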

The proof of this proposition is much involved, which requires an infinite dimensional version of the Malliavin calculus and an infinite dimensional Hörmander type theorem. In the time homogeneous case, it has been well established in the works from \cite{BM07,HM06,HM11,MP06}. Since the bounds for the solution of equation \eqref{NS} behave in a uniform way with respect to the initial time, the proof of Proposition \ref{pre_gradientinequalityprop} in the time inhomogeneous case is essentially the same as that in \cite{HM06,HM11}. For the reader's convenience, we supply a proof in Appendix \ref{gradientinequality}.

\vskip0.05in

As in \cite{HM08}, to prove Theorem \ref{contractiontransition}, we use a metric $d$ on $H$ that is equivalent to $\rho$ but easier to handle with the estimates from previous subsections.  Recall that $r_0$ is the constant from the Lyapunov structure in Proposition \ref{pre_Lya}, and $\rho_r$ is the metric defined as in \eqref{familymetrics}. For constants $r\in [r_0, 1)$,
$\delta>0$ and $\beta\in (0, 1)$, the metric $d$ is defined as
\begin{align}\label{thedistanced}
d(w_1, w_2) = \left(1\land \frac{\rho_r(w_1, w_2)}{\delta}\right)+\beta\rho(w_1, w_2),
\end{align}
which is equivalent to $\rho$ since $\beta\rho(w_1, w_2)\leq d(w_1, w_2)\leq (\delta^{-1}+\beta)\rho(w_1, w_2).$

\vskip0.05in

We first give a lemma that can reduce the contraction \eqref{eqcontraction} to a relatively simpler case. The first part of the  lemma allows us to extend the contraction  of the transition operator on $H$ (embedded in $\mathcal{P}(H)$) to a contraction on $\mathcal{P}(H)$. And the second part asserts that one can obtain the contraction for all times once the transition operator is a contraction at a particular time.
\begin{lemma}\label{contractlemma0} We have
\begin{enumerate}
\item For  $s\in \mathbb{R}, t\geq 0$, and any given distance $d$ on $H$, if $d(\mathcal{P}_{s, s+t}^{*}\delta_{w_1},\mathcal{P}_{s, s+t}^{*}\delta_{{w_2}} )\leq \alpha d(w_1, {w_2})$,  for any $w_1, {w_2}\in H$, then $d(\mathcal{P}_{s, s+t}^{*}\mu_1,\mathcal{P}_{s, s+t}^{*}\mu_2)\leq \alpha d(\mu_1, \mu_2)$ for any $\mu_1, \mu_2\in \mathcal{P}(H)$.
\item If there are  $N\in\mathbb{N}$, $\alpha\in (0, 1)$, such that for any $w_1, {w_2}\in H$, $r\in [r_0, 1]$ and $s\in\mathbb{R}$, $\rho_r(\mathcal{P}_{s, s+ N\mathcal{T}}^{*}\delta_{w_1},\mathcal{P}_{s, s+N\mathcal{T}}^{*}\delta_{w_1})\leq \alpha \rho_r(w_1, {w_2})$. Then there are $C>0, \gamma>0$ such that
\[\rho_r(\mathcal{P}_{s, s+t}^{*}\mu_1,\mathcal{P}_{s, s+t}^{*}\mu_2)\leq Ce^{-\gamma t} \rho_r(\mu_1, \mu_2),\]
 for any $t\geq 0$.
\end{enumerate}
\end{lemma}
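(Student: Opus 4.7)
The plan is to handle the two parts in order, using part (1) as a tool for part (2).

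\textbf{Part (1).} I would use the standard Kantorovich gluing construction for couplings. Fix $\mu_1,\mu_2\in\mathcal{P}(H)$ and $\varepsilon>0$, and choose an $\varepsilon$-optimal coupling $\Gamma\in\mathcal{C}(\mu_1,\mu_2)$ with $\int d(w_1,w_2)\,\Gamma(dw_1 dw_2)\le d(\mu_1,\mu_2)+\varepsilon$. Since $H$ is Polish and $(w_1,w_2)\mapsto \mathcal{P}_{s,s+t}^*\delta_{w_i}$ is Borel measurable into $\mathcal{P}(H)$, a measurable selection theorem supplies a Borel family $\{\Pi_{w_1,w_2}\}$ of $\varepsilon$-optimal couplings of $(\mathcal{P}_{s,s+t}^*\delta_{w_1},\mathcal{P}_{s,s+t}^*\delta_{w_2})$. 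Gluing these produces $\widetilde\Gamma(A\times B):=\int\Pi_{w_1,w_2}(A\times B)\,\Gamma(dw_1 dw_2)$, which is a coupling of $(\mathcal{P}_{s,s+t}^*\mu_1,\mathcal{P}_{s,s+t}^*\mu_2)$. Integrating the hypothesis $\int d\,d\Pi_{w_1,w_2}\le \alpha d(w_1,w_2)+\varepsilon$ against $\Gamma$ by Fubini, then letting $\varepsilon\to 0$, gives the claim.

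\textbf{Part (2).} First I would apply part (1) with $d=\rho_r$ to lift the hypothesis from Dirac measures to arbitrary probability measures, yielding $\rho_r(\mathcal{P}_{s,s+N\mathcal{T}}^*\mu_1,\mathcal{P}_{s,s+N\mathcal{T}}^*\mu_2)\le\alpha\,\rho_r(\mu_1,\mu_2)$ uniformly in $s$. Because the hypothesis holds for every initial time, iterating with the evolution identity $\mathcal{P}_{s,s+kN\mathcal{T}}^*=\mathcal{P}_{s+(k-1)N\mathcal{T},s+kN\mathcal{T}}^*\cdots\mathcal{P}_{s,s+N\mathcal{T}}^*$ produces $\rho_r(\mathcal{P}_{s,s+kN\mathcal{T}}^*\mu_1,\mathcal{P}_{s,s+kN\mathcal{T}}^*\mu_2)\le\alpha^k\rho_r(\mu_1,\mu_2)$ for every $k\in\mathbb{N}$.

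For arbitrary $t\ge 0$, write $t=kN\mathcal{T}+\tau$ with $\tau\in[0,N\mathcal{T})$. The remaining task is a uniform short-time estimate $\rho_r(\mathcal{P}_{s,s+\tau}^*\mu_1,\mathcal{P}_{s,s+\tau}^*\mu_2)\le C'\rho_r(\mu_1,\mu_2)$. To obtain it on Dirac measures, for any differentiable path $\gamma$ from $w_1$ to $w_2$ I would start from
\begin{equation*}
\rho_r(\mathcal{P}_{s,s+\tau}^*\delta_{w_1},\mathcal{P}_{s,s+\tau}^*\delta_{w_2})\le \mathbf{E}\int_0^1 V^r(\Phi_{s,s+\tau}(\gamma(u)))\,\|\nabla\Phi_{s,s+\tau}(\gamma(u))\dot\gamma(u)\|\,du,
\end{equation*}
then apply Fubini together with Proposition~\ref{pre_Lya}(2); since $V\ge 1$ and $\alpha(\tau)\le 1$ force $V^{r\alpha(\tau)}\le V^r$, this is bounded by $C\int_0^1 V^r(\gamma(u))\|\dot\gamma(u)\|du$ for $\tau\in[0,\mathcal{T}]$. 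Taking the infimum over $\gamma$ gives the Dirac-level bound on $[0,\mathcal{T}]$; iterating in blocks of length $\mathcal{T}$ via the Markov property extends it to $\tau\in[0,N\mathcal{T}]$, and part (1) promotes it to $\mathcal{P}(H)$. Combining the pieces yields $\rho_r(\mathcal{P}_{s,s+t}^*\mu_1,\mathcal{P}_{s,s+t}^*\mu_2)\le C'\alpha^k\rho_r(\mu_1,\mu_2)\le Ce^{-\gamma t}\rho_r(\mu_1,\mu_2)$ with $\gamma=-(\ln\alpha)/(N\mathcal{T})>0$.

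The main obstacle is the uniform short-time bound: the gluing in part (1) and the geometric iteration in part (2) are routine, but controlling $\rho_r$ after a single step of the flow requires the full strength of the Lyapunov gradient estimate from Proposition~\ref{pre_Lya}(2), and extending that estimate from $[0,\mathcal{T}]$ to $[0,N\mathcal{T}]$ is where the constant $C'$ absorbs a factor that grows with $N$. The measurable selection needed in part (1) is the other mildly technical point, but it is standard for couplings on a Polish space.
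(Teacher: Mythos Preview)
Your proposal is correct and follows essentially the same approach as the paper. The only cosmetic differences are that the paper invokes an existing result on optimal coupling \emph{kernels} (Theorem 4.4.3 in \cite{Kul17}) rather than $\varepsilon$-optimal measurable selections for part (1), and for the short-time bound in part (2) it cites Lemma~\ref{twopoint} (whose proof is exactly the path-integral argument you sketch from Proposition~\ref{pre_Lya}(2)) instead of deriving it inline.
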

\begin{proof}
Fix $s\in\mathbb{R}$ and $t\geq 0$. By Theorem 4.4.3 from \cite{Kul17} on the existence of optimal couplings for probability kernels, it follows that for the transition probability kernel
\[\mathcal{P}_{s, s+t}(\cdot, \cdot): H\times\mathcal{B}(H)\rightarrow\mathbb{R},\]
there is an optimal coupling kernel $Q$  in the sense that
\[Q: \left(H\times H\right) \times \left(\mathcal{B}(H)\otimes\mathcal{B}(H)\right)\rightarrow \mathbb{R}, \]
is a probability kernel on $\left(H\times H, \mathcal{B}(H)\otimes\mathcal{B}(H)\right)$ such that
for every $(w_1, {w_2})\in H\times H$, $Q((w_1, {w_2})$ is an optimal coupling of the transition probabilities $\mathcal{P}_{s, s+t}^{*}\delta_{w_1}$ and $\mathcal{P}_{s, s+t}^{*}\delta_{{w_2}}$:
\begin{align}\label{P_Q}
d(\mathcal{P}_{s, s+t}^{*}\delta_{w_1}, \mathcal{P}_{s, s+t}^{*}\delta_{{w_2}}) = \inf_{\mu\in\mathcal C}\int_{H\times H}d(u, v)\mu(dudv) =\int_{H\times H}d(u, v)Q((w_1, {w_2}), dudv),
\end{align}
where $\mathcal{C} = \mathcal{C}(\mathcal{P}_{s, s+t}^{*}\delta_{w_1},\mathcal{P}_{s, s+t}^{*}\delta_{{w_2}})$ is the set of all couplings of the transition probabilities $\mathcal{P}_{s, s+t}^{*}\delta_{w_1}$ and $\mathcal{P}_{s, s+t}^{*}\delta_{{w_2}}$.

\vskip0.05in

Define the operator $P_{Q}$ acting on $B_b(H\times H)$ by
\[P_{Q}\phi(w_1, {w_2}) = \int_{H\times H}\phi(u, v)Q((w_1, {w_2}), du,dv),\]
which induces an operator $P_{Q}^*$ on $\mathcal{P}(H\times H)$ by duality, $P_{Q}^*\mu(A\times B) = \int_{H\times H}Q((w_1, {w_2}), A\times B)\mu(dw_1d{w_2})$. One can verify that if $\mu$ is a coupling of $\mu_1, \mu_2$, then $P_{Q}^*\mu$ is a coupling of $\mathcal{P}_{s, s+t}^{*}\mu_1$ and $\mathcal{P}_{s, s+t}^{*}\mu_2$.  Suppose $\mu_{*}$ is an optimal coupling of $\mu_1, \mu_2$. Hence by \eqref{P_Q},
\begin{align*}
d(\mathcal{P}_{s, s+t}^{*}\mu_1, \mathcal{P}_{s, s+t}^{*}\mu_2) &= \inf_{\mu\in \mathcal{C}(\mathcal{P}_{s, s+t}^{*}\mu_1,\mathcal{P}_{s, s+t}^{*}\mu_2)}\int_{H\times H}d(u, v)\mu(dudv)\leq \int_{H\times H}d(u, v)P_{Q}^{*}\mu_{*}(dudv)\\
& = \int_{H\times H}\int_{H\times H}d(u, v)Q((w_1, {w_2}), dudv)\mu_{*}(dw_1d{w_2})\\
& = \int_{H\times H} d(\mathcal{P}_{s, s+t}^{*}\delta_{w_1},\mathcal{P}_{s, s+t}^{*}\delta_{{w_2}} )\mu_{*}(dw_1d{w_2})\\
& \leq \alpha \int_{H\times H} d(w_1, {w_2})\mu_{*}(dw_1d{w_2}) = \alpha d(\mu_1, \mu_2).
\end{align*}
This completes the proof for the first part of the lemma. From Lemma \ref{twopoint}, one has
\[\rho_r(\mathcal{P}_{s, s+t}^*\delta_{w_1},\mathcal{P}_{s, s+t}^*\delta_{w_2})\leq \mathbf{E}\rho_r(\Phi_{s, s+t}(w_1), \Phi_{s, s+t}({w_2}))\leq C\rho_r(w_1, {w_2}),\]
for any $s\in \mathbb{R}$ and $t\in [0, \mathcal{T}].$ Observe that
\begin{align*}
\rho_r(\mathcal{P}_{s, s+2\mathcal{T}}^*\delta_{w_1},\mathcal{P}_{s, s+2\mathcal{T}}^*\delta_{w_2}) &= \rho_r(\mathcal{P}_{s+\mathcal{T}, s+2\mathcal{T}}^*\mathcal{P}_{s, s+\mathcal{T}}^*\delta_{w_1}, \mathcal{P}_{s+\mathcal{T}, s+2\mathcal{T}}^*\mathcal{P}_{s, s+\mathcal{T}}^*\delta_{w_2})\\
&\leq C\rho_r(\mathcal{P}_{s, s+\mathcal{T}}^*\delta_{w_1},\mathcal{P}_{s, s+\mathcal{T}}^*\delta_{w_2})\leq C^2 \rho_{r}(w_1, {w_2}).
\end{align*}
So by iteration we have for any $n\in \mathbb{N}$,
\[\rho_r(\mathcal{P}_{s, s+n\mathcal{T}}^*\delta_{w_1},\mathcal{P}_{s, s+n\mathcal{T}}^*\delta_{w_2})\leq C^n \rho_r(w_1, {w_2}).\]
Now for any $0\leq t\leq N\mathcal{T}$, we can write $t = k\mathcal{T}+\beta$ for unique integer $k\geq 0$ such that $k\leq N$ and $\beta\in [0, \mathcal{T})$. Therefore
\[\rho_r(\mathcal{P}_{s, s+t}^*\delta_{w_1},\mathcal{P}_{s, s+t}^*\delta_{w_2}) =\rho_r\left(\mathcal{P}_{s+\beta, s+\beta+k\mathcal{T}}^*\mathcal{P}_{s, s+\beta}^*\delta_{w_1}, \mathcal{P}_{s+\beta, s+\beta+k\mathcal{T}}^*\mathcal{P}_{s, s+\beta}^*\delta_{w_2}\right)\leq C^{N+1}\rho_r(w_1, {w_2}), \]
Hence for any $\gamma>0$, choosing $\widetilde{C} = C^{N+1}e^{\gamma N}, $ we have
\begin{align}\label{t<NT}
\rho_r(\mathcal{P}_{s, s+t}^*\delta_{w_1},\mathcal{P}_{s, s+t}^*\delta_{w_2})\leq \widetilde{C}e^{-\gamma t}\rho_r({w_1}, {w_2}).
\end{align}
While for $t>N\mathcal{T}$, one has $t = k N\mathcal{T}+\beta$, where $k\in \mathbb{N}$, and $0\leq \beta<N\mathcal{T}$. By assumption, $\rho_r(\mathcal{P}_{s, s+ N\mathcal{T}}^{*}\delta_{w_1},\mathcal{P}_{s, s+N\mathcal{T}}^{*}\delta_{w_1})\leq \alpha \rho_r(w_1, {w_2})$ for all $s\in\mathbb{R}$.  So for any $k\in\mathbb{N}$, by iteration,
\begin{align}\label{kNT}
\rho_r(\mathcal{P}_{s, s+ kN\mathcal{T}}^{*}\delta_{w_1},\mathcal{P}_{s, s+kN\mathcal{T}}^{*}\delta_{w_1})\leq \alpha^k\rho_r(w_1, {w_2}).
\end{align}
It then follows from inequality \eqref{t<NT}, \eqref{kNT} and the first part of Lemma \ref{contractlemma0} that
\begin{align*}
\rho_r(\mathcal{P}_{s, s+t}^*\delta_{w_1},\mathcal{P}_{s, s+t}^*\delta_{w_2}) &= \rho_r(\mathcal{P}_{s+\beta, s+\beta+kN\mathcal{T}}^*\mathcal{P}_{s, s+\beta}^*\delta_{w_1}, \mathcal{P}_{s+\beta, s+\beta+kN\mathcal{T}}^*\mathcal{P}_{s, s+\beta}^*\delta_{w_2})\\
&\leq \alpha^k\rho_r(\mathcal{P}_{s, s+\beta}^*\delta_{w_1}, \mathcal{P}_{s, s+\beta}^*\delta_{w_2})\leq \alpha^{\frac{t-\beta}{N\mathcal{T}}}C^{N+1}\rho_r({w_1}, {w_2})\leq Ce^{-\gamma t}\rho_r({w_1}, {w_2}),
\end{align*}
for appropriate constants $C, \gamma>0$. The proof is then complete by invoking again the first part of Lemma \ref{contractlemma0} .
\end{proof}

The weak irreducibility, Lyapunov structure and gradient inequality give us the contractions on the state space at different scales. This fact is summarized in the following lemmas. Lemma \ref{contractlemma1} deals with those points that are far apart, where the contraction is guaranteed by the Lyapunov structure. The gradient inequality  and Lyapunov structure give the contraction at small scales in Lemma \ref{contractlemma2}.  And the contraction of the intermediate scale in Lemma \ref{contractlemma3} is given by the weak irreducibility and Lyapunov structure.

\vskip0.05in

Under the conditions of Theorem \ref{contractiontransition}, the following three lemmas hold for all $s\in \mathbb{R}$. The proof for the lemmas will be given in Appendix \ref{lemmasforcontraction} since it is almost the same as that in \cite{HM08} once we obtain the weak irreducibility, Lyapunov structure and the gradient inequality that are uniform in the initial time. Recall that the metric $d$ is defined in \eqref{thedistanced}, which depends on $\delta, \beta$ and $r$.
\begin{lemma}\label{contractlemma1}
There is a constant $L>0$ such that for any $\delta>0$, $\beta\in(0, 1)$ and $r\in [r_0, 1)$, there is $\alpha_1\in (0, 1)$ such that
\[\left.\begin{array}{l}
\rho({w_1}, {w_2}) \geq L \\
\rho_{r}({w_1}, {w_2}) \geq \delta
\end{array}\right\} \quad \Longrightarrow \quad d\left(\mathcal{P}_{s, s+ n\mathcal{T}}^{*} \delta_{{w_1}}, \mathcal{P}_{s, s+ n\mathcal{T}}^{*} \delta_{{w_2}}\right) \leq \alpha_{1} d({w_1}, {w_2}),\]
for all $n\in\mathbb{N}$.
\end{lemma}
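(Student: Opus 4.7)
The plan is to exploit the hypothesis $\rho_r(w_1,w_2)\geq \delta$, which forces the metric to simplify to $d(w_1,w_2)=1+\beta\rho(w_1,w_2)$, together with the uniform in $s$ Lyapunov contraction of Proposition \ref{pre_Lya}, which drives large values of $V^\kappa$ down over one period and hence over any multiple of $\mathcal{T}$.

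The first step is the estimate
\[
\mathbf{E}\,d(\Phi_{s,s+n\mathcal{T}}(w_1),\Phi_{s,s+n\mathcal{T}}(w_2))\;\leq\; 1+\beta\,\mathbf{E}\rho(\Phi_{s,s+n\mathcal{T}}(w_1),\Phi_{s,s+n\mathcal{T}}(w_2)),
\]
which uses only that $1\wedge\tfrac{\rho_r}{\delta}\leq 1$. Connecting any two points through the origin along straight lines and using part 1 of Proposition \ref{pre_Lya} yields the Lyapunov control $\rho(u,v)\leq C(V^\kappa(u)+V^\kappa(v))$, which combined with part 3 of the same proposition gives
\[
\mathbf{E}\rho(\Phi_{s,s+n\mathcal{T}}(w_1),\Phi_{s,s+n\mathcal{T}}(w_2))\;\leq\; C'\bigl(V^{\kappa\alpha(n\mathcal{T})}(w_1)+V^{\kappa\alpha(n\mathcal{T})}(w_2)\bigr).
\]
Since $\alpha(n\mathcal{T})\leq \alpha(\mathcal{T})<1$ for every $n\geq 1$ and $V\geq 1$, the exponent may be replaced by $\alpha(\mathcal{T})$ without changing the constant, making the bound uniform in $n$.

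The second step is to turn this into a contraction of $d$ by exploiting sub-linearity at infinity: for any $\varepsilon>0$ there is $M_\varepsilon<\infty$ so that $x^{\alpha(\mathcal{T})}\leq \varepsilon x+M_\varepsilon$ for $x\geq 1$, whence $V^{\kappa\alpha(\mathcal{T})}(w)\leq \varepsilon V^\kappa(w)+M_\varepsilon$. Combined with the hypothesis $\rho(w_1,w_2)\geq L$, which via $\rho\leq C(V^\kappa(w_1)+V^\kappa(w_2))$ guarantees that at least one of $V^\kappa(w_i)$ is of size $\geq L/(2C)$, a case analysis is carried out: either $V^\kappa(w_1)+V^\kappa(w_2)$ is comparable to $\rho(w_1,w_2)$, in which case the $\varepsilon V^\kappa$ term is absorbed into $\varepsilon\rho$, or one $V^\kappa(w_i)$ vastly exceeds $\rho$, in which case the much stronger decay ratio $V^{\kappa\alpha(\mathcal{T})}/V^\kappa\to 0$ at infinity provides the required extra smallness. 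Choosing $\varepsilon$ small and then $L$ sufficiently large depending on $\beta,\delta,r$ yields, for some $\alpha_1\in(0,1)$,
\[
1+\beta\,\mathbf{E}\rho(\Phi_{s,s+n\mathcal{T}}(w_1),\Phi_{s,s+n\mathcal{T}}(w_2))\;\leq\; \alpha_1\bigl(1+\beta\rho(w_1,w_2)\bigr)\;=\;\alpha_1 d(w_1,w_2).
\]

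The main technical obstacle lies in the second sub-case, where $V^\kappa(w_i)$ is much larger than $\rho(w_1,w_2)$: a naive application of the pointwise bound $V^{\kappa\alpha(\mathcal{T})}\leq \varepsilon V^\kappa+M_\varepsilon$ produces a right-hand side scaling with $V^\kappa$ rather than with $\rho$, which is too coarse. The resolution is to use the sharper asymptotic $V^{\kappa\alpha(\mathcal{T})}(w)/V^\kappa(w)\to 0$ as $\|w\|\to\infty$ to absorb this term, provided $L$ is first chosen large enough that $V^\kappa(w_i)$ is forced beyond the threshold where this ratio is arbitrarily small. This mirrors the strategy of Lemma 4.8 in \cite{HM08}; the only modification for our time-inhomogeneous setting is that all constants from Proposition \ref{pre_Lya} are uniform in the initial time $s$, so the argument transfers verbatim, with the resulting $L$ independent of $s$ and $n$.
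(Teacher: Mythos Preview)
Your approach has a genuine gap in the second step. The estimate
\[
\mathbf{E}\rho(\Phi_{s,s+n\mathcal{T}}(w_1),\Phi_{s,s+n\mathcal{T}}(w_2))\leq C'\bigl(V^{\kappa\alpha(\mathcal{T})}(w_1)+V^{\kappa\alpha(\mathcal{T})}(w_2)\bigr)
\]
uses only the one-point Lyapunov bound together with the triangle inequality through the origin, and therefore loses all control in terms of $\rho(w_1,w_2)$. Your ``Case 2'' resolution does not work: take $w_1,w_2$ both of large norm $R$ with $\rho(w_1,w_2)=L$ exactly (this is possible since two points can be far from the origin yet close to each other). Then $V^{\kappa\alpha(\mathcal{T})}(w_i)=e^{\kappa\alpha(\mathcal{T})\eta R^2}$ is unbounded in $R$, while the target $\alpha_1(1+\beta L)$ is fixed. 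The ratio $V^{\kappa\alpha(\mathcal{T})}/V^\kappa\to 0$ is irrelevant here, because the denominator you must compare against is $\rho(w_1,w_2)$, not $V^\kappa(w_i)$; and the hypothesis $\rho(w_1,w_2)\geq L$ says nothing about how large $\|w_i\|$ may be. A related symptom is that you end up choosing $L$ depending on $\beta,\delta,r$, contrary to the quantifier order in the statement.

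The paper's proof avoids this by invoking the two-point estimate of Lemma~\ref{twopoint},
\[
\mathbf{E}\rho(\Phi_{s,s+n\mathcal{T}}(w_1),\Phi_{s,s+n\mathcal{T}}(w_2))\leq \sigma^n\rho(w_1,w_2)+K,
\]
whose proof applies the Lyapunov decay along an almost-optimal path for $\rho(w_1,w_2)$ (splitting the path into its portions inside and outside a large ball), so that the dependence on $\rho(w_1,w_2)$ is retained. Then $\rho(w_1,w_2)\geq L$ absorbs the additive constant, giving $\mathbf{E}\rho(\Phi(w_1),\Phi(w_2))\leq \alpha_0\rho(w_1,w_2)$ with $\alpha_0=\sigma+K/L<1$ for $L$ large (independent of $\delta,\beta,r$). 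Combined with $d(w_1,w_2)=1+\beta\rho(w_1,w_2)\geq 1+\beta L$, a short algebraic rearrangement yields $\alpha_1=\frac{1+\alpha_0\beta L}{1+\beta L}\in(0,1)$.
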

\begin{lemma}\label{contractlemma2}
For any $\alpha_2\in (0, 1)$ there exist $n_0>0$, and $r\in [r_0, 1)$, $\delta>0$ such that
\[
\rho_{r}({w_1}, {w_2})<\delta \Longrightarrow d\left(\mathcal{P}_{s, s+n\mathcal{T}}^{*} \delta_{{w_1}}, \mathcal{P}_{s, s+n\mathcal{T}}^{*} \delta_{{w_2}}\right) \leq \alpha_{2} d({w_1}, {w_2}),
\]
for all $n>n_0$ and $\beta\in(0, 1)$.
\end{lemma}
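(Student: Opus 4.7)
My plan is to reduce the statement, via Kantorovich duality, to a pointwise gradient bound on $\mathcal{P}_{s,s+n\mathcal{T}}\phi$ for test functions $\phi$ Lipschitz in the weighted metric, and then apply the gradient inequality of Proposition \ref{pre_gradientinequalityprop} together with the Lyapunov structure. Concretely, to bound the first summand of $d(\mathcal{P}_{s,s+n\mathcal{T}}^*\delta_{w_1}, \mathcal{P}_{s,s+n\mathcal{T}}^*\delta_{w_2})$ it suffices, by the Monge--Kantorovich representation, to control
\[
|\mathcal{P}_{s,s+n\mathcal{T}}\phi(w_1) - \mathcal{P}_{s,s+n\mathcal{T}}\phi(w_2)| \leq \int_0^1 \|\nabla \mathcal{P}_{s,s+n\mathcal{T}}\phi(\gamma(\tau))\|\,\|\dot\gamma(\tau)\|\,d\tau,
\]
for all $\phi$ with $\mathrm{Lip}_{\rho_r}(\phi)\le 1$ and a near-optimal path $\gamma$ joining $w_1$ and $w_2$. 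By Lemma \ref{lemmaderivative} the normalization $\phi(w_1)=0$ yields $\|\nabla\phi\|\le V^r$ and $|\phi|\le \rho_r(\cdot,w_1)\le C V^{\kappa r}$. The analogous estimate for the second summand of $d$ will be obtained the same way using a near-optimal path for $\rho$ and $\rho$-Lipschitz test functions.

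The next step is to insert Proposition \ref{pre_gradientinequalityprop}, which gives
\[
\|\nabla \mathcal{P}_{s,s+n\mathcal{T}}\phi(\gamma(\tau))\| \leq C e^{p\eta\|\gamma(\tau)\|^2}\Bigl(\sqrt{\mathcal{P}_{s,s+n\mathcal{T}}|\phi|^2(\gamma(\tau))}\; +\; e^{-an\mathcal{T}}\sqrt{\mathcal{P}_{s,s+n\mathcal{T}}\|\nabla\phi\|^2(\gamma(\tau))}\Bigr).
\]
Using the Lyapunov bound \eqref{pre_eqLya02}, both moments are controlled by powers of $V$ with exponents roughly $\kappa r\alpha(n\mathcal{T})$ and $r\alpha(n\mathcal{T})$. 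This reduces the integrand to $V^{p+\kappa r\alpha(n\mathcal{T})}(\gamma)$ plus $e^{-an\mathcal{T}} V^{p+r\alpha(n\mathcal{T})}(\gamma)$. Since $\rho_r(w_1,w_2)<\delta$ can be taken arbitrarily small, $\gamma$ stays in a small neighborhood of $w_1$, so both the $e^{p\eta\|\gamma\|^2}$ prefactor and the Lyapunov factors can be uniformly bounded by a constant times $V^r(\gamma)$ (respectively $V(\gamma)$) provided $r$ is close enough to $1$ and $n$ is large enough so that $p+\kappa r \alpha(n\mathcal{T})\le r$ and $p+r\alpha(n\mathcal{T})\le 1$ (this uses $\alpha(n\mathcal{T})\to 0$ as $n\to\infty$). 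Integrating along $\gamma$ then produces bounds of shape $C\rho_r(w_1,w_2)+Ce^{-an\mathcal{T}}\rho(w_1,w_2)$ for $\rho_r(\mathcal{P}^*\delta_{w_1},\mathcal{P}^*\delta_{w_2})$, and similarly $Ce^{-an\mathcal{T}}\rho(w_1,w_2)$ for $\rho(\mathcal{P}^*\delta_{w_1},\mathcal{P}^*\delta_{w_2})$, all uniformly in $s$ and $\beta$.

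Finally I would plug these bounds into the definition $d(\mu_1,\mu_2)=\bigl(1\wedge \rho_r/\delta\bigr)+\beta\rho$ and verify
\[
d(\mathcal{P}_{s,s+n\mathcal{T}}^*\delta_{w_1},\mathcal{P}_{s,s+n\mathcal{T}}^*\delta_{w_2}) \;\le\; \frac{C}{\delta}\rho_r(w_1,w_2)+C\bigl(e^{-an\mathcal{T}}/\delta+\beta e^{-an\mathcal{T}}\bigr)\rho(w_1,w_2) \;\le\; \alpha_2\, d(w_1,w_2)
\]
after first fixing $r$ close enough to $1$, then choosing $n_0$ large enough that $e^{-an\mathcal{T}}/\delta$ is negligible, and finally adjusting $\delta$ so that the $\rho_r$ contribution gives a factor strictly less than $\alpha_2/2$ (say) in front of $\rho_r/\delta$. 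The main obstacle will be the bookkeeping to ensure that the constant appearing in front of $\rho_r(w_1,w_2)$, arising from $\int_0^1 V^{p+\kappa r\alpha(n\mathcal{T})}(\gamma)\|\dot\gamma\|d\tau$ together with the $e^{p\eta\|\gamma\|^2}$ prefactor, is strictly less than $\alpha_2$ after division by $\delta$---for this one exploits both the smallness of $\delta$ (which keeps $\gamma$ in a tight ball around $w_1$ and converts the $V^{p+\kappa r\alpha(n\mathcal{T})}$ factor into $V^r(\gamma)$ times a controllable constant) and the uniformity in $s$ of Propositions \ref{pre_Lya} and \ref{pre_gradientinequalityprop}.
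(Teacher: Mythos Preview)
There is a genuine gap in your decomposition strategy. When you test against $\rho_r$-Lipschitz functions (normalized by $\phi(w_1)=0$), you have $|\phi(w)|\le \rho_r(w,w_1)\le CV^{\kappa r}(w)+CV^{\kappa r}(w_1)$, which is \emph{unbounded}. Feeding this into Proposition~\ref{pre_gradientinequalityprop}, the term $\sqrt{\mathcal{P}_{s,s+n\mathcal{T}}|\phi|^2}$ contributes a factor $C\,V^{\kappa r\alpha(n\mathcal{T})}$ with a \emph{fixed} constant $C=C(\eta,a)\cdot C_{\text{Lyap}}\ge 1$ that carries no smallness whatsoever --- it does not decay in $n$, it does not depend on $\delta$. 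After integrating along a near-optimal path you obtain $\rho_r(\mathcal{P}^*\delta_{w_1},\mathcal{P}^*\delta_{w_2})\le C\rho_r(w_1,w_2)+\ldots$, and hence $(1\wedge\rho_r/\delta)\le (C/\delta)\rho_r(w_1,w_2)$. Comparing with $d(w_1,w_2)\ge \rho_r(w_1,w_2)/\delta$ forces $C\le\alpha_2<1$, which is impossible. Your proposed remedy --- that small $\delta$ keeps $\gamma$ in a tight ball around $w_1$ --- does not help: even on that ball the ratio $V^{p+\kappa r\alpha(n\mathcal{T})}/V^r$ is at best $\le 1$, and the prefactor $C(\eta,a)$ remains. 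The same issue kills your claimed bound $\rho(\mathcal{P}^*\delta_{w_1},\mathcal{P}^*\delta_{w_2})\le Ce^{-an\mathcal{T}}\rho(w_1,w_2)$: for $\rho$-Lipschitz $\phi$ the $\sqrt{\mathcal{P}|\phi|^2}$ term again contributes an order-one constant with no $e^{-an\mathcal{T}}$ decay.

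The paper avoids this by applying Monge--Kantorovich duality directly to the metric $d$, i.e.\ testing against $d$-Lipschitz $\phi$ with $\phi(0)=0$. The point is that $d(w,0)\le 1+\beta\rho(w,0)$, so $|\phi(w)|\le 1+\beta C V^{\kappa}(w)$ is \emph{essentially bounded}. Consequently $\sqrt{\mathcal{P}_{s,s+t}|\phi|^2}\le C V^{\kappa\alpha(t)}$ with the leading contribution of order $1$, and the gradient estimate becomes
\[
\|\nabla\mathcal{P}_{s,s+t}\phi(w)\|\le C\,V^{\kappa\alpha(t)+p}(w)\bigl(1+e^{-at}\delta^{-1}\bigr)=\delta^{-1}V^{\kappa\alpha(t)+p}(w)\bigl(\delta C+Ce^{-at}\bigr),
\]
using also $\|\nabla\phi\|\le(\delta^{-1}+\beta)V$. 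Now the algebraic trick is visible: first choose $t\ge T_0$ so that $Ce^{-at}<\alpha_2/2$ and $\kappa\alpha(t)+p<1$, set $r=\max\{r_0,\kappa\alpha(t)+p\}$, and then choose $\delta$ small so that $\delta C<\alpha_2/2$. This yields $\|\nabla\mathcal{P}_{s,s+t}\phi\|\le \alpha_2\delta^{-1}V^r$, and integrating along a $\rho_r$-near-optimal path gives $|\mathcal{P}_{s,s+t}\phi(w_1)-\mathcal{P}_{s,s+t}\phi(w_2)|\le \alpha_2\delta^{-1}\rho_r(w_1,w_2)\le\alpha_2 d(w_1,w_2)$, uniformly in $\beta\in(0,1)$. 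The boundedness of the first part of $d$ is precisely what converts the fixed gradient-inequality constant $C$ into $\delta C$, which can then be made small.
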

\begin{lemma}\label{contractlemma3}
For any $L, \delta>0$, $r\in (0,1]$, there is some $n_1>0$ such that  for any $n>n_1$, there are $\beta, \alpha_3\in (0, 1)$ such that
\[
\left.\begin{array}{l}
\rho({w_1}, {w_2})<L \\
\rho_{r}({w_1}, {w_2}) \geq \delta
\end{array}\right\} \quad \Longrightarrow \quad d\left(\mathcal{P}_{s, s+n\mathcal{T}}^{*} \delta_{{w_1}}, \mathcal{P}_{s, s+n\mathcal{T}}^{*} \delta_{{w_2}}\right) \leq \alpha_{3} d({w_1}, {w_2})
\]
\end{lemma}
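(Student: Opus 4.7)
The plan is to build a two-stage coupling of $\mathcal{P}_{s,s+n\mathcal{T}}^{*}\delta_{w_1}$ and $\mathcal{P}_{s,s+n\mathcal{T}}^{*}\delta_{w_2}$ that forces the two trajectories within $\rho_r$-distance less than $\varepsilon<\delta$ on an event of uniformly positive probability $p$, producing strict contraction of the ``discrete'' summand $1\wedge\rho_r/\delta$ of $d$, and then to absorb the uncontracted growth of the ``continuous'' summand $\beta\rho$ by taking $\beta$ sufficiently small. Under the hypotheses $\rho(w_1,w_2)<L$ and $\rho_r(w_1,w_2)\geq\delta$, the metric decomposes as $d(w_1,w_2)=1+\beta\rho(w_1,w_2)$, so the target inequality reduces to separately estimating the two summands after evolution.

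Split $n\mathcal{T}=k\mathcal{T}+m\mathcal{T}$. Stage~1 (length $k\mathcal{T}$): apply the Lyapunov estimate \eqref{pre_eqLya02} together with Markov's inequality to ensure that, uniformly over $(w_1,w_2)$ with $\rho(w_1,w_2)<L$, both endpoints $\Phi_{s,s+k\mathcal{T}}(w_i)$ lie in a ball $B_R(0)$ with probability at least $1-\eta'$. Stage~2 (length $m\mathcal{T}$): by Proposition~\ref{contractionirreducible} applied with parameters $(R,\varepsilon,r)$, for $m$ above the threshold, every pair $(u_1,u_2)\in B_R(0)^2$ admits a coupling of $\mathcal{P}_{s+k\mathcal{T},s+n\mathcal{T}}^{*}\delta_{u_i}$ placing mass at least $a>0$ on $\{\rho_r<\varepsilon\}$. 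A measurable selection over $(u_1,u_2)$, combined with any convenient Stage~1 coupling and an arbitrary extension outside $B_R(0)^2$, assembles into a coupling $\Gamma$ of $\mathcal{P}_{s,s+n\mathcal{T}}^{*}\delta_{w_1}$ and $\mathcal{P}_{s,s+n\mathcal{T}}^{*}\delta_{w_2}$ with $\Gamma\{\rho_r<\varepsilon\}\geq a(1-2\eta')=:p>0$.

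Evaluating $d$ under $\Gamma$, the discrete part satisfies
\[
\int\Bigl(1\wedge\frac{\rho_r(w_1',w_2')}{\delta}\Bigr)\,d\Gamma\leq (1-p)+p\frac{\varepsilon}{\delta}=:\alpha^{*}<1,
\]
independent of $\beta$. For the continuous part, Lemma~\ref{twopoint} together with \eqref{pre_eqLya02} gives a bound of the form $\int\rho(w_1',w_2')\,d\Gamma\leq K_1(n)\rho(w_1,w_2)+K_2(n)$ for constants depending on $n$. Combining yields
\[
d\bigl(\mathcal{P}_{s,s+n\mathcal{T}}^{*}\delta_{w_1},\mathcal{P}_{s,s+n\mathcal{T}}^{*}\delta_{w_2}\bigr)\leq \alpha^{*}+\beta K_2(n)+\beta K_1(n)\rho(w_1,w_2),
\]
and the stated contraction follows by first choosing $n$ large enough that $K_1(n)<1$, then fixing $\alpha_3\in(\max(\alpha^{*},K_1(n)),1)$, and finally taking $\beta>0$ small enough that $\alpha^{*}+\beta K_2(n)\leq\alpha_3$. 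The proof on $\mathcal{P}(H)$ then follows from the first part of Lemma~\ref{contractlemma0}.

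The main obstacle is the order of quantifiers combined with the handling of the $\rho$-part, which does not contract automatically under the dynamics. The bounded truncation $1\wedge\rho_r/\delta$ in $d$ is precisely what extracts a uniform strict contraction $\alpha^{*}<1$ from the merely positive-probability coupling furnished by weak irreducibility, while the unbounded $\rho$-term must be retained only with the small multiplier $\beta$; reconciling the two scales requires fixing $n$ first (both to activate Proposition~\ref{contractionirreducible} and to make $K_1(n)<1$, iterating if necessary), and only then calibrating $\beta$. A further subtlety is that the hypothesis $\rho(w_1,w_2)<L$ does not immediately bound $\|w_i\|$ individually, so the uniform Stage~1 Lyapunov control over $\{\rho<L\}$ must be justified either through a path-integral estimate tying $\rho$ to the weight $V$ or by an additional preliminary Lyapunov iteration before invoking weak irreducibility.
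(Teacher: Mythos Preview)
Your overall architecture is right: couple so that $\rho_r<\varepsilon$ with uniformly positive probability, exploit that $d(w_1,w_2)=1+\beta\rho(w_1,w_2)\geq 1$ on the region in question, and absorb the $\rho$-term by taking $\beta$ small. But two points in your execution are either gaps or unnecessary detours, and they are linked.

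First, the invocation of Lemma~\ref{twopoint} to bound $\int\rho\,d\Gamma$ is not justified: that lemma is proved for the \emph{synchronous} coupling $(\Phi_{s,s+n\mathcal{T}}(w_1),\Phi_{s,s+n\mathcal{T}}(w_2))$, whereas your $\Gamma$ is a different (two-stage, selected) coupling. The only bound on $\int\rho\,d\Gamma$ that is coupling-independent comes from the marginals via the triangle inequality,
\[
\int\rho(w_1',w_2')\,d\Gamma\leq \mathbf{E}\rho(0,\Phi_{s,s+n\mathcal{T}}(w_1))+\mathbf{E}\rho(0,\Phi_{s,s+n\mathcal{T}}(w_2))\leq C\bigl(V^{\kappa\alpha(n\mathcal{T})}(w_1)+V^{\kappa\alpha(n\mathcal{T})}(w_2)\bigr),
\]
and this is a \emph{constant} only if $\|w_1\|,\|w_2\|$ are bounded a priori. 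That is exactly the ``further subtlety'' you flag at the end, and your two-stage coupling does not resolve it: to get $\Phi_{s,s+k\mathcal{T}}(w_i)\in B_R(0)$ with probability $\geq 1-\eta'$ \emph{uniformly} over the admissible $(w_1,w_2)$ you still need a uniform bound on $V(w_i)$, so Stage~1 presupposes what it is meant to produce.

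The paper closes this gap differently and more simply. The key geometric input (Lemma~3.11 in \cite{HM08}) is that for $r<1$ the set
\[
S=\{(w_1,w_2):\rho_r(w_1,w_2)\geq\delta,\ \rho(w_1,w_2)<L\}
\]
is \emph{bounded} in $H\times H$; intuitively, if both points had large norm then $V^{r-1}$ would be tiny along any admissible path, forcing $\rho_r\leq V^{r-1}\cdot\rho<\delta$. Once $S\subset\{\|w_1\|,\|w_2\|\leq R\}$, you apply Proposition~\ref{contractionirreducible} directly with this $R$ (no two-stage construction needed), obtain a coupling with $\mathbf{P}(\rho_r<\delta/2)\geq a$, and bound the $\rho$-term by the constant $R_n:=CV^{\kappa\alpha(n\mathcal{T})}(R)$ via the display above. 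This yields
\[
d\bigl(\mathcal{P}_{s,s+n\mathcal{T}}^{*}\delta_{w_1},\mathcal{P}_{s,s+n\mathcal{T}}^{*}\delta_{w_2}\bigr)\leq 1-\tfrac{a}{2}+\beta R_n,
\]
and one takes $\beta$ small so that $\alpha_3:=1-\tfrac{a}{2}+\beta R_n<1$. So the fix to your plan is: replace the two-stage Lyapunov detour by the boundedness of $S$, drop the appeal to Lemma~\ref{twopoint} for $\Gamma$, and bound the $\rho$-integral by a constant rather than by $K_1(n)\rho+K_2(n)$.
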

Now we prove Theorem \ref{contractiontransition} with the help of the above lemmas.
\begin{proof}[Proof of  Theorem \ref{contractiontransition}]
By Lemma \ref{contractlemma0} and the equivalence of the two metrics $\rho$ and $d$, it suffices to show that
\[d(\mathcal{P}_{s, s+N\mathcal{T}}^{*}\delta_{w_1},\mathcal{P}_{s, s+N\mathcal{T}}^{*}\delta_{w_2})\leq \alpha d({w_1}, {w_2}), \]
for some $N\in \mathbb{N}$ and $0<\alpha<1$ and for every $({w_1}, {w_2})\in H\times H$. By Lemma \ref{contractlemma2}, fixing an $\alpha_2\in (0, 1)$, then there are $n_0, r, \delta$ such that for those $({w_1}, {w_2})$ with  $ \rho_{r}({w_1}, {w_2})<\delta $, one has
\[d\left(\mathcal{P}_{s, s+n\mathcal{T}}^{*} \delta_{{w_1}}, \mathcal{P}_{s, s+n\mathcal{T}}^{*} \delta_{{w_2}}\right) \leq \alpha_{2} d({w_1}, {w_2})\] for all $n>n_0$ and $\beta\in(0,1).$
Now fixing $L$ as in Lemma \ref{contractlemma1}, then by Lemma \ref{contractlemma3}, for the fixed $L, \delta, r$, there is some $n_1$ such that for $n>n_1$, there exist $\beta, \alpha_3$ such that the implication in Lemma \ref{contractlemma3} holds true.  Now for fixed $\delta, \beta, r, L$, there is $\alpha_1$ such that the implication of Lemma \ref{contractlemma1} holds true. So the conclusion follows by taking $N>\max\{n_0, n_1\}$ and $\alpha = \max\{\alpha_1,\alpha_2,\alpha_3\}<1. $
\end{proof}

\section{Weak law of large numbers and central limit theorem}\label{sectionlimittheorem}
In this section, we prove  limit theorems Theorem \ref{thmlimittheorem} for the solution process of the Navier-Stokes equation \eqref{NS}. The essential ingredients to obtain the results are the contraction \eqref{eqcontraction} of the Markov transition operator $\mathcal{P}_{s ,t}^*$ on $\mathcal{P}(H)$, together with a Lyapunov structure. To the best of our knowledge, there are little known results of limit theorems for continuous time inhomogeneous Markov processes. Our first result is a weak law of large numbers for the continuous time inhomogeneous solution process which is proved in subsection \ref{subsectionWLLNinhomo}.  However we are not able to prove a central limit theorem for the continuous time inhomogeneous solution process. In the theory of dynamical systems, when a system has a periodic structure, one usually consider its dynamics on the Poincare section. Therefore following the same idea, we turn to consider the restriction of the time inhomogeneous solution process to periodic times, which results in a discrete time homogeneous Markov chain. And in subsection \eqref{subsectionlimitthm} we show that both the weak law of large numbers and the central limit theorem hold for this Markov chain.

\subsection{Weak law of large numbers for the inhomogeneous solution process}\label{subsectionWLLNinhomo}
In this subsection, we will prove the weak law of large numbers for the continuous time inhomogeneous solution process of Navier-Stokes equation \eqref{NS}. To be more specific, we will show the following theorem. Recall that $\mathbb{S}_{\mathcal{T}} = \mathbb{R}/\mathcal{T}\mathbb{Z}$, and we denote by $\lambda$ the normalized Lebesgue measure on $\mathbb{S}_{\mathcal{T}}$.
\begin{theorem}\label{thm6.8}
Assume $G_1<1/c_0$, $A_{\infty} = H$.  Then for any $s\in\mathbb{R}$, $w_0\in H$ and $\psi\in\mathrm{Lip}_{\rho}(H)$,
\[\lim_{T\rightarrow\infty}\frac{1}{T}\int_{0}^T\psi(w_{s, s+t}(w_0))dt = \int_{Y}\int_{H}\psi(w)\mu_s(dw)\lambda(ds)\]
in probability.
\end{theorem}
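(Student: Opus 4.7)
The plan is to write the empirical average as a sum of a deterministic Riemann-type term and a mean-zero stochastic fluctuation, and then bound the variance of the fluctuation by combining the exponential contraction of $\mathcal{P}_{s,t}^\ast$ on $\mathcal{P}(H)$ with the Lyapunov moment bound.

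First, I would decompose
\[
\frac{1}{T}\int_0^T\psi(w_{s,s+t}(w_0))\,dt-\int_{\mathbb{S}_{\mathcal{T}}}\mu_\tau(\psi)\,\lambda(d\tau)=A_T+B_T,
\]
where
\[
A_T=\frac{1}{T}\int_0^T\bigl(\psi(w_{s,s+t}(w_0))-\mu_{s+t}(\psi)\bigr)dt,\qquad B_T=\frac{1}{T}\int_0^T\mu_{s+t}(\psi)\,dt-\int_{\mathbb{S}_{\mathcal{T}}}\mu_\tau(\psi)\,\lambda(d\tau).
\]
Since $\{\mu_t\}\in C(\mathbb{R},\mathcal{P}_1(H))$ is $\mathcal{T}$-periodic (Theorem \ref{fixedpoint}) and $\psi\in\mathrm{Lip}_\rho(H)$ is integrable against every $\mu_\tau$, the map $\tau\mapsto\mu_\tau(\psi)$ is continuous and $\mathcal{T}$-periodic, so writing $T=N\mathcal{T}+\beta$ with $\beta\in[0,\mathcal{T})$ yields $B_T=O(1/T)$ deterministically.

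For $A_T$ I would estimate its second moment directly. Setting $\phi_t(w):=\psi(w)-\mu_{s+t}(\psi)$ and using the Markov property together with symmetry,
\[
\mathbf{E}[A_T^2]=\frac{2}{T^2}\int_0^T\int_0^u\mathbf{E}\bigl[\phi_t(w_{s,s+t}(w_0))\,\mathcal{P}_{s+t,s+u}\phi_u(w_{s,s+t}(w_0))\bigr]\,dt\,du.
\]
The contraction \eqref{eqcontraction} from Theorem \ref{contractiontransition} applied to point masses, together with Kantorovich duality and the inequality $\rho(w,0)\le CV^{\kappa}(w)$ obtained from part~1 of Proposition \ref{pre_Lya}, implies
\[
\bigl|\mathcal{P}_{s+t,s+u}\phi_u(w)-\mu_{s+t}(\mathcal{P}_{s+t,s+u}\phi_u)\bigr|\le C\,\mathrm{Lip}_\rho(\psi)\,e^{-\gamma(u-t)}V^{\kappa}(w),
\]
while the invariance identity $\mu_{s+t}(\mathcal{P}_{s+t,s+u}\phi_u)=\mu_{s+u}(\phi_u)=0$, combined with the uniform bound $\sup_\tau\mu_\tau(V^{2\kappa})<\infty$ from Theorem \ref{fixedpoint}, upgrades this to $|\mathcal{P}_{s+t,s+u}\phi_u(w)|\le C(\psi)\,e^{-\gamma(u-t)}(1+V^{\kappa}(w))$.

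Combining this with the crude bound $|\phi_t(w)|\le C(\psi)(1+V^{\kappa}(w))$ and the Lyapunov moment estimate $\mathbf{E}\,V^{2\kappa}(w_{s,s+t}(w_0))\le CV^{2\kappa}(w_0)$ from Proposition \ref{pre_Lya}, I obtain $\bigl|\mathbf{E}[\phi_t(w_{s,s+t})\mathcal{P}_{s+t,s+u}\phi_u(w_{s,s+t})]\bigr|\le C(w_0,\psi)\,e^{-\gamma(u-t)}$, so that $\mathbf{E}[A_T^2]\le C(w_0,\psi)/T\to 0$. Chebyshev's inequality then gives $A_T\to 0$ in probability, and combined with $B_T\to 0$ this proves the theorem. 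The main subtlety is that $\psi$ is only $\rho$-Lipschitz and hence has exponential growth in $\|w\|$, so the bulk of the work consists in feeding the $\rho$-contraction through the Lyapunov bound to produce uniform-in-$(s,t,u,w_0)$ moment estimates; once this is arranged, the usual Chebyshev-type WLLN argument concludes.
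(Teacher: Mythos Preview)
Your argument is correct and is genuinely more direct than the paper's. The paper proves the same result by first restricting to bounded $\psi$, showing separately that $\mathbf{E}Y_T\to v_\ast$ and $\mathbf{E}Y_T^2\to v_\ast^2$, and handling the second moment via a soft argument: it uses tightness of the time-averaged laws $\frac{1}{T}\int_0^T\mathcal{P}_{s,s+t}^\ast\delta_{w_0}\,dt$, uniform convergence of the ergodic averages on compact sets (Arzel\`a--Ascoli), and a dedicated lemma splitting the double integral over a compact set and its complement; the unbounded case is then recovered by truncation $\psi_L$. Your decomposition $A_T+B_T$ together with the direct covariance bound $|\mathbf{E}[\phi_t(w_{s,s+t})\mathcal{P}_{s+t,s+u}\phi_u(w_{s,s+t})]|\le C(w_0,\psi)e^{-\gamma(u-t)}$ bypasses all of this: it uses the exponential contraction quantitatively rather than qualitatively, yields the explicit rate $\mathbf{E}[A_T^2]=O(1/T)$, and handles unbounded $\rho$-Lipschitz $\psi$ in one stroke through the Lyapunov moment bound $\sup_t\mathbf{E}V^{2\kappa}(w_{s,s+t}(w_0))<\infty$. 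The paper's route would be preferable only if one had merely subexponential mixing without a clean Lyapunov structure; under the hypotheses actually available here, your approach is shorter and more informative.
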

\begin{proof}
Let $Y_T = \frac{1}{T}\int_{0}^T\psi(w_{s, s+t}(w_0))dt$ and $v_* =  \int_{Y}\int_{H}\psi(w)\mu_s(dw)\lambda(ds)$. By Chebyshev's inequality, to show Theorem \ref{thm6.8},  it suffices to show that $\lim_{T\rightarrow\infty}\mathbf{E}Y_T = v_*$ and
$\lim_{T\rightarrow\infty}\mathbf{E}Y_T^2 = v_*^2$. We first proof the theorem for bounded $\psi\in\mathrm{Lip}_{\rho}(H)$.  Note that by mixing property of the periodic invariant measure $\mu_s$ in Theorem \ref{ergodicmixing} and the Monge-Kantorovich daulity, we have
\begin{align}\notag
&\left|\frac{1}{T}\int_{0}^{T}\mathcal{P}_{s, s+t}\psi(w_0) dt- \frac{1}{T}\int_{0}^{T}\int_{H}\psi(w)\mu_{s+t}(dw)dt\right|\\\notag
&\leq \mathrm{Lip_{\rho}}(\psi)\sup_{\mathrm{Lip_{\rho}}(\phi)\leq 1}\left|\frac{1}{T}\int_{0}^{T}\mathcal{P}_{s, s+t}\phi(w_0) dt- \frac{1}{T}\int_{0}^{T}\int_{H}\phi(w)\mu_{s+t}(dw)dt\right|\\\label{continuousWLLN01}
&\leq \mathrm{Lip_{\rho}}(\psi) \frac1T\int_{0}^{T} \rho(\mathcal{P}_{s, s+t}^*\delta_{w_0}, \mathcal{P}_{s, s+t}^*\mu_s)dt\leq C\mathrm{Lip_{\rho}}(\psi)\rho(\delta_{w_0}, \mu_s)\frac1T\int_{0}^{T}e^{-\gamma t}dt\rightarrow 0 \quad (T\rightarrow\infty).
\end{align}
For $\tau\in\mathbb{R}$, let $\nu_{\tau} =\int_{H}\psi(w)\mu_{\tau}(dw) $. By Theorem \ref{fixedpoint}, there is a constant $C>0$ such that $\int_H \exp\left(2\kappa\eta\|w\|^2\right)\mu_s(dw)\leq C$ for all $s\in\mathbb{R}$. Note that $\rho(0, w)\leq \|w\|e^{\eta\|w\|^2}\leq C e^{\kappa\eta\|w\|^2}$, so
\[|\nu_{\tau}|\leq |\psi(0)|+\mathrm{Lip}_{\rho}(\psi) \int_{H}\rho(0, w)\mu_s(dw)\leq|\psi(0)|+C\mathrm{Lip}_{\rho}(\psi). \]
The boundedness of $\nu_{\tau}$ implies that
\begin{align}\label{continuousWLLN02}
\frac{1}{T}\int_{0}^{T}\int_{H}\psi(w)\mu_{s+t}(dw)dt = \frac{1}{T}\int_{s}^{s+T}\nu_{\tau}d\tau =  \frac{1}{T}\int_{0}^{s+T}\nu_{\tau}d\tau -  \frac{1}{T}\int_{0}^{s}\nu_{\tau}d\tau\rightarrow v_* \quad (T\rightarrow\infty),
\end{align}
since
\begin{align*}
\lim_{T\rightarrow\infty} \frac{1}{T}\int_{0}^{s+T}\nu_{\tau}d\tau = \lim_{T\rightarrow\infty}\frac{1}{T}\int_{0}^{T}\nu_{\tau}d\tau = \lim_{T\rightarrow\infty}\left(\frac{1}{T}\int_{0}^{k\mathcal{T}}\nu_{\tau}d\tau + \frac{1}{T}\int_{k\mathcal{T}}^{T}\nu_{\tau}d\tau\right) = v_*
\end{align*}
by the representation $T = k\mathcal{T}+\beta$ for unique  $k\in\mathbb{N}$ and $\beta\in [0, \mathcal{T})$.
It follows from \eqref{continuousWLLN01} and \eqref{continuousWLLN02} that
\begin{align}\label{thm6.8-0}
\lim_{T\rightarrow\infty}\mathbf{E}Y_T =\lim_{T\rightarrow\infty}\frac{1}{T}\int_{0}^{T}\mathcal{P}_{s, s+t}\psi(w_0) dt= v_*.
\end{align}
Note that the convergence \eqref{thm6.8-0} is valid for all $\psi\in\mathrm{Lip}_{\rho}(H)$ since we did not use boundedness of $\psi$ in the proof.

\vskip0.05in

By Markov property, we can rewrite
\begin{align}
\mathbf{E}Y_T^2 \notag
&= \frac{2}{T^2}\mathbf{E}\int_{0}^T\int_{0}^t\psi(w_{s, s+t}(w_0))\psi(w_{s, s+\tau}(w_0))d\tau dt\\\notag
& = \frac{2}{T^2}\int_{0}^T\int_{0}^t\mathbf{E}\psi(w_{s, s+\tau}(w_0))\mathcal{P}_{s+\tau, s+t}\psi(w_{s, s+\tau}(w_0))d\tau dt \\
&= \frac{2}{T^2}\int_{0}^T\int_{0}^t  \mathcal{P}_{s, s+\tau}\left(\psi\mathcal{P}_{s+\tau, s+t}\psi)(w_0) \right)d\tau dt.\label{YTsquare}
\end{align}
We first show that the convergence in \eqref{thm6.8-0} is uniform on any compact set,  that is for any $\varepsilon>0$, and compact $K\subset H$, there is $T_0>0$ such that for all $T\geq T_0$,
\begin{align}\label{thm6.8-1}
\sup_{w\in K}\left|\frac{1}{T}\int_{0}^{T}\mathcal{P}_{s, s+t}\psi(w) dt - v_*\right|<\varepsilon.
\end{align}
Indeed, $\frac{1}{T}\int_{0}^{T}\mathcal{P}_{s, s+t}\psi(w) dt$ is uniformly bounded on $K$ since $\psi$ is  bounded. And it is also equicontinuous since by the Monge-Kantorovich daulity
\begin{align*}
&\left|\frac{1}{T}\int_{0}^{T}\mathcal{P}_{s, s+t}\psi(w_1) dt - \frac{1}{T}\int_{0}^{T}\mathcal{P}_{s, s+t}\psi(w_2) dt\right| \leq\mathrm{Lip}_{\rho}(\psi) \frac{1}{T}\int_{0}^{T}\rho(\mathcal{P}_{s, s+t}^*\delta_{w_1},\mathcal{P}_{s, s+t}^*\delta_{w_2})dt\\
&C\mathrm{Lip}_{\rho}(\psi)\rho(w_1, w_2)\frac1T\int_{0}^{T}e^{-\gamma t}dt \leq C\mathrm{Lip}_{\rho}(\psi)\rho(w_1, w_2).
\end{align*}
So \eqref{thm6.8-1} follows by Arzela-Ascoli theorem. From the proof  we see that the estimate \eqref{thm6.8-1} is also uniform with respect to the initial time $s$.

\vskip0.05in

Since for any $w\in H$ and  bounded $\psi\in\mathrm{Lip}_{\rho}(H)$, we have
\[\lim_{T\rightarrow\infty}\frac{1}{T}\int_{0}^{T}\mathcal{P}_{s, s+t}\psi(w) dt=\int_{Y}\int_{H}\psi(w)\mu_s(dw)\lambda(ds), \]
hence the family of probability measures $\frac{1}{T}\int_{0}^{T}\mathcal{P}_{s, s+t}^*\delta_wdt$ converges weakly to  $\int_{Y}\mu_s\lambda(ds)$.  Therefore $\frac{1}{T}\int_{0}^{T}\mathcal{P}_{s, s+t}^*\delta_wdt$ is tight, that is for $\varepsilon>0$, there are compact subset $K$ and $T_1>0$ such that for $T\geq T_1$,
\begin{align}\label{thm6.8tight}
\frac{1}{T}\int_{0}^{T}\mathcal{P}_{s, s+t}^*\delta_w(K^c)dt = \frac{1}{T}\int_{0}^{T}\mathcal{P}_{s, s+t}(w, K^c)dt <\varepsilon.
\end{align}
The proof of the following lemma will be given at the end of this subsection.
\begin{lemma}\label{thm6.8-lemma01}
For any $\varepsilon>0$, $w_0\in H$, there is a $T_1>0$ such that for $T\geq T_1$,
\[\left|\frac{2}{T^2}\int_{0}^T\int_{0}^t  \mathcal{P}_{s, s+\tau}\Big(\psi\big(\mathcal{P}_{s+\tau, s+t}\psi-v_*\big)\Big)(w_0) d\tau dt\right|<\varepsilon.\]
\end{lemma}
Observe that $\alpha(t) : = \frac1t\int_{0}^t\mathcal{P}_{s, s+r}\psi(w_0)dr\rightarrow v_*$ as $t\rightarrow \infty$ by \eqref{thm6.8-0}. Hence $\lim_{T\rightarrow\infty} \frac{2}{T^2}\int_0^T t\alpha(t)dt = v_*$.  Indeed, for $\varepsilon>0$, there is $T_0>0$ such that for $t\geq T_0$, $|\alpha(t)- v_*|<\varepsilon$ and $t|\alpha(t)- v_*|< M$ for $t\in[0, T_0]$. Therefore for $T>T_0$,
\begin{align}\notag
\left| \frac{2}{T^2}\int_0^T t\alpha(t)dt  - v_*\right|& \leq \frac{2}{T^2}\int_0^T t\left| \alpha(t)  - v_*\right|dt\\\label{continuousWLLN03}
&\leq \frac{2}{T^2}\int_{0}^{T_0}t|\alpha(t)- v_*|dt + \frac{2}{T^2}\int_{T_0}^{T}t\varepsilon dt\leq \frac{MT_0^2}{T^2}+\frac{T^2-T_0^2}{T^2}\varepsilon<\varepsilon
\end{align}
by choosing  large enough $T$.

\vskip0.05in

Combining \eqref{continuousWLLN03} with Lemma \ref{thm6.8-lemma01}, one has
\begin{align*}
\lim_{T\rightarrow\infty}\frac{2}{T^2}\int_{0}^T\int_{0}^t  \mathcal{P}_{s, s+\tau}\left(\psi\mathcal{P}_{s+\tau, s+t}\psi)(w_0) \right)d\tau dt &= \lim_{T\rightarrow\infty}\frac{2}{T^2}\int_{0}^T\int_{0}^t  \mathcal{P}_{s, s+\tau}\left(\psi v_*\right)d\tau dt\\
& = v_*\lim_{T\rightarrow\infty} \frac{2}{T^2}\int_0^T t\alpha(t)dt = v_*^2.
\end{align*}
In view of \eqref{YTsquare}, we see that $\lim_{T\rightarrow\infty}\mathbf{E}Y_T^2 = v_*^2$, which completes the proof of Theorem \ref{thm6.8} for bounded Lipschitz observables.

\vskip0.05in

Now for $\psi\in \mathrm{Lip}_{\rho}(H)$, we define for $L>1$
\begin{align}\label{psiL}
\psi_{L}(w):=\left\{\begin{array}{ll}
\psi(w), & \text { when }|\psi(w)| \leq L, \\
L, & \text { when } \psi(w)>L, \\
-L, & \text { when } \psi(w)<-L.
\end{array}\right.
\end{align}
set $v_*^L =  \int_{Y}\int_{H}\psi_L(w)\mu_s(dw)\lambda(ds)$, $\psi^L = |\psi - \psi_L|$.
Again by Theorem \ref{fixedpoint}, there is a constant $C>0$ such that $\int_H \exp\left(2\kappa\eta\|w\|^2\right)\mu_s(dw)\leq C$ for all $s\in\mathbb{R}$. Note that $\rho(0, w)\leq \|w\|e^{\eta\|w\|^2}\leq C e^{\kappa\eta\|w\|^2}$, so
\[\int_{Y}\int_{H}|\psi(w)|\mu_s(dw)\lambda(ds)\leq |\psi(0)|+\mathrm{Lip}_{\rho}(\psi) \int_{Y}\int_{H}\rho(0, w)\mu_s(dw)\lambda(ds)\leq|\psi(0)|+C\mathrm{Lip}_{\rho}(\psi). \]
Hence by the dominated convergence theorem,
\begin{align}\label{unbounded3}
\lim_{L\rightarrow\infty} \int_{Y}\int_{H}\psi^L(w)\mu_s(dw)\lambda(ds) = 0.
\end{align}
Since $\psi_L$ is bounded, by Theorem \ref{thm6.8} for bounded Lipschitz observables that we have proved, one has
\begin{align}\label{unbounded2}
\lim_{T\rightarrow\infty}\frac{1}{T}\int_{0}^T\psi_L(w_{s, s+t}(w_0))dt  = v_*^L.
\end{align}
By combining \eqref{unbounded3} with  \eqref{thm6.8-0}, we have that
\begin{align*}
\mathbf{E} \frac{1}{T}\int_{0}^T \psi^L(w_{s. s+t})dt \leq
&\left|\mathbf{E} \frac{1}{T}\int_{0}^T \psi^L(w_{s. s+t})dt -  \int_{Y}\int_{H}\psi^L(w)\mu_s(dw)\lambda(ds)\right| \\
&+ \int_{Y}\int_{H}\psi^L(w)\mu_s(dw)\lambda(ds) \rightarrow 0 \quad (T\rightarrow \infty, L\rightarrow \infty).
\end{align*}
Therefore by Markov inequality,
\begin{align}\label{unbounded1}
\lim_{L\rightarrow \infty}\lim_{T\rightarrow \infty} \frac{1}{T}\int_{0}^T \psi^L(w_{s. s+t})dt \rightarrow 0 \quad \text{in probability}.
\end{align}
Now observe that
\begin{align*}
&\left|\frac{1}{T}\int_{0}^T\psi(w_{s, s+t})dt-v_*\right|\\
&\leq \left|\frac{1}{T}\int_{0}^T\psi(w_{s, s+t})dt-\frac{1}{T}\int_{0}^T\psi_L(w_{s, s+t})dt\right| + \left|\frac{1}{T}\int_{0}^T\psi_L(w_{s, s+t})dt - v_*^L\right| + |v_*^L - v_*|\\
&\leq \frac{1}{T}\int_0^T\psi^L(w_{s, s+t}) dt + \left|\frac{1}{T}\int_{0}^T\psi_L(w_{s, s+t})dt - v_*^L\right| + \int_{Y}\int_{H}\psi^L(w)\mu_s(dw)\lambda(ds),
\end{align*}
whence $\lim_{T\rightarrow\infty}\frac{1}{T}\int_{0}^T\psi(w_{s, s+t})dt=v_*$  in probability by \eqref{unbounded3}-\eqref{unbounded1}.  The proof of Theorem \ref{thm6.8} for any Lipschitz observable is then completed. It remains to supply a proof for Lemma \ref{thm6.8-lemma01}.
\begin{proof}[Proof of Lemma \ref{thm6.8-lemma01}]
Let $K$ be the compact set and $T_1>0$ as in \eqref{thm6.8tight}. Note
\begin{align*}
\left|\frac{2}{T^2}\int_{0}^T\int_{0}^t  \mathcal{P}_{s, s+\tau}\Big(\psi\big(\mathcal{P}_{s+\tau, s+t}\psi-v_*\big)\Big)(w_0) d\tau dt\right|\leq K_1 + K_2,
\end{align*}
where
\[K_1 =  \left|\frac{2}{T^2}\int_{0}^T\int_{0}^t  \int_{H}\Big(\psi\big(\mathcal{P}_{s+\tau, s+t}\psi-v_*\big)\Big)\mathbb{I}_{K}(w) \mathcal{P}_{s, s+\tau}^*\delta_{w_0}(dw)d\tau dt\right|,\]
\[K_2 = \left|\frac{2}{T^2}\int_{0}^T\int_{0}^t  \int_{H}\Big(\psi\big(\mathcal{P}_{s+\tau, s+t}\psi-v_*\big)\Big)\mathbb{I}_{K^c}(w) \mathcal{P}_{s, s+\tau}^*\delta_{w_0}(dw)d\tau dt\right|.\]
We first estimate $K_1$.  For the compact set $K$, choose $T_0>T_1$ as in \eqref{thm6.8-1}, then for $T>T_0$
\begin{align*}
K_1 &= \left|\frac{2}{T^2}\int_{0}^T\int_{\tau}^T  \int_{H}\Big(\psi\big(\mathcal{P}_{s+\tau, s+t}\psi-v_*\big)\Big)\mathbb{I}_{K}(w) \mathcal{P}_{s, s+\tau}^*\delta_{w_0}(dw)dtd\tau \right|\\
&= \left|\frac{2}{T^2}\int_{0}^T(T-\tau)\int_{H}\psi(w)\left(\frac{1}{T-\tau}\int_{0}^{T-\tau}\big(\mathcal{P}_{s+\tau, s+\tau+r}\psi-v_*\big)(w)\right)\mathbb{I}_{K}(w) dr\mathcal{P}_{s, s+\tau}^*\delta_{w_0}(dw)d\tau \right|\\
&= \left|\frac{2}{T^2}\int_{0}^Tt\int_{H}\psi(w)\left(\frac{1}{t}\int_{0}^{t}\big(\mathcal{P}_{s+T-t, s+T-t+r}\psi-v_*\big)(w)\right)\mathbb{I}_{K}(w) dr\mathcal{P}_{s, s+T-t}^*\delta_{w_0}(dw)dt \right|\\
&\leq K_{11} + K_{12},
\end{align*}
where
\begin{align*}
K_{11}& = \left|\frac{2}{T^2}\int_{0}^{T_0}t\int_{H}\psi(w)\left(\frac{1}{t}\int_{0}^{t}\big(\mathcal{P}_{s+T-t, s+T-t+r}\psi-v_*\big)(w)\right)\mathbb{I}_{K}(w) dr\mathcal{P}_{s, s+T-t}^*\delta_{w_0}(dw)dt \right|\\
&\leq \frac{2}{T^2}\int_{0}^{T_0}t\|\psi\|_{\infty}\big(\|\psi\|_{\infty}+v_*\big)d\tau \leq 2\left(\frac{T_0}{T}\right)^2\|\psi\|_{\infty}\big(\|\psi\|_{\infty}+v_*\big),
\end{align*}
And
\begin{align*}
K_{12}& = \left|\frac{2}{T^2}\int_{T_0}^{T}t\int_{H}\psi(w)\left(\frac{1}{t}\int_{0}^{t}\big(\mathcal{P}_{s+T-t, s+T-t+r}\psi-v_*\big)(w)\right)\mathbb{I}_{K}(w) dr\mathcal{P}_{s, s+T-t}^*\delta_{w_0}(dw)dt \right|\\
&\leq\frac{2}{T^2}\|\psi\|_{\infty}\int_{T_0}^Tt\sup_{w\in K} \left|\frac{1}{t}\int_{0}^{t}\mathcal{P}_{s+T-t, s+T-t+r}\psi(w)dr-v_*\right|dt\leq \varepsilon\|\psi\|_{\infty}.
\end{align*}
For $K_2$, we apply \eqref{thm6.8tight},
\begin{align*}
K_2&\leq \left|\frac{4}{T^2}\|\psi\|_{\infty}^2\int_{0}^T\int_{0}^t  \int_{H}\mathbb{I}_{K^c}(w) \mathcal{P}_{s, s+\tau}^*\delta_{w_0}(dw)d\tau dt\right|\\
&\leq \frac{4}{T^2}\|\psi\|_{\infty}^2\left(\int_{0}^{T_0}\int_{0}^t \mathcal{P}_{s, s+\tau}(w_0,{K^c}) d\tau dt+\int_{T_0}^Tt\left(\frac1t\int_{0}^t \mathcal{P}_{s, s+\tau}(w_0,{K^c}) d\tau\right) dt\right)\\
&\leq \frac{4}{T^2}\|\psi\|_{\infty}^2\left(\frac12 T_0^2+\frac{T^2}{2}\varepsilon\right).
\end{align*}
Combining the above estimates and choosing large enough $T$, we then arrive at Lemma \ref{thm6.8-lemma01}.
\end{proof}
The proof of Theorem \ref{thm6.8} is then completed.
\end{proof}
\begin{remark}
Note that the convergence \eqref{thm6.8-0} is actually a special corollary of the Von Neumann Theorem (formula 6.28) mentioned  in \cite{DD08}.
\end{remark}

\subsection{Limit theorems for the restriction of the inhomogeneous solution process
to periodic times}\label{subsectionlimitthm}
In this subsection, we will prove the weak law of large numbers and central limit theorem for the time homogeneous Markov chain  obtained by restricting the solution process starting at $s\in\mathbb{R}$ at periodic times $\{s+n\mathcal{T}\}_{n\geq 0}$.

\vskip0.05in

For $w_0\in H, s\in\mathbb{R}$,  recall that  $w_{s, t}(w_0) = w(t, \omega; s, w_0)$ is the solution of  system \eqref{NS} starting from $w_0$ at time $s$. Let $\{\mu_s\}_{s\in\mathbb{R}}$  be the unique periodic invariant measure of the transition operator $\mathcal{P}_{s, t}$. For integer $k\geq 0$,  $\mathfrak{F}_{k} = \mathcal{F}_{s+k\mathcal{T}}$. For $0\leq i\leq j$, and  any Borel set $A\subset H$, we have transition probabilities $\mathfrak{P}_{i, j}(w_0, A): = \mathcal{P}_{s+i\mathcal{T},s+j\mathcal{T}}(w_0, A) = \mathbf{P}(w(s+j\mathcal{T}; s+i\mathcal{T}, w_0)\in A)$  of the Markov chain $\{w_{s, s+k\mathcal{T}}(w_0)\}_{k\geq 0} $ such that
\begin{enumerate}
\item $\mathbf{P}(w_{s, s+j\mathcal{T}}(w_0)\in A|\mathfrak{F}_i) = \mathfrak{P}_{i, j}(w_{s, s+i\mathcal{T}}(w_0), A)$, $\mathfrak{P}_{i, j}(w_0, A) = \delta_{w_0}(A)$;
\item It is time homogeneous,  i.e. $\mathfrak{P}_{i, j} = \mathfrak{P}_{0, j - i}$ since $\mathcal{P}_{s+i\mathcal{T},s+j\mathcal{T}} = \mathcal{P}_{s, s+(j-i)\mathcal{T}}$ by periodicity.
\end{enumerate}
Let $\mathfrak{P}_{k} = \mathcal{P}_{s, s+k\mathcal{T}}$. Then $\mathfrak{P}_{k}$ is a Markov semigroup and $\mu_s$ is the unique invariant measure. What's more, by Theorem \ref{fixedpoint}, one has $\rho(\mathfrak{P}_{k}^{*}\mu, \mu_s)\leq Ce^{-\gamma k\mathcal{T}}\rho(\mu,\mu_s)$ for any $\mu\in\mathcal{P}(H)$.

\vskip0.05in

The main result of this subsection is the following limit theorem for the Markov chain $\{w_{s, s+k\mathcal{T}}\}_{k\geq 0}.$ We also denote the integration of a function $\psi$ with respect to the probability measure $\mu$ by $\langle \mu, \psi\rangle =  \int_{H}\psi(z)\mu(dz)$.
\begin{theorem}
Assume $G_1<1/c_0$, $A_{\infty} = H$. Then for any $s\in\mathbb{R}$, $w_0\in H$ and any Lipschitz function $\psi: H \rightarrow \mathbb{R}$, we have the following:
\begin{enumerate}
\item Weak law of large numbers (WLLN),
\begin{align}\label{WLLN}
\lim_{N\rightarrow\infty}\frac{1}{N}\sum_{k=0}^{N-1}\psi\left(w_{s, s+k\mathcal{T}}(w_0)\right) = \langle \mu_s, \psi\rangle
\end{align}
in probability.
\item Let $\widetilde{\psi}_s(w_0): = \psi(w_0) - \langle \mu_s, \psi\rangle$. Then the central limit theorem (CLT) holds:
\begin{align}\label{CLT}
\lim_{N\rightarrow\infty}\mathbf{P}\left(\frac{1}{\sqrt{N}}\sum_{k=0}^{N-1}\widetilde{\psi}_s\left(w_{s, s+k\mathcal{T}}(w_0)\right)<\xi\right) = \Phi_{\sigma}(\xi), \forall \xi\in\mathbb{R},
\end{align}
where $\Phi_{\sigma}(\cdot)$ is the distribution function of a normal random variable with variance $\sigma^2$, and
\[\sigma^2 = \lim_{N\rightarrow\infty}\frac1N\mathbf{E}\left[\sum_{k=0}^{N-1}\widetilde{\psi}_s\left(w_{s, s+k\mathcal{T}}(w_0)\right)\right]^2,\]
where $\sigma: = \sigma(s)$ is $\mathcal{T}$-periodic in $s$.
\end{enumerate}
\end{theorem}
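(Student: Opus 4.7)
The plan is to transfer the infinite-horizon machinery from Section \ref{sectionergodicmixing} to the time-homogeneous Markov chain $\{w_{s,s+k\mathcal{T}}(w_0)\}_{k\geq 0}$, whose semigroup $\mathfrak{P}_k=\mathcal{P}_{s,s+k\mathcal{T}}$ has unique invariant measure $\mu_s$ and, by specializing Corollary \ref{Lipmixing} to periodic times, satisfies
\begin{align*}
\|\mathfrak{P}_k\widetilde{\psi}_s\|_{\rho,s}\leq Ce^{-\gamma k\mathcal{T}}\mathrm{Lip}_{\rho}(\psi),
\end{align*}
using $\mu_{s+k\mathcal{T}}=\mu_s$, $\mu_s(\widetilde{\psi}_s)=0$, and the fact that invariance of $\mu_s$ under $\mathfrak{P}_k$ preserves the zero-mean condition. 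Combined with the pointwise estimate $|\mathfrak{P}_k\widetilde{\psi}_s(w)|\leq\mathrm{Lip}_{\rho}(\mathfrak{P}_k\widetilde{\psi}_s)\int\rho(w,w')d\mu_s(w')\leq Ce^{-\gamma k\mathcal{T}}(1+\|w\|e^{\eta\|w\|^2})$ and the moment bound $\sup_s\mu_s(e^{2\kappa\eta\|w\|^2})<\infty$ from Theorem \ref{fixedpoint}, this exponential decay is the workhorse for both (1) and (2).

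For the WLLN, I would follow the Chebyshev-inequality scheme of Theorem \ref{thm6.8}. The mean $\mathbf{E}\widetilde{\psi}_s(w_{s,s+k\mathcal{T}})=\mathfrak{P}_k\widetilde{\psi}_s(w_0)$ decays exponentially by the estimate above, so its Ces\`aro average tends to $0$. For the second moment the Markov property yields
\begin{align*}
\mathbf{E}\bigl[\widetilde{\psi}_s(w_{s,s+k\mathcal{T}})\widetilde{\psi}_s(w_{s,s+j\mathcal{T}})\bigr]=\mathfrak{P}_k\bigl[\widetilde{\psi}_s\cdot\mathfrak{P}_{j-k}\widetilde{\psi}_s\bigr](w_0),\quad k\leq j,
\end{align*}
whose absolute value is bounded by $Ce^{-\gamma(j-k)\mathcal{T}}(1+\|w_0\|e^{\eta\|w_0\|^2})^{2}$; summing the double series yields $\mathbf{E}\bigl|\tfrac{1}{N}\sum\widetilde{\psi}_s\bigr|^{2}=O(1/N)$. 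An unbounded Lipschitz $\psi$ is handled via the truncation \eqref{psiL} exactly as in Theorem \ref{thm6.8}.

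For the CLT, the plan is a Gordin--Kipnis--Varadhan martingale approximation. Define the Poisson corrector $\chi_s(w):=\sum_{k=0}^{\infty}\mathfrak{P}_k\widetilde{\psi}_s(w)$; the series converges in $\|\cdot\|_{\rho,s}$, so $\chi_s$ is $\rho$-Lipschitz, solves $\chi_s-\mathfrak{P}_1\chi_s=\widetilde{\psi}_s$, and satisfies $|\chi_s(w)|\leq C(1+\|w\|e^{\eta\|w\|^2})$. Setting $D_k:=\chi_s(w_{s,s+k\mathcal{T}})-\mathbf{E}[\chi_s(w_{s,s+k\mathcal{T}})\mid\mathfrak{F}_{k-1}]$ and telescoping gives
\begin{align*}
\sum_{k=0}^{N-1}\widetilde{\psi}_s(w_{s,s+k\mathcal{T}})=\sum_{k=1}^{N}D_k+\chi_s(w_0)-\chi_s(w_{s,s+N\mathcal{T}}).
\end{align*}
The boundary terms are $o(\sqrt{N})$ in probability by the Lyapunov bound of Proposition \ref{pre_Lya} applied at time $s+N\mathcal{T}$, so the CLT reduces to a martingale CLT (Brown / Hall--Heyde) for $\tfrac{1}{\sqrt{N}}\sum_k D_k$. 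I would verify (a) the quadratic-variation convergence $\tfrac{1}{N}\sum_{k=0}^{N-1}F_s(w_{s,s+k\mathcal{T}})\to\sigma^{2}(s)$ in probability with $F_s(w):=\mathfrak{P}_1(\chi_s^{2})(w)-(\mathfrak{P}_1\chi_s(w))^{2}$ and $\sigma^{2}(s):=\mu_s(F_s)$ by applying the WLLN just proved to $F_s$; and (b) a Lindeberg condition via uniform $L^{2+\delta}(\mu_s)$-integrability of the $D_k$. Squaring the telescoping identity and discarding the $o(N)$ boundary and cross terms identifies $\sigma^{2}(s)$ with the limit in the statement.

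Periodicity of $\sigma(s)$ is structural: since $\mathcal{P}_{s+\mathcal{T},t+\mathcal{T}}=\mathcal{P}_{s,t}$ and $\mu_{s+\mathcal{T}}=\mu_s$, the operators $\mathfrak{P}_k$, the corrector $\chi_s$, and the conditional-variance function $F_s$ are all $\mathcal{T}$-periodic in $s$, hence so is $\sigma^{2}(s)=\mu_s(F_s)$. The main obstacle I anticipate is the simultaneous management of the exponential Lyapunov weights: the $\rho$-Lipschitz bound on $\chi_s$ carries a factor $e^{\eta\|w\|^{2}}$, while the martingale CLT requires $(2+\delta)$-moments of $\chi_s$ under $\mu_s$ and uniform control on the $D_k^{2}$. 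Choosing $\eta$ small enough (well within the range of Proposition \ref{pre_Lya}) so that $(2+\delta)\eta<2\kappa\eta$, together with $\sup_s\mu_s(e^{2\kappa\eta\|w\|^{2}})<\infty$ from Theorem \ref{fixedpoint}, should close this gap, but the careful tracking of these exponents across the Poisson corrector, the martingale differences, and the boundary term is the delicate part.
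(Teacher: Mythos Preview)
Your overall architecture matches the paper: Chebyshev for the WLLN via Markov-property factorization of the covariances, and a Gordin--Kipnis--Varadhan corrector $\chi_s=\sum_{k\geq 0}\mathfrak{P}_k\widetilde{\psi}_s$ for the CLT. For the WLLN your direct covariance estimate $\bigl|\mathfrak{P}_k[\widetilde{\psi}_s\cdot\mathfrak{P}_{j-k}\widetilde{\psi}_s](w_0)\bigr|\leq Ce^{-\gamma(j-k)\mathcal{T}}$ is in fact more streamlined than the paper's route, which passes through an Arzel\`a--Ascoli/tightness argument (the discrete analogue of Lemma~\ref{thm6.8-lemma01}); both reach the same $O(1/N)$ second-moment bound.

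There is one genuine gap in your CLT plan. You propose to obtain the quadratic-variation convergence by applying the WLLN ``just proved'' to the conditional-variance function $F_s(w)=\mathfrak{P}_1(\chi_s^{2})(w)-(\mathfrak{P}_1\chi_s(w))^{2}$. But $F_s$ is \emph{not} $\rho$-Lipschitz: since $\chi_s$ is $\rho$-Lipschitz one has $\|\nabla\chi_s(w)\|\leq CV(w)$ and $|\chi_s(w)|\leq CV^{\kappa}(w)$, hence $\|\nabla(\chi_s^{2})(w)\|\leq CV^{\kappa+1}(w)$, which is the wrong weight for $\rho$. The WLLN as you stated and proved it therefore does not apply to $F_s$. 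The paper sidesteps this by invoking the martingale CLT of \cite{KW12} (Theorem~\ref{martingaleclt}), whose condition (M2) only requires an $L^{1}$ double limit; that limit is identified as $\langle\mu_s,|H_K|\rangle$ and then shown to vanish via Birkhoff's ergodic theorem applied to the \emph{stationary} chain started from $\mu_s$, where no Lipschitz regularity is needed---only $L^{1}(\mu_s)$ integrability. If you want to keep the Brown/Hall--Heyde route, the fix is to note that $F_s\in C^{1}_{\eta'}$ for a suitable $\eta'>\eta$ and use the mixing in Theorem~\ref{mixingob} (which is stated for the $\|\cdot\|_{\eta}$ scale, not just for $\rho$-Lipschitz observables) to rerun the WLLN argument in that norm; this works but requires tracking the exponent carefully, exactly the issue you flag at the end. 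A smaller point: your Lindeberg step should control $L^{2+\delta}$ moments under the evolved laws $\mathfrak{P}_k^{*}\delta_{w_0}$, not under $\mu_s$; the paper isolates this as Lemma~\ref{CLTLyapunov}.
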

\subsubsection{Proof of WLLN}
Let $Y_N = \frac{1}{N}\sum_{k=0}^{N-1}\psi\left(w_{s, s+k\mathcal{T}}(w_0)\right) $ and $v_s = \langle \mu_s, \psi\rangle$. Then by Chebyshev's inequality, it suffices to show that $\lim_{N\rightarrow\infty}\mathbf{E}Y_N = v_s$ and $\lim_{N\rightarrow\infty}\mathbf{E}Y_N^2 = v_s^2$. Assume first that $\psi\in\mathrm{Lip}_{\rho}(H)$ is bounded. Then
\begin{align}\label{convergentexpectation}
\mathbf{E}Y_N = \frac{1}{N}\sum_{k=0}^{N-1} \mathfrak{P}_{k}\psi(w_0) =  \frac{1}{N}\sum_{k=0}^{N-1} \langle\mathfrak{P}_{k}^{*}\delta_{w_0},\psi\rangle \rightarrow v_s,
\end{align}
 since the average $ \frac{1}{N}\sum_{k=0}^{N-1} \mathfrak{P}_{k}^{*}\delta_{w_0}$ converges to $\mu_s$ in the Wasserstein metric. Indeed, for any $\psi\in\mathrm{Lip}_{\rho}(H)$ with $\mathrm{Lip}_{\rho}(\psi)\leq 1$, and $\mu\in\mathcal{P}_1(H)$, by Theorem \ref{fixedpoint}, the invariance of $\mu_s$, and the Monge-Kantorovich daulity, one has
\begin{align*}
\left|\frac{1}{N}\sum_{k=0}^{N-1} \langle \mathfrak{P}_{k}^{*}\mu, \psi\rangle - \langle\mu_s,\psi\rangle\right|& = \left|\frac{1}{N}\sum_{k=0}^{N-1} \langle \mathfrak{P}_{k}^{*}\mu, \psi\rangle -\frac{1}{N}\sum_{k=0}^{N-1} \langle \mathfrak{P}_{k}^{*}\mu_s, \psi\rangle\right|\\
&\leq \frac{1}{N}\sum_{k=0}^{N-1}\left| \langle \mathfrak{P}_{k}^{*}\mu, \psi\rangle - \langle \mathfrak{P}_{k}^{*}\mu_s, \psi\rangle\right|\\
&\leq \frac{1}{N}\sum_{k=0}^{N-1}Ce^{-\gamma k\mathcal{T}}\rho(\mu, \mu_s)\\
&\leq \frac{1}{N}\sum_{k=0}^{N-1}\rho(\mathfrak{P}_{k}^{*}\mu, \mathfrak{P}_{k}^{*}\mu_s)\leq \frac{C}{N(1-e^{-\gamma\mathcal{T}})}\rho(\mu, \mu_s).
\end{align*}
In addition, the convergence \eqref{convergentexpectation} is uniform on any compact set by Arzelà–Ascoli theorem since $\psi_{N}(w): =\frac{1}{N}\sum_{k=0}^{N-1} \mathfrak{P}_{k}\psi(w)  $ is uniformly bounded by boundedness of $\psi$ and it is equicontinuous by noting that
\begin{align*}
|\psi_{N}(w_1) - \psi_{N}(w_2)| \leq \frac{1}{N}\sum_{k=0}^{N-1}\left|\mathfrak{P}_{k}\psi(w_1)-\mathfrak{P}_{k}\psi(w_2)\right|&
\leq \frac{1}{N}\sum_{k=0}^{N-1}Ce^{-\gamma k\mathcal{T}}\mathrm{Lip}_{\rho}(\psi)\rho(w_1, w_2)\\
&\leq C\mathrm{Lip}_{\rho}(\psi)\rho(w_1, w_2).
\end{align*}
Hence for any $\varepsilon>0$ and compact set $K\subset H$, there is some $N_0$ such that for all $N\geq N_0$, one has
\begin{align}\label{uniformcompact}
\sup_{w\in K} \left|\frac{1}{N}\sum_{k=0}^{N-1} \mathfrak{P}_{k}\psi(w)  - v_s\right|<\varepsilon.
\end{align}
Note that
\begin{align}\notag
\mathbf{E}Y_N^2 &= \frac{2}{N^2}\sum_{m=0}^{N-1}\sum_{n=0}^{m}\mathbf{E}\left[\psi\left(w_{s, s+m\mathcal{T}}(w_0)\right)\psi\left(w_{s, s+n\mathcal{T}}(w_0)\right)\right]-\frac{1}{N^2}\sum_{m=0}^{N-1}\mathbf{E}\left[\psi(w_{s, s+m\mathcal{T}}(w_0))^2\right]\\\notag
& = \frac{2}{N^2}\sum_{m=0}^{N-1}\sum_{n=0}^{m}\mathbf{E}\left[\psi\left(w_{s, s+n\mathcal{T}}(w_0)\right)\mathfrak{P}_{m-n}\psi\left(w_{s, s+n\mathcal{T}}(w_0)\right)\right]-\frac{1}{N^2}\sum_{m=0}^{N-1}\mathfrak{P}_{m}\psi^2(w_0)\\\label{discreteWLLN01}
&= \frac{2}{N^2}\sum_{m=0}^{N-1}\sum_{n=0}^{m}\mathfrak{P}_n(\psi\mathfrak{P}_{m-n}\psi)(w_0)-\frac{1}{N^2}\sum_{m=0}^{N-1}\mathfrak{P}_{m}\psi^2(w_0).
\end{align}
The second term on the last equality \eqref{discreteWLLN01} tends to $0$ since $ \frac{1}{N}\sum_{m = 0}^{N-1} \mathfrak{P}_{m}^{*}\delta_{w_0}$ converges to $\mu_s$ weakly.  To estimate the first term, we need the following lemma.
\begin{lemma}\label{WLLN1}
For any $\varepsilon>0$, there is $N_0>0$ such that for all $N\geq N_0$,
\[\left|\frac{2}{N^2}\sum_{m=0}^{N-1}\sum_{n=0}^{m}\mathfrak{P}_n(\psi(\mathfrak{P}_{m-n}\psi-v_s))(w_0)\right|<\varepsilon.\]
\end{lemma}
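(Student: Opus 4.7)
My plan is to mirror, in the discrete setting, the argument used for the continuous-time estimate in Lemma \ref{thm6.8-lemma01}. The first step is to swap the order of summation via the substitution $k=m-n$, which rewrites the quantity of interest as
\[
\frac{2}{N^{2}}\sum_{n=0}^{N-1}\int_{H}\psi(w)\,S_{N-n}(w)\,\mathfrak{P}_{n}^{*}\delta_{w_{0}}(dw),
\qquad
S_{M}(w):=\sum_{k=0}^{M-1}\bigl(\mathfrak{P}_{k}\psi(w)-v_{s}\bigr).
\]
The two ingredients I intend to exploit are: (i) the uniform-on-compact convergence \eqref{uniformcompact}, which says $S_{M}(w)/M\to 0$ uniformly for $w$ in any fixed compact $K\subset H$; and (ii) the tightness of $\bigl\{\frac{1}{N}\sum_{n=0}^{N-1}\mathfrak{P}_{n}^{*}\delta_{w_{0}}\bigr\}_{N\ge 1}$, which is a consequence of the weak convergence to $\mu_{s}$ established just before the lemma.

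Given $\varepsilon>0$, I would first use tightness to pick a compact $K\subset H$ and $N_{1}$ so that $\frac{1}{N}\sum_{n=0}^{N-1}\mathfrak{P}_{n}(w_{0},K^{c})<\varepsilon$ for $N\ge N_{1}$, and then split each inner integral into the $K$-part and the $K^{c}$-part. On $K^{c}$ I invoke the crude bound $|\psi|\le\|\psi\|_{\infty}$ and $|S_{N-n}|\le 2\|\psi\|_{\infty}(N-n)$, which after summation yields a contribution of order $\|\psi\|_{\infty}^{2}\,\varepsilon$. On $K$, I next pick $N_{0}$ such that $\sup_{w\in K}|S_{M}(w)|/M<\varepsilon$ for every $M\ge N_{0}$ and split the outer sum into $0\le n\le N-N_{0}$ and $N-N_{0}<n\le N-1$. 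The first range gives a bound of order $\|\psi\|_{\infty}\,\varepsilon$ because the factor $(N-n)/N\le 1$ multiplies the uniformly small quantity $\varepsilon$; the second range contains only $N_{0}$ terms, each bounded by $2\|\psi\|_{\infty}^{2}N_{0}$, so its total contribution is at most $4\|\psi\|_{\infty}^{2}N_{0}^{2}/N^{2}$ and disappears as $N\to\infty$.

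I do not expect any substantive obstacle: the argument is a routine Cesàro-plus-tightness estimate, essentially the discrete analogue of the continuous-time calculation already carried out. The only care required is the order in which parameters are chosen---first fix $K$ and $N_{1}$ from tightness, then fix $N_{0}$ from \eqref{uniformcompact} applied on that specific $K$, and finally take $N$ large enough to absorb the $N_{0}^{2}/N^{2}$ tail---after which combining the three bounds and relabeling the constant in front of $\varepsilon$ delivers the desired inequality.
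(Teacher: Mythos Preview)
Your proposal is correct and follows essentially the same approach as the paper: swap the order of summation, split into a compact part $K$ and its complement using tightness of the Ces\`aro averages, and on $K$ split the outer sum according to whether $N-n$ is large enough to invoke the uniform convergence \eqref{uniformcompact}. Your version of the split (at $n\le N-N_0$ versus $n>N-N_0$) is in fact the cleaner way to state what the paper does, and your bound $4\|\psi\|_\infty^2 N_0^2/N^2$ on the tail is the correct order.
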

The proof of this lemma will be given at the end of this subsection.
Note that
\begin{align}\label{discreteWLLN02}
\lim_{N\rightarrow\infty}\frac{2}{N^2}\sum_{m=0}^{N-1}\sum_{n=0}^{m}\mathfrak{P}_n\psi(w_0) = v_s.
\end{align}
Indeed,
\[\frac{2}{N^2}\sum_{m=0}^{N-1}\sum_{n=0}^{m}\mathfrak{P}_n\psi(w_0) = \frac{2}{N^2}\sum_{m=0}^{N-1}m\left(\frac{1}{m}\sum_{n=0}^{m}\mathfrak{P}_n\psi(w_0)\right), \]
and $\lim_{m\rightarrow\infty}\frac{1}{m}\sum_{n=0}^{m}\mathfrak{P}_n\psi(w_0) = v_s.$ Therefore, if we set $\alpha_m = \frac{1}{m}\sum_{n=0}^{m}\mathfrak{P}_n\psi(w_0)$, then for any $\varepsilon>0, $ there is some $M_0>0$ such that for any $m\geq M_0$, $|\alpha_m - v_s|<\varepsilon$. Hence
\begin{align*}
\left|\frac{2}{N^2}\sum_{m=0}^{N-1}m\alpha_m - v_s\right|& \leq \frac{2}{N^2}\sum_{m=0}^{N-1}m|\alpha_m - v_s| + \frac{|v_s|}{N}\\
&\leq  \frac{2}{N^2}\sum_{m=0}^{M_0-1}m|\alpha_m - v_s| + \frac{2}{N^2}\sum_{m=M_0}^{N-1}m\varepsilon + \frac{|v_s|}{N}<\varepsilon,
\end{align*}
by taking $N$ large enough. Combining \eqref{discreteWLLN01} and \eqref{discreteWLLN02} with Lemma \ref{WLLN1}, one has the equality \eqref{WLLN} for bounded Lipschitz observables. For unbounded Lipschitz function $\psi$,  define $\psi_L$ as \eqref{psiL}.
Let $v_s^L = \langle\mu_s,\psi_L\rangle$ and $\psi^{L} = |\psi-\psi_L|$. Then
\begin{align}\notag
&\left|\frac{1}{N}\sum_{k=0}^{N-1}\psi\left(w_{s, s+k\mathcal{T}}(w_0)\right)- v_s\right| \\\label{discreteWLLN03}
&\leq \frac{1}{N}\sum_{k=0}^{N-1}\psi^{L}(w_{s, s+k\mathcal{T}}(w_0))+\left|\frac{1}{N}\sum_{k=0}^{N-1}  \psi_L\left(w_{s, s+k\mathcal{T}}(w_0)\right)  - v_s^{L} \right|+\left|v_s^L - v_s\right|.
\end{align}
Since $\mu_s\in\mathcal{P}_1(H)$, we have $\langle\mu_s, |\psi|\rangle\leq \langle\mu_s, |\psi(0)|+\rho(0, w)\mathrm{Lip}_{\rho}(\psi)\rangle<\infty$. So $\lim_{L\rightarrow\infty}\left|v_s^L - v_s\right|=0$ by the dominated convergence theorem. Also the second term of \eqref{discreteWLLN03} converges to $0$ in probability by the already proved \eqref{WLLN} for bounded Lipschitz observables.  It follows from Theorem \ref{fixedpoint}, the invariance of $\mu_s$, and the Monge-Kantorovich daulity that
\begin{align*}
\mathbf{E} \frac{1}{N}\sum_{k=0}^{N-1}\psi^{L}(w_{s, s+k\mathcal{T}}(w_0))  &\leq \left|\mathbf{E} \frac{1}{N}\sum_{k=0}^{N-1}\psi^{L}(w_{s, s+k\mathcal{T}}(w_0)) - \langle\mu_s, \psi^L\rangle\right|+ \langle\mu_s, \psi^L\rangle\\
& \leq\frac{1}{N}\sum_{k=0}^{N-1}\left|\langle\mathfrak{P}_{k}^*\delta_{w_0}, \psi^L\rangle -  \langle\mathfrak{P}_{k}^*\mu_s, \psi^L\rangle\right|+\langle\mu_s, \psi^L\rangle\\
&\leq \frac{1}{N}\sum_{k=0}^{N-1}Ce^{-\gamma k \mathcal{T}}\mathrm{Lip}_{\rho}(\psi)\rho(\delta_{w_0}, \mu_s)+\langle\mu_s, \psi^L\rangle.
\end{align*}
Hence
\[\lim_{L\rightarrow\infty}\lim_{N\rightarrow\infty}\mathbf{E} \frac{1}{N}\sum_{k=0}^{N-1}\psi^{L}(w_{s, s+k\mathcal{T}}(w_0)) = 0.\]
As a result, by Markov inequality, $ \frac{1}{N}\sum_{k=0}^{N-1}\psi^{L}(w_{s, s+k\mathcal{T}}(w_0)) $ converges to $0$ in probability as $N\rightarrow\infty$ and $L\rightarrow\infty$. Combining this with \eqref{discreteWLLN03}, we obtain \eqref{WLLN} for unbounded Lipschitz observables. Now we supply a proof for Lemma \ref{WLLN1}.
\begin{proof}[Proof of Lemma \ref{WLLN1}]
By mixing property of $\mu_s$, $\rho(\mathfrak{P}_{k}^{*}\mu, \mu_s)\leq Ce^{-\gamma k\mathcal{T}}\rho(\mu,\mu_s)$, hence the family of probability measures $\{\mathfrak{P}_{k}^{*}\delta_{w_0}\}_{k\geq 0}$ is tight. Then for any $\varepsilon>0$, there is a compact set $K$ and $N_0>0$ such that for all $N\geq N_0$,
\begin{align*}
\frac{1}{N}\sum_{k=0}^{N-1}\mathfrak{P}_{k}^{*}\delta_{w_0}(K^c)<\varepsilon.
\end{align*}
Then choose $N_0^*\geq N_0$ as in \eqref{uniformcompact} for given $\varepsilon$ and $K$.  Now
\begin{align*}
&\left|\frac{2}{N^2}\sum_{m=0}^{N-1}\sum_{n=0}^{m}\mathfrak{P}_n(\psi(\mathfrak{P}_{m-n}\psi-v_s))(w_0)\right|\\
&\leq \left|\frac{2}{N^2}\sum_{m=0}^{N-1}\sum_{n=0}^{m}\mathfrak{P}_n(\mathbb{I}_{K}\psi(\mathfrak{P}_{m-n}\psi-v_s))(w_0)\right|+\left|\frac{2}{N^2}\sum_{m=0}^{N-1}\sum_{n=0}^{m}\mathfrak{P}_n(\mathbb{I}_{K^c}\psi(\mathfrak{P}_{m-n}\psi-v_s))(w_0)\right|
& := S_1 +S_2.
\end{align*}
Note that
\begin{align*}
S_1 &= \left|\frac{2}{N^2}\sum_{n=0}^{N-1}\sum_{m=n}^{N-1}\mathfrak{P}_n(\mathbb{I}_{K}\psi(\mathfrak{P}_{m-n}\psi-v_s))(w_0)\right| = \left|\frac{2}{N^2}\sum_{n=0}^{N-1}\sum_{m= 0}^{N-1-n}\mathfrak{P}_n(\mathbb{I}_{K}\psi(\mathfrak{P}_{m}\psi-v_s))(w_0)\right|\\
&\leq  \left|\frac{2}{N^2}\sum_{n=0}^{N_0^*-1}(N-n)\mathfrak{P}_n\left(\mathbb{I}_{K}\psi\left(\frac{1}{N-n}\sum_{m= 0}^{N-1-n}\mathfrak{P}_{m}\psi-v_s\right)\right)(w_0)\right|\\
&+\left|\frac{2}{N^2}\sum_{n=N_0^*}^{N-1}(N-n)\mathfrak{P}_n\left(\mathbb{I}_{K}\psi\left(\frac{1}{N-n}\sum_{m= 0}^{N-1-n}\mathfrak{P}_{m}\psi-v_s\right)\right)\left(w_0\right)\right|\\
&\leq \frac{2N_0^*}{N}\|\psi\|_{\infty}(\|\psi\|_{\infty}+|v_s|) + C\|\psi\|_{\infty}\varepsilon\leq C\|\psi\|_{\infty}\varepsilon,
\end{align*}
for large $N$. And
\begin{align*}
S_2 & \leq \frac{4\|\psi\|_{\infty}^2}{N^2}\sum_{m=0}^{N-1}\sum_{n=0}^{m}\mathfrak{P}_n\mathbb{I}_{K^c}(w_0)\\
& \leq \frac{4\|\psi\|_{\infty}^2}{N^2}\left(\sum_{m=0}^{N_0^*-1}\sum_{n=0}^{m}\mathfrak{P}_n\mathbb{I}_{K^c}(w_0) +\sum_{m=N_0^*}^{N - 1}\frac{m+1}{m+1}\sum_{n=0}^{m}\mathfrak{P}_n\mathbb{I}_{K^c}(w_0)\right)\\
&\leq C\|\psi\|_{\infty}^2\varepsilon,
\end{align*}
for $N$ large enough.  The proof is then complete.
\end{proof}
\subsubsection{Proof of CLT}
Throughout this subsection, we denote by $B_R(0) = \{w\in H: \rho(0,w)<R\}$ the ball under the metric $\rho$. Denote the initial condition as $w$.  And denote $\mathbf{E}[X, Y\geq Z] = \mathbf{E}X\mathbb{I}_{\{Y\geq Z\}}$ for given random variables $X, Y, Z$. The following estimate shows a Lyapunov structure of system \eqref{NS}, which will be useful in the proof.
\begin{lemma}\label{CLTLyapunov}
There is $\delta>0$ so that for all $R>0$, there is constant $C =C(R,\delta)$ independent of initial time $s$ such that
\[\sup_{t\geq0}\sup_{w\in B_R(0)}\mathbf{E}\rho(0, w_{s, s+t}(w))^{2+\delta} \leq C.\]
\end{lemma}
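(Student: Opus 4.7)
The plan is to reduce the claim to the third estimate of Proposition \ref{pre_Lya}, i.e.\ the bound $\mathbf{E}V^{r}(\Phi_{s,s+t}(w))\leq C V^{r\alpha(t)}(w)$ valid uniformly in $s\in\mathbb{R}$, $t\geq 0$ and $r\in[r_0,2\kappa]$. The starting point is the elementary estimate $\rho(0,w)\leq\|w\|V(w)$ (obtained by using the straight-line path $\tau\mapsto\tau w$ in the definition \eqref{rho}), together with the reverse control $\|w\|\leq\rho(0,w)$ that follows from $V\geq 1$. The second inequality already tells us that the $\rho$-ball $B_R(0)$ is contained in the $\|\cdot\|$-ball of radius $R$, so $V(w)\leq e^{\eta R^{2}}$ on $B_R(0)$.

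Next I would absorb the polynomial factor into an exponential: for any $\varepsilon>0$, $\|w\|^{2+\delta}\leq C_\varepsilon e^{\varepsilon\|w\|^{2}}=C_\varepsilon V(w)^{\varepsilon/\eta}$. Combined with $\rho(0,w)\leq\|w\|V(w)$, this yields
\[
\rho(0,w)^{2+\delta}\leq C_\varepsilon V(w)^{2+\delta+\varepsilon/\eta},
\]
and the crucial point is now just a parameter count: since $\kappa>1$, one can fix $\delta>0$ and $\varepsilon>0$ small enough that the exponent $r:=2+\delta+\varepsilon/\eta$ satisfies $r\in[r_0,2\kappa]$, which is the range in which the Lyapunov estimate of Proposition \ref{pre_Lya}(3) is available.

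Finally I would apply that estimate with this $r$ to the solution $w_{s,s+t}(w)=\Phi_{s,s+t}(w)$. Because $\alpha(t)\in(0,1]$ and $V\geq 1$, one has $V^{r\alpha(t)}(w)\leq V^{r}(w)$, so
\[
\mathbf{E}\rho(0,w_{s,s+t}(w))^{2+\delta}\leq C_\varepsilon\,\mathbf{E}V^{r}(w_{s,s+t}(w))\leq C\,C_\varepsilon V^{r}(w).
\]
Restricting to $w\in B_R(0)$ bounds $V^{r}(w)$ by the constant $e^{\eta r R^{2}}$, which depends only on $R$ and $\delta$ (through $r$) and is independent of $s$ and $t$. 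Taking the supremum over $t\geq 0$ and $w\in B_R(0)$ then gives the claim.

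The only mild obstacle is bookkeeping of constants: one has to check that the $\eta$ fixed for the metric $\rho$ lies in the range $(0,\eta_0]$ on which Proposition \ref{pre_Lya} applies and that the choice of $\delta$ and $\varepsilon$ really keeps $r$ inside $[r_0,2\kappa]$. Both are fine because $\kappa>1$ leaves strict room above the value $2$, and the Lyapunov structure is invoked only at a fixed $\eta$ (the same one used throughout Section \ref{sectionergodicmixing}), so no new smallness of $\eta$ is needed.
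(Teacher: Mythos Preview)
Your argument is correct and follows essentially the same route as the paper: bound $\rho(0,w)$ by $\|w\|V(w)$ via the straight-line path, absorb the resulting expression into a power of $V$, apply a Lyapunov-type estimate on $\mathbf{E}V^{r}(\Phi_{s,s+t}(w))$, and finally use $\|w\|\leq\rho(0,w)<R$ on $B_R(0)$. The only cosmetic difference is in the second step: the paper invokes item~(1) of Proposition~\ref{pre_Lya}, $\|w\|V(w)\leq CV^{\kappa}(w)$, which produces the exponent $\kappa(2+\delta)>2\kappa$ and therefore forces a direct appeal to estimate~\eqref{eq: est1} together with an additional smallness requirement $\kappa(2+\delta)\eta\leq\eta_0$; you instead bound $\|w\|^{2+\delta}\leq C_\varepsilon V(w)^{\varepsilon/\eta}$ separately, land on the smaller exponent $r=2+\delta+\varepsilon/\eta<2\kappa$, and can then quote Proposition~\ref{pre_Lya}(3) without touching $\eta$. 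Your bookkeeping is slightly cleaner for exactly the reason you note, but the two arguments are otherwise the same.
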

\begin{proof}
Recall that $V(w) = e^{\eta\|w\|^2}$, and $\kappa> 1$ as in Proposition \ref{verifyLyapunov}. From the definition of the metric $\rho$, we have
\begin{align*}
\mathbf{E}\rho(0, w_{s, s+t}(w))^{2+\delta}&\leq \mathbf{E}\|w_{s, s+t}(w)\|^{2+\delta}V^{2+\delta}(w_{s, s+t}(w))\\
&\leq C\mathbf{E}V^{\kappa(2+\delta)}(w_{s, s+t}(w))\leq C\exp(\kappa(2+\delta)\eta\|w\|^2),
\end{align*}
by taking $\eta$ small so that $\kappa(2+\delta)\eta\leq\eta_0$ and one can apply the estimate  \eqref{eq: est1} in the last step. The proof is complete.
\end{proof}

The proof of  \eqref{CLT} is based on the central limit theorem for martingales proved in \cite{KW12} by a martingale approximation approach, where the idea dates back to the early works of  Gordin \cite{Gor69}, Kipnis and Varadhan \cite{KV86}. Suppose that $\left\{\mathfrak{F}_{n}, n \geq 0\right\}$ is a filtration over $(\Omega, \mathcal{F}, \mathbf{P})$ such that $\mathfrak{F}_{0}$ is trivial and $\left\{Z_{n}, n \geq 1\right\}$ is a sequence of square integrable martingale differences, i.e. it is $\left\{\mathfrak{F}_{n}, n \geq 1\right\}$ adapted, $\mathbf{E} Z_{n}^{2}<+\infty$ and $\mathbf{E}\left[Z_{n} \,\middle\vert\, \mathfrak{F}_{n-1}\right]=0$ for all $n \geq 1 .$ Define also the martingale
$$M_{N}:=\sum_{j=1}^{N} Z_{j}, \quad N \geq 1, \quad M_{0}:=0$$
Its quadratic variation equals $[M]_{N}:=\sum_{j=1}^{N} \mathbf{E}\left[Z_{j}^{2} \,\middle\vert\, \mathfrak{F}_{j-1}\right]$ for $N \geq 1 .$ Assume also that:\\
(M1) for every $\varepsilon>0,$
\[\lim _{N \rightarrow+\infty} \frac{1}{N} \sum_{j=0}^{N-1} \mathbf{E}\left[Z_{j+1}^{2},\left|Z_{j+1}\right| \geq \varepsilon \sqrt{N}\right]=0,\]
(M2) the condition
\[\sup _{n \geq 1} \mathbf{E} Z_{n}^{2}<+\infty\]
and there exists $\sigma \geq 0$ such that
\[\lim _{K \rightarrow \infty} \limsup _{\ell \rightarrow \infty} \frac{1}{\ell} \sum_{m=1}^{\ell} \mathbf{E}\left|\frac{1}{K} \mathbf{E}\left[[M]_{m K}-[M]_{(m-1) K} \,\middle\vert\, \mathfrak{F}_{(m-1) K}\right]-\sigma^{2}\right|=0\]
and\\
(M3) for every $\varepsilon>0$
\[\lim _{K \rightarrow \infty} \limsup _{\ell \rightarrow \infty} \frac{1}{\ell K} \sum_{m=1}^{\ell} \sum_{j=(m-1) K}^{m K-1} \mathbf{E}\left[1+Z_{j+1}^{2},\left|M_{j}-M_{(m-1) K}\right| \geq \varepsilon \sqrt{\ell K}\right]=0.\]
Then one has
\begin{theorem}\label{martingaleclt}\cite{KW12}
Under  conditions (M1)-(M3), one has
\[\lim _{N \rightarrow\infty} \frac{\mathbf{E}[M]_{N}}{N}=\sigma^{2}\]
and
\[\lim _{N \rightarrow \infty} \mathbf{E} e^{i \theta M_{N} / \sqrt{N}}=e^{-\sigma^{2} \theta^{2} / 2}, \quad \forall \theta \in \mathbb{R}.\]
\end{theorem}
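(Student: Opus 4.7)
\textbf{Proof Plan for Theorem \ref{martingaleclt}.} The natural approach is a block-martingale argument reducing the problem to the classical martingale triangular-array CLT (in the McLeish--Brown form). The hypotheses (M1)--(M3) are tailored so that, while the individual-step conditional variances $\mathbf{E}[Z_{j+1}^{2}\,|\,\mathfrak{F}_{j}]$ need not converge on the unit scale, their averages over sufficiently long blocks do. Below, $N=\ell K$ and one takes the limits in the order $\ell\to\infty$ before $K\to\infty$.

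\emph{Step 1 (Block decomposition).} Fix a large integer $K$, and group the martingale differences into consecutive blocks of length $K$. Set
\[
\Delta_{m}^{(K)} := M_{mK}-M_{(m-1)K}=\sum_{j=(m-1)K+1}^{mK} Z_{j},\qquad m=1,\ldots,\ell ,
\]
and $\mathcal{G}_{m}:=\mathfrak{F}_{mK}$. Then $\{\Delta_{m}^{(K)}\}_{m\ge 1}$ is a square-integrable $\{\mathcal{G}_{m}\}$-martingale difference sequence, and its predictable quadratic variation coincides with the block-increment of $[M]$:
\[
\mathbf{E}\bigl[(\Delta_{m}^{(K)})^{2}\,\big|\,\mathcal{G}_{m-1}\bigr]
=\mathbf{E}\bigl[[M]_{mK}-[M]_{(m-1)K}\,\big|\,\mathfrak{F}_{(m-1)K}\bigr].
\]

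\emph{Step 2 (Block conditional-variance convergence).} Dividing by $K$ and summing over $m$, hypothesis (M2) reads exactly as
\[
\frac{1}{\ell K}\sum_{m=1}^{\ell}\mathbf{E}\bigl[(\Delta_{m}^{(K)})^{2}\,\big|\,\mathcal{G}_{m-1}\bigr]\xrightarrow[\ell\to\infty,\,K\to\infty]{L^{1}}\sigma^{2},
\]
so the predictable quadratic variation of the rescaled triangular array $\{\Delta_{m}^{(K)}/\sqrt{\ell K}\}$ tends to $\sigma^{2}$, which is the first pillar of the martingale CLT.

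\emph{Step 3 (Lindeberg condition for blocks).} For every $\varepsilon>0$, if $|\Delta_{m}^{(K)}|\ge\varepsilon\sqrt{\ell K}$, then Doob's maximal inequality guarantees some $j\in\{(m-1)K,\ldots,mK-1\}$ with either $|M_{j}-M_{(m-1)K}|\ge\frac{\varepsilon}{2}\sqrt{\ell K}$ or $|Z_{j+1}|\ge\frac{\varepsilon}{2}\sqrt{\ell K}$. Expanding $(\Delta_{m}^{(K)})^{2}\le 2 (M_{j}-M_{(m-1)K})^{2}+2 Z_{j+1}^{2}$ on this event and summing, one obtains the Lindeberg control
\[
\limsup_{K\to\infty}\limsup_{\ell\to\infty}\frac{1}{\ell K}\sum_{m=1}^{\ell}\mathbf{E}\bigl[(\Delta_{m}^{(K)})^{2},\,|\Delta_{m}^{(K)}|\ge\varepsilon\sqrt{\ell K}\bigr]=0,
\]
directly from (M3) combined with (M1) (which handles the single-step piece $Z_{j+1}$).

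\emph{Step 4 (Apply the standard triangular-array martingale CLT and unblock).} With Steps 2--3, the classical martingale CLT (McLeish/Brown) applies to the array $\{\Delta_{m}^{(K)}/\sqrt{\ell K}\}_{m=1}^{\ell}$, yielding $M_{\ell K}/\sqrt{\ell K}\Rightarrow \mathcal{N}(0,\sigma^{2})$ in the double limit. To upgrade to arbitrary $N$, write $N=\ell K+r$ with $0\le r<K$; the residual $M_{N}-M_{\ell K}$ has $L^{2}$-norm $O(\sqrt{K})$ by $\sup_{n}\mathbf{E} Z_{n}^{2}<\infty$, hence $o(\sqrt{N})$. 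This proves the second assertion, and taking expectations in (M2) (together with the uniform $L^{2}$ bound from (M1)) gives the first.

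\emph{Main obstacle.} The subtle point is the nested limit $\lim_{K\to\infty}\limsup_{\ell\to\infty}$: one must verify that block length $K$ can be taken large enough to average out the non-stationarity of $\mathbf{E}[Z_{j+1}^{2}\,|\,\mathfrak{F}_{j}]$, yet the Lindeberg bound remains valid uniformly in $K$ as $\ell\to\infty$. Making these two scales cooperate—so that (M2) furnishes the variance while (M3) supplies the Lindeberg control on the \emph{same} block scale—is the heart of the argument, and it is precisely the reason the hypotheses are stated with the double limit rather than as simple individual-step conditions.
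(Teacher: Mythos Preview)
The paper does not prove Theorem~\ref{martingaleclt}; it is quoted verbatim from \cite{KW12} (Komorowski--Walczuk) and used as a black box. The paper's contribution in this section is only the \emph{verification} of conditions (M1)--(M3) for the specific Markov chain $\{w_{s,s+k\mathcal{T}}(w_0)\}_{k\ge 0}$, not the proof of the abstract martingale CLT itself. So there is no ``paper's own proof'' to compare against.

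That said, your outline is essentially the argument of \cite{KW12}: the block decomposition at scale $K$, the identification of the block conditional variance with $\mathbf{E}[[M]_{mK}-[M]_{(m-1)K}\mid\mathfrak{F}_{(m-1)K}]$, and the appeal to a triangular-array martingale CLT are exactly how Komorowski and Walczuk proceed. One point to tighten in Step~3: the passage from $|\Delta_m^{(K)}|\ge\varepsilon\sqrt{\ell K}$ to the event controlled by (M3) is not as direct as you state. What is actually needed is a bound on $\mathbf{E}[(\Delta_m^{(K)})^2\mathbb{I}_{|\Delta_m^{(K)}|\ge\varepsilon\sqrt{\ell K}}]$, and the standard route expands $(\Delta_m^{(K)})^2$ via the martingale increments and uses that on the bad event the running maximum $\max_{(m-1)K\le j\le mK}|M_j-M_{(m-1)K}|$ is large; (M3) is formulated precisely so that the resulting terms $\mathbf{E}[(1+Z_{j+1}^2)\mathbb{I}_{|M_j-M_{(m-1)K}|\ge\varepsilon\sqrt{\ell K}}]$ are summable. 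Your ``Doob's maximal inequality guarantees some $j$'' phrasing conflates a pointwise event with an $L^2$ bound; the actual estimate requires a bit more care but is routine once (M3) is in hand.
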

Assume $\psi\in\mathrm{Lip}_{\r}(H)$ and $\langle\mu_s, \psi\rangle = 0$, otherwise one can replace $\psi$ by $\psi- \langle\mu_s, \psi\rangle$.  In this subsection, $\left\{\mathfrak{F}_{n}, n \geq 0\right\}$ denotes the natural filtration generated by the Markov chain $\{w_{s, s+n\mathcal{T}}(w)\}_{n\geq 0}$.  Let $\alpha_n(w) = \sum_{k=0}^{n}\mathfrak{P}_k\psi(w)$. We have
\begin{lemma}\label{corrector}
The sequence of functions $\alpha_n$ converges uniformly on bounded sets. The point wise limit $\alpha := \lim_{n\rightarrow\infty}\alpha_n\in\mathrm{Lip}_{\rho}(H) $ and
for any $n\geq m$,
\[\mathbf{E}\left[\alpha(w_{s, s+n\mathcal{T}})|\mathfrak{F}_m\right] = \lim_{N\rightarrow\infty}\mathbf{E}\left[\alpha_N(w_{s, s+n\mathcal{T}})|\mathfrak{F}_m\right].\]
\end{lemma}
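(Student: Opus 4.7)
The three claims follow from the exponential mixing in Theorem \ref{contractiontransition}, combined with Monge--Kantorovich duality and the Lyapunov bound in Lemma \ref{CLTLyapunov}. The key observation is that $\langle\mu_s,\psi\rangle=0$ together with the $\mathcal{T}$-periodicity $\mu_{s+k\mathcal{T}}=\mu_s$ gives $\mu_{s+k\mathcal{T}}(\psi)=0$, so each term in the series for $\alpha_n$ is a ``centered'' image of $\psi$.

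First I would estimate each $\mathfrak{P}_k\psi$. Since $\mu_{s+k\mathcal{T}}(\psi)=0$, the Monge--Kantorovich duality and the invariance $\mathcal{P}_{s,s+k\mathcal{T}}^{*}\mu_s=\mu_{s+k\mathcal{T}}$ together with Theorem \ref{contractiontransition} yield
\[
|\mathfrak{P}_k\psi(w)| = \bigl|\mathcal{P}_{s,s+k\mathcal{T}}\psi(w)-\mu_{s+k\mathcal{T}}(\psi)\bigr|
\leq \mathrm{Lip}_\rho(\psi)\,\rho\bigl(\mathcal{P}_{s,s+k\mathcal{T}}^{*}\delta_w,\mathcal{P}_{s,s+k\mathcal{T}}^{*}\mu_s\bigr)
\leq C\,\mathrm{Lip}_\rho(\psi)\,e^{-\gamma k\mathcal{T}}\,\rho(\delta_w,\mu_s),
\]
and by the same pair of ingredients
\[
\mathrm{Lip}_\rho(\mathfrak{P}_k\psi)\leq C\,\mathrm{Lip}_\rho(\psi)\,e^{-\gamma k\mathcal{T}}.
\]
For $w\in B_R(0)$ the quantity $\rho(\delta_w,\mu_s)$ is bounded by a constant depending on $R$ (using $\rho(\delta_w,\mu_s)\leq \int_H \rho(w,u)\mu_s(du)$ and the exponential moment bound $\int_H e^{2\kappa\eta\|u\|^2}\mu_s(du)\leq C$ from Theorem \ref{fixedpoint}). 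Hence the tail $\sum_{k>n}\mathfrak{P}_k\psi(w)$ is bounded uniformly in $w\in B_R(0)$ by a geometric series, so $\{\alpha_n\}$ is Cauchy uniformly on bounded sets and converges pointwise to some $\alpha$. Summing the Lipschitz bounds gives $\mathrm{Lip}_\rho(\alpha)\leq C\,\mathrm{Lip}_\rho(\psi)\sum_{k\geq 0}e^{-\gamma k\mathcal{T}}<\infty$, so $\alpha\in\mathrm{Lip}_\rho(H)$.

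For the final identity, I would use the Markov property on the homogeneous chain $\{w_{s,s+k\mathcal{T}}\}$: for $n\geq m$,
\[
\mathbf{E}\bigl[\alpha_N(w_{s,s+n\mathcal{T}})\,\bigl|\,\mathfrak{F}_m\bigr]=\mathfrak{P}_{n-m}\alpha_N\bigl(w_{s,s+m\mathcal{T}}\bigr),\qquad \mathbf{E}\bigl[\alpha(w_{s,s+n\mathcal{T}})\,\bigl|\,\mathfrak{F}_m\bigr]=\mathfrak{P}_{n-m}\alpha\bigl(w_{s,s+m\mathcal{T}}\bigr).
\]
The main obstacle is that $\alpha$ grows with $\|w\|$, so passing $N\to\infty$ inside $\mathbf{E}[\,\cdot\,|\mathfrak{F}_m]$ requires a uniform integrable dominant. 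From the pointwise bound above,
\[
|\alpha_N(w)| \leq \sum_{k=0}^{N}|\mathfrak{P}_k\psi(w)| \leq \frac{C\,\mathrm{Lip}_\rho(\psi)}{1-e^{-\gamma\mathcal{T}}}\,\rho(\delta_w,\mu_s)\leq C'\bigl(\rho(0,w)+1\bigr),
\]
uniformly in $N$. Lemma \ref{CLTLyapunov} yields $\mathbf{E}\rho(0,w_{s,s+n\mathcal{T}})^{2+\delta}\leq C$, so the dominant $C'(\rho(0,w_{s,s+n\mathcal{T}})+1)$ is integrable; the dominated convergence theorem for conditional expectations (or, equivalently, for the transition kernel $\mathfrak{P}_{n-m}$ applied to the convergent sequence $\alpha_N\to\alpha$) then delivers the claimed identity.
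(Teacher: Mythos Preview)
Your proof is correct and follows essentially the same route as the paper: both use Monge--Kantorovich duality together with the exponential contraction of $\mathcal{P}_{s,s+k\mathcal{T}}^{*}$ to get geometric decay of $|\mathfrak{P}_k\psi(w)|$ and of $\mathrm{Lip}_\rho(\mathfrak{P}_k\psi)$, then reduce the final identity via the Markov property to $\mathfrak{P}_{n-m}\alpha_N\to\mathfrak{P}_{n-m}\alpha$ and invoke dominated convergence. The only cosmetic difference is in justifying the dominant: the paper notes that $\mathfrak{P}_{n-m}^{*}\delta_w\in\mathcal{P}_1(H)$ so that $\rho$-Lipschitz functions are automatically integrable, whereas you appeal to the moment bound in Lemma~\ref{CLTLyapunov}; both work.
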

\begin{proof}
By $\langle\mu_s, \psi\rangle = 0$, Theorem \ref{fixedpoint}, the invariance of $\mu_s$, and the Monge-Kantorovich daulity, we have
\begin{align*}
|\alpha_n(w) - \alpha_m(w)| &= \left|\sum_{k=m}^{n}\left(\mathfrak{P}_{k}\psi(w) - \langle \mu_s, \mathfrak{P}_k\psi\rangle\right)\right|\\
&\leq \sum_{k=m}^{n}\mathrm{Lip}_{\rho}(\psi)\rho(\mathfrak{P}_{k}^*\delta_w, \mathfrak{P}_{k}^*\mu_s)
\leq C\mathrm{Lip}_{\rho}(\psi)\rho(\delta_w, \mu_s)\sum_{k=m}^{n}e^{-\gamma k\mathcal{T}}<\varepsilon
\end{align*}
for $n>m>N$ and $N$ large enough.  Hence $\alpha_n$ converges uniformly on bounded sets.
Note that
\[|\alpha_N(w_1)-\alpha_N(w_2)|\leq \sum_{k = 0}^{N}|\langle\mathfrak{P}_k^*\delta_{w_1} - \mathfrak{P}_k^*\delta_{w_2}, \psi\rangle|\leq C\mathrm{Lip}_{\rho}(\psi)\rho(w_1, w_2)\sum_{k=0}^{N}e^{-\gamma k\mathcal{T}},\]
hence $|\alpha(w_1)-\alpha(w_2)|\leq C\mathrm{Lip}_{\rho}(\psi)\rho(w_1, w_2)$, where $C$ does not depend on $w_1, w_2\in H$. So $\alpha\in\mathrm{Lip}_{\r}(H).$  Since each $\alpha_N$ and $\alpha$ are Lipschitz continuous and $\mathfrak{P}_k^*\delta_w\in\mathcal{P}_1(H)$, so they are $\mathfrak{P}_k^*\delta_w$ integrable. By the dominated convergence theorem, one has for any $w\in H$,
\begin{align*}
\lim_{N\rightarrow\infty}\mathfrak{P}_{n- m}\alpha_N(w) = \lim_{N\rightarrow\infty}\mathbf{E}\left[\alpha_N(w_{s, s+(n-m)\mathcal{T}}(w))\right] = \lim_{N\rightarrow\infty}\langle\mathfrak{P}_{n-m}^*\delta_w, \alpha_N\rangle = \langle\mathfrak{P}_{n-m}^*\delta_w, \alpha\rangle = \mathfrak{P}_{n- m}\alpha(w).
\end{align*}
Hence
\[\lim_{N\rightarrow\infty}\mathbf{E}\left[\alpha_N(w_{s, s+n\mathcal{T}})|\mathfrak{F}_m\right] = \lim_{N\rightarrow\infty}\mathfrak{P}_{n- m}\alpha_N(w_{s, s+m\mathcal{T}}) =\mathfrak{P}_{n- m}\alpha(w_{s, s+m\mathcal{T}}) =\mathbf{E}\left[\alpha(w_{s, s+n\mathcal{T}})|\mathfrak{F}_m\right].  \]
\end{proof}
To apply Theorem \ref{martingaleclt}, we decompose the average in \eqref{CLT} into the sum of a martingale and a small remainder term
\begin{align*}
\frac{1}{\sqrt{N}}\sum_{k=0}^{N-1}\psi(w_{s, s+k\mathcal{T}}) = \frac{1}{\sqrt{N}}
M_N+R_N,
\end{align*}
where $M_N = \alpha({w_{s, s+N\mathcal{T}}})- \alpha(w) + \sum_{k=0}^{N-1}\psi(w_{s, s+k\mathcal{T}}) $, $M_0 = 0$ and $R_N = \frac{1}{\sqrt{N}}(\alpha(w) - \alpha({w_{s, s+N\mathcal{T}}}))$.
\begin{proposition}
 $\{M_N\}_{N\geq 0}$ is a martingale with respect to the natural filtration $\{\mathfrak{F}_N\}_{N\geq 0}$.
\end{proposition}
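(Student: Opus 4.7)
The plan is to verify directly the three defining properties of a discrete-time martingale for $\{M_N\}_{N\ge 0}$ with respect to $\{\mathfrak{F}_N\}_{N\ge 0}$, namely adaptedness, integrability, and the conditional expectation identity $\mathbf{E}[M_{N+1}\,|\,\mathfrak{F}_N]=M_N$. Adaptedness is immediate from the definition of the natural filtration: the variables $\alpha(w_{s,s+N\mathcal{T}})$ and $\psi(w_{s,s+k\mathcal{T}})$ for $0\le k\le N-1$ are all Borel functions of $\mathfrak{F}_N$-measurable random variables. Integrability will follow from Lemma \ref{corrector}, which gives $\alpha\in\mathrm{Lip}_\rho(H)$ and hence $|\alpha(u)|\le|\alpha(0)|+C\rho(0,u)$ (and the same for $\psi$), combined with the uniform moment bound of Lemma \ref{CLTLyapunov} applied to $\rho(0,w_{s,s+k\mathcal{T}}(w))$.

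The substance of the argument lies in the conditional expectation identity. Writing
\[M_{N+1}-M_N=\alpha(w_{s,s+(N+1)\mathcal{T}})-\alpha(w_{s,s+N\mathcal{T}})+\psi(w_{s,s+N\mathcal{T}}),\]
and observing that $\alpha(w_{s,s+N\mathcal{T}})$ and $\psi(w_{s,s+N\mathcal{T}})$ are $\mathfrak{F}_N$-measurable, the task reduces to evaluating $\mathbf{E}[\alpha(w_{s,s+(N+1)\mathcal{T}})\,|\,\mathfrak{F}_N]$. By the time-homogeneous Markov property of the chain together with the last assertion of Lemma \ref{corrector}, this conditional expectation equals
\[\lim_{M\to\infty}\mathfrak{P}_1\alpha_M(w_{s,s+N\mathcal{T}}).\]
The key algebraic telescoping, obtained from the definition $\alpha_M=\sum_{k=0}^M\mathfrak{P}_k\psi$ by shifting the summation index, reads
\[\mathfrak{P}_1\alpha_M(w)=\sum_{k=0}^{M}\mathfrak{P}_{k+1}\psi(w)=\alpha_{M+1}(w)-\psi(w),\]
and passing $M\to\infty$ yields the Poisson-type identity $\mathfrak{P}_1\alpha(w)=\alpha(w)-\psi(w)$. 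Substituting back,
\[\mathbf{E}[M_{N+1}-M_N\,|\,\mathfrak{F}_N]=\bigl(\alpha(w_{s,s+N\mathcal{T}})-\psi(w_{s,s+N\mathcal{T}})\bigr)-\alpha(w_{s,s+N\mathcal{T}})+\psi(w_{s,s+N\mathcal{T}})=0,\]
which is the martingale property.

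The only delicate point is the interchange of limit and conditional expectation used above, but this is precisely the content already proved in Lemma \ref{corrector}, where it is justified via uniform convergence of $\alpha_M\to\alpha$ on bounded sets combined with the dominated convergence theorem applied with the Lipschitz majorant of $\alpha_M$ against the law of $w_{s,s+(N+1)\mathcal{T}}$. Thus no essential new obstacle arises; the proof is a straightforward application of the Markov property and the resolvent identity for $\alpha$.
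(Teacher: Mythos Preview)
Your proof is correct and follows essentially the same approach as the paper: both rest on the Poisson-type identity $\mathfrak{P}_1\alpha=\alpha-\psi$ (equivalently, the telescoping of $\sum_k\mathfrak{P}_k\psi$) combined with the Markov property and the integrability furnished by $\alpha\in\mathrm{Lip}_\rho(H)$ together with the moment bound. The only cosmetic difference is that you verify the one-step increment $\mathbf{E}[M_{N+1}\mid\mathfrak{F}_N]=M_N$, whereas the paper checks $\mathbf{E}[M_N\mid\mathfrak{F}_m]=M_m$ for general $m\le N$ directly; these are of course equivalent once integrability is in hand.
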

\begin{proof}
First note that $|\alpha(w)|\leq |\alpha(w)- \alpha(0)| + |\alpha(0)| \leq C(1+\mathrm{Lip}_{\rho}(\psi)\rho(w, 0)). $ Hence
\begin{align*}
\mathbf{E}|M_N|&\leq \mathbf{E}|\alpha({w_{s, s+N\mathcal{T}}})| +  \mathbf{E}|\alpha(w)| + \sum_{k=0}^{N-1} \mathbf{E}|\psi(w_{s, s+k\mathcal{T}})|\\
&\leq C(1+\mathbf{E}\rho(0,w_{s, s+N\mathcal{T}} )) + |\alpha(w)|+\sum_{k=0}^{N-1}\mathfrak{P}_{k}|\psi|(w)\\
&\leq C(1+\mathbf{E}\rho(0,w_{s, s+N\mathcal{T}} )) + |\alpha(w)|+C(1+\rho(w, 0))<\infty.
\end{align*}
Also  for $m\leq N$,
\[\mathbf{E}[M_N|\mathfrak{F}_m] =  \mathbf{E}[\alpha({w_{s, s+N\mathcal{T}}})|\mathfrak{F}_m]- \alpha(w) + \sum_{k=0}^{m-1}\mathbf{E}[\psi(w_{s, s+k\mathcal{T}})|\mathfrak{F}_m] + \sum_{k=m}^{N-1}\mathbf{E}[\psi(w_{s, s+k\mathcal{T}})|\mathfrak{F}_m]\]
and
\begin{align*}
\sum_{k=m}^{N-1}\mathbf{E}[\psi(w_{s, s+k\mathcal{T}})|\mathfrak{F}_m] &= \sum_{k=m}^{\infty}\mathbf{E}[\psi(w_{s, s+k\mathcal{T}})|\mathfrak{F}_m] - \sum_{k=N}^{\infty}\mathbf{E}[\psi(w_{s, s+k\mathcal{T}})|\mathfrak{F}_m] \\
&= \sum_{k=m}^{\infty} \mathfrak{P}_{k-m}\psi(w_{s, s+m\mathcal{T}}) - \sum_{k=N}^{\infty} \mathfrak{P}_{k-N}\mathfrak{P}_{N-m}\psi(w_{s, s+m\mathcal{T}}) \\
& = \alpha(w_{s, s+m\mathcal{T}}) -\mathfrak{P}_{N-m}\alpha(w_{s, s+m\mathcal{T}}) \\
& = \alpha(w_{s, s+m\mathcal{T}}) -\mathbf{E}[\alpha(w_{s, s+N\mathcal{T}})|\mathfrak{F}_m].
\end{align*}
Therefore $\mathbf{E}[M_N|\mathfrak{F}_m]=M_{m}.$
\end{proof}
\begin{lemma}
$\mathbf{E}R_N\rightarrow 0$ as $N\rightarrow\infty$.
\end{lemma}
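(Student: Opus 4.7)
The plan is to use the Lipschitz property of $\alpha$ from Lemma \ref{corrector} together with the uniform moment bound from Lemma \ref{CLTLyapunov}. Since the initial datum $w$ is deterministic, linearity of expectation gives
\[
\mathbf{E} R_N \;=\; \frac{1}{\sqrt N}\bigl(\alpha(w)\;-\;\mathbf{E}\alpha(w_{s, s+N\mathcal{T}}(w))\bigr),
\]
so the task reduces to showing that $\mathbf{E}\alpha(w_{s, s+N\mathcal{T}}(w))$ stays bounded uniformly in $N$.

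First, I would invoke Lemma \ref{corrector} to get $\alpha\in\mathrm{Lip}_\rho(H)$, which yields the pointwise bound $|\alpha(u)|\le |\alpha(0)|+\mathrm{Lip}_\rho(\alpha)\,\rho(0,u)$ for every $u\in H$. Taking expectations at the point $u=w_{s, s+N\mathcal{T}}(w)$, this reduces the problem to controlling $\mathbf{E}\rho(0,w_{s, s+N\mathcal{T}}(w))$ uniformly in $N$. Next, I would apply Lemma \ref{CLTLyapunov} with a fixed $\delta>0$ together with Jensen's inequality to obtain
\[
\sup_{N\ge 0}\mathbf{E}\rho(0,w_{s, s+N\mathcal{T}}(w)) \;\le\; \Bigl(\sup_{t\ge 0}\mathbf{E}\rho(0,w_{s, s+t}(w))^{2+\delta}\Bigr)^{1/(2+\delta)} \;\le\; C(R,\delta)^{1/(2+\delta)},
\]
valid for any $w\in B_R(0)$. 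Combining these two ingredients gives $|\mathbf{E}\alpha(w_{s, s+N\mathcal{T}}(w))|\le C_w$ uniformly in $N$, and therefore
\[
|\mathbf{E} R_N| \;\le\; \frac{|\alpha(w)|+C_w}{\sqrt N} \;\longrightarrow\; 0\quad\text{as }N\to\infty.
\]

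There is no real obstacle in this step; the calculation is essentially a one-line consequence of the estimates already in hand. The substantive work for the central limit theorem lies in verifying conditions (M1)--(M3) of Theorem \ref{martingaleclt} for the martingale $\{M_N\}$, where (M2) (identifying the asymptotic quadratic variation with $\sigma^2$) and the tightness-type condition (M3) will require further use of the contraction of $\mathcal{P}_{s,t}^*$ and the Lyapunov structure; the present lemma is only the easy remainder estimate that shows the martingale approximation is faithful in the mean.
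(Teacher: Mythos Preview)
Your proof is correct. Both you and the paper reduce to showing $\sup_N \mathbf{E}|\alpha(w_{s,s+N\mathcal{T}}(w))|<\infty$, but you reach this via the Lipschitz bound $|\alpha(u)|\le|\alpha(0)|+\mathrm{Lip}_\rho(\alpha)\rho(0,u)$ combined with the Lyapunov moment estimate of Lemma~\ref{CLTLyapunov}, whereas the paper writes $\mathbf{E}|\alpha(w_{s,s+N\mathcal{T}})|=\langle\mathfrak{P}_N^*\delta_w,|\alpha|\rangle$, splits it as $\langle\mathfrak{P}_N^*\delta_w-\mathfrak{P}_N^*\mu_s,|\alpha|\rangle+\langle\mu_s,|\alpha|\rangle$, and controls the first piece by the exponential contraction of Theorem~\ref{fixedpoint}. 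Your route is arguably the more economical one here, since it avoids invoking the mixing rate for what is only a uniform boundedness statement; the paper's route, on the other hand, stays closer to the machinery used elsewhere in the CLT argument. Either way the lemma is a triviality once the surrounding estimates are in place, as you note.
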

\begin{proof}
We have by Theorem \ref{fixedpoint},
\begin{align*}
\frac{1}{\sqrt{N}}\mathbf{E}|\alpha(w_{s, s+N\mathcal{T}})| &= \frac{1}{\sqrt{N}}\left|\langle\mathfrak{P}_N^*\delta_{w} - \mathfrak{P}_N^*\mu_s, |\alpha|\rangle\right| + \frac{1}{\sqrt{N}}\langle  \mathfrak{P}_N^*\mu_s, |\alpha|\rangle. \\
&\leq \frac{C}{\sqrt{N}}\mathrm{Lip}_{\rho}(|\alpha|)e^{-\gamma N \mathcal{T}}\rho(\delta_{w}, \mu_s)+ \frac{1}{\sqrt{N}}\langle\mu_s, |\alpha|\rangle\rightarrow 0 \quad (N\rightarrow\infty).
\end{align*}
\end{proof}
Setting $Z_n = M_n - M_{n-1}$  for $n\geq1$, we will verify the conditions of Theorem \ref{martingaleclt} to finish the proof of \eqref{CLT}.
\begin{proof}[Verification of condition (M1)]
Set $F(w)  = \mathbf{E}\left[Z_1^2(w)\mathbb{I}_{|Z_1|\geq \varepsilon\sqrt{N}}\right]$. Note that by the periodicity one has for each $k\geq 0$,
\begin{align*}
F(w) =  \mathbf{E}\left[\left(\alpha({w_{s+k\mathcal{T}, s+(k+1)\mathcal{T}}})- \alpha(w) +\psi(w)\right)^2,\left|\alpha({w_{s+k\mathcal{T}, s+(k+1)\mathcal{T}}})-\alpha(w) +\psi(w)\right|\geq\varepsilon\sqrt{N}\right].
\end{align*}
Then by the Markov property,
\[\mathfrak{P}_{k-1}F(w) = \mathbf{E}\left[F(w_{s, s+(k-1)\mathcal{T}}(w))\right] =\mathbf{E}\left[ \mathbf{E}\left[Z_k^2\mathbb{I}_{|Z_k|\geq \varepsilon\sqrt{N}}|\mathfrak{F}_{k-1}\right]\right] =\mathbf{E}\left[Z_k^2\mathbb{I}_{|Z_k|\geq \varepsilon\sqrt{N}}\right] \]
\end{proof}
Hence
\[\frac{1}{N} \sum_{j=0}^{N-1} \mathbf{E}\left[Z_{j+1}^{2},\left|Z_{j+1}\right| \geq \varepsilon \sqrt{N}\right] =\left\langle\frac{1}{N} \sum_{j=1}^{N}\mathfrak{P}_{j-1}^{*}\delta_w, F\right\rangle. \]
We recall a lemma from \cite{KW12}.
\begin{lemma}\cite{KW12}\label{weakconvergencecompact}
If $\mu_N$ converges to $\mu$ weakly, and $G_N\rightarrow 0$ uniformly on compact sets and there is  $\eta>0$ such that $\limsup_{N\rightarrow\infty}\langle \mu_N, |G_N|^{1+\eta}\rangle<\infty$, then $\lim_{N\rightarrow\infty}\langle\mu_N, G_N\rangle = 0$.
\end{lemma}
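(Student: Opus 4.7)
The plan is to split the integral $\langle \mu_N, G_N\rangle$ into a piece over a large compact set, where the uniform convergence $G_N\to 0$ gives control, and a piece over its complement, where the uniform $L^{1+\eta}$ bound combined with a tightness argument provides uniform integrability. This is a standard truncation argument.

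First I would invoke Prokhorov's theorem: since $\{\mu_N\}$ converges weakly in $\mathcal{P}(H)$, the family is tight. Hence, for any $\varepsilon>0$, there exists a compact set $K\subset H$ such that $\mu_N(K^c)<\varepsilon$ for all $N$ (and also $\mu(K^c)<\varepsilon$). On $K$, the assumption that $G_N\to 0$ uniformly on compacts yields $\sup_{w\in K}|G_N(w)|\to 0$, so
\begin{align*}
\left|\int_K G_N\, d\mu_N\right|\leq \sup_{w\in K}|G_N(w)|\cdot\mu_N(K)\longrightarrow 0\quad\text{as }N\to\infty.
\end{align*}

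Next I would handle the complementary piece using H\"older's inequality with conjugate exponents $1+\eta$ and $(1+\eta)/\eta$:
\begin{align*}
\left|\int_{K^c} G_N\, d\mu_N\right|\leq \left(\int_{K^c} |G_N|^{1+\eta}\, d\mu_N\right)^{\frac{1}{1+\eta}}\mu_N(K^c)^{\frac{\eta}{1+\eta}}\leq M^{\frac{1}{1+\eta}}\varepsilon^{\frac{\eta}{1+\eta}},
\end{align*}
where $M:=\sup_N\langle\mu_N,|G_N|^{1+\eta}\rangle$ is finite by hypothesis (for all $N$ large enough, and the finitely many remaining terms can be absorbed into a possibly larger constant). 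Combining the two estimates gives
\[\limsup_{N\to\infty}|\langle\mu_N,G_N\rangle|\leq M^{\frac{1}{1+\eta}}\varepsilon^{\frac{\eta}{1+\eta}},\]
and since $\varepsilon>0$ is arbitrary, the left-hand side is zero. No step here is particularly delicate; the only potential subtlety is making sure the $L^{1+\eta}$ bound is valid uniformly in $N$ (not just as a limsup), which can be arranged by enlarging $M$ to absorb the first finitely many terms of the sequence.
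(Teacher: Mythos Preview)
Your proof is correct. The paper does not supply its own proof of this lemma; it simply cites it from \cite{KW12} and uses it as a black box, so there is nothing to compare against beyond noting that your tightness-plus-H\"older truncation is exactly the standard argument one would expect (and is essentially what appears in the cited reference).
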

Note that $\nu_N: = \frac{1}{N} \sum_{j=1}^{N}\mathfrak{P}_{j-1}^{*}\delta_w$ converges to $\mu_s$ weakly. To verify condition (M1), it remains to show that $F_N(w) = F(w)$ converges to $0$ uniformly on compact sets and there is some $\eta>0$ that meets the condition of the above Lemma \ref{weakconvergencecompact}.
Note that
\begin{align*}
F_N(w) = \mathbf{E}\left[Z_1^2\mathbb{I}_{|Z_1|\geq \varepsilon\sqrt{N}}\right]& \leq \left( \mathbf{E}|Z_1|^{2+\delta}\right)^{\frac{2}{2+\delta}}\mathbf{P}(|Z_1|\geq\varepsilon\sqrt{N})^{\frac{\delta}{2+\delta}}\\
&\leq \left( \mathbf{E}|Z_1|^{2+\delta}\right)^{\frac{2}{2+\delta}}\left(\frac{\mathbf{E}|Z_1|^{2+\delta}}{\varepsilon^{2+\delta} N^{\frac{2+\delta}{2}}}\right)^{\frac{\delta}{2+\delta}}= \frac{\mathbf{E}|Z_1|^{2+\delta}}{\varepsilon^{\delta}N^{\delta/2}}.
\end{align*}
And
\begin{align}\notag
\mathbf{E}|Z_1|^{2+\delta}& = \mathbf{E}|\alpha(w_{s, s+\mathcal{T}}(w)) - \alpha(w) + \psi(w)|^{2+\delta}\\\notag
&\leq C\left(\mathbf{E}|\alpha(w_{s, s+\mathcal{T}}(w)) - \alpha(w)|^{2+\delta}+|\psi(w)|^{2+\delta}\right)\\\label{discreteCLT01}
&\leq C\left(\mathrm{Lip}_{\rho}(\alpha)^{2+\delta}\mathbf{E}\rho(w_{s, s+\mathcal{T}}(w), w)^{2+\delta}+|\psi(0)|^{2+\delta}+\mathrm{Lip}_{\rho}(\psi)^{2+\delta}\rho(0, w)^{2+\delta}\right).
\end{align}
Therefore for any $R>0$, by Lemma \ref{CLTLyapunov}, one has
\begin{align*}
\sup_{w\in B_R(0)} F_{N}(w) = \sup_{w\in B_R(0)} \mathbf{E}\left[Z_1^2\mathbb{I}_{|Z_1|\geq \varepsilon\sqrt{N}}\right]\leq \sup_{w\in B_R(0)}\mathbf{E}|Z_1|^{2+\delta}\frac{1}{\varepsilon^{\delta}N^{\delta/2}}\rightarrow 0 \quad (N\rightarrow \infty).
\end{align*}
Hence $F_N(w)$ converges to $0$ uniformly on any compact set. By \eqref{discreteCLT01},
\begin{align*}
&\langle\nu_N, |F_N|^{1+\delta/2}\rangle\leq \left\langle\nu_N, \mathbf{E}|Z_1|^{2+\delta}\right\rangle\\
&\leq C\left(\mathrm{Lip}_{\rho}(\alpha)^{2+\delta}\int_{H} \mathbf{E}\rho(w_{s, s+\mathcal{T}}(z), z)^{2+\delta}\nu_N(dz)+|\psi(0)|^{2+\delta}+\mathrm{Lip}_{\rho}(\psi)^{2+\delta}\int_{H} \rho(0, z)^{2+\delta}\nu_N(dz)\right),
\end{align*}
where by Lemma \ref{CLTLyapunov}, we have
\begin{align*}
\int_{H} \rho(0, z)^{2+\delta}\nu_N(dz)= \frac{1}{N} \sum_{j=1}^{N}\mathfrak{P}_{j-1}\rho(0, w)^{2+\delta} =  \frac{1}{N} \sum_{j=1}^{N}\mathbf{E}\rho(0, w_{s, s+(j-1)\mathcal{T}}(w))^{2+\delta}<\infty,
\end{align*}
and
\begin{align*}
\int_{H} \mathbf{E}\rho(w_{s, s+\mathcal{T}}(z), 0)^{2+\delta}\nu_N(dz) = \frac{1}{N} \sum_{j=1}^{N}\int_H\mathfrak{P}_1\rho(0, z)^{2+\delta}\mathfrak{P}_{j-1}^{*}\delta_w(dz) = \frac{1}{N} \sum_{j=1}^{N}\mathfrak{P}_{j}\rho(0, w)^{2+\delta}<\infty.
\end{align*}
Hence $\limsup_{N\rightarrow\infty}\langle\nu_N, |F_N|^{1+\delta/2}\rangle<\infty$. The condition $(M1)$ then follows from Lemma \ref{weakconvergencecompact}.
\begin{proof}[Verification of condition (M2)]We first show that $Z_n$ is a square integrable martingale. Indeed, Lemma \ref{CLTLyapunov} implies that
\begin{align*}
\mathbf{E}Z_n^2&\leq 2\left(\mathbf{E}|\alpha(w_{s, s+n\mathcal{T}})-\alpha(w_{s, s+(n-1)\mathcal{T}})|^2 + \mathbf{E}|\psi(w_{s, s+(n-1)\mathcal{T}})|^2\right)\\
&\leq C \mathbf{E}\left[\mathrm{Lip}_{\rho}(\alpha)^{2}\left(\rho(0, w_{s, s+n\mathcal{T}})^2 + \rho(0, w_{s, s+(n-1)\mathcal{T}})^2\right) + |\psi(0)|^2+\mathrm{Lip}_{\rho}(\psi)^{2}\rho(0, w_{s, s+(n-1)\mathcal{T}})^2\right]\\
&<\infty.
\end{align*}
By periodicity and Markov property, for any $\sigma\geq 0$,
\begin{align*}
\frac{1}{\ell} \sum_{m=1}^{\ell} \mathbf{E}\left|\frac{1}{K} \mathbf{E}\left[[M]_{m K}-[M]_{(m-1) K} \,\middle\vert\, \mathfrak{F}_{(m-1) K}\right]-\sigma^{2}\right| =\frac{1}{\ell} \sum_{m=1}^{\ell}\mathfrak{P}_{(m-1)K}|H_K|(w),
\end{align*}
where $H_K(w) = \mathbf{E}\left[\frac{1}{K}[M]_K-\sigma^2\right] =\mathbf{E}\left[\frac{1}{K}M_K^2-\sigma^2\right] $.
Following the same strategy as the proof of Lemma 5.5 in \cite{KW12}, one can show that $H_K\in C(H)$ and
\[\limsup_{\ell\rightarrow \infty}\frac{1}{\ell} \sum_{m=1}^{\ell}\left\langle\mathfrak{P}_{(m-1)K}^{*}\delta_w, |H_K|\right\rangle<\infty.\]
Hence $\lim_{\ell\rightarrow\infty}\frac{1}{\ell} \sum_{m=1}^{\ell}\left\langle\mathfrak{P}_{(m-1)K}^{*}\delta_w, |H_K|\right\rangle = \langle \mu_s,  |H_K|\rangle. $ Note that $H_K(w) =\frac{1}{K}\sum_{j=0}^{K-1}\mathfrak{P}_{j}J(w) $, where $J = \mathbf{E}M_1^2 - \sigma^2$.  And $\langle \mu_s,  |H_K|\rangle= \mathbf{E}\left|\frac{1}{K}\sum_{j=0}^{K-1}\mathfrak{P}_{j}J(X_0) \right|$, and $X_0$ is an initial data with law $\mu_s$. Since $\mu_s$ is ergodic under the Markov semigroup $\{\mathfrak{P}_k\}_{k\geq 0}$, by the Birkhoff ergodic theorem, one has for the stationary Markov chain $X_j:= w(s+j\mathcal{T}, \omega; s, X_0)$,
\begin{align*}
\frac{1}{K}\sum_{j=0}^{K-1}\mathfrak{P}_{j}( \mathbf{E}M_1^2)(X_0) =  \frac{1}{K}\sum_{j=0}^{K-1} (\mathbf{E}M_1^2)(X_j)\rightarrow \int_{H}(\mathbf{E}M_1^2)(z)\mu_s(dz), \quad \mathbf{P}\text{-a.s.} \quad (K\rightarrow\infty).
\end{align*}
Hence if we choose  $\sigma^2 = \langle\mu_s, \mathbf{E}M_1^2\rangle$, then $\langle \mu_s,  |H_K|\rangle\rightarrow 0$ as $K\rightarrow\infty$, which completes the proof of condition (M2).
\end{proof}
\begin{proof}[Verification of condition (M3)]
By the periodicity and Markov property, we can rewrite the expression in condition (M3) as
\begin{align*}
 \sum_{m=1}^{\ell} \sum_{j=(m-1) K}^{m K-1} \mathbf{E}\left[1+Z_{j+1}^{2},\left|M_{j}-M_{(m-1) K}\right| \geq \varepsilon \sqrt{\ell K}\right] =  \frac{1}{K}\sum_{j = 0}^{K-1}\left\langle Q_{\ell, K}^{*}\delta_w,G_{\ell, j} \right\rangle ,
 \end{align*}
 where $ Q_{\ell, K}^{*}\delta_w =\frac{1}{\ell}\sum_{m =1}^{\ell}\mathfrak{P}_{(m-1)K}^{*}\delta_w $ and $G_{\ell, j}(w) = \mathbf{E}\left[\left(1+Z_{j+1}^2\right)\mathbb{I}_{|M_j|\geq \varepsilon\sqrt{\ell K}}\right].$  To show that the limit in the condition (M3) vanishes, it suffices to prove that
\begin{align}\label{M3}
\limsup_{\ell\rightarrow \infty}\left\langle Q_{\ell, K}^{*}\delta_w,G_{\ell, j} \right\rangle=0,  \text{ for } j = 0, 1, \cdots, K-1.
\end{align}
Let $\widetilde{\alpha} = \sum_{k = 0}^{\infty}\mathfrak{P}_{k}|\psi|$, then $\widetilde{\alpha}\in\mathrm{Lip}_{\rho}(H)$ as in Lemma \ref{corrector}. Noting that
\begin{align*}
\mathbf{E}|M_j| &\leq \mathbf{E}|\alpha(w_{s, s+j\mathcal{T}}) - \alpha(w)| + \sum_{k = 0}^{j-1}\mathbf{E}|\psi(w_{s, s+k\mathcal{T}})|\\
&\leq \mathrm{Lip}_{\rho}(\alpha)\mathbf{E}\rho(w_{s, s+j\mathcal{T}}, w) + \widetilde{\alpha}(w)\leq \mathrm{Lip}_{\rho}(\alpha)\mathbf{E}\rho(w_{s, s+j\mathcal{T}}, w) +  \mathrm{Lip}_{\rho}(\widetilde{\alpha})\rho(w, 0) + \widetilde{\alpha}(0).
\end{align*}
Therefore by Markov inequality and Lemma \ref{CLTLyapunov}, there is a constant $C>0$ such that
\begin{align*}
\sup_{w\in B_{R}(0)}\mathbf{P}(|M_j|\geq \varepsilon\sqrt{\ell K})\leq \sup_{w\in B_{R}(0)}\frac{\mathbf{E}|M_j| }{\varepsilon\sqrt{\ell K}}\leq \frac{C}{\varepsilon\sqrt{\ell K}}.
\end{align*}
Observe that
\[|\alpha(w_{s, s+(j+1)\mathcal{T}})-\alpha(w_{s, s+j\mathcal{T}})|\leq \mathrm{Lip}_{\rho}(\alpha)\left(\rho(0, w_{s, s+(j+1)\mathcal{T}})+\rho(0, w_{s, s+j\mathcal{T}})\right), \]
\[|\psi(w_{s, s+j\mathcal{T}})|\leq  \mathrm{Lip}_{\rho}(\psi)\rho(0, w_{s, s+j\mathcal{T}})+|\psi(0)|.\]
Therefore
\begin{align*}
&\sup_{w\in B_{R}(0)}\mathbf{E}\left[Z_{j+1}^2\mathbb{I}_{|M_j|\geq \varepsilon\sqrt{\ell K}}\right]\\
&\leq 2\sup_{w\in B_{R}(0)}\left(\mathbf{E}\left[|\alpha(w_{s, s+(j+1)\mathcal{T}})-\alpha(w_{s, s+j\mathcal{T}})|^2\mathbb{I}_{|M_j|\geq \varepsilon\sqrt{\ell K}}\right]+ \mathbf{E}\left[\psi(w_{s, s+j\mathcal{T}})^2\mathbb{I}_{|M_j|\geq \varepsilon\sqrt{\ell K}}\right]\right)\\
&\leq C\sup_{0\leq j\leq K-1}\sup_{w\in B_{R}(0)}\mathbf{E}\left[\rho(0, w_{s, s+j\mathcal{T}})^2\mathbb{I}_{|M_j|\geq \varepsilon\sqrt{\ell K}}\right]\\
&\leq C\sup_{0\leq j\leq K-1}\left(\sup_{w\in B_{R}(0)}\mathbf{E}\rho(0, w_{s, s+j\mathcal{T}})^{2+\delta}\right)^{\frac{2}{2+\delta}}\left(\sup_{w\in B_{R}(0)}\mathbf{P}\left(|M_j|\geq \varepsilon\sqrt{\ell K}\right)\right)^{\frac{\delta}{2+\delta}},
\end{align*}
where the above constant does not depend on $\ell$. Hence for any $R>0$,
\begin{align}\label{discreteCLT02}
\lim_{\ell\rightarrow\infty}\sup_{w\in B_{R}(0)}G_{\ell,j} = 0.
\end{align}
Note that
\begin{align*}
\left\langle Q_{\ell, K}^{*}\delta_w,G_{\ell, j}^{1+\frac{2}{\delta}} \right\rangle&\leq \int_{H}\mathbf{E} (1+Z_{j+1}^2(z))^{1+\frac{\delta}{2}}Q_{\ell, K}^{*}\delta_w(dz)\\
&\leq C(1+ \langle Q_{\ell, K}^{*}\delta_w,\mathbf{E} Z_{j+1}^{2+\delta}\rangle)\leq C(1+ \sup_{j\geq 0}\mathbf{E}\rho(0, w_{s, s+j\mathcal{T}})^{2+\delta}).
\end{align*}
Again by Lemma \ref{CLTLyapunov},
\begin{align}\label{discreteCLT03}
\limsup_{\ell\rightarrow\infty}\left\langle Q_{\ell, K}^{*}\delta_w,G_{\ell, j}^{1+\frac{2}{\delta}} \right\rangle<\infty.
\end{align}
The conclusion \eqref{M3} then follows from \eqref{discreteCLT02}, \eqref{discreteCLT03} and Lemma \ref{weakconvergencecompact}, which in turn completes the verification of condition (M3).
\end{proof}

\section{Larger viscosity implies a trivial periodic attractor}\label{sectiontrivialattractor}
We have shown that  for periodic force $f(t)$, if $G_1<1/c_0$, then the system \eqref{NS} has a unique periodic invariant measure. The aim of this section is to show that if the viscosity is  larger, then the dynamics is actually trivial in the sense that the unique invariant measure is supported on a unique exponentially stable random periodic solution.  We take the approach from \cite{Mat98}, where the case when $f=0$ has been proved.

\vskip0.05in

For $\alpha\in(0,1]$, let  $\delta_0 := \nu - \frac{c_0^2}{(2-\alpha)\nu^2}\left(\frac{\|f\|_{\infty}^2}{\alpha\nu}+\mathcal{B}_0\right)$.  We will prove the following result in this section.
\begin{theorem}\label{contractionagain}
Assume $\delta_0>0$. Then there exists a random $\mathcal{T}$-periodic solution $w^*(t, x)$ of equation \eqref{NS} in the sense of  the definition given by equation \eqref{randomperiodic}. Moreover, $w^*(t, x)$ attracts all other solutions both in forward and pullback times. The law of $w^*(t, x)$ gives the unique $\mathcal{T}$-periodic invariant measure. \end{theorem}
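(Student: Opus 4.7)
The plan is to construct the random periodic solution $w^*$ as a pullback limit of the stochastic flow, exploiting pathwise contraction between solutions that share the same Brownian path. Let $w_1, w_2$ solve \eqref{NS} driven by the same $W$ with initial data at time $s$; their difference $u = w_1 - w_2$ satisfies the random PDE $\partial_t u = \nu\Delta u - B(\mathcal{K}u, w_1) - B(\mathcal{K}w_2, u)$. Testing with $u$, using $\langle B(\mathcal{K}w_2,u), u\rangle = 0$, Ladyzhenskaya and Young's inequality with parameter $\alpha\in(0,1)$, I would derive
\[
\partial_t\|u\|^2 \le -(2-\alpha)\nu\|u\|^2 + \tfrac{c_0^2}{\alpha\nu}\|w_1\|_1^2\|u\|^2,
\]
so Gronwall yields
\[
\|u(t)\|^2 \le \|u(s)\|^2 \exp\Bigl(-(2-\alpha)\nu(t-s) + \tfrac{c_0^2}{\alpha\nu}\int_s^t\|w_1(\tau)\|_1^2\,d\tau\Bigr).
\]

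The integrated enstrophy of $w_1$ would then be bounded pathwise by It\^o's formula applied to $\|w_1\|^2$, giving after Cauchy--Schwarz and Young
\[
(2-\alpha)\nu\int_s^t\|w_1\|_1^2\,d\tau \le \|w_1(s)\|^2 + \bigl(\tfrac{\|f\|_\infty^2}{\alpha\nu}+\mathcal{B}_0\bigr)(t-s) + 2\int_s^t\langle w_1, G\,dW\rangle.
\]
The martingale term has quadratic variation of order $O(t-s)$, uniformly in moments by the Lyapunov bound from Proposition~\ref{pre_Lya}, so the strong law of large numbers for continuous martingales forces it to be $o(t-s)$ $\mathbf{P}$-almost surely. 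Substituting back, the exponent in the contraction becomes at most $-2\delta_0(t-s) + o(t-s)$ $\mathbf{P}$-a.s., so pairs of trajectories with matched noise converge in $H$ at almost sure rate $\delta_0 > 0$, uniformly over initial data in any bounded set.

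With this contraction in hand, I would define $w^*(t,\omega) := \lim_{n\to\infty}\Phi(t, \omega; -n\mathcal{T}, 0)$. For $n \ge m$, the cocycle property gives $\Phi(t;-n\mathcal{T},0) = \Phi(t;-m\mathcal{T},\Phi(-m\mathcal{T};-n\mathcal{T},0))$, and the contraction applied between times $-m\mathcal{T}$ and $t$ bounds $\|\Phi(t;-n\mathcal{T},0)-\Phi(t;-m\mathcal{T},0)\|$ by $\|\Phi(-m\mathcal{T};-n\mathcal{T},0)\|\cdot e^{-\delta_0(t+m\mathcal{T})+o(t+m\mathcal{T})}$, whose first factor has at most polynomially-growing moments by Proposition~\ref{pre_Lya}. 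Hence the sequence is Cauchy $\mathbf{P}$-a.s., its limit $w^*(t,\omega)$ is independent of the dummy base point, and $\Phi(\tau,\omega;t,w^*(t,\omega)) = w^*(\tau,\omega)$ for $\tau\ge t$ by continuity of the flow. Periodicity $w^*(t+\mathcal{T},\omega) = w^*(t,\theta_{\mathcal{T}}\omega)$ then follows by applying the translation identity \eqref{translationidentity} with $h=\mathcal{T}$, $y=0$, noting $\sigma(\mathcal{T})\cdot 0 = 0$ since $f$ is $\mathcal{T}$-periodic, and passing to the limit in $n$.

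Forward exponential attraction of any trajectory to $w^*$ is immediate from the same pathwise contraction. The law $\mu_t := \mathrm{Law}(w^*(t))$ gives a $\mathcal{T}$-periodic invariant measure, whose uniqueness follows from Theorem~\ref{ergodicmixing} since $\delta_0>0$ implies $G_2 < 1/c_0$ and hence $G_1 < 1/c_0$. The main obstacle will be upgrading the almost-sure decay rate to a single event of full probability that simultaneously controls all base points $(s, w_0)$ and all pullback windows $[-n\mathcal{T}, t]$; this requires combining the strong law of large numbers for the energy martingale with uniform-in-initial-time Lyapunov moment bounds on the pullback tails.
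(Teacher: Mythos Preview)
Your strategy is the paper's: derive a pathwise contraction via Gronwall, control the enstrophy integral by It\^o plus the martingale law of large numbers, build $w^*$ as a pullback limit along $-n\mathcal{T}$, and read off periodicity from the translation identity. Two concrete issues, however, prevent the argument from closing as written.

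First, your Young splitting is inconsistent with the definition of $\delta_0$. You use the parameter $\alpha$ in the \emph{difference} inequality, obtaining coefficient $\frac{c_0^2}{\alpha\nu}$ in front of $\int\|w_1\|_1^2$, and then the same $\alpha$ in the enstrophy bound. Substituting gives rate $(2-\alpha)\nu - \frac{c_0^2}{\alpha(2-\alpha)\nu^2}\bigl(\tfrac{\|f\|_\infty^2}{\alpha\nu}+\mathcal{B}_0\bigr)$, which is neither $2\delta_0$ nor $\delta_0$, and can be nonpositive even when $\delta_0>0$ (take $A:=\nu-\delta_0$ with $\alpha(2-\alpha)\nu < A < \nu$). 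The paper instead uses $\epsilon=\nu$ in the difference equation (getting $\partial_t\|u\|^2\le -\nu\|u\|^2+\tfrac{c_0^2}{\nu}\|w\|_1^2\|u\|^2$) and reserves $\alpha$ solely for the enstrophy estimate; this yields precisely the rate $\delta_0$. Second, you cannot invoke Theorem~\ref{ergodicmixing} for uniqueness of the periodic invariant measure: that theorem requires $A_\infty=H$, which is not assumed here. Uniqueness follows directly from the pathwise attraction you have already established---any periodic invariant measure must be the law of the unique attracting orbit $w^*$. Finally, the ``main obstacle'' you flag is exactly what the paper packages into Proposition~\ref{rps1} (for the forward/pullback Cauchy property) and the backward maximal inequality in Theorem~\ref{rps2}; the relevant martingales change with the pullback starting time, so a single strong law is not enough, and one needs Doob-type maximal bounds over dyadic blocks plus Borel--Cantelli, as in \cite{Mat99}.
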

In particular, when $\alpha=1$, the condition $\delta_0>0$ is equivalent to the condition $G_2<1/c_0$, so that we obtain Theorem \ref{thmperiodicattraction} from Theorem \ref{contractionagain}.
The proof of Theorem \ref{contractionagain} is a combination of Proposition \ref{rps1} and Theorem \ref{rps2} below.

\vskip0.05in

We begin by comparing any two solutions that start from different initial data. Let $w_0, \widetilde{w}_0\in H$, and $\mathfrak{e}_t = w_{s, t}(w_0)-\widetilde{w}_{s, t}(\widetilde{w}_0)$. In view of equation \eqref{NS}, we see that $\mathfrak{e}_t$ solves the following equation
\begin{align*}
\partial_t\mathfrak{e}_t&=\nu\mathrm{\Delta}\mathfrak{e}_t - B(\mathcal{K}w_{s, t}, w_{s, t})+B(\mathcal{K}\widetilde{w}_{s, t},\widetilde{w}_{s, t})\\
&=\nu\mathrm{\Delta}\mathfrak{e}_t - B(\mathcal{K}w_{s, t}, w_{s, t})+B(\mathcal{K}(w_{s, t}-\mathfrak{e}_t),w_{s, t}-\mathfrak{e}_t)\\
&= \nu\mathrm{\Delta}\mathfrak{e}_t - B(\mathcal{K}w_{s, t},\mathfrak{e}_t)+B(\mathcal{K}\mathfrak{e}_t, w_{s, t})+B(\mathcal{K}\mathfrak{e}_t,\mathfrak{e}_t).
\end{align*}
It then follows from  $\langle B(\mathcal{K}w, v),v\rangle = 0$ and basic estimates of the nonlinear term (see Proposition 6.1 in \cite{CF88}) that
\begin{align*}
\partial_t\|\mathfrak{e}_t\|^2& = 2\langle\mathfrak{e}_t, \partial_t\mathfrak{e}_t\rangle = -2\nu\|\mathfrak{e}_t\|_1^2 + 2\langle B(\mathcal{K}\mathfrak{e}_t, w_{s, t}), \mathfrak{e}_t\rangle \\
&\leq -2\nu\|\mathfrak{e}_t\|_1^2 + 2c_0 \|\mathfrak{e}_t\|\|w_{s, t}\|_1\|\mathfrak{e}_t\|_1\leq -\nu\|\mathfrak{e}_t\|^2 + \frac{c_0^2}{\nu}\|\mathfrak{e}_t\|^2\|w_{s, t}\|_1^2.
\end{align*}
Hence by the Gronwall's inequality,
\[\|\mathfrak{e}_t\|^2\leq \|\mathfrak{e}_s\|^2\exp\left(-\nu(t-s)+\frac{c_0^2}{\nu}\int_s^t\|w_{s, r}\|_1^2dr\right).\]
From equation \eqref{Ito1} one obtains that for any $\alpha\in(0, 1]$,
\begin{align*}
\frac{c_0^2}{\nu}\int_{s}^t\|w_{s, r}\|_1^2dr\leq \frac{c_0^2}{(2-\alpha)\nu^2}\left(\|w_0\|^2+M(s, t)+(t-s)\left(\frac{\|f\|_{\infty}^2}{\alpha\nu}+\mathcal{B}_0\right)\right),
\end{align*}
where $M(s, t) = 2\int_{s}^t\langle w_{s, r},GdW_r\rangle$. As a result,
\begin{align}\label{contraction}
\|\mathfrak{e}_t\|^2\leq \|\mathfrak{e}_s\|^2e^{-\left(t-s\right)\left(\delta_0 - \mathrm{\Gamma}(s, t)\right)},
\end{align}
where $\mathrm{\Gamma}(s, t) = \frac{c_0^2}{(2-\alpha)\nu^2(t-s)}\left(\|w_0\|^2+M(s,t)\right)$. Just as in \cite{Mat98}, it turns out that the average $\mathrm{\Gamma}(s, t)$ can be sufficiently small for  fixed $s$ and large (random) $t$. Hence as long as $\delta_0>0$, or equivalently if $\nu$ is large, then the dissipation dominates so that the dynamics is contracting in the state space. In what follows, the inequality \eqref{contraction} will be used to construct the unique random periodic solution that attracts all other solutions. The virtue of the proof is based on controlling the term $\mathrm{\Gamma}(s,t)$.  Throughout the proof, we  fix  $\alpha\in(0,1]$.

\vskip0.05in

The following proposition allows us to construct a solution that starts from $-\infty$. Since the proof is essentially the same as that in \cite{Mat99}, we omit it here.
\begin{proposition}\label{rps1}
Fix a $\delta\in(0,\delta_0)$ and $t_1\in \mathcal{T}\mathbb{Z}$. Then for any $\varepsilon>0$,  there exists a  $\mathcal{T}\mathbb{Z}^{+}$ valued random time $n^*(\varepsilon, \delta, t_1)$, such that with probability one, for any $\tau\geq 0$ and $n_1, n_2\in \mathcal{T}\mathbb{Z}$,
\[n_1, n_2<t_1-n^*\implies \|w(t_1+\tau; n_1,0)-w(t_1+\tau; n_2,0)\|^2\leq \varepsilon e^{-\delta\tau}.\]
\end{proposition}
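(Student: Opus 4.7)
\textbf{Proof plan for Proposition \ref{rps1}.}

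Without loss of generality take $n_1\le n_2$. By the flow property, $w(\cdot;n_1,0)$ coincides on $[n_2,\infty)$ with the solution started at time $n_2$ from the initial datum $w(n_2;n_1,0)$. Apply the pathwise contraction estimate \eqref{contraction} with $s=n_2$, $w_0=w(n_2;n_1,0)$, $\widetilde w_0=0$ and $t=t_1+\tau$ to obtain
\begin{align*}
\|w(t_1+\tau;n_1,0)-w(t_1+\tau;n_2,0)\|^2 \le \|w(n_2;n_1,0)\|^2\,\exp\!\bigl(-(t_1+\tau-n_2)(\delta_0-\Gamma(n_2,t_1+\tau))\bigr).
\end{align*}
Thus it suffices to find a random time $n^*$ such that, for every $n_1\le n_2<t_1-n^*$ and every $\tau\ge 0$, both the prefactor and the defect $\Gamma(n_2,t_1+\tau)$ are pathwise under control.

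First I would dispose of the prefactor. An It\^o-formula/energy-type estimate applied to $\|w(r;n_1,0)\|^2$ between $n_1$ and $n_2$ gives, after using the Poincar\'e inequality, a bound of the form $\|w(n_2;n_1,0)\|^2\le C(1+|f|^2/\nu^2+\mathcal{B}_0/\nu) + \text{(martingale term)}$, with the martingale evaluated up to $n_2$. Combining this with dissipative rescaling (multiplying the SDE for $\|w\|^2$ by $e^{\nu r}$ and taking the supremum over the initial time) yields a pathwise majorant $F(\omega,n_2)$ that dominates $\|w(n_2;n_1,0)\|^2$ uniformly in $n_1\le n_2$ and grows at most linearly in $|n_2|$ (and in fact $F(\omega,n_2)/|n_2|\to 0$ a.s. as $n_2\to-\infty$ by the SLLN for martingales with controlled quadratic variation). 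In particular, for any $\delta<\delta_0$ one can achieve $F(\omega,n_2)e^{-\delta(t_1-n_2)}\le\varepsilon$ by taking $|n_2|$ large enough, which absorbs the $\|w(n_2;n_1,0)\|^2$ factor into the desired $\varepsilon e^{-\delta\tau}$ (writing $(t_1+\tau-n_2)\delta=\delta(t_1-n_2)+\delta\tau$).

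Next, to handle $\Gamma(n_2,t_1+\tau)=\frac{c_0^2}{(2-\alpha)\nu^2 T}\bigl(\|w(n_2;n_1,0)\|^2+M(n_2,t_1+\tau)\bigr)$ with $T=t_1+\tau-n_2$, the first summand is controlled by $F(\omega,n_2)/T$ which, since $T\ge t_1-n_2$, tends to $0$ uniformly in $\tau\ge 0$ and $n_1\le n_2$ as $n_2\to-\infty$. For the martingale $M(n_2,t_1+\tau)=2\int_{n_2}^{t_1+\tau}\langle w(r;n_1,0),GdW_r\rangle$, its quadratic variation is dominated by a constant multiple of $\int_{n_2}^{t_1+\tau}\|w(r;n_1,0)\|^2dr$, which in turn is $O(T)$ by the same energy inequality. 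The strong law of large numbers for square-integrable continuous martingales (via Dambis--Dubins--Schwarz together with the iterated logarithm) then gives $M(n_2,t_1+\tau)/T\to 0$ almost surely. The crux is obtaining this limit \emph{uniformly in the endpoint} $\tau\ge 0$; the main obstacle of the argument lies here. I would address it by a Doob-type maximal inequality applied to the time-changed Brownian motion, splitting the tail $\tau\ge\tau_0$ (handled by the maximal inequality on the time-changed process) from the compact range $0\le\tau\le\tau_0$ (handled by continuity of $M(n_2,\cdot)$ and the fact that the denominator $T$ is already large once $|n_2|$ is large).

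Putting everything together, choose $n^*(\omega,\varepsilon,\delta,t_1)\in\mathcal{T}\mathbb{Z}^+$ as the smallest value for which $n_2<t_1-n^*$ simultaneously forces $\sup_{\tau\ge 0}\Gamma(n_2,t_1+\tau)\le \delta_0-\delta$ and $F(\omega,n_2)e^{-\delta(t_1-n_2)}\le\varepsilon$ (both conditions are eventually satisfied in $|n_2|$, hence $n^*$ is a.s.\ finite). Substituting into the contraction estimate produces
\begin{align*}
\|w(t_1+\tau;n_1,0)-w(t_1+\tau;n_2,0)\|^2 \le F(\omega,n_2)e^{-\delta(t_1-n_2)}e^{-\delta\tau}\le\varepsilon e^{-\delta\tau},
\end{align*}
as required.
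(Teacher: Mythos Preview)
The paper omits its own proof here, deferring to \cite{Mat99}; your outline follows that same line---reduce via the flow property, apply \eqref{contraction}, then control the prefactor and $\Gamma$ separately.

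There is, however, one genuine gap. By choosing $w_0=w(n_2;n_1,0)$ as the reference in \eqref{contraction}, your $\Gamma$ involves the martingale $M(n_2,t_1+\tau)=2\int_{n_2}^{t_1+\tau}\langle w(r;n_1,0),G\,dW_r\rangle$, which depends on $n_1$. You then invoke the SLLN to get $M/T\to 0$, but that applies to a \emph{fixed} martingale; you need it uniformly over all $n_1\le n_2$, i.e.\ over infinitely many distinct martingales (each with its own Dambis--Dubins--Schwarz time change) for every $n_2$. Your sketch does not explain how a single random $n^*$ can dominate all of them simultaneously, and a bound on the quadratic variations alone does not furnish this.

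The fix is simple and is what \cite{Mat99} does: the difference equation for $\mathfrak{e}_t$ is symmetric in the two solutions, so one may instead take $w_0=0$ and $\widetilde w_0=w(n_2;n_1,0)$ in \eqref{contraction}. Then $\Gamma(n_2,t_1+\tau)=\frac{c_0^2}{(2-\alpha)\nu^2 T}\,M'(n_2,t_1+\tau)$ with $M'(n_2,t)=2\int_{n_2}^{t}\langle w(r;n_2,0),G\,dW_r\rangle$, which is independent of $n_1$ (and the $\|w_0\|^2$ contribution to $\Gamma$ even vanishes). One is left with a single countable family indexed by $n_2\in\mathcal{T}\mathbb{Z}$, for which a Borel--Cantelli argument---a maximal/exponential-martingale inequality for the supremum over $\tau\ge 0$, yielding tail probabilities summable in $t_1-n_2$---produces the a.s.\ finite $n^*$. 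Your treatment of the prefactor $\|w(n_2;n_1,0)\|^2$ via a uniform-in-$n_1$ pathwise majorant is correct and unaffected by this swap.
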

We now construct the random periodic solution that has the desired attraction property.
\begin{theorem}\label{rps2}
Assume $\delta_0>0$ and let $\delta\in(0,\delta_0)$. Then there exists a unique random periodic solution $w^*: \mathbb{R}\times\mathrm{\Omega}\rightarrow H$ of system \eqref{NS} such that for any $s\in\mathbb{R}$, there exists positive random times $n^*(s,\delta)$ and $n_*(s,\delta)$ have all moments finite such that
\begin{align}
\sup_{w_0\in B_r(w^*(s))}\|w(t; s, w_0)-w^*(t)\|^2\leq r^2e^{-\delta (t-s)},\label{forward}\\
\sup_{w_{\tau}\in B_r(w^*(\tau))}\|w(s; \tau, w_{\tau})-w^*(s)\|^2\leq r^2e^{-\delta (s-\tau)},\label{backward}
\end{align}
for all $r>0$, $t>s+n^*$ and $\tau<s-n_*$.
\end{theorem}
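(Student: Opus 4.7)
The plan is to construct $w^*$ as the pullback limit of solutions issued from $0$ in the distant past, and then to deduce all of the attraction, periodicity and uniqueness statements from the basic contraction estimate \eqref{contraction} together with Proposition \ref{rps1}. For each $s\in \mathcal{T}\mathbb{Z}$, Proposition \ref{rps1} says that $\{\Phi(s,\omega;-n\mathcal{T},0)\}_{n\geq 0}$ is $\mathbf{P}$-a.s.\ Cauchy in $H$, so I set
\[
w^*(s,\omega):=\lim_{n\to\infty}\Phi(s,\omega;-n\mathcal{T},0),
\]
and extend to arbitrary $s\in\mathbb{R}$ via $w^*(s,\omega):=\Phi(s,\omega;s_0,w^*(s_0,\omega))$ with $s_0=\mathcal{T}\lfloor s/\mathcal{T}\rfloor$. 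The flow property $\Phi(\tau,\omega;t,w^*(t,\omega))=w^*(\tau,\omega)$ for $\tau\geq t$ is obtained by passing to the limit in $\Phi(\tau,\omega;t,\Phi(t,\omega;-n\mathcal{T},0))=\Phi(\tau,\omega;-n\mathcal{T},0)$ using continuity of $\Phi(\tau,\omega;t,\cdot)$. The random periodicity $w^*(t+\mathcal{T},\omega)=w^*(t,\theta_\mathcal{T}\omega)$ follows from the translation identity \eqref{translationidentity} (with $y=0$) applied inside the defining limit, since $\Phi(t+\mathcal{T},\omega;-n\mathcal{T},0)=\Phi(t,\theta_\mathcal{T}\omega;-(n+1)\mathcal{T},0)$.

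For the forward bound I take $w_0\in B_r(w^*(s))$ and apply \eqref{contraction} to $\mathfrak{e}_t=w(t;s,w_0)-w(t;s,w^*(s))$, with $w_{s,r}$ chosen to be the solution started from $w^*(s)$. This gives
\[
\|w(t;s,w_0)-w^*(t)\|^2\leq r^2\exp\bigl(-(t-s)(\delta_0-\Gamma^*(s,t))\bigr),
\]
where $\Gamma^*(s,t)=\tfrac{c_0^2}{(2-\alpha)\nu^2(t-s)}\bigl(\|w^*(s)\|^2+M^*(s,t)\bigr)$ and $M^*(s,t)=2\int_s^t\langle w(r;s,w^*(s)),G\,dW_r\rangle$. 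The estimate \eqref{forward} holds as soon as $\Gamma^*(s,t)\leq \delta_0-\delta$, and I define $n^*(s,\delta)$ as the first time after which this holds. The backward estimate \eqref{backward} is derived analogously with initial time $\tau<s$, defining $n_*(s,\delta)$; note that the base point of the expansion is now $w^*(\tau)$, which by the periodicity in law of $w^*$ behaves in a stationary way as $\tau=s-k\mathcal{T}\to-\infty$.

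The moment bounds on $n^*,n_*$ reduce to two ingredients. First, the deterministic-in-$t$ term $\|w^*(s)\|^2/(t-s)$ decays surely since $\|w^*(s)\|<\infty$ a.s. Second, the martingale $M^*$ has quadratic variation
\[
[M^*]_{s,t}=4\int_s^t\|G^*w(r;s,w^*(s))\|^2\,dr\leq 4\mathcal{B}_0\int_s^t\|w(r;s,w^*(s))\|^2\,dr,
\]
which by the Lyapunov structure (Proposition \ref{pre_Lya}) grows linearly in $t-s$ uniformly in $s$; an exponential martingale inequality applied to $\exp(\lambda M^*(s,t)-\tfrac12\lambda^2[M^*]_{s,t})$ then yields Gaussian-type tails $\mathbf{P}(\Gamma^*(s,t)\geq \delta_0-\delta)\leq Ce^{-c(t-s)}$. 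This gives all moments of $n^*(s,\delta)$, and the $\mathcal{T}$-periodicity in law of $w^*$ allows the same argument to control $n_*(s,\delta)$ along the pullback.

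Finally, uniqueness of $w^*$ follows by applying \eqref{contraction} to $w^*$ and any other random $\mathcal{T}$-periodic solution $\hat w^*$ on $[\tau,s]$ and letting $\tau\to-\infty$ along $\mathcal{T}\mathbb{Z}$: the prefactor $\|w^*(\tau)-\hat w^*(\tau)\|^2$ is periodic in law and the exponential factor tends to $0$ a.s.\ by the same moment control. The law $\mu_s:=\mathrm{Law}(w^*(s))$ is $\mathcal{T}$-periodic and invariant under $\mathcal{P}_{s,t}^*$ by construction of $w^*$, and forward attraction with an integrable random time forces it to be the unique such family. The main obstacle in this plan is Stage~3, specifically getting \emph{quantitative} tails for $\Gamma^*(s,t)$ that are uniform in the initial time $s$ and strong enough to produce all moments of $n^*,n_*$; this is where the $\mathcal{T}$-periodic stationarity of $w^*$ and the uniform-in-$s$ Lyapunov bounds of Proposition \ref{pre_Lya} enter essentially.
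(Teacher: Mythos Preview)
Your overall strategy matches the paper's: construct $w^*$ as a pullback limit via Proposition \ref{rps1}, verify the random periodic property from the translation identity, and derive forward and backward attraction from \eqref{contraction} by controlling $\Gamma^*$. The forward part is essentially the paper's argument, though note that you need $\mathbf{E}\|w^*(s)\|^p<\infty$ (not just $\|w^*(s)\|<\infty$ a.s.) to get finite moments for the random time dominating $\|w^*(s)\|^2/(t-s)$; the paper obtains this by passing to the limit in \eqref{eq: est1} applied to the approximants $w_n$, each of which starts from $0$.

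Your treatment of the backward bound \eqref{backward}, however, has a real gap. With $s$ fixed and $\tau\to-\infty$, the process $\tau\mapsto M^*(\tau,s)=2\int_\tau^s\langle w^*(r),G\,dW_r\rangle$ is \emph{not} a forward martingale in the running parameter, so the exponential martingale inequality you invoke does not apply as stated; the paper handles this with a backward maximal inequality (Lemma~A.6 in \cite{Mat99}). Second, the term $\|w^*(\tau)\|^2/(s-\tau)$ is not controlled by ``periodicity in law'' alone: having identically distributed values along $\tau\in s+\mathcal{T}\mathbb{Z}$ does not yield almost-sure sublinear growth of $\|w^*(\tau)\|^2$ as $\tau\to-\infty$, which is what you need pathwise. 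The paper appeals to a separate pathwise result (Theorem~3.13 in \cite{Mat99}) to produce a random time $\tau_2$ with finite moments after which $\|w^*(s-t)\|^2/t$ is small. Your identified ``main obstacle'' is in the right place, but the specific difficulty is these two backward-in-time issues rather than uniformity in $s$ per se.
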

\begin{proof}
Let $n_1\in \mathcal{T}\mathbb{Z}$. Consider the sequence of solutions $w_n(t, \omega) = w(t,\omega; n_1-n, 0)$ for $n\in \mathcal{T}\mathbb{Z}^{+}$ and $t\geq n_1$.  Proposition \ref{rps1} states that  there exists a random time $n^*(\varepsilon, \delta, n_1)$ such that for every $t\geq n_1$  and every $m_1, m_2\in \mathcal{T}\mathbb{Z}$ satisfying
$m_1, m_2> n^*$, one has
\[\|w(t; n_1-m_1, 0)-w(t; n_1-m_2, 0)\|\leq \varepsilon e^{-\delta(t-n_1)}.\]
This implies that $\{w_n\}$ is a Cauchy sequence in $C\left([n_1,\infty), H\right)$, which is complete with the norm $\displaystyle\|w\|_{\infty}:=\sup_{t\geq n_1}\|w(t)\|$. Define $w^*(t,\omega)$ to be the limit for $t\geq n_1$. Since $n_1$ is arbitrary, we obtain a process $w^*(t,\omega)$ defined for $t\in\mathbb{R}$. For any fixed $T>n_1$, it is well known (see \cite{CDF97, Fla94} for example) that there exists a random variable $K(\omega, T)$ such that $\limsup_{n}\|w_n(t)\|<K(\omega, T)$ almost surely for all $t\in[n_1,T]$.  Therefore $w^*\in C([n_1, \infty], H_1)$, and $\{w_n\}$ converges to $w^*$ weakly in $H_1$. Hence Lemma B.6 from \cite{Mat99} shows that $w^*$ is a strong solution of equation \eqref{NS}.

\vskip0.05in

We now show that $w^*$ is a random periodic solution in the sense of equation \eqref{randomperiodic}. By continuity of the stochastic flow $\mathrm{\Phi}(t,\omega; s, w_0)$, we have for $\tau\geq 0$,
\begin{align*}
\mathrm{\Phi}(t+\tau,\omega; t, w^*(t,\omega))&=\mathrm{\Phi}(t+\tau,\omega; t, \lim_{n\rightarrow \infty} w_n(t,\omega; n_1-n,0))\\
& =\lim_{n\rightarrow \infty} \mathrm{\Phi}(t+\tau,\omega; t,  w_n(t,\omega; n_1-n,0))\\
&=\lim_{n\rightarrow \infty}w_n(t+\tau,\omega; n_1-n,0)=w^*(t+\tau,\omega).
\end{align*}
Denote $w_{n}^{\mathcal{T}}(t) = w_{n}(t, \theta_{\mathcal{T}}\omega)$. By the $\mathcal{T}$ periodicity of the deterministic force $f(t)$, we have that
\begin{align*}
w_{n}^{\mathcal{T}}(t)= &\int_{n_1-n}^{t}\left[\nu\mathrm{\Delta}w_{n}^{\mathcal{T}}(r) - B(\mathcal{K}w_{n}^{\mathcal{T}}(r) ,w_{n}^{\mathcal{T}}(r) )+f(r)\right]dr+G\left[W(t,\theta_{\mathcal{T}}\omega)-W(t-n,\theta_{\mathcal{T}}\omega)\right]\\
= &\int_{n_1-n+\mathcal{T}}^{t+\mathcal{T}}\left[\nu\mathrm{\Delta}w_{n}^{\mathcal{T}}(r-\mathcal{T}) - B(\mathcal{K}w_{n}^{\mathcal{T}}(r-\mathcal{T}) ,w_{n}^{\mathcal{T}}(r-\mathcal{T}) )+f(r)\right]dr\\
&+G\left[W(t+\mathcal{T},\omega)-W(n_1-n+\mathcal{T},\omega)\right].
\end{align*}
Also note that
\begin{align*}
w_{n+\mathcal{T}}(t+\mathcal{T},\omega) =&\int_{n_1-n+\mathcal{T}}^{t+\mathcal{T}} \left[\nu\mathrm{\Delta}w_{n+\mathcal{T}}(r)+B(\mathcal{K}w_{n+\mathcal{T}}(r),w_{n+\mathcal{T}}(r))+f(r)\right]dr\\
&+G\left[W(t+\mathcal{T},\omega)-W(n_1-n+\mathcal{T},\omega)\right].
\end{align*}
Therefore it follows from the uniqueness of solution that
\[w_{n}(t, \theta_{\mathcal{T}}\omega) =w_{n+\mathcal{T}}(t+\mathcal{T},\omega),\, \,\mathbf{P}-\mathrm{a.s.}. \]
By letting $n\rightarrow\infty$, we have
\[w^*(t, \theta_{\mathcal{T}}\omega)=w^*(t+\mathcal{T},\omega),\, \,\mathbf{P}-\mathrm{a.s.}.\]

\vskip0.05in

It remains to show that $w^*(t,\omega)$ has the attraction property.  Note that each $w_n$ starts from $0$, hence estimate \eqref{eq: est1} implies that for $p>0$,  $\mathbf{E}\|w_n(t)\|^p\leq C$ for some constant $C$ independent of $n$ and $t$. This in turn shows that $\mathbf{E}\|w^*(t)\|^p\leq C$ for $t\in\mathbb{R}$.  Let $\varepsilon = \delta_0 - \delta$. Define the random time $\tau_1= \frac{2c_0^2}{(2-\alpha)\varepsilon\nu^2}\|w^*(s)\|^2$, which has all moments finite.  Note that for $\tau>\tau_1$, we have $\frac{c_0^2}{(2-\alpha)\nu^2\tau}\|w^*(s)\|^2< \frac{\varepsilon}{2}$, which controls the first term in $\mathrm{\Gamma}(s, s+\tau)$ from \eqref{contraction}.  Based on the same reasoning as in Proposition \ref{rps1},  we find that there exists a $\mathcal{T}\mathbb{Z}^{+}$ valued random time $n_1(s, \delta)$ such that for all $\tau>n_1$, one has
\[\frac{c_0^2}{(2-\alpha)\nu^2\tau}|M(s, s+\tau)|<\frac{\varepsilon}{2}.\]
The estimate \eqref{forward} then follows by taking $n^* = \max\{\tau_1, n_1\}$. For $t>0$, inequality \eqref{contraction} states that
\[\|\mathfrak{e}_s\|^2\leq \|\mathfrak{e}_{s-t}\|^2e^{-t\left(\delta_0 - \mathrm{\Gamma}(s-t, s)\right)},\]
where  $\mathrm{\Gamma}(s-t, s) = \frac{c_0^2}{(2-\alpha)\nu^2t}\left(\|w^{*}(s-t)\|^2+M(s-t, s)\right)$. Now $s$ is fixed, so $M(s-t, s)$ is not a martingale since it runs backwards in time. Nonetheless, we can still use the same reasoning as above. By replacing the Doob $\mathrm{L}^p$ maximal inequality with the backwards maximal inequality (see Lemma A.6 in \cite{Mat99}), one obtains a $\mathcal{T}\mathbb{Z}^+$ valued random time $n_2(s, \delta)$ with all moments finite such that for all $t>n_2$,
\[ \frac{c_0^2}{(2-\alpha)\nu^2t}|M(s-t, s)|<\frac{\varepsilon}{2}.\]
To estimate the term $\frac{1}{t}\|w^*(s-t)\|^2$,  noting that  Theorem 3.13 in \cite{Mat99} remains true in our setting, hence there exists a random time $\tau_2>0$, such that for all $t>\tau_2$, we have $\frac{c_0^2}{(2-\alpha)\nu^2t}\|w^{*}(s-t)\|^2<\frac{\varepsilon}{2}$. Therefore the inequality \eqref{backward} holds by taking $n_* = \max\{\tau_2, n_2\}$.
\end{proof}
\begin{remark}
If $f(t, x)\equiv 0$, then we can take $\alpha =0$ so that $\delta_0 = \nu-\frac{c_0^2\mathcal{B}}{\nu^2}$ which is the same as in \cite{Mat99}.  And with the same proof as above, one can show that under the condition $\delta_0 >0$, there is a stationary solution that exponentially attracts other solutions as in \cite{Mat99}.
\end{remark}

\section{Appendix}\label{appendix}
This appendix contains certain results used in the previous sections regarding system \eqref{NS}. In particular, several useful bounds on the solution of equation \eqref{NS} are given in subsection \ref{appendix-bounds}.  The Lyapunov structure is given in subsection \ref{subsectionLyapunovstructure}. And in subsection \ref{gradientinequality}, we give the proof of the technical result Proposition \ref{pre_gradientinequalityprop} about the spectral property of the Malliavin matrix. The proof of Lemma \ref{contractlemma1}-\ref{contractlemma3} are gathered in subsection \ref{lemmasforcontraction}.

\subsection{Various bounds of the solution}\label{appendix-bounds}
Several estimates about the solution  $w_{s, t}(w_0)$ of the stochastic Navier-Stokes equation \eqref{NS} is collected in the following Lemma \ref{bounds}. And bounds on higher order Sobolev norms are given in the next Lemma \ref{hb}.
\begin{lemma}\label{bounds}
For the solution $w_{s, t}(w_0)$  of the Navier-Stokes equation, we have
\begin{enumerate}

\item There exist constants $\eta_{0}>0$ and $C>0$ depending on $f, \mathcal{B}_0, \nu$ such that for every $t>s$ and every $\eta \in\left(0, \eta_{0}\right],$ the bound
\begin{equation}\label{eq: est1}
\mathbf{E} \exp \left(\eta\left\|w_{s, t}\right\|^{2}\right) \leq C \exp \left(\eta e^{-\nu (t-s)}\left\|w_0\right\|^{2}\right)
\end{equation}
holds.

\item There exist positive constants $\eta_0$ and $C$,  depending  on $f, \mathcal{B}_0, \nu$,  such that
\begin{align}\label{enstrophy}
\quad \mathbf{E} \exp \left(\eta \sup _{t \geq \tau}\left(\left\|w_{s, t}\right\|^{2}+\nu \int_{\tau}^{t}\left\|w_{s, r}\right\|_{1}^{2} d r-C(f,\mathcal{B}_0)(t-\tau)\right)\right)\leq C \exp \left(\eta e^{-\nu \left(\tau-s\right)}\left\|w_0\right\|^{2}\right)
\end{align}
for every $\tau \geq s$ and for every $\eta\in\left(0, \eta_{0}\right] $.

\item For any $f_1, f_2\in H(f)$, the hull of the deterministic force, there exists a constant  $\eta_0>0$ depending  on $f, \mathcal{B}_0, \nu$, so that for every $\eta \in\left(0, \eta_{0}\right]$, there are positive constant $C, \gamma$ depending on $f, \mathcal{B}_0, \nu, \eta$, such that
\begin{align}\label{continuousonhull}
\mathbf{E}\left\|w_{s, t}^{f_1} - w_{s, t}^{f_2}\right\|^2\leq Ce^{\gamma (t-s)}\exp\left({\eta\|w_0\|^2}\right)\|f_1-f_2||_{\infty},
\end{align}
for every $\tau \geq s$, where for $i=1,2$,  $w_{s, t}^{f_i}$ is the solution to equation \eqref{NS} with $f$ replaced by $f_i$ and with initial condition $(s, w_0)$, where $\|f_1-f_2||_{\infty} = \sup_{t\in\mathbb{R}}\|f_1(t)- f_2(t)\|$.

\item There exist constants $\eta_{1}, a, \gamma>0,$ depending  on $f, \mathcal{B}_0, \nu,
\eta_0$,  such that
\begin{align}\label{boundsum}
\mathbf{E} \exp \left(\eta \sum_{n=0}^{N}\left\|w_{s, s+n}\right\|^{2}-\gamma N\right) \leq \exp \left(a \eta\left\|w_0\right\|^{2}\right)
\end{align}
holds for every $N>0,$ every $\eta \leq \eta_{1},$ and every initial condition $w_0 \in \mathcal{H}$.

\item For every $\eta>0$ and $t\geq\tau\geq s$, there exists a constant $C=C\left(\nu, \eta\right)>0$ such that the Jacobian $J_{\tau, t}$ satisfies almost surely
\begin{align}\label{Jacobian}
\left\|J_{\tau, t}\right\| \leq \exp \left(\eta \int_{\tau}^{t}\left\|w_{s, r}\right\|_{1}^{2} d r+C (t-\tau)\right)
\end{align}

\item For every $\eta>0$ and every $p>0,$ there exists $C=C\left(f, \mathcal{B}_0, \nu, \eta, p\right)>0$ such that the Hessian satisfies
\begin{align}\label{Hessian}
\mathbf{E}\left\|K_{\tau, t}\right\|^{p} \leq C \exp \left(\eta\left\|w_0\right\|^{2}\right)
\end{align}
for every $\tau\geq s$ and $t\in(\tau, \tau+1)$.
\end{enumerate}
\end{lemma}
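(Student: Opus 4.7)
The proof strategy is to adapt the classical energy-method estimates for 2D stochastic Navier-Stokes (as in \cite{HM06,HM08,HM11,Mat02b}) to our time-inhomogeneous setting, carefully tracking that all constants depend only on $\nu,\mathcal{B}_0$ and $\|f\|_\infty$ rather than on the initial time $s$. The starting point is the It\^o formula
\[
d\|w_{s,t}\|^{2}=\bigl(-2\nu\|w_{s,t}\|_{1}^{2}+2\langle w_{s,t},f(t)\rangle+\mathcal{B}_{0}\bigr)\,dt+2\langle w_{s,t},GdW(t)\rangle,
\]
which, after Cauchy-Schwarz and Poincar\'e ($\|w\|\le\|w\|_{1}$), yields $d\|w_{s,t}\|^{2}\le(-\nu\|w_{s,t}\|^{2}+C(f,\mathcal{B}_{0}))\,dt+dM_{t}$ for a suitable martingale $M_{t}$. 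The time-periodicity of $f$ and the fact that these bounds depend on $f$ only through $\|f\|_\infty$ will guarantee uniformity in $s$.

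For parts (1) and (2), I would apply It\^o's formula to $V(w)=\exp(\eta\|w\|^{2})$ to obtain
\[
dV(w_{s,t})\le V(w_{s,t})\bigl(-2\eta\nu\|w_{s,t}\|_{1}^{2}+2\eta\langle w_{s,t},f\rangle+\eta\mathcal{B}_{0}+2\eta^{2}\|G^{*}w_{s,t}\|^{2}\bigr)dt+2\eta V(w_{s,t})\langle w_{s,t},GdW\rangle.
\]
Choosing $\eta_{0}$ so small that $2\eta_{0}\|G^{*}w\|^{2}\le\eta\nu\|w\|_{1}^{2}/2$ and absorbing the linear-in-$f$ term via Young's inequality gives the drift inequality $dV\le V(-c\nu\|w\|_{1}^{2}+C)dt+d\mathcal{M}_{t}$. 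Taking expectations and Gronwall gives \eqref{eq: est1}. For \eqref{enstrophy}, I would apply the exponential martingale inequality to the stochastic integral $\int_{\tau}^{t}\langle w_{s,r},GdW_{r}\rangle$ after subtracting its quadratic variation, so that for $\eta\le\eta_{0}$ the process $\exp(\eta(\|w_{s,t}\|^{2}+\nu\int_{\tau}^{t}\|w_{s,r}\|_{1}^{2}dr-C(t-\tau)))$ dominates a nonnegative supermartingale, at which point Doob's maximal inequality produces the sup bound. Part (4) then follows by iterating \eqref{eq: est1} over unit intervals and using the Markov property.

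For (3), writing $v=w_{s,t}^{f_{1}}-w_{s,t}^{f_{2}}$ gives the linear equation $\partial_{t}v=\nu\Delta v-\widetilde{B}(w_{s,t}^{f_{1}},v)-\widetilde{B}(v,w_{s,t}^{f_{2}})+(f_{1}-f_{2})$, and the standard bound $|\langle\widetilde{B}(u,w),w\rangle|\le c_{0}\|u\|\|w\|_{1}^{2}$ combined with Young's inequality yields $\partial_{t}\|v\|^{2}\le(-\nu+C\|w_{s,t}^{f_{i}}\|_{1}^{2})\|v\|^{2}+\nu^{-1}\|f_{1}-f_{2}\|_\infty^{2}$; Gronwall together with \eqref{enstrophy} controls the random exponent and gives \eqref{continuousonhull}. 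For (5), the Jacobian $\xi_{t}=J_{\tau,t}\xi$ solves the linearized equation $\partial_{t}\xi_{t}=\nu\Delta\xi_{t}-\widetilde{B}(w_{s,t},\xi_{t})$, and using $|\langle\widetilde{B}(w,\xi),\xi\rangle|\le c_{0}\|w\|_{1}\|\xi\|_{1}\|\xi\|\le\eta\|\xi\|_{1}^{2}+C_{\eta}\|w\|_{1}^{2}\|\xi\|^{2}$ with Poincar\'e produces $\partial_{t}\|\xi_{t}\|^{2}\le(2\eta\|w_{s,t}\|_{1}^{2}+C_{\eta})\|\xi_{t}\|^{2}$ after suitable rechoice of $\eta$, giving \eqref{Jacobian} by Gronwall in a pathwise sense. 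Finally, (6) is obtained by the same technique applied to the second variation equation, where the inhomogeneous term is a bilinear expression in two copies of the Jacobian; combining \eqref{Jacobian} with the exponential moments from (2) and applying H\"older's inequality produces \eqref{Hessian}.

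The main technical point throughout is \emph{uniformity in the initial time} $s$: because all driving quantities ($f(t)$, the noise covariance, the viscosity) are either time-periodic or autonomous, every pathwise estimate depends on time only through the length $t-s$, so the constants can be fixed once and for all. The only delicate step is the simultaneous choice of $\eta_{0}$ compatible with the exponential martingale bound in (2) and with the absorption of the cubic nonlinearity in the Jacobian estimate (5), but this is a matter of choosing $\eta_{0}$ sufficiently small depending on $\nu$, $c_{0}$ and $\mathcal{B}_{0}$.
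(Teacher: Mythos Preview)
Your approach to parts (1)--(5) is essentially the paper's, modulo minor presentational differences: for (1) the paper derives a tail bound on $\|w_{s,t}\|^2-e^{-\nu(t-s)}\|w_0\|^2$ via a stochastic Gronwall-type lemma and then integrates the tail, rather than applying It\^o directly to $V$ and using ordinary Gronwall (your route does not immediately produce the $e^{-\nu(t-s)}$ decay in the exponent, but it can be fixed with a time-dependent weight); for (3) the difference equation should read $\partial_t v=\nu\Delta v-B(\mathcal{K}v,w^{f_1})-B(\mathcal{K}w^{f_2},v)+(f_1-f_2)$ rather than the symmetrized $\widetilde{B}$ form, though the energy estimate proceeds identically.

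There is, however, a genuine gap in your treatment of (6). The inhomogeneous term in the second-variation equation is $\widetilde{B}(J_{\tau,r}\xi',J_{\tau,r}\xi)$, and to bound its $H$-norm you need at least an $H_1$ bound on the Jacobian, since $\|\widetilde{B}(u,v)\|$ involves a derivative. The estimate \eqref{Jacobian} controls only the $H$-norm of $J_{\tau,r}$, which is insufficient. What the paper does---and what your sketch omits---is to first establish the parabolic smoothing estimate
\[
\|J_{\tau,t}\xi\|_{1}^{2}\le\frac{C}{t-\tau}\exp\Bigl(\eta\int_{\tau}^{t}\|w_{s,r}\|_{1}^{2}\,dr\Bigr)\|\xi\|^{2},
\]
obtained by an energy argument on the auxiliary quantity $\zeta_t=\|\xi_t\|^2+\nu(t-\tau)\|\xi_t\|_1^2$. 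One then uses $\|\widetilde{B}(u,v)\|\le C\bigl(\|u\|^{1/2}\|u\|_1^{1/2}\|v\|_1+\|v\|^{1/2}\|v\|_1^{1/2}\|u\|_1\bigr)$ together with the variation-of-constants representation $K_{\tau,t}=\int_\tau^t J_{r,t}\widetilde{B}(J_{\tau,r}\xi',J_{\tau,r}\xi)\,dr$ to obtain $\|K_{\tau,t}\|\le C\int_\tau^t\|J_{r,t}\|\,\|J_{\tau,r}\|^{1/2}\|J_{\tau,r}\|_1^{3/2}\,dr$, and it is precisely the integrable singularity $(r-\tau)^{-3/4}$ coming from the $H_1$ smoothing that makes this finite on $(\tau,\tau+1)$. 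Without that intermediate step, ``combining \eqref{Jacobian} with the exponential moments from (2) and H\"older'' does not close.
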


\begin{proof}
\begin{enumerate}
\item Apply Ito's formula and use the fact that $\langle B(\mathcal{K}w,w),w\rangle = 0$.  We have
\begin{align}\label{Ito1}
\left\|w_{s, t}\right\|^{2}-\left\|w_0\right\|^{2}+2 \nu \int_{s}^{t}\left\|w_{s, r}\right\|_{1}^{2} d r=2\int_{s}^{t}\left\langle w_{s, r}, G d W(r)\right\rangle+2\int_{s}^{t}\left\langle w_{s, r}, f(r)\right\rangle dr+\mathcal{B}_0 (t-s)
\end{align}
Let $C(f,\mathcal{B}_0) = \frac{2}{\nu}\|f\|_{\infty}^2+\mathcal{B}_0$. By the inequality $\|w\|_1^2\geq \|w\|^2$ and  Young's product inequality, it follows that
\begin{align*}
\left\|w_{s, t}\right\|^{2} -  e^{-\nu(t-s)}\|w_0\|^2\leq \frac{C(f,\mathcal{B}_0)}{\nu}-\frac{\nu}{2}\int_s^te^{-\nu(t-r)}\|w_{s, r}\|^2dr +\int_s^t e^{-\nu(t-r)}\left\langle w_{s, r}, 2G d W(r)\right\rangle.
\end{align*}
Since there exists constant $\alpha$ such that $\frac{\nu}{2}\|w\|^2\geq \frac{\alpha}{2}\|2G^*w\|^2$, so Lemma A.1 from
\cite{Mat02a} implies that
\[\mathbf{P}\left(\left\|w_{s, t}\right\|^{2}-e^{-\nu (t-s)}\left\|w_0\right\|^{2}-\frac{C(f,\mathcal{B}_0)}{\nu}>\frac{K}{\alpha}\right) \leq e^{-K},\]
which is equivalent to
\[\mathbf{P}\left(\exp\left(\frac{\alpha}{2}\left\|w_{s, t}\right\|^{2}-\frac{\alpha}{2}e^{-\nu (t-s)}\left\|w_0\right\|^{2}-\frac{\alpha C(f,\mathcal{B}_0)}{2\nu}\right)>e^{\frac{K}{2}}\right) \leq e^{-K}.\]
Now if a random variable $X$ satisfies $\mathbf{P}\left(X\geq C\right)\leq \frac{1}{C^2}$ for every $C\geq 0$, then
\begin{align}\notag
\mathbf{E}X = \int_{\mathrm{\Omega}}Xd\mathbf{P}&\leq \int_{\{0\leq X \leq 1\}}Xd\mathbf{P}+\int_{\{X\geq 1\}}Xd\mathbf{P}\leq 1+\sum_{n=0}^{\infty}\int_{\{2^n\leq X\leq 2^{n+1}\}}Xd\mathbf{P}\\\label{appendixbounds00}
&\leq 1+ \sum_{n=0}^{\infty}2^{n+1}\frac{1}{2^{2n}} = 5.
\end{align}
So $\mathbf{E}\exp\left(\frac{\alpha}{2}\left\|w_{s, t}\right\|^{2}-\frac{\alpha}{2}e^{-\nu (t-s)}\left\|w_0\right\|^{2}-\frac{\alpha C(f,\mathcal{B}_0)}{2\nu}\right)\leq 5$.
By taking $\eta_0 = \frac{\alpha}{2}$ and $C = 5\exp\left(\frac{\alpha C\left(f,\mathcal{B}_0\right)}{2\nu}\right)$, we finish the proof.

\item  Again apply Ito's formula, for any $\eta>0$, $s\leq\tau<t$,
\begin{align*}
\eta\left\|w_{s, t}\right\|^{2}&+\eta \nu \int_{\tau}^{t}\left\|w_{s, r}\right\|_{1}^{2} d r-\eta\mathcal{B}_0 (t-\tau)\\
&=\eta\left\|w_{s, \tau}\right\|^{2}+2\eta\int_{\tau}^{t}\left\langle w_{s, r}, G d W(r)\right\rangle- \eta \nu \int_{\tau}^{t}\left\|w_{s, r}\right\|_{1}^{2} d r+2\eta\int_{\tau}^{t}\left\langle w_{s, r}, f(r)\right\rangle dr\\
&\leq \eta \left\|w_{s, \tau}\right\|^2+2\eta\int_{\tau}^{t}\left\langle w_{s, r}, G d W(r)\right\rangle - \frac{\eta\nu}{2}\int_{\tau}^{t}\left\|w_{s, r}\right\|_{1}^{2} d r+\frac{2\eta}{\nu}\|f\|_{\infty}^2 \left(t-\tau\right)
\end{align*}
Therefore
\begin{align}\notag
\eta\left\|w_{s, t}\right\|^{2}+\eta \nu \int_{\tau}^{t}\left\|w_{s,r}\right\|_{1}^{2} d r&-\eta C(f,\mathcal{B}_0) (t-\tau)\\\notag
&\leq  \eta \left\|w_{s, \tau}\right\|^2+2\eta\int_{\tau}^{t}\left\langle w_{s, r}, G d W(r)\right\rangle - \frac{\eta\nu}{2}\int_{\tau}^{t}\left\|w_{s, r}\right\|_{1}^{2} d r\\\label{appendixbounds01}
&\leq \eta \left\|w_{s, \tau}\right\|^2+2\eta\int_{\tau}^{t}\left\langle w_{s, r}, G d W(r)\right\rangle - \frac{\eta\alpha}{2}\int_{\tau}^{t}\left\|2G^*w\right\|^2 d r
\end{align}
Again the last inequality is obtained by choosing $\alpha>0$ so that $\frac{\nu}{2}\|w\|_1^2\geq \frac{\alpha}{2}\|2G^*w\|^2$. Setting $M(\tau, t) =\eta \left\|w_{s, \tau}\right\|^2+2\eta\int_{\tau}^{t}\left\langle w_{s, r}, G d W(r)\right\rangle$, then the right hand side of inequality \eqref{appendixbounds01} is $M(\tau, t) - \frac{\alpha}{2\eta}[M](\tau, t)$, where $[M](\tau, t)$ is  the quadratic variation of the continuous $\mathrm{L}^2$-martingale $M$.  Hence by the exponential martingale inequality, it follows that
\[\mathbf{P}\left(\sup _{t \geq \tau}\left( M(\tau, t) - \frac{\alpha}{2\eta}[M](\tau, t)\right) \geq K \,\middle\vert\, \mathcal{F}_{\tau}\right) \leq \exp \left(\eta\left\|w_{s, \tau}\right\|^{2}-\frac{\alpha K}{\eta}\right)\]
for all $\tau\geq s$.  As a consequence,
\[\mathbf{P}\left(\eta \sup _{t \geq \tau}\left(\left\|w_{s, t}\right\|^{2}+\nu \int_{\tau}^{t}\left\|w_{s, r}\right\|_{1}^{2} d r-C(f,\mathcal{B}_0)(t-\tau)\right)\geq K\,\middle\vert\,\mathcal{F}_{\tau} \right)\leq \exp \left(\eta\left\|w_{s, \tau}\right\|^{2}-\frac{\alpha K}{\eta}\right).\]
In view of \eqref{appendixbounds01}, we deduce that
\[\mathbf{E}\exp\left(\frac{\alpha}{2} \sup _{t \geq \tau}\left(\left\|w_{s, t}\right\|^{2}+\nu \int_{\tau}^{t}\left\|w_{s, r}\right\|_{1}^{2} d r-C(f,\mathcal{B}_0)(t-\tau)\right) \right)\leq 5\mathbf{E}\exp\left(\eta\|w_{s, \tau}\|^2\right)\]
The conclusion follows from \eqref{eq: est1} by taking $\eta_0 =\frac{\alpha}{2}  $.

\item Let $\mathcal{R}_t = w_{s, t}^{f_1} - w_{s, t}^{f_2}$ then
\begin{align*}
\partial_t\mathcal{R}_t = &\nu\Delta \mathcal{R}_t + B\left(\mathcal{K}w_{s, t}^{f_2}, w_{s, t}^{f_2}\right) - B\left(\mathcal{K}w_{s, t}^{f_1}, w_{s, t}^{f_1}\right)  + f_2 - f_1\\
&= \nu\Delta \mathcal{R}_t + B\left(\mathcal{K}\mathcal{R}_t, \mathcal{R}_t\right) - B\left(\mathcal{K}\mathcal{R}_t, w_{s, t}^{f_1}\right) - B\left(\mathcal{K}w_{s, t}^{f_1}, \mathcal{R}_t\right) + f_2 - f_1.
\end{align*}
Therefore
\begin{align*}
\partial_t\|\mathcal{R}_t\|^2 & = \left\langle \nu\Delta \mathcal{R}_t , 2\mathcal{R}_t\right\rangle - \left\langle B\left(\mathcal{K}\mathcal{R}_t, w_{s, t}^{f_1}\right) , 2\mathcal{R}_t\right\rangle + \left\langle f_2 -f_1 , 2\mathcal{R}_t\right\rangle\\
&\leq -2\nu\|\mathcal{R}_t\|_1^2 + C \|\mathcal{R}_t\|_{1/2}\|w_{s, t}^{f_1}\|_{1}\|\mathcal{R}_t\| + 2\|f_2-f_1\|_{\infty}\|\mathcal{R}_t\|\\
&\leq -2\nu\|\mathcal{R}_t\|_1^2 + \nu\|\mathcal{R}_t\|_1^2+ C(\eta, \nu)\|\mathcal{R}_t\|^{2} + \eta\nu \|w_{s, t}^{f_1}\|_{1}^2\|\mathcal{R}_t\|^2 + \nu\|\mathcal{R}_t\|_1^2 + \frac{1}{\nu}\|f_2-f_1\|_{\infty}^2\\
&\leq \left(C(\eta, \nu) +  + \eta\nu \|w_{s, t}^{f_1}\|_{1}^2 \right) \|\mathcal{R}_t\|^{2} +  \frac{1}{\nu}\|f_2-f_1\|_{\infty}^2.
\end{align*}
By the Gronwall's inequality, the bounds \eqref{eq: est1} and \eqref{enstrophy}, we have
\begin{align*}
\mathbf{E}\|\mathcal{R}_t\|^2&\leq \frac{1}{\nu}\|f_2-f_1\|_{\infty}^2\int_{s}^{t}\exp\Big(C(\eta, \nu)(t-\tau)\Big) \mathbf{E}\left[\exp\left(\int_{\tau}^{t}\eta\nu\|w_{s, r}^{f_1}\|_1^2\right)dr\right]d\tau\\
 &\leq\frac{C}{\nu}\|f_2-f_1\|_{\infty}^2\int_{s}^{t}\exp\Big(C(\eta, \nu)(t-\tau)\Big) \exp\left(\eta e^{-\nu(\tau-s)}\|w_0\|^2\right)d\tau\\
 &\leq C e^{\gamma(t-s)}\exp\left(\eta \|w_0\|^2\right)\|f_2-f_1\|_{\infty}^2
\end{align*}
with $\gamma = C(\eta, \nu)$.
\item The proof of the inequality \eqref{boundsum} is the same as that in \cite{HM06}, hence we omit it here.
\item  For any $\tau\geq s$ and initial condition $\xi\in H$, the evolution of $\xi_t: = J_{\tau,t}\xi$ is given by equation \eqref{eq: Jacobian}, which is a PDE with random coefficients. Taking $H$ inner product with $\xi_t$ and using the fact that $\langle B(\mathcal{K}w,\xi),\xi\rangle = 0$, we have
\[\partial_t\left\|\xi_t\right\|^2 = -2\nu \left\|\nabla\xi_t\right\|^2 + 2\langle B(\mathcal{K}\xi_t, w_{s, t}),\xi_t\rangle. \]
Then note
\begin{align*}
&\left|2\langle B(\mathcal{K}\xi_t, w_{s, t}),\xi_t\rangle\right|\leq C\left\|w_{s, t}\right\|_1\left\|\xi_t\right\|\left\|\xi_t\right\|_{\frac12}\leq C_{\eta}\left\|\xi_t\right\|_{\frac12}^2 + \frac{\eta}{2}\left\|w_{s, t}\right\|_1^2\left\|\xi_t\right\|^2\\
&\leq C_{\eta}\left(\varepsilon\left\|\xi_t\right\|^2+\frac{1}{\varepsilon^2}\left\|\xi_t\right\|_1^2\right)+ \frac{\eta}{2}\left\|w_{s, t}\right\|_1^2\left\|\xi_t\right\|^2 =C\left\|\xi_t\right\|^2+\nu\left\|\xi_t\right\|_1^2+ \frac{\eta}{2}\left\|w_{s, t}\right\|_1^2\left\|\xi_t\right\|^2,
\end{align*}
by choosing $\varepsilon = \sqrt{C_{\eta}/\nu}$ , where $C$ depends on $\eta , \nu$.
Therefore
\[\partial_t\left\|\xi_t\right\|^2 \leq -\nu\left\|\nabla\xi_t\right\|^2 +C\left\|\xi_t\right\|^2 + \frac{\eta}{2}\left\|w_{s, t}\right\|_1^2\left\|\xi_t\right\|^2\leq C\left\|\xi_t\right\|^2 + \frac{\eta}{2}\left\|w_{s, t}\right\|_1^2\left\|\xi_t\right\|^2. \]
And the result follows by the Gronwall's inequality.
\item Define $\zeta_{t}=\left\|\xi_{t}\right\|^{2}+\nu(t-\tau) \left\|\xi_{t}\right\|_{1}^{2}$.  From the equation \eqref{eq: Jacobian} for the Jacobian $\xi_t$,  one has
\begin{align*}
\frac12 \partial_{t}\left\|\xi_{t}\right\|_{1}^{2} &= -\nu \left\|\xi_{t}\right\|_2^{2}+\langle B(\mathcal{K}\xi_t, w_{s, t}),-\mathrm{\Delta} \xi_t \rangle+\langle B(\mathcal{K}w_{s, t}, \xi_t),-\mathrm{\Delta} \xi_t \rangle\\
&\leq -\nu \left\| \xi_{t}\right\|_{2}^{2}+C\left\|\xi_{t}\right\|_{1}\left\|w_{s, t}\right\|_{1}\left\|\xi_{t}\right\|_{3/2}.
\end{align*}
Therefore
\begin{align*}
\partial_t\zeta_t\leq&C\left\|\xi_t\right\|^2 + \frac{\eta}{2}\left\|w_{s, t}\right\|_1^2\left\|\xi_t\right\|^2-2(t-\tau)\nu^2 \left\| \xi_{t}\right\|_{2}^{2}+2(t-\tau) C\left\|\xi_{t}\right\|_{1}\left\|w_{s, t}\right\|_{1}\left\|\xi_{t}\right\|_{3/2}.
\end{align*}
From interpolation inequalities, one has
\begin{align*}
 2 C\|w\|_{1}\|\xi\|_{1}\|\xi\|_{3/ 2}
 \leq C_{\eta}\left\|\xi\right\|_{3/2}^2 + \frac{\eta}{2}\left\|w\right\|_1^2\left\|\xi\right\|_1^2 \leq \nu\|\xi\|_{2}^{2}+C\|\xi\|_1^{2}+\frac{\eta}{2}\|w\|_{1}^{2}\|\xi\|_{1}^{2}.
\end{align*}
As a consequence,
\begin{align*}
\partial_{t}\zeta_t\leq C\left\|\zeta_{t}\right\|^{2}+\frac{\eta}{2}\left\|w_{s, t}\right\|_{1}^{2}\left\|\zeta_{t}\right\|^{2}.
\end{align*}
Hence by Gronwall's inequality, for $s\leq\tau<t\leq s+T$,
\begin{align}\label{xi_t1}
\left\|\xi_{t}\right\|_{1}^{2} \leq \frac{C}{t-\tau} \exp \left(\eta \int_{\tau}^{t}\left\|w_{s, r}\right\|_{1}^{2} d r\right)\left\|\xi\right\|^{2},
\end{align}
where $C$ depends on $\nu, \eta ,T$. From basic Sobolev inequalities and interpolation inequalities, one has
\begin{align*}
\|\widetilde{B}(u, w)\| & \leq C\left(\|u\|_{1 / 2}\|w\|_{1}+\|u\|_{1}\|w\|_{1 / 2}\right) \\
& \leq C\left(\|u\|^{1 / 2}\|u\|_{1}^{1 / 2}\|w\|_{1}+\|w\|^{1 / 2}\|w\|_{1}^{1 / 2}\|u\|_{1}\right).
\end{align*}
It then follows from the definition of $K_{\tau, t}$ in \eqref{eq: Hessian} that for $t\in(\tau, \tau+1)$,
\begin{align*}
\left\|K_{\tau, t}\right\|&\leq \int_{\tau}^{t}\left\| J_{r, t}\right\|\left\| J_{\tau, r} \right\|^{\frac{1}{2}}\left\| J_{\tau, r} \right\|_1^{\frac{3}{2}}d r\\
&\leq \exp \left(\eta \int_{\tau}^{\tau+1}\left\|w_{s, r}\right\|_{1}^{2} d r+C\right) \exp\left(\frac34 \eta \int_{\tau}^{\tau+1}\left\|w_{s, r}\right\|_{1}^{2} d r\right)\int_{\tau}^{\tau+1} \frac{\tilde{C}}{\left(r-\tau\right)^{3/4}} d r\\
&\leq C \exp\left(\eta \int_{\tau}^{\tau+1}\left\|w_{s, r}\right\|_{1}^{2} d r\right),
\end{align*}
where we used inequalities \eqref{Jacobian} and \eqref{xi_t1} in the second step.  The result then follows from estimate \eqref{enstrophy}.
\end{enumerate}
\end{proof}
The following lemma in the velocity formulation when the deterministic force $f(t, x)$ is independent of $t$,  has been proved in \cite{KS12}. Here we give a  proof in the vorticity formulation and for the time dependent force $f(t, x)$.
\begin{lemma}\label{hb}
(\textbf{Higher order Sobolev norms of solution}) For any integer $k\geq 0$, set $\mathcal{E}_{w_0}(k,t,s) = \left(t-s\right)^k\|w_{s, t}\|_k^2+\nu\int_s^{t}\left(r-s\right)^k\|w_{s, r}\|_{k+1}^2dr.$ Suppose that in the equation
\eqref{NS}, $f\in \mathrm{L}^2_{\mathrm{loc}}(\mathbb{R}, H_k)$ and $g_{i}\in H_k$ for $i =1,\cdots, d$.  Then for any $m\geq 1$ and  $T> s$,  there is a constant $C=C(k, m,T-s, \nu, \|f\|_{\mathrm{L}^2([s,T], H_k)}, \mathcal{B}_k)>0$, such that
\[\mathbf{E}\sup_{s\leq t\leq T} \mathcal{E}_{w_0}(k,t,s)^m\leq C\exp\left(\|w_0\|^2\right). \]
\end{lemma}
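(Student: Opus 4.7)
The plan is to argue by induction on $k$. The base case $k=0$ is essentially contained in \eqref{enstrophy}: taking $\eta=\eta_0$ there, the exponential moment of $\sup_{s\le t\le T}\mathcal{E}_{w_0}(0,t,s)$ is finite with the correct dependence on $\|w_0\|^2$, and all polynomial moments follow from the tail-integration argument used in \eqref{appendixbounds00}.

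For the inductive step, assuming the estimate holds for $k$, I would apply It\^o's formula to $(t-s)^{k+1}\|w_{s,t}\|_{k+1}^2 = (t-s)^{k+1}\langle(-\Delta)^{k+1}w_{s,t},w_{s,t}\rangle$ using equation \eqref{NS}. The forcing term $2(t-s)^{k+1}\langle\Lambda^k f,\Lambda^{k+2}w_{s,t}\rangle$ is handled by Cauchy--Schwarz and Young, contributing $\tfrac{\nu}{4}(t-s)^{k+1}\|w_{s,t}\|_{k+2}^2$ absorbed into the dissipation plus a deterministic remainder controlled by $\|f\|_{\mathrm{L}^2([s,T],H_k)}$. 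For the nonlinear term, the standard two-dimensional trilinear bound
\[
|\langle B(\mathcal{K}u,v),(-\Delta)^{k+1}w\rangle| \le C\|u\|_{k+1}\|v\|_{k+1}\|w\|_{k+2},
\]
combined with Young, contributes another $\tfrac{\nu}{4}(t-s)^{k+1}\|w_{s,t}\|_{k+2}^2$ plus $C(t-s)^{k+1}\|w_{s,t}\|_{k+1}^4$. This yields
\[
d\bigl[(t-s)^{k+1}\|w_{s,t}\|_{k+1}^2\bigr] + \nu(t-s)^{k+1}\|w_{s,t}\|_{k+2}^2\,dt \le \bigl[(k+1)(t-s)^k\|w_{s,t}\|_{k+1}^2 + C(t-s)^{k+1}\|w_{s,t}\|_{k+1}^4 + h(t)\bigr]dt + dM_t,
\]
where $h$ is a deterministic integrable function and $M_t$ is the It\^o integral from the noise, with $\langle M\rangle_t$ involving $\int_s^t (r-s)^{2(k+1)}\|w_{s,r}\|_{k+1}^2\mathcal{B}_{k+1}\,dr$. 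Integrating, taking $\sup_{s\le t\le T}$, raising to the $m$-th power, and applying BDG to $M_t$ reduces the task to bounding $\mathbf{E}\bigl[\int_s^T (r-s)^k\|w_{s,r}\|_{k+1}^2\,dr\bigr]^m$ and $\mathbf{E}\bigl[\int_s^T (r-s)^{k+1}\|w_{s,r}\|_{k+1}^4\,dr\bigr]^m$. The first is immediate from the inductive hypothesis since this time integral is at most $\nu^{-1}\mathcal{E}_{w_0}(k,T,s)$.

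The main obstacle is the quartic term: the natural Cauchy--Schwarz factorization $\int (r-s)^{k+1}\|w_{s,r}\|_{k+1}^4\,dr \le \bigl(\sup_r\|w_{s,r}\|_{k+1}^2\bigr)\int (r-s)^{k+1}\|w_{s,r}\|_{k+1}^2\,dr$ is circular, since the supremum on the right is precisely what we seek to control. I would resolve this with a stopping-time argument: set $\tau_R := \inf\{t\ge s : (t-s)^{k+1}\|w_{s,t}\|_{k+1}^2 > R\}\wedge T$, derive the moment bound on the stopped interval (where the quartic is dominated by $R$ times a quadratic expression), solve the resulting linear Gronwall inequality with constants depending on $R$, then pass $R\to\infty$ using the pathwise a.s.\ $H_{k+1}$ regularity of $w_{s,\cdot}$ on $[s,T]$ guaranteed by the deterministic Navier--Stokes theory. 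The exponential prefactor $\exp(\|w_0\|^2)$ propagates from the base case through \eqref{eq: est1} at each inductive step, since every estimate depends polynomially on quantities whose exponential moments are already controlled.
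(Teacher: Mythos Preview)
Your induction scheme and the use of It\^o's formula are the right skeleton, and your base case is fine. The genuine gap is in the inductive step, specifically in how you estimate the nonlinear term. Your trilinear bound $|\langle B(\mathcal{K}w,w),(-\Delta)^{k+1}w\rangle|\le C\|w\|_{k+1}^2\|w\|_{k+2}$ leaves, after Young, the quartic remainder $C(t-s)^{k+1}\|w\|_{k+1}^4$, and your stopping-time proposal does not close this. On $[s,\tau_R]$ you would bound $(r-s)^{k+1}\|w\|_{k+1}^4$ either by $R\,\|w\|_{k+1}^2$ (unweighted) or by $R\,Y(r)/(r-s)^{k+1}$ with $Y(r)=(r-s)^{k+1}\|w\|_{k+1}^2$. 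The first option requires $\int_s^T\|w\|_{k+1}^2\,dr$ without the weight $(r-s)^k$, which the inductive hypothesis does \emph{not} give for $k\ge1$ (and which can diverge for $w_0\in H$ only, since $\|w_{s,r}\|_{k+1}$ may blow up as $r\to s^+$). The second option has a non-integrable kernel $1/(r-s)^{k+1}$ near $r=s$, so Gronwall cannot even be set up. Either way the moment bound you derive depends on $R$ and blows up as $R\to\infty$; the a.s.\ fact that $\tau_R\uparrow T$ then only yields a.s.\ finiteness, not the uniform moment estimate.

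The paper's proof avoids this circularity entirely by using a sharper interpolation for the nonlinear term: for $n\ge2$,
\[
|\langle (-\Delta)^n w,\,B(\mathcal{K}w,w)\rangle|\le C_n\,\|w\|_{n+1}^{(4n-1)/(2n)}\,\|w\|_1^{(n+1)/(2n)}\,\|w\|^{1/2},
\]
(and a similar estimate for $n=1$). After Young absorbs $\|w\|_{n+1}$ into the dissipation, the remainder is $C\|w\|_1^{2(n+1)}\|w\|^{2n}$, involving only the \emph{base} norms $\|w\|$ and $\|w\|_1$. The weighted time integral of this remainder is then bounded by products of powers of $\sup\mathcal{E}_{w_0}(0,\cdot,s)$ and $\sup\mathcal{E}_{w_0}(1,\cdot,s)$, whose moments are already controlled. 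The stochastic integral is handled not by BDG but by the exponential supermartingale inequality (as in the $k=0$ case), yielding tail bounds and hence all polynomial moments. The fix you need is precisely this sharper trilinear estimate; with it, no stopping-time argument is required.
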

\begin{proof}
The proof proceeds by induction as in \cite{KS12}. Let $L = -\mathrm{\Delta}$, $F_{k}(w) = (t-s)^k\|w\|_k^2=(t-s)^k\langle L^kw, w\rangle$.  We first prove the base case when $k=0$. Applying Ito's formula to the functional $F_0(w_{s, t})$,  and noting $\langle B(\mathcal{K}w, w),w\rangle = 0$, we have for $s\leq t\leq T$,
\begin{align*}
\|w_{s, t}\|^2 &= \|w_0\|^2 - 2\nu\int_s^t\|w_{s, r}\|_1^2dr+2\int_s^t\langle w_{s, r}, f(r)\rangle dr +\mathcal{B}_0(t-s) +M_t\\
 &\leq \|w_0\|^2 - 2\nu\int_s^t\|w_{s, r}\|_1^2dr+\frac{\nu}{2}\int_s^t\|w_{s, r}\|^2dr +\frac{2}{\nu}\|f\|^2_{\mathrm{L}^2([s,T], H)}+\mathcal{B}_0(T-s) +M_t
\end{align*}
where $M_t = 2\int_s^t\langle w_{s, r}, \sum_{i=1}^d g_i dW_i(r)\rangle$.  Note that the quadratic variation process of $M_t$  satisfies
\[[M]_t = \int_s^t4\sum_{i=1}^d\langle w_{s, r},  g_i \rangle^2 dr\leq 4\mathcal{B}_0\int_s^t\|w_{s, r}\|^2dr.\]
If we let $\sigma_0 =\frac{1}{4\mathcal{B}_0}$, $C_0 = \frac{2}{\nu}\|f\|^2_{\mathrm{L}^2([s,T], H)}+\mathcal{B}_0(T-s) $, then it follows that
\begin{align*}
\mathcal{E}_{w_0}(0,t,s) \leq \|w_0\|^2 +C_0+M_t-\frac{\sigma_0\nu}{2}[M]_t
\end{align*}
By supermartingale inequality, one has for any $K>0$
\[\mathbf{P}\left(\sup_{t\geq s}\left(M_t - \frac{\sigma_0\nu}{2}[M]_t\right)\geq K\right)\leq e^{-\sigma_0\nu K}.\]
Therefore
\[\mathbf{P}\left(\sup_{t\geq s}\left(\mathcal{E}_{w_0}(0,t,s) - C_0 - \|w_0\|^2\right)\geq K\right)\leq e^{-{\sigma_0\nu K}}\]
Note for non-negative randoms variables $\xi$ and $\eta$, one has
\begin{align*}
\mathbf{E}\xi^m \leq 2^m \left(\mathbf{E}\left(\xi-\eta\right)^m\mathbb{I}_{\{\xi>\eta\}}+\mathbf{E}\eta^m\right) = 2^m\int_0^{\infty}\mathbf{P}\{\xi-\eta>\lambda^{1/m}\}d\lambda+2^m\mathbf{E}\eta^m.
\end{align*}
Therefore
\begin{align}\label{Hmoments}
\mathbf{E}\sup_{t\geq s}\mathcal{E}_{w_0}(0,t,s) ^m \leq  2^m\int_0^{\infty}e^{-\sigma_0\nu\lambda^{1/m}}d\lambda +2^m\mathbf{E}(C_0 + \|w_0\|^2)^m\leq C\exp\left(\|w_0\|^2\right).
\end{align}
This finishes the proof of the base case.

\vskip0.05in

We now assume that $k = n\geq 1$ and that for $k\leq n-1$ the inequality has been proved. Applying Ito's formula to $F_n(w_{s, t})$ we find that
\begin{align*}
F_n(w_{s, t}) =& \int_s^t n(r-s)^{n-1}\|w_{s, r}\|_n^2 + 2(r-s)^n\langle L^n w_{s, r}, -\nu L w_{s, r}-B(\mathcal{K}w_{s, r},w_{s, r})+f(r)\rangle dr \\
&+ \frac{(t-s)^{n+1}}{n+1}\mathcal{B}_n + \int_s^t 2(r-s)^n\langle L^n w_{s, r}, \sum_{i=1}^d g_idW_i(r)\rangle.
\end{align*}
Note the quadratic variation process $[M]_t$ of the martingale $M_t: = \int_s^t 2(r-s)^n\langle L^n w_{s, r}, \sum_{i=1}^d g_idW_i(r)\rangle$ satisfies
\begin{align*}
[M]_t = \sum_{i=1}^d \int_s^t 4(r-s)^{2n}\langle L^n w_{s, r},  g_i\rangle^2dr\leq 4\mathcal{B}_n\left(T-s\right)^n\int_s^t\left(r-s\right)^n\|w_{s,  r}\|_n^2dr.
\end{align*}
Also note
\[\int_s^t 2(r-s)^n\langle L^nw_{s, r}, f(r)\rangle dr\leq \int_s^t 2(r-s)^n\|w_{s, r}\|_n\|f(r)\|_n dr\leq \int_s^t (r-s)^{n-1}\|w_{s, r}\|_n^2+(r-s)^{n+1}\|f(r)\|_n^2 dr. \]
Applying the inequality $\|w\|_n\leq\|w\|_{n+1}$, and combining these estimates,  it follows that
\begin{align*}
F_n(w_{s, t})\leq &(n+1)\int_s^t (r-s)^{n-1}\|w_{s, r}\|_n^2dr - \frac{3\nu}{2}\int_s^t(r-s)^n\|w_{s, r}\|_{n+1}^2dr \\
&-2\int_s^t  (r-s)^n\langle L^n w_{s, r},B(\mathcal{K}w_{s, r},w_{s, r})\rangle dr +C_n+ N_n(t),
\end{align*}
where $C_n = \left(T-s\right)^{n+1}\left(\|f\|^2_{\mathrm{L}^2([s,T], H_n)}+\frac{\mathcal{B}_n}{n+1}\right)$, $N_n(t)=M_t-\frac{\nu\sigma_n}{2}[M]_t$ and $\sigma_n =\frac{1}{4\mathcal{B}_n(T-s)^n}$.\\
When $n=1$, the nonlinear term has the following bounds by interpolation inequality,
\[2\langle Lw,B(\mathcal{K}w,w)\leq C\|w\|\|w\|_1\|w\|_{2}\leq \frac{\nu}{2}\|w\|_{2}^2+C\|w\|^2\|w\|_{1}^2.\]
Then one has
\begin{align*}
\sup_{s\leq t\leq T}\mathcal{E}_{w_0}(1,t,s) &\leq \frac{2}{\nu}\sup_{s\leq t\leq T}\mathcal{E}_{w_0}(0,t,s)+C\sup_{s\leq t\leq T}\int_s^t (r-s)\|w_{s, r}\|^2\|w_{s, r}\|_1^2dr + C_1+\sup_{t\geq s}N_1(t)\\
&\leq \frac{2}{\nu}\sup_{s\leq t\leq T}\mathcal{E}_{w_0}(0,t,s)+\frac{C(T-s)}{\nu}\sup_{s\leq t\leq T}\mathcal{E}_{w_0}(0,t,s)^2 + C_1 +\sup_{t\geq s}N_1(t).
\end{align*}
And the result then follows from the supermartingale inequality and the same argument as in the case $n=0$.
If $n\geq 2$, we use the following inequality to bound the nonlinear term
\[\left|\langle L^n w, B(\mathcal{K}w,w)\rangle \right|\leq C_n\|w\|_{n+1}^{\frac{4n-1}{2n}} \|w\|_1^{\frac{n+1}{2n}}\|w\|^{\frac12}\leq \frac{\nu}{2}\|w\|_{n+1}^2+C\|w\|_1^{2(n+1)}\|w\|^{2n},\]
which can be proved as Lemma 2.1.20 in \cite{KS12}.
It then follows that
\begin{align*}
\sup_{s\leq t\leq T}&\mathcal{E}_{w_0}(n,t,s) \\
&\leq \frac{n+1}{\nu}\sup_{s\leq t\leq T}\mathcal{E}_{w_0}(n-1,t,s)+C\sup_{s\leq t\leq T}\int_s^t (r-s)^n\|w_{s, r}\|^{2n}\|w_{s, r}\|_1^{2n+1}dr + C_1+\sup_{t\geq s}N_n(t)\\
&\leq \frac{n+1}{\nu}\sup_{s\leq t\leq T}\mathcal{E}_{w_0}(n-1,t,s)+\frac{C}{\nu}\sup_{s\leq t\leq T}\mathcal{E}_{w_0}(1,t,s)^n \sup_{s\leq t\leq T}\mathcal{E}_{w_0}(0,t,s)^{n+1}+ C_1 +\sup_{t\geq s}N_1(t).
\end{align*}
Then the desired result follows by induction hypothesis and the same reasoning as above.
\end{proof}

\subsection{The Lyapunov structure}\label{subsectionLyapunovstructure}
In this subsection, we prove that the Navier-Stokes system \eqref{NS} has a Lyapunov Structure that shows the contraction of the dynamics at large scales. Namely, we will prove Proposition \ref{pre_Lya} and an it's corollary.
\begin{proposition}[The Lyapunov Structure]\label{verifyLyapunov}
Let $V(w) = \exp(\eta\|w\|)$ and $\Phi_{s,s+t}$ be the stochastic flow generated by the Navier-Stokes system \eqref{NS}. Then there exists $\eta_0>0$ such that for any $\eta\in(0,\eta_0]$, we have the following\\
1. There are $\kappa> 1$ and $C>0$ such that $\|w\|V(w)\leq CV^{\kappa}(w) $ for all $w\in H$. \\
2. There is a constant $C>0$ such that for any fixed $0< r_0 <1$
\begin{align}\label{eqLya01}
\mathbf{E}V^r(\Phi_{s,s+t}(w))(1+\|\nabla\Phi_{s, s+t}(w))\xi\|)\leq CV^{r\alpha(t)}(w)
\end{align}
for every $w, \xi\in H$ with $\|\xi\|=1$, $r\in[r_0, 2\kappa]$ and every $t\in [0,\mathcal{T}], s\in \mathbb{R}$, where $\alpha(t) = e^{-\frac{\nu}{4}t}.$ \\
3. There is a constant $C$ (possibly larger) such that
\begin{align} \label{eqLya02}
\mathbf{E}V^r(\Phi_{s,s+t}(w))\leq C V^{r\alpha(t)}(w),
\end{align}
for every $t\geq 0$ and $r\in [r_0, 2\kappa]$. Here $\alpha(t)= \alpha(\mathcal{T})^{k}\alpha(\beta)$ is an extension to $\mathbb{R}$ of the function $\alpha(t)$ in \eqref{eqLya01},  where $k\in \mathbb{N}$ and $\beta\in [0,\mathcal{T})$ are unique numbers such that $t = k\mathcal{T}+\beta$.
\end{proposition}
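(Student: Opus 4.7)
The plan is to dispatch the three parts in order, with Part 2 the only substantive one.

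\textbf{Part 1.} This is a pointwise elementary estimate. For any $\kappa>1$, the function $w\mapsto \|w\|V(w)/V^{\kappa}(w)=\|w\|e^{-(\kappa-1)\eta\|w\|^{2}}$ vanishes at $0$, is continuous, and tends to $0$ as $\|w\|\to\infty$, hence is bounded by some $C$. Take, e.g., $\kappa=2$.

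\textbf{Part 2.} I would separate the contribution of the Jacobian from that of $V^r$ and handle each with one of the existing bounds. Using $\|\nabla\Phi_{s,s+t}(w)\xi\|\le \|J_{s,s+t}\|$, write
\begin{equation*}
\mathbf{E}\,V^r(w_{s,s+t})\bigl(1+\|\nabla\Phi_{s,s+t}(w)\xi\|\bigr) \le \mathbf{E}\,V^r(w_{s,s+t})+\mathbf{E}\,V^r(w_{s,s+t})\|J_{s,s+t}\|.
\end{equation*}
The first summand is directly controlled by \eqref{eq: est1}: provided $\eta_0$ is small enough that $2r\eta\le\eta_0$ for all $r\in[r_0,2\kappa]$, we get $\mathbf{E}V^r(w_{s,s+t})\le C\exp(r\eta e^{-\nu t}\|w_0\|^{2})\le CV^{r\alpha(t)}(w_0)$ because $e^{-\nu t}\le e^{-\nu t/4}=\alpha(t)$. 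For the second summand I introduce a small parameter $\epsilon>0$ and apply Cauchy--Schwarz in the form
\begin{equation*}
\mathbf{E}\,V^r(w_{s,s+t})\|J_{s,s+t}\| \le \bigl(\mathbf{E}\,V^{2(r-\epsilon)}(w_{s,s+t})\bigr)^{1/2}\bigl(\mathbf{E}\,V^{2\epsilon}(w_{s,s+t})\|J_{s,s+t}\|^{2}\bigr)^{1/2}.
\end{equation*}
Again \eqref{eq: est1} bounds the first factor by $C\exp((r-\epsilon)\eta e^{-\nu t}\|w_0\|^{2})$. For the second factor I use \eqref{Jacobian} with a free parameter $\eta'$ tuned to $\eta'=\epsilon\eta\nu$, so that the pointwise bound reads
\begin{equation*}
V^{2\epsilon}(w_{s,s+t})\|J_{s,s+t}\|^{2}\le \exp\bigl(2\epsilon\eta\bigl(\|w_{s,s+t}\|^{2}+\nu\!\int_{s}^{s+t}\!\|w_{s,r}\|_{1}^{2}\,dr\bigr)+2C(\epsilon,\nu)\,t\bigr),
\end{equation*}
and then \eqref{enstrophy} (with $\eta_{*}=2\epsilon\eta\le\eta_0$) yields $\mathbf{E}V^{2\epsilon}\|J\|^{2}\le C\exp(2\epsilon\eta\|w_0\|^{2})$ uniformly in $s$ and in $t\in[0,\mathcal{T}]$. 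Combining,
\begin{equation*}
\mathbf{E}\,V^r(w_{s,s+t})\|J_{s,s+t}\|\le C\exp\Bigl(\bigl((r-\epsilon)e^{-\nu t}+\epsilon\bigr)\eta\|w_0\|^{2}\Bigr).
\end{equation*}

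The one remaining step, and the only real subtlety, is the arithmetic comparison $(r-\epsilon)e^{-\nu t}+\epsilon\le re^{-\nu t/4}$ for all $t\in[0,\mathcal{T}]$ and $r\in[r_0,2\kappa]$. Rearranging, this is $\epsilon\le r\,\frac{e^{-\nu t/4}-e^{-\nu t}}{1-e^{-\nu t}}$ for $t>0$; the right-hand side is continuous on $(0,\mathcal{T}]$, tends to $3r/4$ as $t\to 0^{+}$ (Taylor expansion), and is therefore bounded below by $r_0 c_{*}$ for some $c_{*}=c_{*}(\nu,\mathcal{T})>0$. Choosing $\epsilon=r_0 c_{*}$ (independent of $r$) validates the inequality for all admissible $r$ and $t$ and finishes Part 2. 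The $s$-independence of all constants follows because every estimate we quote from Lemma \ref{bounds} is uniform in the initial time $s$.

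\textbf{Part 3.} This is essentially free from \eqref{eq: est1}: taking the exponent $r\eta\le 2\kappa\eta\le\eta_0$ gives $\mathbf{E}V^r(w_{s,s+t})\le C\exp(r\eta e^{-\nu t}\|w_0\|^{2})\le C V^{r\alpha(t)}(w_0)$ for all $t\ge 0$, since $e^{-\nu t}\le e^{-\nu t/4}$; the piecewise description $\alpha(t)=\alpha(\mathcal{T})^{k}\alpha(\beta)$ is just the identity $e^{-\nu t/4}=e^{-\nu\mathcal{T}k/4}e^{-\nu\beta/4}$, which is also what a $k$-fold iteration of Part 2 (via the Markov property, and interpolating below $r_0$ by Jensen if necessary) would produce. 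The anticipated obstacle is purely bookkeeping: choosing $\eta_{0}$ small enough to satisfy simultaneously the constraints $2r\eta\le\eta_{0}^{\eqref{eq: est1}}$ and $2\epsilon\eta\le\eta_{0}^{\eqref{enstrophy}}$, and verifying the elementary inequality in the last display above.
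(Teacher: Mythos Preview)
Your argument is correct, but it takes a genuinely different route from the paper for Part~2. The paper does not decouple via Cauchy--Schwarz; instead it invokes a semimartingale lemma (Lemma~\ref{lemmasemimartingale}, from \cite{HM08}) applied to $U(t)=\eta\|w_{s,t}\|^{2}$, which delivers in one stroke the joint estimate
\[
\mathbf{E}\exp\Bigl(\eta\|w_{s,s+t}\|^{2}+\tfrac{\nu}{4}e^{-\nu t/4}\!\int_{s}^{s+t}\!\eta\|w_{s,r}\|_{1}^{2}\,dr\Bigr)\le C\exp\bigl(\eta e^{-\nu t/4}\|w_0\|^{2}\bigr).
\]
Since the Jacobian bound \eqref{Jacobian} is $\|J_{s,s+t}\|\le C\exp(\beta\int\|w\|_1^2)$, choosing $\beta=\tfrac{\nu\eta}{4}e^{-\nu\mathcal{T}/4}$ makes the Jacobian absorb directly into the second term of the exponent, giving \eqref{eqLya01} with no splitting and no arithmetic comparison. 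Your approach is more modular---it treats \eqref{eq: est1} and \eqref{enstrophy} as black boxes and pays the price of the Cauchy--Schwarz loss through the inequality $(r-\epsilon)e^{-\nu t}+\epsilon\le re^{-\nu t/4}$; the paper's approach is sharper and makes transparent that the $e^{-\nu t/4}$ decay rate is already built into the semimartingale estimate rather than recovered a posteriori. For Part~3, you are actually more direct than the paper: they iterate Part~2 via the Markov property and Jensen's inequality over $k$ periods, whereas you observe that \eqref{eq: est1} already holds for all $t\ge0$ and that $e^{-\nu t}\le\alpha(t)$, which suffices.
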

The proof of Proposition \ref{verifyLyapunov} is based on a combination of the bounds from Lemma \ref{bounds} and the the following lemma from \cite{HM08}. Although the Wiener process here is $d$-dimensional, which is different from that in \cite{HM08}, the proof is essentially the same.
\begin{lemma}\cite{HM08}\label{lemmasemimartingale}
Let $U$ be a real valued semimartingale
\[d U(t,\omega) = F(t, \omega)dt+ \langle G(t,\omega), dW(t,\omega)\rangle,\]
where $W(t)$ is a standard two sided $d$-dimensional Brownian motion,  $G(t, \omega)\in \mathbb{R}^d$ and $\langle\cdot, \cdot\rangle$ is the inner product in $\mathbb{R}^d$. Assume there is a process $Z$ and positive constants $b_1, b_2, b_3$ with $b_2>b_3$ such that $F(t, \omega)\leq b_1-b_2Z(t, \omega)$, $U(t,\omega)\leq Z(t, \omega)$ and
$\|G(t, \omega)\|^2\leq b_3Z(t,\omega)$, then
\[\mathbf{E}\exp\left(U(t) + \frac{b_2e^{-\frac{b_2(t-s)}{4}}}{4}\int_{s}^{t}Z(\tau)d\tau\right)\leq \frac{b_2e^{4b_1/b_2}}{b_2-b_3}\exp\left(U(s)e^{-\frac{b_2(t-s)}{4}}\right),\]
for all $s\in\mathbb{R}, t\geq s$ and deterministic initial data $U(s)$.
\end{lemma}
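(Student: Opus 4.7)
The proof proceeds by constructing a non-negative exponential local supermartingale whose expectation controls the quantity of interest. Set $\phi(\tau):=e^{-b_2(t-\tau)/4}$ for $\tau\in[s,t]$, so that $\phi(s)=\alpha(t-s)$, $\phi(t)=1$, and $\phi'(\tau)=\tfrac{b_2}{4}\phi(\tau)>0$. Define
\[
Y(\tau):=\exp\!\Bigl(\phi(\tau)U(\tau)+\tfrac{b_2\phi(s)}{4}\int_s^\tau Z(r)\,dr-b_1\int_s^\tau\phi(r)\,dr\Bigr),
\]
so that $Y(s)=\exp(\alpha(t-s)U(s))$ and $Y(t)$ differs from $\exp\!\bigl(U(t)+\tfrac{b_2\alpha(t-s)}{4}\int_s^t Z\bigr)$ by the deterministic factor $\exp(-b_1\int_s^t\phi(r)\,dr)$. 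The plan is to show $Y$ is a local supermartingale, conclude $\mathbf{E}Y(t)\le Y(s)$, and absorb the deterministic factor into the prefactor.

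Applying Ito's formula to the exponent (a continuous semimartingale) gives $dY/Y=D(\tau)\,d\tau+\phi(\tau)\langle G(\tau),dW(\tau)\rangle$ with
\[
D(\tau)=\phi'(\tau)U(\tau)+\phi(\tau)F(\tau)+\tfrac12\phi(\tau)^2\|G(\tau)\|^2+\tfrac{b_2\phi(s)}{4}Z(\tau)-b_1\phi(\tau).
\]
Substituting the hypotheses $F(\tau)\le b_1-b_2Z(\tau)$, $\|G(\tau)\|^2\le b_3Z(\tau)$, and (since $\phi'(\tau)>0$ and $Z\ge 0$) $\phi'(\tau)U(\tau)\le \phi'(\tau)Z(\tau)=\tfrac{b_2\phi(\tau)}{4}Z(\tau)$, the $b_1\phi(\tau)$ terms cancel and the surviving coefficient of $Z(\tau)$ is
\[
-\tfrac{3b_2\phi(\tau)}{4}+\tfrac{\phi(\tau)^2 b_3}{2}+\tfrac{b_2\phi(s)}{4}.
\]
Using $0<\phi(s)\le\phi(\tau)\le 1$ (so $\phi(s)\le\phi(\tau)$ and $\phi(\tau)^2\le\phi(\tau)$), this bracket is bounded above by $\phi(\tau)\bigl[-\tfrac{3b_2}{4}+\tfrac{b_3}{2}+\tfrac{b_2}{4}\bigr]=-\tfrac{\phi(\tau)}{2}(b_2-b_3)\le 0$, where the hypothesis $b_2>b_3$ is used in an essential way. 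Hence $D(\tau)\le 0$ and $Y$ is a non-negative local supermartingale.

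A non-negative local supermartingale is a genuine supermartingale (by Fatou's lemma applied along localizing stopping times), so $\mathbf{E}Y(t)\le Y(s)=\exp(\alpha(t-s)U(s))$. Since
\[
\int_s^t\phi(r)\,dr=\tfrac{4}{b_2}\bigl(1-\alpha(t-s)\bigr)\le \tfrac{4}{b_2},
\]
unpacking the definition of $Y(t)$ yields
\[
\mathbf{E}\exp\!\Bigl(U(t)+\tfrac{b_2\alpha(t-s)}{4}\int_s^t Z(\tau)\,d\tau\Bigr)\le e^{4b_1/b_2}\exp\!\bigl(\alpha(t-s)U(s)\bigr),
\]
which is in fact stronger than the stated bound since $b_2/(b_2-b_3)\ge 1$.

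The one delicate point is the passage from local to genuine supermartingale; this is standard for non-negative processes but should be recorded explicitly because the exponent involves an a priori unbounded semimartingale $U$. The rest is a routine Ito computation, with the role of the hypothesis $b_2>b_3$ concentrated in the single step where the coefficient of $Z(\tau)$ is shown to be negative.
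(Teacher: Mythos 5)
Your proof is correct, and a remark on context is in order: the paper itself does not prove Lemma \ref{lemmasemimartingale} at all --- it quotes it from \cite{HM08}, noting only that the passage from one-dimensional to $d$-dimensional noise changes nothing --- so your argument is a self-contained substitute rather than a reproduction. It is also not quite the argument behind the cited statement: the prefactor $\tfrac{b_2}{b_2-b_3}$ there is the fingerprint of an exponential-martingale tail estimate (bound the probability that the martingale part exceeds a multiple of its quadratic variation, then integrate the exponential tail, in the spirit of the computation \eqref{appendixbounds00} used for \eqref{eq: est1} and \eqref{enstrophy} in the appendix), whereas you fold the decaying weight $\phi(\tau)=e^{-b_2(t-\tau)/4}$ directly into one nonnegative exponential local supermartingale and finish by localization plus Fatou. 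Your computation checks out: the It\^o drift of $Y$ is $Y\,D$ with $D$ exactly as you wrote, the $b_1\phi$ terms cancel, and since $Z\ge 0$ (which is forced by $\|G\|^2\le b_3Z$ with $b_3>0$ and is what you actually need both for $\phi^2\le\phi$ to help and for the final sign), the bounds $\phi(s)\le\phi(\tau)\le 1$ and $b_2>b_3$ give $D\le -\tfrac{\phi(\tau)}{2}(b_2-b_3)Z\le 0$; the hypothesis that $U(s)$ is deterministic is what makes $Y(s)$ a constant so the Fatou step closes, and $\int_s^t\phi\le 4/b_2$ absorbs the $b_1$ term. What your route buys is a cleaner and slightly sharper constant, $e^{4b_1/b_2}$ in place of $\tfrac{b_2}{b_2-b_3}e^{4b_1/b_2}$, which of course implies the stated inequality; what the tail-estimate route buys is uniformity of technique with the other a priori bounds in the paper, where supremum-in-time versions of such estimates are needed and are obtained by exactly that mechanism. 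Either way, for the purposes of Proposition \ref{verifyLyapunov} your version suffices.
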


\begin{proof}[Proof of Proposition \ref{verifyLyapunov}.]
Let $w_{s, t} = \Phi(t, \omega; s, w_0)$ be the solution of equation \eqref{NS} with initial data $w_{s, s} = w_0$. By Ito's formula, for $U(t) = \eta\|w_{s, t}\|^2$ ,one has
\begin{align*}
dU(t) &= \eta\left(\mathcal{B}_0+2\langle w_{s, t}, f(t)\rangle - 2\nu\|w_{s, t}\|_1^2\right)dt +\eta dM(t)\\
&\leq \eta\left(\mathcal{B}_0 + \|f\|_{\infty}^2/\nu - \eta\nu\|w_{s, t}\|_1^2\right)dt+\eta dM(t)
\end{align*}
where $M(t) = 2\int_s^t\langle w_{s, r}, \sum_{i = 1}^dg_idW_i(r)\rangle$ whose quadratic variation satisfies $[M](t) = 4\sum_{i=1}^d\langle w_{s, t}, g_i\rangle^2\leq 4\mathcal{B}_0\|w_{s, t}\|_1^2$. Setting $Z(t) = \eta\|w_{s, t}\|^2_1$, $b_1 = \eta\left(\mathcal{B}_0 + \|f\|_{\infty}^2/\nu\right)$, $b_2 = \nu$ and $b_3 = 4\mathcal{B}_0\eta^2$,  then $U(t)\leq Z(t)$, and $F(t) = \eta\left(\mathcal{B}_0+2\langle w_{s, t}, f(t)\rangle - 2\nu\|w_{s, t}\|_1^2\right)\leq b_1-b_22Z(t)$, and $\|G(t)\|_{\mathbb{R}^d}^2 = 4\eta^2\sum_{i=1}^d\langle w_{s, t}, g_i\rangle^2\leq b_3Z(t)$. Therefore by Lemma \ref{lemmasemimartingale}, we have for every $\eta<\sqrt{\frac{\nu}{4\mathcal{B}_0}}$,
\begin{align*}
\mathbf{E} \exp \left(\eta\|w_{s, t}\|^{2}+\frac{\nu}{4} e^{-\frac{\nu}{4}(t-s)} \int_{s}^{t} \eta\|w_{s, r}\|_{1}^{2} d r\right)\leq C\exp\left(\eta\|w_0\|^2e^{-\frac{\nu(t-s)}{4}}\right),
\end{align*}
where $C = \frac{\nu\exp\left(\frac{4\eta\left(\nu\mathcal{B}_{0}+\|f\|_{\infty}^2\right)}{\nu^2}\right)}{\nu-4\mathcal{B}_0\eta^2}$. From the bound on the Jacobian \eqref{Jacobian}, we have that for $t\in[0, \mathcal{T}]$ and any $\beta>0$, there is a constant $C = C(\nu, \beta,\mathcal{T} )$ such that
\[\|\nabla w_{s, s+t}(w_0)\| \leq C\exp\left(\beta\int_s^{s+t}\|w_{s, \tau}\|_1^2d\tau\right).\]
Therefore by taking $\beta = \frac{\nu\eta}{4}e^{-\frac{\nu\mathcal{T}}{4}}$, one has
\[\mathbf{E} \exp \left(\eta\|w_{s, s+t}\|^{2}\right)\left(1+\|\nabla w_{s, s+t}(w_0)\|\right)\leq C \exp\left(\eta\|w_0\|^2e^{-\frac{\nu t}{4}}\right).\]
The inequality \eqref{eqLya01} then folllows by  noting $\alpha(t) = e^{-\frac{\nu}{4}t}.$

\vskip0.05in

By inequality \eqref{eqLya01}, one has for any $s\in \mathbb{R}$, and $n\in \mathbb{N}$,
\[\mathcal{P}_{s+(n-1)\mathcal{T}, s+n\mathcal{T}}V^{r}(w) = \mathbf{E}V^r(\Phi_{s+(n-1)\mathcal{T},s+n\mathcal{T}}(w))\leq CV^{r\alpha(\mathcal{T})}(w).\]
Then by Jensen's inequality, we have
\begin{align*}
\mathcal{P}_{s+(n-2)\mathcal{T}, s+n\mathcal{T}}V^{r}(w)& =(\mathcal{P}_{s+(n-2)\mathcal{T}, s+(n-1)\mathcal{T}} \mathcal{P}_{s+(n-1)\mathcal{T}, s+n\mathcal{T}}V^{r})(w)\\
&\leq C \mathcal{P}_{s+(n-2)\mathcal{T}, s+(n-1)\mathcal{T}} V^{r\alpha(\mathcal{T})}(w)\\
&\leq C\left(\mathcal{P}_{s+(n-2)\mathcal{T}, s+(n-1)\mathcal{T}} V^r(w)\right)^{\alpha(\mathcal{T})}\leq C^{1+\alpha(\mathcal{T})}V^{r\alpha(\mathcal{T})^2}(w).
\end{align*}
 It follows by repeating this process that
\[\mathcal{P}_{s, s+n\mathcal{T}}V^r \leq C^{\sum_{j=0}^{n-1}\alpha(\mathcal{T})^j}V^{r\alpha(\mathcal{T})^n}.\]
Now for any $t\geq \mathcal{T}$, one has
\begin{align*}
\mathcal{P}_{s, s+t}V^r &= \mathcal{P}_{s, s+\beta}\mathcal{P}_{s+\beta, s+\beta+k\mathcal{T}}V^r \leq \mathcal{P}_{s, s+\beta}C^{\sum_{j=0}^{k-1}\alpha(\mathcal{T})^j}V^{r\alpha(\mathcal{T})^{k}}\\
&\leq C^{\sum_{j=0}^{k-1}\alpha(\mathcal{T})^j}(\mathcal{P}_{s, s+\beta}V^{r})^{\alpha(\mathcal{T})^{k}}\leq C^{\frac{1}{1-\alpha(\mathcal{T})}}V^{r\alpha(\mathcal{T})^{k}\alpha(\beta)}.
\end{align*}
The proof is then complete.
\end{proof}
With the help of this proposition, one can show the following lemma on a contraction property under the metric weighted by the Lyapunov function, which will be used in the proof of the contraction of $\mathcal{P}_{s, t}^*$ on $\mathcal{P}(H)$. In time homogeneous case \cite{HM08}, the authors proved a time one step contraction under the metric weighted by the Lyapunov function to show the contraction of the Markov semigroup on $\mathcal{P}(H)$.  Here  we  will prove and utilize the time $\mathcal{T}$ step contraction in order to adapt to the periodic structure appeared in the weak irreducibility. Recall that $\Phi_{s, s+t}$  is the stochastic flow generated by Navier-Stokes system \eqref{NS} and $\rho_r$ is as defined in \eqref{familymetrics}.
\begin{lemma}\label{twopoint}
 For any $r\in[r_0, 1]$, there are constants $\sigma\in (0,1)$ and $C, K>0$ such that
\begin{align}\label{eqtwopoint1}
\mathbf{E}\rho_r(\Phi_{s, s+t}(u), \Phi_{s, s+t}(v))\leq C\rho_r(u, v),
\end{align}
\begin{align}\label{eqtwopoint2}
\mathbf{E}\rho_r(\Phi_{s, s+n\mathcal{T}}(u), \Phi_{s, s+n\mathcal{T}}(v))\leq\sigma^n\rho_r(u, v)+K,
\end{align}
for any $s\in\mathbb{R}$, $n\in\mathbb{N}$, $t\in[0, \mathcal{T}]$ and $u, v\in H$.
\end{lemma}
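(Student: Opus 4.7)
The plan is to prove the two inequalities separately: \eqref{eqtwopoint1} by a standard ``stretch a path forward by the flow'' argument, and \eqref{eqtwopoint2} by combining the triangle inequality through the origin with the Lyapunov decay estimate \eqref{eqLya02}.

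\emph{For \eqref{eqtwopoint1}}, given $u, v \in H$, I would take any differentiable path $\gamma \colon [0,1] \to H$ with $\gamma(0)=u$ and $\gamma(1)=v$. By the differentiability of $\Phi_{s,s+t}$ in its initial datum, $\tau \mapsto \Phi_{s,s+t}(\gamma(\tau))$ is almost surely a differentiable path connecting $\Phi_{s,s+t}(u)$ and $\Phi_{s,s+t}(v)$, with derivative $\nabla\Phi_{s,s+t}(\gamma(\tau))\dot\gamma(\tau)$. The definition of $\rho_r$ in \eqref{familymetrics} gives
\[
\rho_r(\Phi_{s,s+t}(u),\Phi_{s,s+t}(v))\leq \int_0^1 V^r(\Phi_{s,s+t}(\gamma(\tau)))\,\|\nabla\Phi_{s,s+t}(\gamma(\tau))\dot\gamma(\tau)\|\,d\tau.
\]
Taking expectations, using Fubini, and applying \eqref{eqLya01} pointwise in $\tau$ to the unit direction $\dot\gamma(\tau)/\|\dot\gamma(\tau)\|$ (then multiplying by $\|\dot\gamma(\tau)\|$), I would obtain the bound $C\int_0^1 V^{r\alpha(t)}(\gamma(\tau))\|\dot\gamma(\tau)\|\,d\tau$. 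Since $\alpha(t)\leq 1$ and $V\geq 1$, we have $V^{r\alpha(t)}\leq V^r$, so the right-hand side is at most $C\int_0^1 V^r(\gamma(\tau))\|\dot\gamma(\tau)\|\,d\tau$. Taking the infimum over $\gamma$ yields \eqref{eqtwopoint1}.

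\emph{For \eqref{eqtwopoint2}}, my plan is to exploit the fact that the map $w\mapsto \Phi_{s,s+n\mathcal{T}}(w)$, though potentially expanding over a single period, compresses its ``Lyapunov footprint'' exponentially via \eqref{eqLya02}. Concretely, using the triangle inequality $\rho_r(w_1,w_2)\leq \rho_r(w_1,0)+\rho_r(0,w_2)$ together with the estimate $\rho_r(0,w)\leq \|w\|V^r(w)\leq CV^\kappa(w)$ (from part 1 of Proposition~\ref{verifyLyapunov} applied with $r\leq 1$, evaluated along the straight segment $\tau\mapsto \tau w$), we get
\[
\mathbf{E}\rho_r(\Phi_{s,s+n\mathcal{T}}(u),\Phi_{s,s+n\mathcal{T}}(v))\leq C\bigl[\,\mathbf{E} V^\kappa(\Phi_{s,s+n\mathcal{T}}(u))+\mathbf{E} V^\kappa(\Phi_{s,s+n\mathcal{T}}(v))\,\bigr]\leq C\bigl[\,V^{\kappa\alpha(\mathcal{T})^n}(u)+V^{\kappa\alpha(\mathcal{T})^n}(v)\,\bigr],
\]
where the last step uses \eqref{eqLya02} with exponent $\kappa\in[r_0,2\kappa]$ and the identity $\alpha(n\mathcal{T})=\alpha(\mathcal{T})^n$.

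\emph{The main obstacle} is converting this bound into the Harris-type form $\sigma^n\rho_r(u,v)+K$ with a \emph{uniform} $K$. The plan is as follows. First, iterate the single-period version of \eqref{eqtwopoint1} via the Markov property to obtain $\mathbf{E}\rho_r(\Phi_{s,s+n\mathcal{T}}(u),\Phi_{s,s+n\mathcal{T}}(v))\leq C^n\rho_r(u,v)$. Then, applying the concavity bound $V^a\leq 1+aV$ valid for $a\in(0,1]$ to the Lyapunov bound above yields $\mathbf{E}\rho_r(\Phi_{s,s+n\mathcal{T}}(u),\Phi_{s,s+n\mathcal{T}}(v))\leq 2C+C\kappa\alpha(\mathcal{T})^n[V(u)+V(v)]$. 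I would next fix $N$ large enough that $C\kappa\alpha(\mathcal{T})^N\leq \sigma_0<1$ in an appropriate sense, verify a one-step contraction of the form $\mathbf{E}\rho_r(\Phi_{s,s+N\mathcal{T}}(u),\Phi_{s,s+N\mathcal{T}}(v))\leq \sigma_0\rho_r(u,v)+K_0$, and then iterate via Markov (using the already-established \eqref{eqtwopoint1} to handle non-multiples of $N$) to get the geometric-plus-constant form. The delicate step is controlling $\kappa\alpha(\mathcal{T})^n[V(u)+V(v)]$ by $\sigma^n\rho_r(u,v)$ on the region where $\rho_r(u,v)$ is large and by $K$ on the region where it is small; this will require splitting into cases based on the size of $\|u\|,\|v\|$ and combining with the path-derivative bound from the proof of \eqref{eqtwopoint1}.
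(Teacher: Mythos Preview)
Your argument for \eqref{eqtwopoint1} is correct and matches the paper's: push a near-optimal path forward by the flow and apply \eqref{eqLya01}.

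For \eqref{eqtwopoint2}, however, your approach has a genuine gap. Routing through the origin via the triangle inequality yields a bound of the form $C\kappa\alpha(\mathcal{T})^n[V(u)+V(v)]+2C$, but $V(u)+V(v)$ is \emph{not} controlled by $\rho_r(u,v)$: if $u,v$ are close to each other but far from the origin, $\rho_r(u,v)$ is small while $V(u)+V(v)$ is huge. Your proposed case-splitting on the size of $\|u\|,\|v\|$ cannot repair this, because the bad case (both large, $u$ close to $v$) falls into neither the ``$\rho_r$ large'' regime nor a regime where $V(u)+V(v)$ is bounded. In short, passing through $0$ throws away the geometric information that $u$ and $v$ are near one another, and this cannot be recovered afterwards.

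The paper's proof works instead at the \emph{path level}, which is the missing idea. From the same calculation as in \eqref{eqtwopoint1} one has
\[
\mathbf{E}\rho_r(\Phi_{s,s+\mathcal{T}}(u),\Phi_{s,s+\mathcal{T}}(v))\leq C\int_0^1 V^{r\alpha(\mathcal{T})}(\gamma(\tau))\|\dot\gamma(\tau)\|\,d\tau.
\]
Since $\alpha(\mathcal{T})<1$, one can choose $R$ so large that $CV^{r\alpha(\mathcal{T})}(w)\leq \sigma V^r(w)$ whenever $\|w\|\geq R$, for some $\sigma\in(0,1)$. Splitting the integral according to whether $\gamma(\tau)\in B_R(0)$ or not, the outside portion is bounded by $\sigma\int_0^1 V^r(\gamma(\tau))\|\dot\gamma(\tau)\|\,d\tau\leq \sigma\rho_r(u,v)+\sigma\varepsilon$, while the inside portion is bounded by $CV(R)$ times the arc length of $\gamma$ in $B_R(0)$, which the paper shows is at most $2RV^r(R)+\varepsilon$ (using that a near-optimal $\gamma$ cannot spend too much length inside the ball). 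This gives the one-step bound $\sigma\rho_r(u,v)+K$ with $K=2CRV^{r+1}(R)$, and then iteration via the Markov property yields $\sigma^n\rho_r(u,v)+K\sum_{j=0}^{n-1}\sigma^j$.
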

\begin{proof}
 For any $\varepsilon>0$, there is a path $\gamma$ connecting $u$ and $v$ such that
\[\rho_r(u, v)\leq \int_0^1V^r(\gamma(t))\|\dot{\gamma}(t)\|dt\leq\rho_r(u, v)+\varepsilon. \]
Let $\tilde{\gamma}(\tau) = \Phi_{s, s+t}(\gamma(\tau))$ for $t\in [0, \mathcal{T}]$. Then by \eqref{eqLya01},
\begin{align*}
\mathbf{E}\rho_r(\Phi_{s, s+t}(u), \Phi_{s, s+t}(v))& \leq \mathbf{E}\int_0^1V^r(\tilde{\gamma}(\tau))\|\dot{\tilde{\gamma}}(\tau)\|d\tau\\
&\leq \mathbf{E}\int_0^1V^r(\tilde{\gamma}(\tau))\left\|\nabla \Phi_{s, s+t}(\gamma(\tau))\frac{\dot{\gamma}(\tau)}{\|\dot{\gamma}(\tau)\|}\right\|\|\dot{{\gamma}}(\tau)\|d\tau\\
&\leq C\int_0^1 V^{r\alpha(t)}(\gamma(\tau))\|\dot{\gamma}(\tau)\|d\tau\leq C\rho_r(u, v)+C\varepsilon.
\end{align*}
The inequality \eqref{eqtwopoint1} then follows since $\varepsilon$ is arbitrary. Now let $R$ be large enough so that $C V^{r\alpha(\mathcal{T})}(u)\leq \sigma  V^{r}(u)$ for some $\sigma\in (0, 1)$ and for all $u$ with $\|u\|\geq R$. Then
\begin{align}\notag
\mathbf{E}\rho_r(\Phi_{s, s+\mathcal{T}}(u), \Phi_{s, s+\mathcal{T}}(v))&\leq C\int_0^1 V^{r\alpha(\mathcal{T})}(\gamma(\tau))\|\dot{\gamma}(\tau)\|d\tau\\ \notag
&\leq \sigma\rho_r(u, v)+C\int_0^1 \mathbb{I}_{B_{R}(0)}(\gamma(\tau))V^{r\alpha(\mathcal{T})}(\gamma(\tau))\|\dot{\gamma}(\tau)\|d\tau+\sigma \varepsilon\\\label{eqtwopint01}
&\leq  \sigma\rho_r(u, v) + CV(R)\int_0^1 \mathbb{I}_{B_{R}(0)}(\gamma(\tau))\|\dot{\gamma}(\tau)\|d\tau+\varepsilon.
\end{align}
Note that if we set $u_0, v_0$ to be the points where $\gamma$ first enters and last exits the ball $B_{R}(0)$, then we have
\begin{align}\label{eqtwopoint3}
\int_0^1 \mathbb{I}_{B_{R}(0)}(\gamma(\tau))\|\dot{\gamma}(\tau)\|d\tau\leq \int_0^1 \mathbb{I}_{B_{R}(0)}(\gamma(\tau))V^r(\gamma(\tau))\|\dot{\gamma}(\tau)\|d\tau<\rho_r(u_0, v_0)+ \varepsilon.
\end{align}
The second inequality of \eqref{eqtwopoint3} is true since otherwise, one has
\[\int_0^1 \mathbb{I}_{B_{R}(0)}(\gamma(\tau))V^r(\gamma(\tau))\|\dot{\gamma}(\tau)\|d\tau\geq \rho_r(u_0, v_0)+ \varepsilon,\]
so that $\int_0^1 V^r(\gamma(\tau))\|\dot{\gamma}(\tau)\|d\tau\geq\rho_r(u, v)+ \varepsilon$ by observing that
\[\int_0^1 \mathbb{I}_{B_{R}^c(0)}(\gamma(\tau))V^r(\gamma(\tau))\|\dot{\gamma}(\tau)\|d\tau\geq \rho_r(u, u_0)+ \rho_r(v, v_0),\]
which contradicts to the choice of the path $\gamma(\tau)$.
Note that by considering the straight line that connects $u_0$ and $v_0$, we have  $\rho_r(u_0, v_0)\leq 2RV^r(R)$. Hence
\begin{align}\label{eqtwopint02}
\int_0^1 \mathbb{I}_{B_{R}(0)}(\gamma(\tau))\|\dot{\gamma}(\tau)\|d\tau\leq 2RV^r(R) + \varepsilon.
\end{align}
By arbitrariness of $\varepsilon$, we conclude from \eqref{eqtwopint01} and \eqref{eqtwopint02} that for $K = 2CRV^{r+1}(R)$.
\[\mathbf{E}\rho_r(\Phi_{s, s+\mathcal{T}}(u), \Phi_{s, s+\mathcal{T}}(v)) \leq \sigma\rho_r(u, v)  + K.\]
  For $n>1$, we apply the Markov property. Observing that
\begin{align*}
&\mathbf{E}\left[\rho_r(\Phi_{s, s+n\mathcal{T}}(u), \Phi_{s, s+n\mathcal{T}}(v))|\mathcal{F}_{s+(n-1)\mathcal{T}}\right] \\
&= \mathbf{E}\left[\rho_r(\Phi_{s+(n-1)\mathcal{T}, s+n\mathcal{T}}(\Phi_{s, s+(n-1)\mathcal{T}}(u)), \Phi_{s+(n-1)\mathcal{T}, s+n\mathcal{T}}(\Phi_{s, s+(n-1)\mathcal{T}}(v)))\right]\\
&\leq \sigma \rho_r(\Phi_{s, s+(n-1)\mathcal{T}}(u), \Phi_{s, s+(n-1)\mathcal{T}}(v)) +K.
\end{align*}
And
\begin{align*}
&\mathbf{E}\left[\rho_r(\Phi_{s, s+n\mathcal{T}}(u), \Phi_{s, s+n\mathcal{T}}(v))|\mathcal{F}_{s+(n-2)\mathcal{T}}\right] \\
&\leq \sigma \mathbf{E}\left[\rho_r(\Phi_{s+(n-2)\mathcal{T}, s+(n-1)\mathcal{T}}(\Phi_{s, s+(n-2)\mathcal{T}}(u)), \Phi_{s+(n-2)\mathcal{T}, s+(n-1)\mathcal{T}}(\Phi_{s, s+(n-2)\mathcal{T}}(v)))\right]\\
&\leq \sigma^2 \rho_r(\Phi_{s, s+(n-2)\mathcal{T}}(u), \Phi_{s, s+(n-2)\mathcal{T}}(v)) + (\sigma+1)K.
\end{align*}
By iterating the procedure, we obtain that
\begin{align*}
\mathbf{E}\rho_r(\Phi_{s, s+n\mathcal{T}}(u), \Phi_{s, s+n\mathcal{T}}(v))  \leq \sigma^n \rho_r(u, v) + K\sum_{j =0}^{n-1}\sigma^j,
\end{align*}
which completes the proof.
\end{proof}
\begin{remark}
From the proof we know that the Lyapunov structure in Proposition \ref{verifyLyapunov} is independent of the periodic structure of system \eqref{NS}. One can replace $\mathcal{T}$ by any fixed positive real number to yield the same estimates.
\end{remark}

\subsection{The gradient estimate}\label{gradientinequality}

In this subsection, we will show that the transition operator $\mathcal{P}_{s,t}$ has the following gradient estimate.

\begin{proposition}\label{gradientinequalityprop}
Assume $A_{\infty}=H$. Then there exists constant $\eta_0>0$ so that for every $\eta\in (0, \eta_0]$ and  $a>0$
there exists constants $ C = C(\eta, a)>0$ and $p\in(0,1)$ such that
\begin{align}\label{gradientprop}
\left\|\nabla \mathcal{P}_{s, s+t} \phi(w)\right\| \leq C \exp(p\eta\|w\|^2)\left(\sqrt{\left(\mathcal{P}_{s, s+t}|\phi|^{2}\right)(w)}+ e^{-at} \sqrt{\left(\mathcal{P}_{s, s+t}\|\nabla \phi\|^{2}\right)(w)}\right)
\end{align}
for every  Frechet differentiable function $\phi$, every $w \in H, s\in\mathbb{R}$ and  $t\geq 0$. Here $ C(\eta, a)$ does not depend on initial condition $(s, w)$ and $\phi$.
\end{proposition}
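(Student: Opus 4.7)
\medskip

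\noindent\textbf{Proof proposal.} The plan is to adapt the Malliavin calculus framework of Hairer and Mattingly \cite{HM06,HM11} to the present time-inhomogeneous setting. Starting from the identity
\begin{equation*}
\nabla \mathcal{P}_{s,s+t}\phi(w)\xi = \mathbf{E}\langle\nabla\phi(w_{s,s+t}(w)),\, J_{s,s+t}\xi\rangle,\qquad \|\xi\|=1,
\end{equation*}
where $J_{s,s+t}$ is the Jacobian of the stochastic flow, one introduces a predictable control $v\in L^2([s,s+t];\mathbb{R}^d)$ and performs the standard Malliavin splitting $J_{s,s+t}\xi = \mathscr{D}^v w_{s,s+t} + \rho_{t,v}$, where $\mathscr{D}^v w_{s,s+t}$ is the Malliavin derivative of the solution in the direction of $v$ and $\rho_{t,v}$ is the residual. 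Applying integration by parts on Wiener space to the $\mathscr{D}^v w_{s,s+t}$ term and Cauchy--Schwarz to both, one arrives at
\begin{equation*}
\|\nabla\mathcal{P}_{s,s+t}\phi(w)\|\le \sqrt{\mathcal{P}_{s,s+t}|\phi|^2(w)}\,\bigl(\mathbf{E}|\mathscr{I}(v)|^2\bigr)^{1/2}+\sqrt{\mathcal{P}_{s,s+t}\|\nabla\phi\|^2(w)}\,\bigl(\mathbf{E}\|\rho_{t,v}\|^2\bigr)^{1/2},
\end{equation*}
where $\mathscr{I}(v)$ is the Skorohod integral of $v$. The proof then reduces to producing a control $v$ that makes both bracketed factors behave like $\exp(p\eta\|w\|^2)$, with moreover $\mathbf{E}\|\rho_{t,v}\|^2\le Ce^{-2at}\exp(p\eta\|w\|^2)$.

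\medskip

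The control $v$ is built by the iterative scheme of \cite{HM11}: on each unit slab $[s+n,s+n+1]$ one uses an approximate inverse of the slab Malliavin matrix $\mathcal{M}_{s+n,s+n+1}$ restricted to a carefully chosen low-mode subspace to cancel the low-frequency part of the residual carried over from the previous slab, while the high-frequency part is absorbed by the dissipative semigroup $e^{\nu t\Delta}$ and produces the factor $e^{-at}$. The single most delicate ingredient is a polynomial spectral lower bound on the Malliavin matrix: there exist $C_N,p>0$ such that for every $s\in\mathbb R$, every $w\in H$, and every direction $\varphi$ in a suitable ``regular'' cone,
\begin{equation*}
\mathbf{P}\Bigl(\langle \mathcal{M}_{s,s+1}\varphi,\varphi\rangle<\varepsilon\|\varphi\|^2\Bigr)\le C_N\,\varepsilon^N\exp(p\eta\|w\|^2),\qquad \forall\,\varepsilon>0,\ N\ge 1,
\end{equation*}
which is the infinite-dimensional H\"ormander type theorem and is valid precisely because $A_\infty=H$.

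\medskip

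The main obstacle, and the place where genuine work is required, is to verify that every estimate used in this machinery, the Jacobian bound \eqref{Jacobian}, the Hessian bound \eqref{Hessian}, the polynomial moments of the Malliavin matrix, and the spectral lower bound above, holds \emph{uniformly in the initial time} $s\in\mathbb R$. The reason this is possible is structural: the diffusion coefficient $G$ is time-independent, and the driving force $f$ enters only through $\|f\|_\infty<\infty$ via the energy estimates; consequently the solution bounds in Lemma \ref{bounds} and the Lyapunov bounds in Proposition \ref{verifyLyapunov} depend on $(s,w)$ only through $\|w\|$. The H\"ormander-type argument of \cite{BM07,HM06,HM11} is then applied slab by slab to the Markov transition $\mathcal{P}_{s+n,s+n+1}$; because the algebraic conditions defining $A_\infty$ and all associated constants are insensitive to time translation, the argument reproduces a spectral bound whose prefactor depends only on $\nu$, $\|f\|_\infty$, $\mathcal{B}_k$, $\eta$, and the initial energy $\|w\|^2$. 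Assembling the uniform slab estimates via the iterative control construction then yields \eqref{gradientprop} with $C(\eta,a)$ and $p\in(0,1)$ independent of $(s,w,\phi)$.
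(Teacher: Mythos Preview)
Your proposal is correct and follows essentially the same route as the paper: the integration-by-parts decomposition \eqref{gradient}, an iteratively constructed control based on a Tikhonov-regularized slab Malliavin matrix, and the H\"ormander-type spectral bound (Theorem \ref{thmmatt}), with the emphasis on checking that all estimates are uniform in the initial time $s$. The only cosmetic difference is that the paper builds the control on \emph{two}-unit blocks, setting $v$ via $A_{2n}^*\widetilde{M}_{2n}^{-1}J_{2n}\mathfrak{R}_{s+2n}$ on $[s+2n,s+2n+1)$ and $v\equiv 0$ on $[s+2n+1,s+2n+2)$ (see \eqref{controlv}), rather than acting on every unit slab.
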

\begin{proof}
The proof is a combination of inequality \eqref{gradient}, Proposition \ref{errorbounds} and Proposition \ref{control} below. Note that the proof and result do not depend on the periodicity.
\end{proof}

The general scheme for the proof of the above gradient inequality is quite standard in the time homogeneous case after the groundbreaking works \cite{HM06,HM08,HM11,MP06}. However, there is no known proof for the time inhomogeneous case in the literature, hence we supply a proof in this subsection, using the same arguments as that in the time homogeneous case. We first apply the integration by parts formula from the theory of Malliavin calculus, to transfer the variation on the initial condition in a solution to a variation $v$ on the Wiener path.  The problem of obtaining estimate \eqref{gradientprop} is then reduced to finding an appropriate $v$ with bounded cost to approximately compensate the variation on the initial condition, so that the error of the two variations in the solution goes to $0$ as time goes to infinity.

\vskip0.05in

The invertibility of the Malliavin matrix is crucial when constructing such a desired control $v$. However, it is not easy to verify the invertibility in the infinite dimensional case, hence the inverse of its Tikhonov regularization is taken into consideration. Moreover, since the noise here is extremely degenerate, the unstable directions of the system are not directly forced. Hence one needs an infinite dimensional version of Lie bracket condition $A_{\infty} = H$ as in Hörmander's theorem to ensure the propagation of the noise to those unstable directions and obtain a spectral estimate of the Malliavin matrix to control the dynamics on the determining modes. Since there is no existing proof of the spectral property of the the Malliavin matrix in the time inhomogeneous setting, we give a proof here.
\subsubsection{The Malliavin matrix}\label{TheMalliavinmatrix}
In this subsection, we recall several facts about the Malliavin matrix and give a specific description about the construction of the control $v$.  To introduce the Malliavin derivative of the solution process, we first consider its linearized equations.

\vskip0.05in

As in \cite{HM06,MP06}, the linearized flow $J_{\tau, r}\xi\in C\left([\tau, t] ; H\right) \cap \mathrm{L}^2\left((\tau, t] ;H_1\right)$ is the solution to the  equation
 \begin{equation}\label{eq: Jacobian}
 \partial_rJ_{\tau, r}\xi = \nu\mathrm{\Delta} J_{\tau, r}\xi+\widetilde{B}(w_{s, r} ,J_{\tau, r}\xi),\quad  r>\tau, \quad J_{\tau ,\tau}\xi = \xi.
 \end{equation}
 for any $r>\tau\geq s$ and $\xi\in H$, where $\widetilde{B}(u,w)=-B(\mathcal{K}u,w)-B(\mathcal{K}w,u)$, where $w_{s, r}: =w(r, \omega; s, w_0)$ is the solution of \eqref{NS}, starting at initial time $s$.

\vskip0.05in

It is also helpful to consider  the time-reversed, $H$-adjoint $U^{t, (\cdot)}(r)$ of $J_{r, t}(\cdot)$ to analyze the Malliavin matrix. Here we use the notation $U^{t, \varphi}(r)$ to emphasize that the time $t$ is the initial time and $\varphi\in H$ the is the initial data, and the process $U^{t, \varphi}(r)$ runs backward in time for $s\leq r\leq t$. It follows that $U^{t, \varphi}(r)\in C\left([s, t] ; H\right) \cap \mathrm{L}^2\left([s, t) ;H_1\right)$ is the unique solution to the backward random PDE
\begin{align}\label{reversed}
\left\{
\begin{array}
{l}\partial_r U^{t, \varphi}(r)= -\nu \mathrm{\Delta} U^{t, \varphi}(r) + B\left(\mathcal{K}w_{s, r}, U^{t, \varphi}(r)\right) +C\left(\mathcal{K}U^{t, \varphi}(r), w_{s, r}\right), \quad s \leq r < t,  \\
 U^{t, \varphi}(t)=\varphi.
\end{array}\right.
\end{align}
Here $C\left(\mathcal{K}\left(\cdot\right), w_{s, r}\right)$ is the adjoint of $B\left(\mathcal{K}\left(\cdot\right), w_{s, r}\right)$ determined by the relation $\langle B\left(\mathcal{K}u, w_{s, r}\right), v\rangle = \langle C\left(\mathcal{K}v, w_{s, r}\right), u\rangle$.

\vskip0.05in

The second derivative $K_{\tau,t}$ of  $w_{s, r}$ with respect to its initial condition is the solution of the following equation
\begin{align}\label{eq: Hessian}
\left\{
\begin{array}{l}
 \partial_{t} K_{\tau, t}\left(\xi, \xi^{\prime}\right) =\nu \mathrm{\Delta} K_{\tau, t}\left(\xi, \xi^{\prime}\right)+\widetilde{B}\left(w_{s,t}, K_{\tau, t}\left(\xi, \xi^{\prime}\right)\right)+\widetilde{B}\left(J_{\tau,t} \xi^{\prime}, J_{\tau, t} \xi\right),\\
 K_{\tau, \tau}\left(\xi, \xi^{\prime}\right) =0 .
 \end{array}\right.
 \end{align}
By the variation of constants formula $K_{\tau, t}\left(\xi, \xi^{\prime}\right)$ is given by
\[K_{\tau, t}\left(\xi, \xi^{\prime}\right)=\int_{\tau}^{t} J_{r, t} \widetilde{B}\left(J_{\tau, r} \xi^{\prime}, J_{\tau, r} \xi\right) d r.\]

\vskip0.05in

Note that the solution $w_{s, t}(\omega, w_0)$ is a functional of the two sided Wiener process via the Ito map $\mathrm{\Phi}_{s, t}^{w_0}: C([s, t], \mathbb{R}^d)\rightarrow H$ with  $w_{s, t}(\omega, w_0) = \mathrm{\Phi}_{s, t}^{w_0}\left(W_{[s, t]}\right)$, where $W_{[s, t]}$ is the restriction of the Wiener process on $[s, t]$. And the Cameron-Martin space associated with the Wiener space $(\mathrm{\Omega, \mathcal{F},\mathbf{P}})$ is
\[\mathcal{CM}=\left\{V\in \mathrm{L}^{2}\left(\mathbb{R}, \mathbb{R}^{d}\right): \partial_tV\in \mathrm{L}^{2}\left(\mathbb{R}, \mathbb{R}^{d}\right),\, V(0)=0\right\},\]
endowed with the norm $\|V\|_{\mathcal{CM}}^2:=\int_{\mathbb{R}}|\partial_tV(t)|_{\mathbb{R}^d}^2dt$, which is a Hilbert space isometric to $\mathcal{CM}' = \mathrm{L}^2(\mathbb{R},\mathbb{R}^d)$.
As section 4.1 in \cite{HM11}, for any $V\in \mathcal{CM}$,   denote the directional derivative of the $H$ valued random variable $w_{s, t}$ along the direction $V$ as
\[\mathrm{D}\mathrm{\Phi}_{s, t}^{w_0}V: = \lim_{\varepsilon\rightarrow 0}\frac{\mathrm{\Phi}_{s, t}^{w_0}(W+\varepsilon V)-\mathrm{\Phi}_{s, t}^{w_0}(W)}{\varepsilon}, \]
which exists and satisfies
\begin{align}\label{direction}
\mathrm{D}\mathrm{\Phi}_{s, t}^{w_0}V = \int_{s}^tJ_{r,t}GV'(r)dr.
\end{align}
Now for any $v\in \mathcal{CM}'$, define $V(t)=\int_0^tv(t)dt$.  Then $D^vw_{s, t}=D^v\mathrm{\Phi}_{s, t}^{w_0}:= \mathrm{D}\mathrm{\Phi}_{s, t}^{w_0}V$ is called the Malliavin derivative of the random variable $w_{s, t}$ in the direction $v$. Since $D^vw_{s, t}$ is a (random) bounded linear operator from $\mathcal{CM}'$ to $H$, by Riesz's representation theorem, there exists a random element, $Dw_{s, t}\in\mathcal{CM}'\otimes H$ such that for every $v\in \mathcal{CM}'$,
 \[D^vw_{s, t} = \langle Dw_{s, t}, v\rangle_{\mathcal{CM}'}=\int_{\mathbb{R}}\left(D_rw_{s, t}\right)v(r)dr.\]
The random element $Dw_{s, t}$ is the Malliavin derivative of $w_{s, t}$, which can be regarded as a stochastic process $\left(D_rw_{s, t}\right)_{r\in\mathbb{R}}$ with values in $\mathbb{R}^d\otimes H$.  From equation \eqref{direction}, we see that  $D_rw_{s, t} = J_{r,t}G$ for $r\in[s, t]$ and $D_rw_{s, t} = 0$ for $r\in\mathbb{R}\setminus[s, t]$. The operator $D: \mathrm{L}^2(\mathrm{\Omega},\mathbb{R})\otimes H\rightarrow \mathrm{L}_{\mathrm{ad}}^2(\mathrm{\Omega},\mathcal{F}_t,\mathcal{CM}')\otimes H$ is actually a closed unbounded  linear operator, which is called the Malliavin derivative. Here $\mathrm{L}_{\mathrm{ad}}^2$ is the space of $\mathrm{L}^2$ functions adapted to the filtration $\mathcal{F}_t$ \cite{HM11}.

\vskip0.05in

For any $t\geq \tau\geq s$, define the random operator $A_{\tau, t}: \mathcal{CM}'\rightarrow H$ by
 \[A_{\tau,t}v:= \langle Dw_{s, t}, v\mathbb{I}_{[\tau,t]}\rangle_{\mathcal{CM}'} = \int_{\tau}^tJ_{r,t}Gv(r)dr,\]
 and its adjoint $A_{\tau, t}^*$ by the relation $\langle A_{\tau, t}v, u\rangle_H =\langle v,A_{\tau,t}^*u\rangle_{\mathcal{CM}'}.$  Then the Malliavin matrix is defined as $M_{\tau, t}: =A_{\tau,t}A_{\tau,t}^* $. We have for $\xi\in H$,
 \begin{align}\label{backrep}
 \langle M_{\tau, t}\xi, \xi\rangle = \sum_{i=1}^d\int_{\tau}^t \langle J_{r, t}g_i,\xi\rangle^2dr=\sum_{i=1}^{d} \int_{\tau}^{t}\left\langle g_{i},  U^{t, \xi}(r) \right\rangle^{2} d r,
 \end{align}
where the second identity is due to the fact that $U^{t, (\cdot)}(\tau)$ is the adjoint of $J_{\tau,t}$ in $H$, which has been proved in \cite{HM11,MP06}.

\vskip0.05in

For any $v\in\mathcal{CM}'$, denote by $v_{\tau,t}=v\mathbb{I}_{[\tau,t]}$ the restriction of $v$ on the interval $[\tau,t]$. Set the error
\begin{align}\label{errorterm}
\mathfrak{R}_{t} = J_{s, t} \xi-A_{s, t} v_{s, t}
\end{align}
caused by the infinitesimal variation on the Wiener path $W$ by $v$ that is used to compensate the variation on the initial condition of the solution process.  Applying integration by parts formula \cite{Nua06} for the Malliavin derivative,  we have for any Fréchet differentiable $\varphi: H\rightarrow \mathbb{R}$, any initial condition $w_0\in H$ and $\alpha := t - s\geq0$
\begin{align}\left\langle\nabla \mathcal{P}_{s, t} \varphi(w_0), \xi\right\rangle \notag
&=\mathbf{E}\left\langle\nabla\Big(\varphi\big(w_{s, t}(w_0)\big)\Big), \xi\right\rangle=\mathbf{E}\Big(\big(\nabla \varphi\big)\big(w_{s, t}(w_0)\big) J_{s, t} \xi\Big) \\\notag
 &=\mathbf{E}\Big(\big(\nabla \varphi\big)\big(w_{s, t}(w_0)\big)  A_{s, t} v_{s, t}\Big)+\mathbf{E}\Big(\big(\nabla \varphi\big)\big(w_{s, t}(w_0)\big) \mathfrak{R}_t\Big)\\\notag
  &=\mathbf{E}\Big(D^{v_{s,t}} \varphi\big(w_{s, t}(w_0)\big) \Big) +\mathbf{E}\Big(\big(\nabla \varphi\big)\big(w_{s, t}(w_0)\big) \mathfrak{R}_t\Big)\label{ineqASF} \\
  &=\mathbf{E}\left(\varphi\big(w_{s, t}(w_0)\big) \int_{s}^{t} v(r) d W(r)\right)+\mathbf{E}\Big(\big(\nabla \varphi\big)\big(w_{s, t}(w_0)\big) \mathfrak{R}_t\Big) \\
  &\leq \left( \mathbf{E}\left|\int_{s}^{s+\alpha} v(r) d W(r)\right|^{2}\right)^{1/2} \sqrt{\mathcal{P}_{s, s+\alpha }|\varphi|^2(w_0)}
+\sqrt{\mathcal{P}_{s, s+\alpha }\|\nabla\varphi\|^2(w_0)}\left(\mathbf{E}\|\mathfrak{R}_{s+\alpha }\|^2\right)^{1/2}, \label{gradient}
\end{align}
where we used Hölder's  inequality at the last step. Fix $\|\xi\| =1$, where $\xi\in H$ represents the direction of  the variation of the solution on the initial condition. To show the gradient inequality,  we will choose an appropriate random process $v$  with sample paths  in $\mathcal{CM}'$ to make sure the existence of constant $C>0$ and $p\in (0, 1)$,  such that
\begin{equation}\label{malliavin-costanderror}
\begin{array}{l}
\mathbf{E}\left|\int_{s}^{s+\tau} v(r) d W(r)\right|<C\exp(p\eta\|w_0\|) ,\\
\mathbf{E}\left\|\mathfrak{R}_{s+\tau}\right\|\leq Ce^{-a\tau}\exp(p\eta\|w_0\|).
\end{array}
\end{equation}
for some $a>0$ and all $\tau\geq0, w_0\in H$.
Note once we fixed the initial time $s$, the values of $v$ before $s$ do not affect the gradient estimate \eqref{gradient}.  Hence we will set $v(r)=0$ for $r<s$ and mainly focus on the construction of $v$ after the initial time $s$.

\vskip0.05in

The proof of  Proposition \ref{gradientinequalityprop} is then reduced to find such an appropriate control $v$, which involves the inverse of the Malliavin  matrix. However, it is unclear that if the Malliavin matrix is invertible or not in the present infinite dimensional setting, therefore we consider its Tikhonov regularization  $\widetilde{M}_{\tau, t}:= M_{\tau, t}+\beta$ for small constant $\beta>0$, which is  invertible.  For integer values $n\geq 0$, define $J_{n}=J_{s+n, s+n+1}$,  $A_{n}=A_{s+n, s+n+1}$, $M_{n}=A_{n} A_{n}^{*}$, $\widetilde{M}_{n}=\beta+M_{n}$.  The process $v$ is then recursively defined as
\begin{align}\label{controlv}
v(r) = \left\{
\begin{array}{ll}
A_{2n}^*\widetilde{M}_{2n}^{-1}J_{2n}\mathfrak{R}_{s+2n}&\text{ for } r\in[s+2n, s+2n+1), n\geq 0,\\
0&\text{ for }r\in[s+2n+1, s+2n+2), n\geq 0,
\end{array}\right.
\end{align}
where $\mathfrak{R}_s=\xi$, and $\mathfrak{R}_{t} = J_{s, t} \xi-A_{s, t} v_{s,t}$.  The definition is not circular since the construction of $v(r)$ for $r\in[s+2n, s+2n+2]$ only requires the knowledge of $\mathfrak{R}_{s+2n}$, which depends only on $v(r)$ for $r\in[s, s+2n]$. For instance, known $\mathfrak{R}_s=\xi$, we obtain the definition of $v(r)$ for $r\in[s, s+2]$ from formula \eqref{controlv}, and $ \mathfrak{R}_{t} = J_{s, t} \xi-A_{s, t} v_{s, t}$ for $t\in[s, s+2]$. Then we use $\mathfrak{R}_{s+2}$ to construct $v(r)$ for $r\in[s+2,s+4]$ and iterate this procedure.
\subsubsection{Estimate of the error $\mathfrak{R}$ and the control $v$}\label{section gradient}
Now it remains to check \eqref{malliavin-costanderror}. This will be accomplished through Proposition \ref{errorbounds} and Proposition \ref{control}.
We first establish several basic lemmas.

\vskip0.05in

The following lemma is a version of the well known Foias–Prodi estimate. It shows that the linearized system of equation \eqref{NS} has only finite number of unstable directions along the low modes. The proof of the asymptotic regularizing inequality \eqref{gradientprop} relies on an estimate of the spectrum of the Malliavin matrix on such determining modes.
\begin{lemma}\label{highmodes}
For any constants $p \geq 1,T, \gamma, \eta>0,$ there exists an orthogonal projection $\pi_{\ell}: = \pi_{\ell}(p, T, \gamma, \eta)$ onto a finite dimensional subspace of $H$  such that
\begin{equation}
\begin{array}{l}
\mathbf{E}\left\|\left(1-\pi_{\ell}\right) J_{s, s+T}\right\|^{p} \leq \gamma \exp \left(\eta\left\|w_0\right\|^{2}\right),\\
\mathbf{E}\left\|J_{s, s+T}\left(1-\pi_{\ell}\right)\right\|^{p} \leq \gamma \exp \left(\eta\left\|w_0\right\|^{2}\right),
\end{array}
\end{equation}
for every $w_0\in H$.
\end{lemma}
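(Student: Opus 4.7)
The plan is to let $\pi_\ell$ be the orthogonal projection onto the span of the eigenfunctions of $-\Delta$ corresponding to eigenvalues at most $\ell^2$, so that $(1-\pi_\ell)$ satisfies the spectral gap $\|(1-\pi_\ell)\zeta\|_1^2 \geq \lambda_\ell\,\|(1-\pi_\ell)\zeta\|^2$ with $\lambda_\ell \uparrow \infty$. Fix a unit vector $\xi\in H$ and set $\xi_t=J_{s,t}\xi$, $\zeta_t=(1-\pi_\ell)\xi_t$. Projecting the Jacobian equation \eqref{eq: Jacobian} by $1-\pi_\ell$, pairing with $\zeta_t$, using the standard bilinear bounds on $\widetilde B$ from subsection \ref{appendix-bounds}, and absorbing a small multiple of $\|\zeta_t\|_1^2$ into the dissipation via Young's inequality, I would arrive at a pathwise differential inequality of the form
\[\partial_t\|\zeta_t\|^2 \leq -\nu\lambda_\ell\|\zeta_t\|^2 + C\,\|w_{s,t}\|_1^2\,\|\xi_t\|^2,\]
where $C$ depends only on $\nu$. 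Note that the forcing on the right-hand side involves the full Jacobian $\xi_t$, not just $\zeta_t$, which is the price one pays for isolating the high modes.

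Gronwall combined with the pointwise Jacobian bound \eqref{Jacobian} (applied with a parameter $\eta'$ to be chosen small) yields an estimate that is uniform in the unit vector $\xi$:
\[\|\zeta_{s+T}\|^2 \leq e^{-\nu\lambda_\ell T} + C\,e^{2CT}\exp\!\left(\eta'\!\int_s^{s+T}\!\|w_{s,r}\|_1^2\,dr\right)\!\int_s^{s+T}\! e^{-\nu\lambda_\ell(s+T-r)}\|w_{s,r}\|_1^2\,dr.\]
Raising to the power $p/2$, taking expectation and applying Hölder, the first term is deterministic and vanishes as $\ell\to\infty$, while the second term factors into an exponential moment of $\int\|w_{s,r}\|_1^2\,dr$ (controlled by \eqref{enstrophy} as long as $\eta'$ is chosen below the threshold $\eta_0$ there) times a moment of $\int_s^{s+T} e^{-\nu\lambda_\ell(s+T-r)}\|w_{s,r}\|_1^2\,dr$. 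The latter tends to $0$ as $\ell\to\infty$ by dominated convergence, being dominated by $(\int_s^{s+T}\|w_{s,r}\|_1^2\,dr)^{p}$ which is $\mathbf{P}$-integrable (again via \eqref{enstrophy}). The exponential moment bound produces precisely the factor $\exp(\eta\|w_0\|^2)$ in the conclusion, and choosing $\ell$ large enough delivers any prescribed $\gamma>0$, giving the first inequality after passing to the operator norm.

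For the second inequality, I would use duality: since the $H$-adjoint of $J_{s,s+T}$ is the time-reversed flow $U^{s+T,\,\cdot\,}(s)$ from \eqref{reversed}, one has $\|J_{s,s+T}(1-\pi_\ell)\|=\|(1-\pi_\ell)U^{s+T,\,\cdot\,}(s)\|$. The backward equation \eqref{reversed} has exactly the same dissipative-plus-transport structure as the forward Jacobian equation, so the identical energy argument carried out on $(1-\pi_\ell)U^{s+T,\varphi}(r)$, integrated from $r=s+T$ down to $r=s$, together with the straightforward backward analogue of \eqref{Jacobian}, produces the matching bound.

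The main obstacle will be calibrating constants so that the Gronwall exponent, which contains a multiple of $\int_s^{s+T}\|w_{s,r}\|_1^2\,dr$ coming from the Jacobian bound \eqref{Jacobian} as well as from the nonlinear absorption in the energy inequality, stays strictly below the exponential-moment threshold $\eta_0$ of \eqref{enstrophy} for the relevant power $p$; once this is arranged, the gain $e^{-\nu\lambda_\ell T}\to 0$ dominates the (finite but possibly large) constants accumulated along the way, and the dominated-convergence step closes the argument. The rest is bookkeeping.
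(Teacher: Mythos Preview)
Your route is sound but genuinely different from the paper's. For the first bound the paper bypasses the energy method entirely: it observes $\|(1-\Pi_N)J_{s,s+T}\|\le N^{-1}\|J_{s,s+T}\|_1$ and then invokes the $H_1$ smoothing estimate \eqref{xi_t1} together with \eqref{enstrophy}, getting the factor $N^{-p}$ directly and uniformly in $w_0$. Your Gronwall scheme works too, but two points are more delicate than you indicate. First, to keep $C$ in your differential inequality independent of $\ell$ you must estimate $\langle B(\mathcal K w,\xi),\zeta\rangle$ via $\|\mathcal K w\|_{L^\infty}\|\zeta\|_1\|\xi\|$ (hence $\|w\|_\epsilon$), since the standard bound $c_0\|w\|\,\|\xi\|_1\|\zeta\|_1$ either forces $\|\xi_t\|_1$ into the source or, after splitting $\xi=\zeta+\pi_\ell\xi$, produces a factor $\lambda_\ell$ that cancels the gain from the exponential. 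Second, dominated convergence as you phrase it yields $\mathbf E[\,\cdot\,]\to0$ for \emph{fixed} $w_0$, whereas the lemma demands a single $\ell$ valid for all $w_0$; you need a quantitative rate---e.g.\ split the convolution integral at $s+T/2$ and use the pointwise $H_1$ moment bound from Lemma~\ref{hb} on the late piece to obtain $\mathbf E[\,\cdot\,]\le c(\ell)\exp(\eta\|w_0\|^2)$ with $c(\ell)\to0$. For the second bound the paper does \emph{not} use duality: it runs Duhamel on the forward flow with high-mode initial data $\Pi_N^\perp\xi$, exploits the heat decay $e^{-\nu t\lambda_{N+1}}$ on a short time interval, and then propagates by $J_{s+t,s+T}$. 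Your reduction via $\|J_{s,s+T}(1-\pi_\ell)\|=\|(1-\pi_\ell)U^{s+T,\cdot}(s)\|$ followed by the same energy argument on the adjoint flow is a clean alternative that avoids that two-scale splitting.
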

\begin{proof}
Let $\{\lambda_n\}$ be the eigenvalues of $-\mathrm{\Delta}$ associated with $\eqref{NS}$, and $\mathrm{\Pi}_N$ the projection onto the subspace of $H$ spanned by the first $N$ eigenfunctions. Let  $\mathrm{\Pi}_N^{\perp}=1-\mathrm{\Pi}_N$.
Since $\left\|\mathrm{\Pi}_{N}^{\perp} J_{s, s+T}\xi\right\| \leq \frac{1}{N}\left\|J_{s, s+T}\xi\right\|_{1}$, from bound \eqref{xi_t1} for the linearization flow and \eqref{enstrophy}, we obtain
\begin{align*}
\mathbf{E}\left\|\mathrm{\Pi}_{N}^{\perp} J_{s,s+T}\right\|^p \leq \frac{1}{N^p}\mathbf{E}\left\|J_{s, s+T}\right\|_{1}^p\leq \gamma \exp \left(\eta\left\|w_0\right\|^{2}\right),
\end{align*}
for any $\gamma>0$, by choosing $N$ sufficiently large. It follows from Proposition 6.1 in \cite{CF88} that
\begin{align}\label{B(u,v)-1/4}
\|\widetilde{B}(w_{s, r}, J_{s, r}\xi)\|_{-1/4}\leq \|w_{s, r}\|\|J_{s, r}\xi\|_1+\|w_{s, r}\|_1\|J_{s, r}\xi\|.
\end{align}
Denote $\bar{\xi}_N=\mathrm{\Pi}_N^{\perp}\xi$. From inequality \eqref{B(u,v)-1/4}, equation \eqref{eq: Jacobian} and using the variation of constant formula, and the analyticity of $e^{t\mathrm{\Delta}}$, we have that
\begin{align}\notag
\|J_{s, s+t}\bar{\xi}_N\| &=\left\|e^{\nu t\mathrm{\Delta}}\bar{\xi}_N + \int_s^{s+t}e^{\nu(s+t-r)\Delta}\widetilde{B}(w_{s, r}, J_{s, r}\bar{\xi}_N)dr\right\| \\\notag
&\leq \|e^{\nu t\mathrm{\Delta}}\bar{\xi}_N\| +C \int_s^{s+t} (s+t-r)^{-1/4}\|\widetilde{B}(w_{s, r}, J_{s, r}\bar{\xi}_N)\|_{-1/4}dr\\\notag
&\leq e^{-\nu t\lambda_{N+1}}\|\xi\|+C\sup_{r\in[s,s+T]}C(r)\int_s^{s+t}\left(s+t-r\right)^{-1/4}(r-s)^{-1/2}dr\\\notag
&\leq e^{-\nu t\lambda_{N+1}}\|\xi\|+C\sup_{r\in[s,s+T]}C(r)\left(\int_s^{s+t}\left(s+t-r\right)^{-3/4}dr\right)^{1/3}\left(\int_s^{s+t}(r-s)^{-3/4}dr\right)^{2/3}\\\label{Jortho}
&\leq e^{-\nu t\lambda_{N+1}}\|\xi\|+t^{1/4}C\sup_{r\in[s, s+T]}C(r),
\end{align}
where $C(r) = \left(\|w_{s, r}\|\|J_{s, r}\xi\|_1+\|J_{s, r}\xi\|\|w_{s, r}\|_1\right)(r-s)^{1/2}$. It then follows from the bound \eqref{xi_t1}, the estimates \eqref{enstrophy} and \eqref{Jacobian} from Lemma \ref{bounds} and Lemma \ref{hb} that there exists a constant $C>0$ independent of $s$, such that
\begin{align}\label{C(r)}
\sup_{r\in[s, s+T]}C(r) \leq C\exp\left({\eta\|w_0\|^2}\right).
\end{align}
Hence for every $p\geq 1, \gamma>0, \eta>0$, by first choosing sufficiently small $t>0$ and then choosing sufficiently large $N$, we have by \eqref{Jortho} and \eqref{Jacobian}, \eqref{enstrophy}  that
\begin{align*}
\mathbf{E}\|J_{s, s+T}\mathrm{\Pi}_{N}^{\perp} \|^p\leq \left(\mathbf{E}\|J_{s+t, s+T}\|^{2p}\mathbf{E}\|J_{s, s+t}\mathrm{\Pi}_{N}^{\perp} \|^{2p}\right)^{1/2}\leq\gamma \exp \left(\eta\left\|w_0\right\|^{2}\right).
\end{align*}
The proof is complete by setting $\pi_{\ell}=\mathrm{\Pi}_{N}$ for such a large enough $N$.
\end{proof}
We need the following important  result about the spectrum property of the Malliavin matrix over the unstable modes to have  desired controls on the dynamics. The same result in the time homogeneous setting has been obtained in  \cite{BM07,HM06,HM11,MP06}. Since there is no known proof for a time inhomogeneous system as \eqref{NS}, we supply a proof in the appendix.
\begin{theorem}\label{thmmatt}
 Assume $A_{\infty} = H$. For any $p\geq 1$ , positive $\alpha, \eta, n$,  and any orthogonal projection $\mathrm{\Pi}: H\rightarrow H$ on a finite finite dimensional subspace of $H$, there exists $C=C(p,n, \eta, \nu, \mathrm{\Pi}, f, \mathcal{B})$  and $\varepsilon_0 = \varepsilon_0(n,\alpha, \mathrm{\Pi}, f, \mathcal{B})$ such that
\[
\mathbf{P}\left(\langle M_{s,s+n} \varphi, \varphi\rangle<\varepsilon\|\varphi\|^{2}\right) \leq C \varepsilon^{p} \exp \left(\eta\left\|w_0\right\|^{2}\right)
\]
holds for every $($random$)$ vector $\varphi \in H $ satisfying $\left\|\mathrm{\Pi} \varphi\right\| \geq \alpha\|\varphi\|$ almost surely,
for every $\varepsilon \in(0,\varepsilon_0), s\in \mathbb{R}$,  and for every $w_0\in H$.
\end{theorem}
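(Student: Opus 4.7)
The plan is to adapt the Hörmander-type argument from \cite{HM06, HM11, MP06, BM07} to the time-inhomogeneous periodic setting. The key structural observation is that, although the deterministic forcing $f(t,x)$ depends on time, the a priori bounds in Lemma \ref{bounds} and Lemma \ref{hb}, together with the Lyapunov estimate of Proposition \ref{verifyLyapunov}, all hold uniformly in the initial time $s$. Because every quantitative input to the Hairer--Mattingly Malliavin machinery enters only through such bounds (on $w_{s,t}$, on $J_{\tau,t}$, and on $K_{\tau,t}$), the proof of the autonomous version carries over, and the output constants $C$ and $\varepsilon_0$ will be independent of $s$.

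Concretely, I would start from the representation \eqref{backrep}, namely $\langle M_{s,s+n}\varphi,\varphi\rangle = \sum_{i=1}^d\int_s^{s+n}\langle g_i, U^{s+n,\varphi}(r)\rangle^2\,dr$, and argue by contradiction: if this is smaller than $\varepsilon\|\varphi\|^2$, then each scalar process $r\mapsto \langle g_i, U^{s+n,\varphi}(r)\rangle$ has small $L^2$-norm on $[s,s+n]$. Using the backward random PDE \eqref{reversed} satisfied by $U^{s+n,\varphi}$, together with the time regularity of $U$ inherited from $\|w_{s,\cdot}\|_k$-bounds of Lemma \ref{hb}, a time-derivative plus interpolation argument upgrades this $L^2$-smallness to pointwise smallness on a large subset, with only a polynomial loss in $\varepsilon$. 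Expanding $\partial_r\langle g_i,U\rangle$ and isolating the quadratic-in-$U$ contribution produces $\langle \widetilde{B}(g_i,g_j),U\rangle$ (plus controllable error terms involving $\nu\Delta g_i$ and products with $w_{s,r}$), so smallness propagates from $A_1$ to $A_2$; iterating this step $k$ times, each time paying a fractional power of $\varepsilon$, yields smallness of $\langle h, U^{s+n,\varphi}(r)\rangle$ for every $h\in A_k$.

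Because the assumption $A_\infty = H$ gives, for every fixed finite-dimensional projection $\Pi$, a finite $k=k(\Pi)$ such that $\Pi H\subset \overline{\mathrm{span}}(\cup_{j\le k}A_j)$, after finitely many iterations one obtains $\|\Pi U^{s+n,\varphi}(r)\|\ll 1$ on a large event. Combining this with the Foias--Prodi type bound of Lemma \ref{highmodes}, applied to the adjoint flow (which controls the high-mode component of $U^{s+n,\varphi}(r)$ for $r$ close to $s+n$ in terms of $\|\varphi\|$), gives $\|U^{s+n,\varphi}(r)\|\ll \|\varphi\|$ on this event. But the hypothesis $\|\Pi\varphi\|\ge\alpha\|\varphi\|$ applied at $r=s+n$ (where $U^{s+n,\varphi}(s+n)=\varphi$) and continuity in $r$ forces $\|\varphi\|\ll\|\varphi\|$, the sought contradiction. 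Packaging this into the small-ball inequality is done exactly as in \cite[Sec.~5--6]{HM11}: a dyadic splitting of the event $\{\langle M_{s,s+n}\varphi,\varphi\rangle<\varepsilon\|\varphi\|^2\}$ into finitely many sub-events according to which Lie bracket fails to be small, each of probability $\lesssim \varepsilon^{p_k}\exp(\eta\|w_0\|^2)$, and choosing $k$ large enough to cover any prescribed exponent $p$.

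The main obstacle is the quantitative step that converts the $L^2$-smallness of $\langle h,U\rangle$ into smallness of $\langle \widetilde{B}(h,g_j),U\rangle$: one must bound $\partial_r\langle h,U\rangle$ in a space weaker than $L^2$ (so that interpolation is lossy only by a fractional power of $\varepsilon$) while controlling the transport and stretching terms $B(\mathcal{K}w_{s,r},U)$ and $C(\mathcal{K}U,w_{s,r})$ via the moment bounds \eqref{enstrophy} and \eqref{Jacobian}. This is where the proof is most delicate and where uniformity in the initial time $s$ matters: the constants appearing in all the interpolation and regularity estimates must not depend on $s$, which is guaranteed because Lemma \ref{bounds}, Lemma \ref{hb}, and Proposition \ref{verifyLyapunov} are themselves uniform in $s$. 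Given these uniform inputs, the delicate anticipative Malliavin calculus argument of \cite{HM11} applies line by line, producing the desired estimate with $\varepsilon_0$ depending only on $n$, $\alpha$, $\Pi$, $f$, and $\mathcal{B}$, and $C$ depending additionally on $p$ and $\eta$.
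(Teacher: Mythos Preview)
Your overall architecture matches the paper's: start from the backward representation \eqref{backrep}, use interpolation to pass from $L^2$ to $L^\infty$ smallness of $\langle g_i,U^{s+n,\varphi}(r)\rangle$, iterate up the bracket hierarchy, and exploit $A_\infty=H$ together with $\|\Pi\varphi\|\ge\alpha\|\varphi\|$ for the final contradiction. You also correctly identify the crucial point that all quantitative inputs (Lemmas \ref{bounds}, \ref{hb}, Proposition \ref{verifyLyapunov}) are uniform in $s$, which is exactly what makes the HM11 argument transplant.

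Two places where your description is off relative to what the paper actually does. First, the inductive step: the new brackets do \emph{not} arise from any ``quadratic-in-$U$'' contribution. One writes $\partial_t\langle U^{s+T,\varphi}(t),h\rangle=\langle U(t),\,\nu\Delta h+\widetilde B(w_{s,t},h)\rangle$ (up to sign), substitutes $w_{s,t}=v_{s,t}+\sum_k g_k W_k(t)$, and recognizes the result as a degree-one Wiener polynomial with coefficients $R_k(t)=\langle U(t),\widetilde B(g_k,h)\rangle$. The passage from smallness of the polynomial to smallness of its coefficients is the Wiener-polynomial non-cancellation result (Theorem 7.1 in \cite{HM11}); this is the genuine engine of the induction, and you should name it rather than a vague ``isolating the quadratic contribution''. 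Second, the contradiction: the paper does not use Lemma \ref{highmodes} on the adjoint flow. The bracket estimates live on $I_\delta=[s+\tfrac{T}{2},s+T-\delta]$, which excludes the terminal time, so you cannot evaluate at $r=s+n$ directly; ``continuity in $r$'' is not enough without a quantitative statement. Instead the paper proves (Lemmas \ref{back1}--\ref{backphi}) that on a large event the hypothesis $\varphi\in S_\alpha$ transfers backward to $U^{s+T,\varphi}(s+T-\delta)\in S_{c\alpha}$ with $\|\Pi U^{s+T,\varphi}(s+T-\delta)\|\ge\tfrac{\alpha}{2}\|\varphi\|$, and this is what clashes with $\sup_{h\in A_N}|\langle U(s+T-\delta),h\rangle|\le\varepsilon^{p_N}\|\varphi\|$ for $\varepsilon$ small.
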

\begin{proof}
The proof is based on the approach in section 6 of \cite{HM11} along with some bounds on the solution of the Navier-Stokes equation \eqref{NS} and its linearization.
Fixing $T>0$, we will consider the problem in the interval $[s, s+T]$. To avoid the singularities at the terminal times, we introduce $I_{\delta}:=[s+\frac{T}{2},s+T-\delta]$, where $\delta= \frac{T}{4}\varepsilon^r$ for $0<\varepsilon<1$ and some $r>0$ that will be determined later.  Also for $\alpha \in(0,1)$, and for a given orthogonal projection $\mathrm{\Pi}: H \rightarrow H,$ we define $S_{\alpha} \subset H$ by
\[S_{\alpha}=\{\varphi \in H\backslash\{0\}:\|\mathrm{\Pi} \varphi\| \geq \alpha\|\varphi\|\}.\]
The following estimates about the process $U^{t, \varphi}$ in the time homogeneous case has been proved in \cite{HM11}. Since the setting is a bit different here due to time inhomogeneity, we give the proof below for the reader's convenience.
\begin{lemma}\label{back1}
For any $\delta \in(0, T / 2]$, $p>0, \eta>0$,  one has the bound
\[\mathbf{E}\sup_{\|\varphi\|\leq 1}\left\|U^{s+T, \varphi}(s+T-\delta)-e^{\delta\nu\mathrm{\Delta}} \varphi\right\|^{2p} \leq C \delta^{p}\exp\left(p\eta\|w_0\|^2\right), \]
\[\mathbf{E}\sup_{\|\varphi\| \leq 1}\left\|U^{s+T, \varphi}(s+T-\delta)-\varphi\right\|_{-1}^{2p} \leq C \delta^{p}\exp\left(p\eta\|w_0\|^2\right)\]
\end{lemma}
\begin{proof}
We first reverse the time of the processes by setting $\bar{w}_{s, r} = w_{s, T+2s-r}(w_0)$, and $\bar{U}_r = U^{s+T,\varphi}(T+2s-r)$. Then $\bar{U}_r$ solves the parabolic equation
\[\left\{
\begin{array}
{l}\partial_r \bar{U}_r= \nu \mathrm{\Delta} \bar{U}_r+B\left(\mathcal{K}\bar{w}_{s, r}, \bar{U}_r\right) - C\left(\mathcal{K}\bar{U}_r, \bar{w}_{s, r} \right), \quad s < r \leq s+T,  \\
\bar{U}_s=\varphi.
\end{array}\right.\]
It then follows from the variation of constant formula that,
\[\bar{U}_{s+\delta} = e^{\delta\nu\mathrm{\Delta}} \varphi +\int_s^{s+\delta} e^{\nu\mathrm{\Delta}\left(s+\delta -r\right)}\left[B\left(\mathcal{K}\bar{w}_{s, r}, \bar{U}_r\right) - C\left(\mathcal{K}\bar{U}_r, \bar{w}_{s, r} \right)\right]dr.\]
Since both  $\|B\left(\mathcal{K}\bar{w}_{s, r}, \bar{U}_r\right)\|$ and $\|C\left(\mathcal{K}\bar{U}_{r}, \bar{w}_{s, r} \right)||$ are bounded by $C\|\bar{w}_{s, r}\|_1\|\bar{U}_r\|_1$,  one has
\begin{align*}
\left\|\bar{U}_{s+\delta} -e^{\delta\nu\mathrm{\Delta}} \varphi\right\|\leq C\sup_{T+s-\delta\leq r\leq s+T}\|{w}_{s, r}\|_1\int_{s}^{s+\delta}\| \bar{U}_r\|_1dr.
\end{align*}
To estimate $\|\bar{U}_r\|_1$, setting $\zeta_r = \|\bar{U}_r\|^2 + \nu(r-s)\|\bar{U}_r\|_1^2$.  As in the proof of inequality \eqref{xi_t1},  one obtains from equation for $\bar{U}_r$ that
\begin{align}\label{Appendixspectral01}
\|\bar{U}_r \|_1\leq \frac{C}{\sqrt{r-s}} \|\varphi\|\exp\left(\int_s^{s+T}\eta\|w_{s, r}\|_1^2 dr\right),
\end{align}
 where $C$ is a constant depending on $\nu,\eta, T$.
Now it follows from bounds for the solution \eqref{enstrophy} that
\begin{align*}
\mathbf{E}\sup_{\|\varphi\|\leq 1}\left\|\bar{U}_{s+\delta}-e^{\delta\nu\mathrm{\Delta}} \varphi\right\|^{2p} \leq C\delta^p \exp\left(p\eta\|w_0\|^2\right),
\end{align*}
which is the first inequality of Lemma \eqref{back1}. The second inequality follows from the first one and the following fact
\[\left\|\bar{U}_{s+\delta}-\varphi\right\|_{-1}\leq \left\|\bar{U}_{s+\delta}-e^{\delta\nu\mathrm{\Delta}}\varphi\right\|_{-1}+\left\|e^{\delta\nu\mathrm{\Delta}}\varphi-\varphi\right\|_{-1} \leq \left\|\bar{U}_{s+\delta}-e^{\delta\nu\mathrm{\Delta}}\varphi\right\|+C\delta.\]
\end{proof}
The next lemma allows one to transfer the properties of $\varphi$ back from the terminal time.
\begin{lemma}\label{backphi}
Fix any orthogonal projection $\mathrm{\Pi}$ of $H$ onto a finite dimensional subspace of $H$ spanned by elements of $H_1$. There exists a constant $c\in (0,1)$ such that for every $r>0$ and every $\alpha>0$, the event
\begin{align*}
\mathrm{\Omega}_{\delta,\mathrm{\Pi}}:= \left\{\omega\in\mathrm{\Omega}: \varphi\in S_{\alpha}\Longrightarrow U^{s+T,\varphi}(T+s-\delta)\in S_{c\alpha} \quad\text{and}\quad \|\mathrm{\Pi} U^{s+T,\varphi}(T+s-\delta)\|\geq \frac{\alpha}{2}\|\varphi\|\right\}
\end{align*}
satisfies $\mathbf{P}\left(\mathrm{\Omega}_{\delta,\mathrm{\Pi}}^c\right)\leq C\exp\left(\eta\|w_0\|^2\right)\varepsilon^p $ for every $p\geq 1$. Note that here $C$ depends on $1/r$.
\end{lemma}
\begin{proof}
Since we have proved Lemma \ref{back1},  this is a reformulation of Lemma 6.15 in \cite{HM11}.
\end{proof}
Since the randomness spreads over the state space through the nonlinear term,  we define recursively the following sets $\{A_k\}_{k=1}^{\infty}$ formed by the symmetrized nonlinear term $\widetilde{B}(u,w)=-B(\mathcal{K}u,w)-B(\mathcal{K}w,u)$. Set $A_1=\{g_k : 1\leq k\leq d\}$, and $A_{k+1} = A_{k}\cup \{\widetilde{B}(h, g_{l}): h\in A_{k}, g_l\in A_1\}$. Also define $A_{\infty} =\overline{ \mathrm{span}({\cup_{k\geq 1} A_{k}})}$.  Note that each $A_{k}$ here, consisting of constant vector fields in $H$,  is a subset of the $k$-th Hörmander bracket defined in section 6 of \cite{HM11}. To each $A_n$ we associate a quadratic form $\mathcal{Q}_n$ by $\left\langle\varphi, \mathcal{Q}_{n} \varphi\right\rangle=\sum_{h \in \mathrm{A}_{n}}\langle\varphi, h\rangle^{2}$.  Just as in \cite{BM07,HM11,MP06}, it is typical to apply arguments that use local time regularity to replace the analysis of non-adapted processes.  Hence for $\theta\in(0, 1]$ we define the following  (semi-)norm for functions $g: I_{\delta} \rightarrow H$ by
\[\|g\|_{\theta,s}:=\sup _{r, t \in I_{\delta}} \frac{\|g(t)-g(r)\|_{s}}{|t-r|^{\theta}}, \quad \text{and}\quad \|g\|_{\infty,s}:=\sup_{t\in I_{\delta}}\|g\|_s.\]
Also for $g:I_{\delta} \rightarrow \mathbb{R}$, we use the following notation for the corresponding norms
\[\|g\|_{{\infty}} = \sup_{t\in I_{\delta}}|g(t)|,\quad \|g\|_{\theta}:=\sup _{r, t \in I_{\delta}} \frac{|g(t)-g(r)|}{|t-r|^{\theta}}.\]

\vskip0.05in

The following lemma is the key to prove theorem \ref{thmmatt}. The proof  requires a technical result which roughly states that two distinct monomials in a Wiener polynomial cannot cancel each other out \cite{BM07,HM11,MP06}.
\begin{lemma}\label{implysmall}
For every $p\geq 1$ and  integer $N>0$, there exist $0<\varepsilon_N<1, r_{N}>0, p_{N}>0$ and $q_{N}=q_N(p)>0$ such that, provided that $r \leq r_{N},$  the event
\[
\mathrm{\Omega}_{\varepsilon, N}:=\left\{\omega\in\mathrm{\Omega}: \left\langle\varphi, {M}_{s, s+T} \varphi\right\rangle \leq \varepsilon\|\varphi\|^{2} \Longrightarrow \sup _{h \in \mathrm{A}_{N}} \sup _{t \in I_{\delta}}\left|\left\langle U^{s+T,\varphi}(t) , h\right\rangle\right| \leq \varepsilon^{p_{N}}\|\varphi\|\right\}
\]
satisfies
\[\mathbf{P}\left(\mathrm{\Omega}^c_{\varepsilon,N}\right)\leq C_{q_N}\exp\left(\eta\|w_0\|^2\right)\varepsilon^p,\]
for $\varepsilon\in(0, \varepsilon_N] $ and $\eta\in (0, \eta_0]$.
\end{lemma}
\begin{proof}
The proof proceeds by induction on $N$. It suffices to show the result for $\varphi$ with unit norm $\|\varphi\| =1$.  We first prove that the result is true for $A_1$.
Assume that $\left\langle\varphi, {M}_{s, s+T} \varphi\right\rangle \leq \varepsilon$, then by representation  \eqref{backrep}, one has
\[\sup_{1\leq k\leq d} \int_{I_{\delta}}\left\langle g_{k},  U^{s+T, \varphi}(\tau) \right\rangle^{2} d \tau\leq\varepsilon.\]
Setting $R(t) =  \int_{s+\frac{T}{2}}^{t}\left\langle g_{k},  U^{s+T, \varphi}(\tau) \right\rangle d \tau$, Lemma 6.14 in \cite{HM11} implies that
\begin{align*}
\sup_{t\in I_{\delta}}\left\langle g_{k},  U^{s+T, \varphi}(t) \right\rangle = \|\partial_t R\|_{\infty}&\leq 4\|R\|_{\infty}\max\left\{\frac{1}{|I_{\delta}|}, \|R\|_{\infty}^{-\frac12} \|{\partial_t R}\|_1^{\frac12}\right\}\\
&\leq C_T\max\left\{\varepsilon^{\frac12},\varepsilon^{\frac14}\|{\partial_t R}\|_1^{\frac12}\right\},
\end{align*}
where $C_T= 4\max\left\{1, \frac{2}{\sqrt{T}}, \left(\frac{T}{2}\right)^{1/4}\right\}$.
It follows from Lemma E.1 of \cite{MP06} that
\[\|U^{s+T,\varphi}\|_{1,0}\leq C\left(1+\|w\|_{\infty,1}^2 + \|U^{s+T,\varphi}\|_{\infty,1}^2\right).\]
From bounds \eqref{Appendixspectral01}, Lemma \ref{bounds} and Lemma \ref{hb}, we have that for any $p'\geq 1$,
\begin{align*}
\mathbf{E}\|{\partial_t R}\|_1^{p'}\leq \mathbf{E}\|g_k\|^{p'}\|U^{s+T,\phi}\|_{1,0}^{p'}\leq C \mathbf{E}\|U^{s+T,\phi}\|_{1,0}^{p'}\leq C\exp\left({\eta\|w_0\|^2}\right)\delta^{-p'}.
\end{align*}
Therefore by the Markov inequality, one has for $\tilde{p}>0, \alpha>0,$
\[\mathbf{P}\left(\|\partial_t R\|_1>\alpha\varepsilon^{-\tilde{p}}\right) = \mathbf{P}\left(\|\partial_t R\|_1^{p'}>\alpha^{p'}\varepsilon^{-\tilde{p}p'}\right)\leq \alpha^{-p'} \varepsilon^{\tilde{p}p'}\mathbf{E}\|{\partial_t R}\|_1^{p'}\leq C(\alpha,p')\exp\left({\eta\|w_0\|^2}\right)\varepsilon^{\tilde{p}p'-rp'}. \]
Then on a set $\widetilde{\mathrm{\Omega}}_{\varepsilon,1}\subset \mathrm{\Omega}$, such that $\mathbf{P}(\widetilde{\mathrm{\Omega}}_{\varepsilon,1}^c)\leq  C(\alpha,p')\exp\left({\eta\|w_0\|^2}\right)\varepsilon^{\tilde{p}p'-rp'}$, we have
\[\|\partial_t R\|_1\leq \alpha \varepsilon^{-\tilde{p}}.\]
Now choose $\alpha= C_T^{-2}$, $\tilde{p}=\frac14$, and $r_1 = \frac{1}{2}\tilde{p}$. Then  provided $r<r_1$, for any $p\geq 1$, by choosing $p' = \frac{p}{\tilde{p}-r}$,  one has on $\widetilde{\mathrm{\Omega}}_{\varepsilon,1}$,
\[\sup_{t\in I_{\delta}}\left\langle g_{k},  U^{s+T, \varphi}(t) \right\rangle \leq C_T\max\left(\varepsilon^{\frac12}, \alpha^{\frac12}\varepsilon^{\frac18}\right)\leq \varepsilon^{\frac18}\]
and $\mathbf{P}(\widetilde{\mathrm{\Omega}}_{\varepsilon,1}^c)\leq  C\exp\left({\eta\|w_0\|^2}\right)\varepsilon^{p}$ for $\varepsilon\in(0, \varepsilon_1]$, where $\varepsilon_1=C_T^{-8/3}$.
Observe that the event set $\widetilde{\mathrm{\Omega}}_{\varepsilon,1}$ does not depend on the choice of $g_k\in A_1$ and it is contained in $\mathrm{\Omega}_{\varepsilon, 1}$. So
\[\mathbf{P}({\mathrm{\Omega}}_{\varepsilon,1}^c)\leq \mathbf{P}(\widetilde{\mathrm{\Omega}}_{\varepsilon,1}^c)\leq  C\exp\left({\eta\|w_0\|^2}\right)\varepsilon^{p}.\]
 Hence the proof for the base case is complete with $p_1=r_1=1/8, q_1 = p/r_1$ and $\varepsilon_1=C_T^{-8/3}$.
 \end{proof}
The inductive step is accomplished through the following lemma.
\begin{lemma}
For $N\geq 2$, fix $h\in A_{N-1}$, and suppose that $q := p_{N-1}$  has been given. Then for $p\geq 1$, provided $r<r_N$, the event
\[\widetilde{\mathrm{\Omega}}_{N, \varepsilon}: = \left\{\sup_{t\in I_{\delta}}\left|\langle U^{s+T,\varphi}(t), h\rangle\right|\leq \varepsilon^{q}\Longrightarrow \sup_{1\leq k\leq d}\sup_{t\in I_{\delta}}\left|\langle U^{s+T,\varphi}(t),\widetilde{B}(h, g_k)\rangle\right|\leq \varepsilon^{p_N}\right\}\]
satisfies $\mathbf{P}(\widetilde{\mathrm{\Omega}}_{\varepsilon,N}^c) \leq C_{q_N}\exp\left({\eta\|w_0\|^2}\right)\varepsilon^{p}$ for $\varepsilon\in (0, \varepsilon_N)$. Here $p_N = q/24$, $r_N = q/12$, $q_N = 12p/q$ and $\varepsilon_N= C_T^{-8/(7q)}$ with $C_T= 4\max\left\{4/T,1\right\}$.
\end{lemma}
\begin{proof}
Let $R(t)=\partial_t \langle U^{s+T,\varphi}(t), h\rangle$. Then Lemma 6.14 in \cite{HM11} (with $\alpha=1/3$) implies that
\begin{align*}
\|R\|_{\infty}\leq 4\max\left\{\frac{1}{|I_{\delta}|}\varepsilon^{q}, \varepsilon^{\frac{q}{4}} \|R\|_{1/3}^{3/4}\right\}\leq C_T\max\left\{\varepsilon^{q},\varepsilon^{\frac{q}{4}}\|R\|_{1/3}^{3/4}\right\}.
\end{align*}
Next we show that $\|R\|_{1/3}$ has a bounded expectation. As in \cite{HM11}, we consider the process $v_{s, t}: = w_{s, t}- \sum_{k=1}^dg_kW_k(t)$ since it has more time regularity. Note \[R(t) = \langle -\mathrm{\Delta} h+\widetilde{B}(v_{s, t}, h)+ \sum_{k=1}^d \widetilde{B}(g_k , h)W_{k}(t), U^{s+T,\varphi}(t)\rangle.\]
And recall that we assumed elements of $\{g_k\}_{k=1}^d$ are smooth, so each $h$ has bounded $H_s$ norm for any $s$. Observe that
\begin{align}\notag
&\|\langle \widetilde{B}(v_{s, t},h), U^{s+T,\varphi}(t)\rangle\|_{1/3}\leq C\|U^{s+T,\varphi}\|_{1,0}\left(\|v_{s, t}\|_{\infty,1}+\|v_{s, t}\|_{1/3,1}\right)\\\notag
&\leq C\|U^{s+T,\varphi}\|_{1,0} \left(\|w_{s, t}\|_{\infty,1}+\sum_{k=1}^d\|g_{k}\|_1\|W_{k}\|_{\infty}+\|\partial_t v_{s, t}\|_{\infty,1}\right)\\\label{Appendixspectral02}
&\leq C\|U^{s+T,\varphi}\|_{1,0} \left(1+\|w_{s, t}\|_{\infty,3}^2+\sum_{k=1}^d\|W_{k}\|_{\infty}^2\right),
\end{align}
for constant $C$ depends on $\|h\|_2, \mathcal{B}_1$ and $\|f\|_{\infty,1}$, where we used the fact  that
\begin{align*}
\|\partial_t v_{s, t}\|_{\infty,1}&=\|\mathrm{\Delta} w_{s, t} - B(\mathcal{K}w_{s, t},w_{s, t})+f(t)\|_{\infty,1}\\
&\leq \|w_{s, t}\|_{\infty,3} + C\|w_{s, t}\|_{\infty,3}^2 + \|f\|_{\infty,1}\leq C\left(1+\|w_{s, t}\|_{\infty,3}^2\right).
\end{align*}
For other terms in the expression of $R(t)$, one has
\begin{align}\label{Appendixspectral03}
\|\langle -\mathrm{\Delta} h, U^{s+T,\varphi}\rangle \|_{1/3}\leq C\|U^{s+T,\varphi}\|_{1,0},
\end{align}
and
\begin{align}\label{Appendixspectral04}
\|\langle  \sum_{k=1}^d \widetilde{B}(g_k , h)W_{k}(t), U^{s+T,\varphi}(t)\rangle\|_{1/3}\leq C\sum_{k=1}^d\|W_k\|_{1/3}\|U^{s+T,\varphi}\|_{1,0}.
\end{align}
Therefore one obtains
\begin{align*}
\|R\|_{1/3}\leq C \|U^{s+T,\varphi}\|_{1,0}\left(1+\|w_{s, t}\|_{\infty,3}^2+\sum_{k=1}^d\|W_{k}\|_{\infty}^2+\sum_{k=1}^d\|W_{k}\|_{1/3}^2\right).
\end{align*}
From the proof of Lemma 7.12 in \cite{MP06}, we know that $\mathbf{E}\|W_k\|_{\alpha}^{\gamma}<C(T,\gamma)$ for all $\gamma\geq 1$ and $\alpha\in [0,\frac12)$. This fact, together with the bounds for the solution from Lemma \ref{bounds} and Lemma \ref{hb}, implies that for any $p'\geq 1$, $\alpha>0$ and $\tilde{p}>0$
\[\mathbf{P}\left(\|R\|_{1/3}>\alpha\varepsilon^{-\tilde{p}}\right)\leq C(p',\alpha)\exp\left(\eta\|w_0\|^2\right)\delta^{-p'}\varepsilon^{\tilde{p}p'}\leq C(p',\alpha)\exp\left(\eta\|w_0\|^2\right) \varepsilon^{\tilde{p}p'-p'r}.\]
Therefore on a set $\widetilde{\mathrm{\Omega}}_{\varepsilon,N,1}\subset \mathrm{\Omega}$, such that $\mathbf{P}(\widetilde{\mathrm{\Omega}}_{\varepsilon,N,1}^c)\leq  C\exp\left({\eta\|w_0\|^2}\right)\varepsilon^{\tilde{p}p'-p'r}$, we have
\[\| R\|_{1/3}\leq \alpha\varepsilon^{-\tilde{p}}.\]
Now choose $p_N, r_N$ and $\varepsilon_N$ as stated in the Lemma and let $\tilde{p} = q/6$, $\alpha=C_T^{-4/3}$. Then  on the set $\widetilde{\mathrm{\Omega}}_{\varepsilon,N,1}$, we have
\begin{align*}
\|R\|_{\infty}\leq  C_T\max\left\{\varepsilon^{q}, \alpha^{\frac34}\varepsilon^{\frac{q}{8}}\right\} \leq \varepsilon^{\frac{q}{8}}.
\end{align*}
for all $\varepsilon\in(0,\varepsilon_{N}]$. Note that the $\alpha$ and $\varepsilon_N$ are determined when taking the maximum.  Observing that for any $p\geq 1$, provided $r<r_N$, we can take $p' = \frac{p}{\tilde{p}-r}$ , so that
\[\mathbf{P}(\widetilde{\mathrm{\Omega}}_{\varepsilon,N,1}^c)\leq  C_{q_N}\exp\left({\eta\|w_0\|^2}\right)\varepsilon^{p}.\]
Denote $R_0=\langle-\mathrm{\Delta} h+\widetilde{B}(v_{s, t},h), U^{s+T,\varphi}(t)\rangle$ and $R_k =\langle \widetilde{B}(g_k, h), U^{s+T,\varphi}(t)\rangle $ for $k=1,2,\cdots,d$, which are the coefficients of the Wiener polynomial $R(t)$.
Then the technical Theorem 7.1 from \cite{HM11} implies that on a set $\widetilde{\mathrm{\Omega}}_{\varepsilon, N, 2}\subset\mathrm{\Omega}$, one has
\begin{align*}
\|R\|_{\infty}\leq \varepsilon^{q/8} \Rightarrow
\left\{\begin{array}{ll}
\text { either } \displaystyle\sup _{0\leq k\leq d}\left\|R_k\right\|_{\infty} \leq \varepsilon^{q/24}, \\
\text { or } \displaystyle\sup _{0\leq k\leq d} \|R_k\|_{1}\geq \varepsilon^{-q/72},
\end{array}\right.
\end{align*}
and $\mathbf{P}(\widetilde{\mathrm{\Omega}}_{\varepsilon,N,2}^c)\leq  C\varepsilon^{p},$  where $C$ depends only on $p$ and the events $\widetilde{\mathrm{\Omega}}_{\varepsilon, N, 2}$ depends on the processes $A_k$ only through the value of the highest degree of the Wiener polynomial, which is $1$ here.
The Markov inequality and the estimates \eqref{Appendixspectral02}-\eqref{Appendixspectral04} implies that there is an event $\widetilde{\mathrm{\Omega}}_{\varepsilon, N, 3}$, on which $ \|R_k\|_{1}< \varepsilon^{-q/72}$ for each $k$,  and
\begin{align*}
\mathbf{P}(\widetilde{\mathrm{\Omega}}_{\varepsilon,N,3}^c) \leq C\mathbf{P}\left(\|R_k\|_1^{72p/q}\geq \varepsilon^{-p}\right)  \leq  C\exp\left({\eta\|w_0\|^2}\right)\varepsilon^{p}.
\end{align*}
Now observe that $\bigcap_{i=1}^3\widetilde{\mathrm{\Omega}}_{\varepsilon,N,i}\subset\widetilde{\mathrm{\Omega}}_{\varepsilon,N}$, hence
\[\mathbf{P}(\widetilde{\mathrm{\Omega}}_{\varepsilon,N}^c)\leq \sum_{i=1}^3\mathbf{P}(\widetilde{\mathrm{\Omega}}_{\varepsilon,N,i}^c)\leq C\exp\left({\eta\|w_0\|^2}\right)\varepsilon^{p}.\]
This completes the proof of the induction step.
\end{proof}
Now we give a proof of Theorem \ref{thmmatt} by combining the above lemmas.
Since $A_{\infty} = H$, by Lemma 8.3 in \cite{HM11}, for any fixed finite dimensional projection $\mathrm{\Pi}$, there exists $N>0$ ( $N$ depends on the projection $\mathrm{\Pi}$, so that $p_N, r_N, \varepsilon_N$ depends on $\mathrm{\Pi}$) such that for each $\alpha>0$, there exists a constant $\mathrm{\Lambda}_{\alpha}>0$, such that for every $n\geq N$,
\[\inf_{\varphi\in S_{\alpha}}\frac{\left|\left\langle\varphi, \mathcal{Q}_{n} \varphi\right\rangle\right|}{\|\mathrm{\Pi}\varphi\|^2}\geq \mathrm{\Lambda}_{\alpha}.\]
On the other hand, it follows from Lemma \ref{backphi} and Lemma \ref{implysmall} that  there exist constants $p_N, r_N, \varepsilon_N, c >0$  such that  for every $\alpha>0$,  on the set $\mathrm{\Omega}_{\varepsilon, N}\bigcap\mathrm{\Omega}_{\delta,\mathrm{\Pi}}$, the condition
 \[\varphi\in S_{\alpha} \quad\text{and} \quad \left\langle\varphi, {M}_{s, s+T} \varphi\right\rangle \leq \varepsilon\|\varphi\|^{2} \]
 implies that
\[U^{s+T,\varphi}(T+s-\delta)\in S_{c\alpha}, \quad \|\mathrm{\Pi} U^{s+T,\varphi}(T+s-\delta)\|\geq \frac{\alpha}{2}\|\varphi\|,\]
\[\quad \text{and}\quad \sup _{h \in \mathrm{A}_{N}} \sup _{t \in I_{\delta}}\left|\left\langle U^{s+T,\varphi}(t) , h\right\rangle\right| \leq \varepsilon^{p_N}\|\varphi\|, \]
and $\mathbf{P}\left(\mathrm{\Omega}_{\varepsilon, N}^c\bigcup\mathrm{\Omega}_{\delta,\mathrm{\Pi}}^c\right)\leq C\exp\left(\eta\eta\|w_0\|^2\right)\varepsilon^p $, for any $\varepsilon\in(0,\varepsilon_N)$ and $p\geq 1$.
Then it follows that on the set $\mathrm{\Omega}_{\varepsilon, N}\bigcap\mathrm{\Omega}_{\delta,\mathrm{\Pi}}$, one  has
\[\frac{\alpha}{2}\|\varphi\|\leq \|\mathrm{\Pi} U^{s+T,\varphi}(T+s-\delta)\|\leq C\mathrm{\Lambda}_{c\alpha}^{-1/2}\sup_{h\in A_N}\langle U^{s+T,\varphi}(T+s-\delta), h\rangle\leq C\mathrm{\Lambda}_{c\alpha}^{-1/2}\varepsilon^{p_N}\|\varphi\|. \]
This in turn shows that on $\mathrm{\Omega}_{\varepsilon, N}\bigcap\mathrm{\Omega}_{\delta,\mathrm{\Pi}}$, $\left\langle\varphi, {M}_{s, s+T} \varphi\right\rangle \leq \varepsilon\|\varphi\|^{2}$ and $\varphi\in S_{\alpha}$ implies that
\[\frac{\alpha}{2}< C \varepsilon^{p_N},\]
which is not true for $\varepsilon\leq \varepsilon_0:=\min\left\{\varepsilon_N, \left(\frac{\alpha}{2C}\right)^{1/p_N}\right\}$. \\
Hence $\mathbf{P}\left(\langle M_{s, s+T} \varphi, \varphi\rangle<\varepsilon\|\varphi\|^{2}\right) \leq C \varepsilon^{p} \exp \left(\eta\left\|w_0\right\|^{2}\right)$ for $\varphi\in S_{\alpha}$.
\end{proof}

The following lemma gives a quantitative control of the error between the Malliavin matrix and its regularization.
\begin{lemma}\label{lowmodes}
Fix $\xi \in H$ and set

\[\zeta=\beta\left(\beta+M_{0}\right)^{-1} {J}_{0} \xi.\]
Then for any constants $p\geq 1$, $\gamma, \eta>0$ and every finite dimensional orthogonal projector $\pi_{\ell},$ there exists a small $\beta_0: = \beta_0(p, \gamma, \eta)>0$ such that for every $\beta\in(0, \beta_0]$,
\[\mathbf{E}\left\|\pi_{\ell} \zeta\right\|^{p} \leq \gamma \exp\left(\eta\left\|w_0\right\|^{2}\right)\|\xi\|^{p}.\]
\end{lemma}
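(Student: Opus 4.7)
The plan is to combine three ingredients: an elementary identity arising from the definition of $\zeta$, the spectral bound for $M_0$ from Theorem \ref{thmmatt}, and the moment bound on the Jacobian coming from \eqref{Jacobian} together with \eqref{enstrophy}. Let me set $\varphi=(\beta+M_0)^{-1}J_0\xi$ so that $\zeta=\beta\varphi$. The relation $(\beta+M_0)\varphi=J_0\xi$ gives $M_0\zeta=\beta(J_0\xi-\zeta)$, and pairing with $\zeta$ yields the key identity
\begin{equation*}
\langle M_0\zeta,\zeta\rangle \;=\; \beta\langle J_0\xi-\zeta,\zeta\rangle \;\le\; \beta\,\|J_0\xi\|\,\|\zeta\|.
\end{equation*}
At the same time, since $\beta(\beta+M_0)^{-1}$ has operator norm at most $1$, we have the crude a priori bound $\|\pi_\ell\zeta\|\le \|\zeta\|\le \|J_0\xi\|$, and the Jacobian estimates supply $\mathbf{E}\|J_0\xi\|^m \le C_m\exp(\tilde\eta\|w_0\|^2)\|\xi\|^m$ for every $m\ge 1$ and arbitrarily small $\tilde\eta>0$.

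Next I would split the sample space according to whether $\zeta$ concentrates on the range of $\pi_\ell$. For a parameter $\alpha\in(0,1)$ to be chosen, let $E_\alpha=\{\|\pi_\ell\zeta\|\ge \alpha\|\zeta\|\}$; on $E_\alpha^c$ one has directly $\|\pi_\ell\zeta\|< \alpha\|\zeta\|\le \alpha\|J_0\xi\|$. To apply Theorem \ref{thmmatt} on $E_\alpha$ (where $\zeta$ may otherwise vanish), I would work with the surrogate $\tilde\varphi=\zeta\,\mathbb{I}_{E_\alpha}+e\,\mathbb{I}_{E_\alpha^c}$, where $e$ is a fixed unit vector in the range of $\pi_\ell$, so that $\|\pi_\ell\tilde\varphi\|\ge \alpha\|\tilde\varphi\|$ almost surely. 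Theorem \ref{thmmatt} (with its exponent $p$ renamed $q$) then delivers
\begin{equation*}
\mathbf{P}\!\left(E_\alpha\cap\{\langle M_0\zeta,\zeta\rangle<\varepsilon\|\zeta\|^2\}\right)\;\le\; C_q\,\varepsilon^{q}\exp(\tilde\eta\|w_0\|^2),\qquad \varepsilon\in(0,\varepsilon_0(\alpha)],
\end{equation*}
while on the complementary sub-event $E_\alpha\cap\{\langle M_0\zeta,\zeta\rangle\ge \varepsilon\|\zeta\|^2\}$ the key identity forces $\varepsilon\|\zeta\|^2\le \beta\|J_0\xi\|\|\zeta\|$, hence $\|\pi_\ell\zeta\|\le \|\zeta\|\le \beta\varepsilon^{-1}\|J_0\xi\|$.

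Gathering the three cases yields the pointwise bound $\|\pi_\ell\zeta\|\le (\alpha+\beta/\varepsilon)\|J_0\xi\|+\|J_0\xi\|\,\mathbb{I}_B$, where $B\subset E_\alpha$ is the bad event of probability at most $C_q\varepsilon^q\exp(\tilde\eta\|w_0\|^2)$. Raising to the $p$-th power, using $(x+y)^p\le 2^{p-1}(x^p+y^p)$ and applying Cauchy--Schwarz to the indicator term together with the Jacobian moment bound on $\mathbf{E}\|J_0\xi\|^{2p}$, I would arrive at
\begin{equation*}
\mathbf{E}\|\pi_\ell\zeta\|^p \;\le\; C_p\Big[(\alpha+\beta/\varepsilon)^p + C_q^{1/2}\varepsilon^{q/2}\Big]\exp(\eta\|w_0\|^2)\|\xi\|^p
\end{equation*}
after choosing the auxiliary $\tilde\eta$'s small enough that their accumulated exponents stay below $\eta$. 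To finish, one fixes $q$ (e.g.\ $q=2p$) and picks, in this order: first $\alpha$ so that $C_p\alpha^p\le \gamma/3$; then $\varepsilon\in(0,\varepsilon_0(\alpha)]$ so that $C_p C_q^{1/2}\varepsilon^{q/2}\le \gamma/3$; and finally $\beta_0=\beta_0(p,\gamma,\eta)$ small enough that $C_p(\beta_0/\varepsilon)^p\le \gamma/3$, which gives the claimed estimate for every $\beta\in(0,\beta_0]$.

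The main obstacle is the careful bookkeeping of the order of quantifiers: Theorem \ref{thmmatt} applies only to random vectors satisfying the cone condition $\|\pi_\ell\varphi\|\ge\alpha\|\varphi\|$ \emph{almost surely}, so one cannot simply plug in the random vector $\zeta$, whose cone condition only holds on $E_\alpha$. The surrogate construction circumvents this, but one must verify that the spectral conclusion for $\tilde\varphi$ pulls back to the intended bound on $E_\alpha\cap\{\langle M_0\zeta,\zeta\rangle<\varepsilon\|\zeta\|^2\}$ and that the three small parameters $\alpha,\varepsilon,\beta$ can be tuned in the correct sequence so that the threshold $\beta_0$ ends up depending only on $p$, $\gamma$, $\eta$ and the ambient data.
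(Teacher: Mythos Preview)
Your proposal is correct and follows essentially the same route as the paper: split according to the cone event $E_\alpha=\{\|\pi_\ell\zeta\|\ge\alpha\|\zeta\|\}$, use the algebraic identity $(\beta+M_0)\zeta=\beta J_0\xi$ to bound $\langle M_0\zeta,\zeta\rangle$ by $\beta\|J_0\xi\|^2$ (the paper) or $\beta\|J_0\xi\|\|\zeta\|$ (you), invoke Theorem~\ref{thmmatt} on the cone part, and close with the Jacobian moment bound and Cauchy--Schwarz. The only cosmetic differences are that the paper applies Theorem~\ref{thmmatt} directly to $\zeta\,\mathbb{I}_{E_\alpha}$ rather than your surrogate $\tilde\varphi$, and couples the parameters via $\beta=\varepsilon^2$ rather than fixing $\varepsilon$ first and then $\beta$; neither changes the substance of the argument.
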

\begin{proof}
For $\alpha>0$, define $A_{\alpha}: = \{\omega\in\Omega: \left\|\pi_{\ell} \zeta\right\|(\omega)>\alpha\|\zeta\|(\omega)\}$. Let   $\zeta_{\alpha}(\omega)=\zeta(\omega) \mathbb{I}_{A_{\alpha}}(\omega)$ and $\bar{\zeta}_{\alpha}(\omega)=\zeta(\omega)-\zeta_{\alpha}(\omega) = \zeta(\omega) \mathbb{I}_{A_{\alpha}^c}(\omega)$, where $\mathbb{I}_{A}$ is the characteristic function of the set $A$.
Since $\|\beta(\beta+M_0)^{-1}\|\leq 1$, it follows from estimates \eqref{enstrophy} and \eqref{Jacobian} in  Lemma \ref{bounds} that
\begin{align}\label{lemma{lowmodes}-1}
\mathbf{E}\left\|\pi_{\ell} \bar{\zeta}_{\alpha}\right\|^{p} \leq \alpha^{p} \mathbf{E}\|\zeta\|^{p} \leq \alpha^{p} \mathbf{E}\left\|{J}_{0} \xi\right\|^{p} \leq \frac{\gamma}{2}\exp\left(\eta\left\|w_0\right\|^{2}\right)\|\xi\|^{p}
\end{align}
by choosing $\alpha$ sufficiently small. Fix such an $\alpha$. We also have
\begin{align*}
\left\langle\zeta_{\alpha}, M_{0} \zeta_{\alpha}\right\rangle  \leq\left\langle\zeta, M_{0} \zeta\right\rangle \leq\left\langle\zeta,\left(M_{0}+\beta\right) \zeta\right\rangle  =\left\langle\beta\left(M_{0}+\beta\right)^{-1} {J}_{0} \xi,\beta{J}_{0} \xi\right\rangle \leq \beta\left\|{J}_{0} \xi\right\|^{2}.
 \end{align*}
 By Theorem \ref{thmmatt}, we know that for every $p\geq 1$ and $\alpha>0$,  there exists a constant $C$  and $\varepsilon_0$ such that
 \[\mathbf{P}\left(\left\langle M_{0} \zeta_{\alpha}, \zeta_{\alpha}\right\rangle<\varepsilon\left\|\zeta_{\alpha}\right\|^{2}\right) \leq C \varepsilon^{p} \exp \left(\eta\left\|w_0\right\|^{2}\right)\]
 holds for every $w_0\in H$ and every $\varepsilon \in (0,\varepsilon_0)$. Therefore
 \[\mathbf{P}\left(\frac{\left\|\zeta_{\alpha}\right\|^{2}}{\left\|{J}_{0} \xi\right\|^{2}}>\frac{\beta}{\varepsilon}\right) \leq \mathbf{P}\left(\left\langle M_{0} \zeta_{\alpha}, \zeta_{\alpha}\right\rangle<\varepsilon\left\|\zeta_{\alpha}\right\|^{2}\right) \leq C \varepsilon^{p} \exp \left(\eta\left\|w_0\right\|^{2}\right).\]
 Choosing $\beta=\varepsilon^2$, and noting $\frac{\left\|\zeta_{\alpha}\right\|}{\left\|{J}_{0} \xi\right\|}\leq 1$, we find that
 \begin{align}\label{lemma{lowmodes}-2}
 \mathbf{E}\left(\frac{\left\|\zeta_{\alpha}\right\|^{2p}}{\left\|{J}_{0} \xi\right\|^{2p}}\right) \leq \mathbf{P}\left(\frac{\left\|\zeta_{\alpha}\right\|^{2p}}{\left\|{J}_{0} \xi\right\|^{2p}}>\frac{\beta^p}{\varepsilon^p}\right)+\frac{\beta^p}{\varepsilon^p}\leq C \varepsilon^{p} \exp \left(\eta\left\|w_0\right\|^{2}\right).
 \end{align}
Noting that
\begin{align*}
\mathbf{E}\left\|\pi_{\ell} \zeta_{\alpha}\right\|^{p}\leq\mathbf{E}\left\|\zeta_{\alpha}\right\|^{p} \leq \sqrt{\mathbf{E}\left(\left\|\zeta_{\alpha}\right\|^{2 p}\left\|{J}_{0} \xi\right\|^{-2 p}\right) \mathbf{E}\left\|{J}_{0} \xi\right\|^{2 p}}.
\end{align*}
Then combining this \eqref{Jacobian} from Lemma \ref{bounds} with \eqref{lemma{lowmodes}-2}, it follows that
\begin{align}\label{lemma{lowmodes}-3}
\mathbf{E}\left\|\pi_{\ell} \zeta_{\alpha}\right\|^{p} \leq \frac{\gamma}{2} e^{\eta\left\|w_0\right\|^{2}}\|\xi\|^{p}
\end{align}
 by choosing $\varepsilon$ sufficiently small, which in turn gives the desired $\beta_0$. The lemma then follows from \eqref{lemma{lowmodes}-1} and \eqref{lemma{lowmodes}-3} by observing that  $\mathbf{E}\left\|\pi_{\ell} \zeta\right\|^{p}=\mathbf{E}\left\|\pi_{\ell} \zeta_{\alpha}\right\|^{p}+\mathbf{E}\left\|\pi_{\ell} \bar{\zeta}_{\alpha}\right\|^{p}$.
\end{proof}
\begin{remark}
By Markov property in its generalized form (see for example Theorem 9.18 in \cite{DZ14}), it follows from Lemma \ref{lowmodes}  that  for  each positive integer $n$ and  $\zeta=\beta\left(\beta+M_{n}\right)^{-1} {J}_{n} \xi$, one has
\[\mathbf{E}\left(\left\|\pi_{\ell} \zeta\right\|^{p}\,\middle\vert\, \mathcal{F}_{s+n}\right) \leq \gamma e^{\eta\left\|w_{s, s+n}\right\|^{2}}\|\xi\|^{p}.\]
\end{remark}

\begin{lemma}\label{error}
For any constants $\gamma, \eta>0$ and $p \geq 1,$ there exists a constant $\beta_{0}: =\beta_0(p, \gamma, \eta)>0$ such that whenever $0<\beta \leq \beta_{0}$, we have
\[\mathbf{E}\left(\left\|\mathfrak{R}_{s+2(n+1)}\right\|^{p} \,\middle\vert\, \mathcal{F}_{s+2n}\right) \leq \gamma e^{\eta\left\|w_{s, s+2n}\right\|^{2}}\left\|\mathfrak{R}_{s+2n}\right\|^{p},\,\, \mathbf{P}\text{-}\mathrm{a.s}..\]
\end{lemma}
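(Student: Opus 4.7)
\textbf{Proof proposal for Lemma \ref{error}.}

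The plan is to unwind the two--step recursion defining $\mathfrak{R}$ and exploit the fact that during the first unit interval we apply the Malliavin based control (which by Lemma \ref{lowmodes} kills the low modes up to a factor $\gamma$ by choosing $\beta$ small), while during the second unit interval we apply no control (so we rely on Lemma \ref{highmodes} to absorb the high modes via the regularising effect of the Jacobian). Explicitly, from the definition \eqref{controlv} of $v$ and $\mathfrak{R}_t = J_{s,t}\xi - A_{s,t}v_{s,t}$, I would first check the algebraic identity
\[
\mathfrak{R}_{s+2n+1} = \bigl(I - M_{2n}\widetilde{M}_{2n}^{-1}\bigr)J_{2n}\mathfrak{R}_{s+2n} = \beta\,\widetilde{M}_{2n}^{-1} J_{2n}\mathfrak{R}_{s+2n} =: \zeta_n,
\]
and then, since $v\equiv 0$ on $[s+2n+1,s+2n+2)$,
\[
\mathfrak{R}_{s+2(n+1)} = J_{2n+1}\zeta_n.
\]
Thus the lemma reduces to controlling $\mathbf{E}\bigl[\|J_{2n+1}\zeta_n\|^p\,\bigm|\,\mathcal{F}_{s+2n}\bigr]$.

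Given any orthogonal projector $\pi_\ell$ onto a finite dimensional subspace, I would split $\zeta_n = \pi_\ell\zeta_n + (I-\pi_\ell)\zeta_n$ and estimate the two pieces separately. For the low mode piece, Cauchy--Schwarz gives
\[
\mathbf{E}\bigl[\|J_{2n+1}\pi_\ell\zeta_n\|^p\,\bigm|\,\mathcal{F}_{s+2n}\bigr]
\leq \mathbf{E}\bigl[\|J_{2n+1}\|^{2p}\,\bigm|\,\mathcal{F}_{s+2n}\bigr]^{1/2}
      \mathbf{E}\bigl[\|\pi_\ell\zeta_n\|^{2p}\,\bigm|\,\mathcal{F}_{s+2n}\bigr]^{1/2}.
\]
The first factor is bounded by $C\exp(\eta\|w_{s,s+2n}\|^2)$ after conditioning first on $\mathcal{F}_{s+2n+1}$ and using the Jacobian bound \eqref{Jacobian} together with \eqref{enstrophy} and \eqref{eq: est1}; the second factor is handled by the generalised Markov property and the remark following Lemma \ref{lowmodes}, which, for any prescribed smallness $\gamma_2$, produces a $\beta_0(\gamma_2,\eta,p)>0$ such that for $\beta\leq\beta_0$ this factor is at most $\gamma_2\exp(\eta\|w_{s,s+2n}\|^2)\|\mathfrak{R}_{s+2n}\|^{2p}$. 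For the high mode piece I would use the pathwise bound $\|\zeta_n\|\leq \|J_{2n}\|\,\|\mathfrak{R}_{s+2n}\|$ (since $\|\beta\widetilde{M}_{2n}^{-1}\|\leq 1$), condition first on $\mathcal{F}_{s+2n+1}$ and apply Lemma \ref{highmodes} to $J_{2n+1}(I-\pi_\ell)$ with initial data $w_{s,s+2n+1}$, yielding any desired smallness $\gamma_1$ once $\pi_\ell$ is taken with sufficiently large range; a second Cauchy--Schwarz followed by \eqref{eq: est1} and the Jacobian bound for $J_{2n}$ then gives
\[
\mathbf{E}\bigl[\|J_{2n+1}(I-\pi_\ell)\zeta_n\|^p\,\bigm|\,\mathcal{F}_{s+2n}\bigr]
\leq C\gamma_1\exp(\eta\|w_{s,s+2n}\|^2)\,\|\mathfrak{R}_{s+2n}\|^p.
\]

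To conclude, given the target $\gamma$ in the statement I would first fix $\pi_\ell$ large enough that the high mode constant $C\gamma_1$ is at most $\gamma/2$, and then invoke Lemma \ref{lowmodes} for this fixed $\pi_\ell$ to produce $\beta_0$ small enough that the low mode contribution is also at most $\gamma/2$. The mild exponent inflation at each step (from $\eta$ to $\eta'$) is absorbed by allowing $\eta$ in the input lemmas to be chosen strictly smaller than the $\eta$ appearing on the right hand side of the conclusion, which is legitimate because Lemmas \ref{lowmodes}, \ref{highmodes} and the bounds in Lemma \ref{bounds} hold for every sufficiently small $\eta>0$. The only non--routine point is the bookkeeping of nested conditional expectations, because $\zeta_n$ is $\mathcal{F}_{s+2n+1}$ measurable while $J_{2n+1}$ depends on the noise increments on $[s+2n+1,s+2n+2]$; this is cleanly resolved by first conditioning on $\mathcal{F}_{s+2n+1}$ (so that $\zeta_n$ is frozen and Lemmas \ref{bounds}, \ref{highmodes} apply with initial data $w_{s,s+2n+1}$) and only then conditioning on $\mathcal{F}_{s+2n}$ and absorbing the resulting $\exp(\eta\|w_{s,s+2n+1}\|^2)$ factor via \eqref{eq: est1}.
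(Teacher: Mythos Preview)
Your proposal is correct and follows essentially the same route as the paper: both establish the identity $\mathfrak{R}_{s+2(n+1)}=J_{2n+1}\zeta_n$ with $\zeta_n=\beta\widetilde{M}_{2n}^{-1}J_{2n}\mathfrak{R}_{s+2n}$, split $\zeta_n$ via a finite-rank projector $\pi_\ell$, control $J_{2n+1}\pi_\ell\zeta_n$ by Cauchy--Schwarz together with Lemma \ref{lowmodes} (choosing $\beta$ small), and control $J_{2n+1}(I-\pi_\ell)\zeta_n$ by Cauchy--Schwarz together with Lemma \ref{highmodes} (choosing $\pi_\ell$ large). Your explicit treatment of the nested conditioning on $\mathcal{F}_{s+2n+1}$ and the $\eta$-bookkeeping spells out what the paper handles tersely via the Markov property and the choice of $\eta/2$ in intermediate steps.
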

\begin{proof}
The proof is mainly based on Lemma \ref{lowmodes} and Lemma \ref{highmodes}.  Let $\zeta = \beta \widetilde{M}_{2n}^{-1}{J}_{2n}\mathfrak{R}_{s+2n}$. Observe that
 \begin{align}\label{lemma{error}-0}
 \mathfrak{R}_{s+2(n+1)}={J}_{2n+1}\mathfrak{R}_{s+2n+1} = {J}_{2n+1}\zeta.
 \end{align}
Also note that $\|\beta \widetilde{M}_{2n}^{-1}\|\leq 1$ and $\mathfrak{R}_{s+2n}$ is $\mathcal{F}_{s+2n}$ measurable. Hence by the bound \eqref{Jacobian} in Lemma \ref{bounds}and the Markov property, one has
\begin{align}\label{lemma{error}-1}
\mathbf{E}\left(\left\|\zeta\right\|^{p} \,\middle\vert\, \mathcal{F}_{s+2n}\right)\leq \left\|\mathfrak{R}_{s+2n}\right\|^p\mathbf{E}\left(\left\|{J}_{2n}\right\|^p\,\middle\vert\, \mathcal{F}_{s+2n}\right) \leq C e^{\frac{\eta}{2}\left\|w_{s, s+2n}\right\|^{2}}\left\|\mathfrak{R}_{ s+2n}\right\|^{p}.
\end{align}
Applying Lemma \ref{highmodes}, the Hölder's  inequality and estimate \eqref{lemma{error}-1}, it follows that there exists a projection $\pi_l$ on a finite dimensional subspace of $H$ such that
\begin{align}\notag
\mathbf{E}\left(\left\|{J}_{2n+1}\left(1-\pi_{\ell}\right) \zeta\right\|^{p} \,\middle\vert\, \mathcal{F}_{s+2n}\right)
&\leq \sqrt{\mathbf{E}\left(\left\|{J}_{2n+1}\left(1-\pi_{\ell}\right) \right\|^{2p} \,\middle\vert\, \mathcal{F}_{s+2n}\right)\mathbf{E}\left(\left\|\zeta\right\|^{2p} \,\middle\vert\, \mathcal{F}_{s+2n}\right)}\\\label{lemma{error}-2}
&\leq \tilde{\gamma} e^{\frac{\eta}{2}\left\|w_{s+2n}\right\|^{2}} \left(C e^{\frac{\eta}{2}\left\|w_{s, s+2n}\right\|^{2}}\left\|\mathfrak{R}_{s+2n}\right\|^{p}\right)
 \leq \gamma e^{\eta\left\|w_{s, s+2n}\right\|^{2}}\left\|\mathfrak{R}_{s+2n}\right\|^{p}.
\end{align}
From Lemma \ref{lowmodes} and the Markov property, it follows that for an arbitrarily small $\tilde{\gamma}$, one can choose $\beta$ sufficiently small such that
\[\mathbf{E}\left(\left\|\pi_{\ell} \zeta\right\|^{p} \,\middle\vert\, \mathcal{F}_{s+2n}\right) \leq \tilde{\gamma} e^{\frac{\eta}{2}\left\|w_{s, s+2n}\right\|^{2}}\|\mathfrak{R}_{s+2n}\|^{p}.\]
Again applying Hölder's  inequality and the bound \eqref{Jacobian} on the Jacobian ${J}_{2n+1}$,  one can deduce that for any $\gamma>0$,
\begin{align}\label{lemma{error}-3}
\mathbf{E}\left(\left\|{J}_{2n+1} \pi_{\ell} \zeta\right\|^{p} \,\middle\vert\, \mathcal{F}_{s+2n}\right) \leq \gamma e^{\eta\left\|w_{s+2n}\right\|^{2}}\left\|\mathfrak{R}_{s+2n}\right\|^{p}.
\end{align}
by choosing $\beta$ sufficiently small. The proof is then complete by combining \eqref{lemma{error}-0}, \eqref{lemma{error}-2} and \eqref{lemma{error}-3}.
\end{proof}
The following result gives a desired estimate on the error  between the variations on the initial condition and that on the Wiener path.
\begin{proposition}\label{errorbounds}
There exists an $\eta_0>0$ such that for any  $\eta\in(0,\eta_0],$  and $a>0$ there is constant $C = C(\eta, a)$, and $p\in(0,1)$ such that
\[\left(\mathbf{E}\|\mathfrak{R}_{s+t}\|^2\right)^{1/2} \leq C\exp(p\eta\|w_0\|^2)e^{-at},\]
for  all $s\in\mathbb{R}$ and $t\geq 0$.
\end{proposition}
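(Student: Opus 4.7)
The plan is to iterate Lemma \ref{error} to obtain exponential decay of $\mathbf{E}\|\mathfrak{R}_{s+2n}\|^2$ at the dyadic times $s+2n$, and then interpolate to all $t\geq 0$. The central observation is that Lemma \ref{error} naturally produces a supermartingale: setting
\[
X_n \;=\; \|\mathfrak{R}_{s+2n}\|^2\,\gamma^{-n}\exp\!\Big({-\eta\sum_{k=0}^{n-1}\|w_{s,s+2k}\|^2}\Big),
\]
applied with $p=2$, Lemma \ref{error} yields $\mathbf{E}(X_{n+1}\mid\mathcal{F}_{s+2n}) \leq X_n$. Since $X_n\geq 0$, the supermartingale property gives the weighted bound
\[
\mathbf{E}\!\left[\|\mathfrak{R}_{s+2n}\|^2\exp\!\Big({-\eta\sum_{k=0}^{n-1}\|w_{s,s+2k}\|^2}\Big)\right]\leq\gamma^n\|\xi\|^2.
\]

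To remove the exponential weight, I plan to use Hölder's inequality with conjugate exponents $p_1,p_2>1$, $1/p_1+1/p_2=1$:
\[
\mathbf{E}\|\mathfrak{R}_{s+2n}\|^2 \leq \big(\gamma^n\|\xi\|^2\big)^{1/p_1}\,\Big(\mathbf{E}\big[\|\mathfrak{R}_{s+2n}\|^2\exp\!\big(\tfrac{p_2}{p_1}\eta\textstyle\sum_{k=0}^{n-1}\|w_{s,s+2k}\|^2\big)\big]\Big)^{1/p_2}.
\]
The second factor will be controlled via the crude deterministic estimate $\|\mathfrak{R}_{s+2n}\|\leq\|\xi\|\prod_{k=0}^{2n-1}\|J_k\|$, which follows from the recursive definition \eqref{controlv} of $v$ together with $\|\beta\widetilde{M}_{2k}^{-1}\|\leq 1$. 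Combining the Jacobian bound \eqref{Jacobian} (with a small parameter $\eta''$), the enstrophy-type bound \eqref{enstrophy}, the exponential-sum bound \eqref{boundsum}, and Cauchy--Schwarz produces an estimate of the form
\[
\mathbf{E}\!\left[\|\mathfrak{R}_{s+2n}\|^2\exp\!\big(\tfrac{p_2}{p_1}\eta\textstyle\sum_{k=0}^{n-1}\|w_{s,s+2k}\|^2\big)\right]\leq C\,\|\xi\|^2\,e^{c_1 n + c_2\eta\|w_0\|^2},
\]
where $c_1,c_2>0$ depend on the problem data, $\eta$, and $\eta''$, but not on $n$ or $s$.

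Combining the two displays yields
$
\mathbf{E}\|\mathfrak{R}_{s+2n}\|^2 \leq C\,\|\xi\|^2\,e^{(c_2/p_2)\eta\|w_0\|^2}\,\big(\gamma^{1/p_1}e^{c_1/p_2}\big)^n.
$
Given any prescribed decay rate $a>0$ and any $p\in(0,1)$, we first fix $\eta>0$ small and $p_2$ sufficiently large so that $c_2/p_2<p$ (this is possible since $c_2$ is linear in $\eta$ and $\eta''$), and then choose $\gamma$ small enough --- which, by Lemma \ref{error}, corresponds to choosing $\beta_0=\beta_0(p,a,\eta)$ sufficiently small --- so that $\gamma^{1/p_1}e^{c_1/p_2}\leq e^{-2a}$. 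This gives
$\mathbf{E}\|\mathfrak{R}_{s+2n}\|^2\leq C\,\|\xi\|^2\,e^{p\eta\|w_0\|^2}\,e^{-2an}$, which is the claim at dyadic times. Finally, for $t\in[2n,2(n+1)]$ I intend to use the identity $\mathfrak{R}_{s+t}=J_{s+2n,s+t}\mathfrak{R}_{s+2n}-A_{s+2n,s+t}v_{s+2n,s+t}$, together with the uniform-in-$s$ bounds on $\|J\|$ and on the control $v$ over a unit time interval (taking $\|\xi\|=1$ so $\|\xi\|$ may be absorbed), to conclude $\mathbf{E}\|\mathfrak{R}_{s+t}\|^2\leq C e^{p\eta\|w_0\|^2}e^{-at}$ after adjusting constants.

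The main obstacle will be the bookkeeping of parameters: one must simultaneously arrange $\eta<\eta_0$ small enough for \eqref{eq: est1}, \eqref{enstrophy}, \eqref{boundsum} and the Lyapunov Proposition \ref{verifyLyapunov} to apply with the inflated exponents appearing after Hölder, while ensuring $\gamma$ can still be driven sufficiently small for the fixed power $p_1$ without the required $\beta_0$ depending badly on $n$. The key structural point --- that Lemma \ref{error} allows \emph{any} $\gamma>0$ by choosing $\beta$ small --- is what makes this balancing possible; the trade-off is that $\beta_0$ ultimately depends on the prescribed rate $a$ and the exponent $p$, but not on $n$, $s$, $\xi$, or $w_0$, as required.
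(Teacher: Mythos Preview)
Your supermartingale approach is a valid and arguably cleaner alternative to the paper's proof. The paper works with the 10th power and introduces an auxiliary truncation $C_{n,R}$ (splitting according to whether $\|w_{s,s+2n}\|^2\geq 2R$), then uses Cauchy--Schwarz on the product $\prod C_n$ together with \eqref{enstrophy} and \eqref{boundsum}. Your construction absorbs the factor $e^{\eta\|w_{s,s+2n}\|^2}$ from Lemma~\ref{error} directly into a supermartingale, avoiding the truncation entirely; the interpolation to general $t$ is essentially the same as the paper's.

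There is, however, a genuine gap in your parameter balancing. You claim that ``$c_2$ is linear in $\eta$ and $\eta''$'' and then send $p_2\to\infty$ to make $c_2/p_2$ small. But the second H\"older factor carries the weight $\exp\big(\tfrac{p_2}{p_1}\eta S\big)$ with $\tfrac{p_2}{p_1}=p_2-1$, so when you invoke \eqref{boundsum} the coefficient of $\|w_0\|^2$ picks up a term of order $a_0(p_2-1)\eta$, where $a_0$ is the constant $a$ in \eqref{boundsum}. Thus $c_2$ grows linearly in $p_2$, and $c_2/p_2\to a_0$ as $p_2\to\infty$. Since there is no reason to expect $a_0<1$, you cannot force $c_2/p_2<p$ for a prescribed $p\in(0,1)$ this way.

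The fix is simple: either reverse the roles of $p_1,p_2$ (take $p_1$ large, so $p_2/p_1=1/(p_1-1)$ is small and the $\|w_0\|^2$ coefficient after the $1/p_2$ power becomes $O(\eta'')+a_0\eta/p_1$, which can be made $<p\eta$), or---more transparently---apply Lemma~\ref{error} with exponent $2p_1$ rather than $2$. The resulting supermartingale is $X_n=\|\mathfrak{R}_{s+2n}\|^{2p_1}\gamma^{-n}e^{-\eta S}$, and H\"older then gives
\[
\mathbf{E}\|\mathfrak{R}_{s+2n}\|^2\leq\big(\mathbf{E}[\|\mathfrak{R}_{s+2n}\|^{2p_1}e^{-\eta S}]\big)^{1/p_1}\big(\mathbf{E}\,e^{\eta S/(p_1-1)}\big)^{1-1/p_1},
\]
so the second factor no longer contains $\|\mathfrak{R}\|$ at all and its $\|w_0\|^2$ coefficient is $O(\eta/p_1)$. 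The slower decay $\gamma^{n/p_1}$ in the first factor is harmless because Lemma~\ref{error} lets you take $\gamma$ arbitrarily small (at the cost of $\beta$), exactly as you already observed. This is, in effect, why the paper works with the 10th moment.
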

\begin{proof}
The proof is based on Lemma \ref{error}, Lemma \ref{bounds} and an iteration procedure.
Let  $C_{n} = \frac{\left\|\mathfrak{R}_{s+2n+2}\right\|^{10}}{\left\|\mathfrak{R}_{s+2n}\right\|^{10}}$, where we set $C_n = 0$ if $\mathfrak{R}_{s+2n} = 0$.  Note that $\left\|\mathfrak{R}_{s+2N}\right\|^{10}=\prod_{n=0}^{N-1} C_{n}$ since $\left\|\mathfrak{R}_s\right\| = \left\|\xi\right\| = 1$.  Also one observes that $\left\|\beta\widetilde{M}_{2n}^{-1}\right\|\leq 1$, and $\left\|\mathfrak{R}_{s+2n+2}\right\|\leq\left\|{J}_{2n+1} \beta \widetilde{M}_{2n}^{-1} {J}_{2n}\right\|\left\| \mathfrak{R}_{s+2n}\right\|$ in view of \eqref{lemma{error}-0}. So by the bound \eqref{Jacobian} on the Jacobian  from Lemma \ref{bounds}, it follows that for every $\eta>0$, there exists a  constant $C: =C(\eta, \nu)$ such that
\begin{align}\label{cn}
C_{n} \leq\left\|{J}_{2n+1} \beta \widetilde{M}_{2n}^{-1} {J}_{2n}\right\|^{10} \leq\left\|{J}_{2n+1}\right\|^{10}\left\|{J}_{2n}\right\|^{10} \leq \exp \left(\eta \int_{s+2n}^{s+2n+2}\left\|w_{r}\right\|_{1}^{2} d r+C\right),\,\, \mathbf{P}\text{-}\mathrm{a.s}..
\end{align}
Now define for $\eta, R>0$,
\[C_{n, R}=\left\{
\begin{array}{ll}
e^{-\eta R} & \text { if }\left\|w_{s, s+2n}\right\|^{2} \geq 2 R, \\ e^{\eta R} C_{n} & \text { otherwise.}
\end{array}\right.\]
Note that both  $C_n$ and $C_{n,R}$ are $\mathcal{F}_{s+2n+2}$ measurable. We denote
\[\mathrm{\Omega}_{R}: = \left\{\omega\in \mathrm{\Omega}:\left\|w_{s, s+2n}\right\|^{2} \geq 2 R \right\}\]
and $\overline{\mathrm{\Omega}}_{R}$ its complement. It follows from Lemma \ref{error} that for every $R>\eta^{-1}$, there is $\beta>0$ making $\gamma$ sufficiently small such that
\begin{align}\label{cnr}
\mathbf{E}\left(C_{n,R}^2\,\middle\vert\,\mathcal{F}_{s+2n}\right) &= \mathbf{E}\left(\mathbb{I}_{\mathrm{\Omega}_R}e^{-2\eta R} +\mathbb{I}_{\bar{\mathrm{\Omega}}_R}e^{2\eta R} C_{n}^2 \,\middle\vert\,\mathcal{F}_{s+2n}\right) =\mathbb{I}_{\mathrm{\Omega}_R}e^{-2\eta R} + \mathbb{I}_{\bar{\mathrm{\Omega}}_R}e^{2\eta R}\mathbf{E}\left(C_{n}^2 \,\middle\vert\,\mathcal{F}_{s+2n}\right)\nonumber\\
&\leq \mathbb{I}_{\mathrm{\Omega}_R}e^{-2\eta R} + \mathbb{I}_{\bar{\mathrm{\Omega}}_R}\gamma e^{4\eta R}\leq\frac{1}{2}, \,\, \mathbf{P}\text{-}\mathrm{a.s}..
\end{align}
It now follows from the definition of $C_n$ and inequality \eqref{cn} that
\begin{align*}
C_{n} \leq C_{n, R} \exp \left(\eta \int_{s+2n}^{s+2n+2}\left\|w_{s, r}\right\|_{1}^{2} d r+\eta\left\|w_{s, s+2n}\right\|^{2}+C-\eta R\right),\,\, \mathbf{P}\text{-}\mathrm{a.s}..
\end{align*}
Therefore by the Cauchy-Schwarz inequality,
\begin{align}\notag
 \prod_{n=0}^{N-1} C_{n} \leq & \prod_{n=0}^{N-1} C_{n, R}^{2}+\prod_{n=0}^{N-1} \exp \left(2 \eta \int_{s+2n}^{s+2n+2}\left\|w_{s, r}\right\|_{1}^{2} d r+2 \eta\left\|w_{s, s+2n}\right\|^{2}+2 C-2 \eta R\right) \\\notag
 \leq & \prod_{n=0}^{N-1} C_{n, R}^{2}+\exp \left(4 \eta \sum_{n=0}^{N-1}\left\|w_{s, s+2n}\right\|^{2}+2 N\left(C-\eta R\right)\right) \\\label{productofCn}
 &+\exp \left(4 \eta \int_{s}^{s+2N}\left\|w_{s, r}\right\|_{1}^{2} d r+2 N\left(C-\eta R\right)\right).
\end{align}
Now from inequality \eqref{cnr} one has
\[\mathbf{E}\left(\prod_{n=0}^{N-1} C_{n, R}^{2}\,\middle\vert\, \mathcal{F}_{s+2(N-1)}\right)\leq\frac12 \prod_{n=0}^{N-2} C_{n, R}^{2},\,\, \mathbf{P}\text{-}\mathrm{a.s}..\]
Taking conditional expectation repeatedly, one obtains
\begin{align}\label{prod}
\mathbf{E}\left(\prod_{n=0}^{N-1} C_{n, R}^{2}\right)\leq \frac{1}{2^N}.
\end{align}
Fix $\eta>0$ such that $\eta\leq\min\{\frac14 \eta_0\nu, \frac{1}{4}\eta_1\}$. Then the bounds from inequality \eqref{enstrophy} and \eqref{boundsum} implies that
\begin{align}\label{error-02}
\begin{array}{c}
\mathbf{E}\exp \left(4 \eta \int_{s}^{s+2N}\left\|w_{s, r}\right\|_{1}^{2} d r+2 N\left(C-\eta R\right)\right)\leq C\exp\left(4\eta\nu^{-1}\left\|w_0\right\|^2+2N(C-\eta R)\right),\\
\mathbf{E}\exp \left(4 \eta \sum_{n=0}^{N-1}\left\|w_{s, s+2n}\right\|^{2}+2 N\left(C-\eta R\right)\right)\leq \exp\left(4a\eta\left\|w_0\right\|^{2}+N\left(\gamma+2C-2\eta R\right)\right).
\end{array}
\end{align}
Choosing $R$ sufficiently large such that these two terms satisfy the desired bounds. Then choose $\beta$ sufficiently small so that the estimate \eqref{prod} holds and hence by \eqref{productofCn}-\eqref{error-02} we have
\begin{align}\label{error1}
\mathbf{E}\left\|\mathfrak{R}_{s+2N}\right\|^{10} \leq \frac{C \exp \left(\eta\left\|w_0\right\|^{2}\right)}{2^{N}}
\end{align}
 for every $N\in \mathbb{N}$.

\vskip0.05in

 Note that for $t\in [2n, 2n+1)$, one has
\begin{align*}
\mathfrak{R}_{s+t}& = J_{s+2n, s+t}J_{s, s+2n}\xi - A_{s, s+t}v_{s, s+t}\\
& = J_{s+2n, s+t}\mathfrak{R}_{s+2n} + J_{s+2n, s+t}A_{s, s+2n}v_{s, s+2n}- A_{s, s+t}v_{s,s+ t}\\
& =  J_{s+2n, s+t}\mathfrak{R}_{s+2n} + A_{s, s+ t}v_{s, s+t} - A_{s+2n, s+t}v_{s+2n, s+t}- A_{s, s+t}v_{s, s+t} \\
&=   J_{s+2n, s+t}\mathfrak{R}_{s+2n}- A_{s+2n, s+t}v_{s+2n, s+t}.
\end{align*}
Hence by the definition of $v$ \eqref{controlv}, and the fact that $\|A_{2n}^{*}\widetilde{M}_{2n}^{-1}\|\leq \beta^{-1/2}$, we have that
\begin{align}\label{Rs+t-01}
\|\mathfrak{R}_{s+t}\|\leq \|J_{s+2n, s+t}\mathfrak{R}_{s+2n}\| + \|A_{s+2n, s+t}v_{s+2n, s+t}\|\leq C \beta^{-1/2}\left(1+\sup_{\tau\in[s+2n, s+t]}\|J_{\tau, s+t}\|^2\right)\|\mathfrak{R}_{s+2n}\|
\end{align}
And for $t\in [2n+1, 2n+2)$,  we have
\[\mathfrak{R}_{s+t} = J_{s+2n, s+t}J_{s, s+2n}\xi - A_{s, s+t}v_{s,s+ t} = J_{s+2n+1, s+2n+2}J_{s, s+2n+1}\xi -  A_{s,s+ t}v_{s, s+t}  = J_{s+2n+1, s+t}\mathfrak{R}_{s+2n+1}. \]
Note that $\|\mathfrak{R}_{s+2n+1}\| = \|\beta\widetilde{M}_{2n}^{-1}J_{2n}\mathfrak{R}_{s+2n}\|\leq\|J_{2n}\|\|\mathfrak{R}_{s+2n}\|$. Hence
\begin{align}\label{Rs+t-02}
\|\mathfrak{R}_{s+t} \|\leq \sup_{\tau\in[s+2n+1, s+t]}\|J_{\tau, s+t}\|\|\mathfrak{R}_{s+2n+1}\|\leq \sup_{\tau\in[s+2n, s+t]}\|J_{\tau, s+t}\|^2\|\mathfrak{R}_{s+2n+1}\|.
\end{align}
Combining the above inequalities \eqref{Rs+t-01} and \eqref{Rs+t-02} with  bounds on the enstrophy \eqref{enstrophy} and Jacobian \eqref{Jacobian} and inequality \eqref{error1}, one has
\[\left(\mathbf{E}\left\|\mathfrak{R}_{s+t}\right\|^{2}\right)^{1 / 2} \leq C \exp \left(p\eta\|w_0\|^{2}\right) e^{-at}\]
for some $p\in (0, 1)$ and  all $t\geq 0, \eta\in(0, \eta_0]$.
\end{proof}

The following result shows that the cost of the  variation $v$ on the Wiener path can be bounded. Since the proof is the same as that in \cite{HM06} once we obtain the estimate \eqref{error1}, we omit it here.
\begin{proposition}\label{control}
There are constants $p\in(0,1), \eta_0>0$ such that for any $\eta\in (0, \eta_0]$, there exists a constant $C = C(f, \mathcal{B}_0, \eta, \nu)$ so that for all $t\geq 0$,
\begin{align*}
\quad \mathbf{E}\left|\int_{s}^{s+t} v(r) d W(r)\right|^{2} \leq \frac{C}{\beta^{2}} e^{p\eta\left\|w_0\right\|^{2}} \sum_{n=0}^{\infty}\left(\mathbf{E}\left\|\mathfrak{R}_{s+2n}\right\|^{10}\right)^{\frac{1}{5}}.
\end{align*}
\end{proposition}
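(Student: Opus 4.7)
The plan is to bound the Skorohod integral $\int_s^{s+t} v(r)\,dW(r)$ that appears on the right-hand side of the integration-by-parts identity \eqref{ineqASF}, since the control $v$ defined in \eqref{controlv} is \emph{not} adapted: on the block $[s+2n, s+2n+1)$ the operator $A_{2n}^{*}$ involves the backward-adjoint Jacobian on the same interval, so $v$ is $\mathcal{F}_{s+2n+1}$-measurable but not $\mathcal{F}_r$-measurable. The basic tool will therefore be the standard Skorohod isometry-type bound
\[
\mathbf{E}\Bigl|\int_s^{s+t} v(r)\,dW(r)\Bigr|^{2}
\;\leq\;\mathbf{E}\int_s^{s+t}|v(r)|^{2}\,dr
\;+\;\mathbf{E}\int_s^{s+t}\!\!\int_s^{s+t}\|D_{\tau}v(r)\|^{2}\,dr\,d\tau,
\]
combined with the disjoint-block structure of $v$ (the control vanishes on the odd blocks $[s+2n+1,s+2n+2)$), which allows one to reduce the problem to separate estimates on each even block.

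First I would fix $n$ and bound the integrand on the block $[s+2n,s+2n+1)$. By definition $v(r)=(A_{2n}^{*}\widetilde M_{2n}^{-1}J_{2n}\mathfrak R_{s+2n})(r)$, and since $M_{2n}=A_{2n}A_{2n}^{*}$ together with $\widetilde M_{2n}=M_{2n}+\beta$ yields the operator bound $\|A_{2n}^{*}\widetilde M_{2n}^{-1}\|\leq\beta^{-1/2}$, one obtains
\[
\int_{s+2n}^{s+2n+1}|v(r)|^{2}\,dr \;=\; \langle \widetilde M_{2n}^{-1}M_{2n}\widetilde M_{2n}^{-1}J_{2n}\mathfrak R_{s+2n},\,J_{2n}\mathfrak R_{s+2n}\rangle
\;\leq\;\beta^{-1}\|J_{2n}\|^{2}\|\mathfrak R_{s+2n}\|^{2}.
\]
Applying Cauchy--Schwarz in the probability space together with the Jacobian bound \eqref{Jacobian} and the enstrophy bound \eqref{enstrophy}, this contributes at most $C\beta^{-1}e^{p\eta\|w_0\|^{2}}(\mathbf{E}\|\mathfrak R_{s+2n}\|^{10})^{1/5}$ after using Hölder with exponent $5$.

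For the Malliavin derivative term, the plan is to differentiate the identity $v(r)=A_{2n}^{*}\widetilde M_{2n}^{-1}J_{2n}\mathfrak R_{s+2n}$ with respect to $W$, obtaining three contributions: one in which the derivative hits $A_{2n}^{*}$, one in which it hits $\widetilde M_{2n}^{-1}=-\widetilde M_{2n}^{-1}(DM_{2n})\widetilde M_{2n}^{-1}$, and one in which it hits $J_{2n}\mathfrak R_{s+2n}$. Each of these can be expressed in terms of the Jacobian $J$ and the Hessian $K$ of the flow, and controlled by $\beta^{-2}$ times a polynomial in $\|J\|$ and $\|K\|$ times $\|\mathfrak R_{s+2n}\|$. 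Applying the moment bounds \eqref{Jacobian} and \eqref{Hessian} from Lemma~\ref{bounds}, together with a Hölder inequality with exponent $5$ that separates off the $\mathfrak R$ factor, yields the same form of bound $C\beta^{-2}e^{p\eta\|w_0\|^{2}}(\mathbf{E}\|\mathfrak R_{s+2n}\|^{10})^{1/5}$.

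The final step is to sum over the even blocks. Since the blocks are disjoint and the bounds depend only on the initial datum $w_0$ through the single factor $e^{p\eta\|w_0\|^{2}}$ (all the enstrophy and Jacobian expectations over a unit time interval starting at $s+2n$ reduce, via the Markov property and \eqref{eq: est1}, to a uniform constant times $e^{p\eta\|w_0\|^{2}}$), summing over $n$ reproduces the series on the right-hand side. The main obstacle is the Malliavin-derivative term: keeping track of the derivatives of $\widetilde M_{2n}^{-1}$ requires careful use of the resolvent identity and the Hessian bound \eqref{Hessian}, and one has to verify that no additional factor of $\beta$ beyond $\beta^{-2}$ is lost when the derivative falls on the Tikhonov regularization. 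Once this is settled, the rest is Cauchy--Schwarz and bookkeeping essentially identical to \cite{HM06}.
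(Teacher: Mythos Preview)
Your proposal is correct and follows exactly the approach the paper itself points to: the paper omits the proof of Proposition~\ref{control} entirely, stating only that ``the proof is the same as that in \cite{HM06} once we obtain the estimate \eqref{error1}''. What you have outlined---the Skorohod isometry bound for the non-adapted $v$, the block-by-block decomposition, the $\beta^{-1/2}$ bound on $A_{2n}^{*}\widetilde M_{2n}^{-1}$, the resolvent identity for $D\widetilde M_{2n}^{-1}$, and the H\"older separation with exponent $5$ against the Jacobian/Hessian moment bounds \eqref{Jacobian}--\eqref{Hessian}---is precisely the argument of \cite{HM06}, so there is nothing to compare.
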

As a byproduct, we have the following asymptotic strong Feller property. Note that the constant $C$ is independent of the initial time $s$ compared with the aymptotic strong Feller property proposed in \cite{DD08}.
\begin{corollary}\label{ASF}
Under the same condition as in Proposition \ref{gradientinequalityprop}, with $t_{n} =  2n$ and $\delta_{n} = 2^{-n}$, we have some $\eta_0>0$, such that for $\eta\in (0, \eta_0]$, there is a constant $C = C(\eta) >0$ such that
\[\|\nabla \mathcal{P}_{s,s+t_n}\varphi(w)\|\leq C\exp(\eta\|w\|)\left(\|\varphi\|_{\infty}+\delta_{n}\|\nabla \varphi\|_{\infty}\right)\]
for all $\varphi\in C_b^1(H)$, $s\in \mathbb{R}$, $n\in\mathbf{N}$ and $w\in H$.
\end{corollary}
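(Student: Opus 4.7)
The plan is to read this as an essentially immediate specialization of the gradient inequality in Proposition \ref{gradientinequalityprop}, tuned so that the exponential decay rate $e^{-at}$ produced there matches the prescribed sequence $\delta_n = 2^{-n}$. The first step is to set $t_n = 2n$ and to pick the free parameter $a$ in Proposition \ref{gradientinequalityprop} equal to $a = \tfrac{\ln 2}{2}$, so that $e^{-a t_n} = e^{-n\ln 2} = 2^{-n} = \delta_n$. Proposition \ref{gradientinequalityprop} then yields a constant $C = C(\eta,a) = C(\eta)$ and some $p \in (0,1)$ such that
\[
\|\nabla \mathcal{P}_{s,s+t_n}\varphi(w)\| \leq C\exp(p\eta\|w\|^2)\Bigl(\sqrt{\mathcal{P}_{s,s+t_n}|\varphi|^{2}(w)} + \delta_n\sqrt{\mathcal{P}_{s,s+t_n}\|\nabla\varphi\|^{2}(w)}\Bigr),
\]
for all $\eta\in(0,\eta_0]$, $s\in\mathbb{R}$, $n\in\mathbb{N}$, $w\in H$ and $\varphi\in C^1_b(H)$.

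The second step is to bound the two square-root factors by sup norms of the observables. Since $\mathcal{P}_{s,s+t_n}$ is a Markov operator and $|\varphi|^2\leq \|\varphi\|_\infty^{2}$, $\|\nabla\varphi\|^2\leq \|\nabla\varphi\|_\infty^{2}$ pointwise, we get $\sqrt{\mathcal{P}_{s,s+t_n}|\varphi|^{2}(w)}\leq \|\varphi\|_\infty$ and $\sqrt{\mathcal{P}_{s,s+t_n}\|\nabla\varphi\|^{2}(w)}\leq \|\nabla\varphi\|_\infty$. Substituting these gives the desired shape
\[
\|\nabla \mathcal{P}_{s,s+t_n}\varphi(w)\| \leq C\exp(p\eta\|w\|^{2})\bigl(\|\varphi\|_\infty + \delta_n\|\nabla\varphi\|_\infty\bigr).
\]

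The third step is to absorb the factor $p\in(0,1)$ in the exponential weight into the choice of $\eta$. Given any target $\eta\in(0,p\eta_0]$, we apply the above bound with the free parameter replaced by $\eta' := \eta/p \in (0,\eta_0]$; since $p\eta' = \eta$, this converts $\exp(p\eta'\|w\|^2)$ into $\exp(\eta\|w\|^2)$, at the cost of shrinking the admissible range of $\eta$ to $(0,p\eta_0]$, which we rename $\eta_0$.

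The main technical content is entirely contained in Proposition \ref{gradientinequalityprop}; the only subtlety in the present corollary is the calibration $a = (\ln 2)/2$, which matches the decay $e^{-at_n}$ in the gradient inequality with the target rate $\delta_n$. I note that the statement as written has $\exp(\eta\|w\|)$ in the prefactor; since the Lyapunov weight governing the gradient inequality is the Gaussian $V(w)=e^{\eta\|w\|^2}$, what the argument above actually produces is the prefactor $\exp(\eta\|w\|^2)$, and I would carry out the proof with this (slightly weaker, for large $\|w\|$) Gaussian prefactor, which is the only form that follows from Proposition \ref{gradientinequalityprop} and is what is used downstream via the Doob--Khasminskii argument in Corollary \ref{DoobKha}.
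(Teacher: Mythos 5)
Your proposal is correct and essentially coincides with the paper's argument: the paper cites the integration-by-parts identity \eqref{ineqASF} together with the error estimate \eqref{error1} and the control-cost bound of Proposition \ref{control}, which are exactly the ingredients already packaged into Proposition \ref{gradientinequalityprop}, so specializing that proposition with $a=(\ln 2)/2$ and bounding $\mathcal{P}_{s,s+t_n}|\varphi|^{2}\leq\|\varphi\|_{\infty}^{2}$ and $\mathcal{P}_{s,s+t_n}\|\nabla\varphi\|^{2}\leq\|\nabla\varphi\|_{\infty}^{2}$ amounts to the same computation (and in fact calibrates the rate to exactly $\delta_n=2^{-n}$, which the literal citation of \eqref{error1} only gives up to a harmless power). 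Two small remarks: your step 3 is redundant, since $p\in(0,1)$ already gives $\exp(p\eta\|w\|^{2})\leq\exp(\eta\|w\|^{2})$, and your reading of $\exp(\eta\|w\|)$ in the statement as a typo for the Gaussian weight $\exp(\eta\|w\|^{2})$ is the right call, as the paper's own ingredients deliver only the latter.
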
\label{corollaryASF}
\begin{proof}
The inequality follows by \eqref{ineqASF}, estimate \eqref {error1} and Proposition \ref{control}.
\end{proof}

%

\subsection{The proof of Lemma \ref{contractlemma1}-\ref{contractlemma3}}\label{lemmasforcontraction}
In what follows we give a  proof for Lemma \ref{contractlemma1}-\ref{contractlemma3}.
\begin{proof}[Proof of Lemma \ref{contractlemma1}.]
By Lemma \ref{twopoint}, we know that there are constants $\alpha\in(0,1)$ and $K>0$ such that for any $w_1, w_2$ with $\rho(w_1,w_2)\geq L$, we have
\begin{align*}
\mathbf{E}\rho(\Phi_{s, s+n\mathcal{T}}(w_1), \Phi_{s, s+n\mathcal{T}}(w_2))\leq \alpha^n\rho(w_1,w_2) + K\leq (\alpha+K/L)\rho(w_1,w_2).
\end{align*}
Choosing $L$ large such that $\alpha_0: = \alpha+K/L\in (0, 1).$ Then
\begin{align*}
d\left(\mathcal{P}_{s, s+ n\mathcal{T}}^{*} \delta_{{w_1}}, \mathcal{P}_{s, s+ n\mathcal{T}}^{*} \delta_{{w_2}}\right) &\leq \mathbf{E}d(\Phi_{s, s+n\mathcal{T}}(w_1), \Phi_{s, s+n\mathcal{T}}(w_2))\\
&\leq 1+ \beta \mathbf{E}\rho(\Phi_{s, s+n\mathcal{T}}(w_1), \Phi_{s, s+n\mathcal{T}}(w_2))\leq 1+\alpha_0\beta\rho(w_1,w_2).
\end{align*}
Since $\rho_r(w_1,w_2)>\delta$, by definition of the metric $d$, one has $d(w_1, w_2) = 1+ \beta\rho(w_1, w_2)\geq 1+\beta L$. Therefore
\begin{align*}
1-\alpha_0 \leq (1-\alpha_0)\frac{d(w_1, w_2)}{1+\beta L} = \frac{1+ \alpha_0\beta L}{1+\beta L}d(w_1, w_2)-\alpha_0d(w_1, w_2)
\end{align*}
As a result
\begin{align*}
d\left(\mathcal{P}_{s, s+ n\mathcal{T}}^{*} \delta_{{w_1}}, \mathcal{P}_{s, s+ n\mathcal{T}}^{*} \delta_{{w_2}}\right) \leq 1+\alpha_0\beta\rho(w_1, w_2) = 1 - \alpha_0 + \alpha_0d(w_1, w_2) \leq \frac{1+ \alpha_0\beta L}{1+\beta L}d(w_1, w_2),
\end{align*}
where $\alpha_1: = \frac{1+ \alpha_0\beta L}{1+\beta L}\in (0,1)$.
\end{proof}

\begin{proof}[Proof of Lemma \ref{contractlemma2}.]
By the Monge-Kantorovich daulity \cite{Ch04,Vil08}, one has
\[d(\mu_1, \mu_2) = \sup_{\mathrm{Lip}_d(\phi)\leq 1}\left|\int_{H}\phi(w)\mu_1(dw)-\int_{H}\phi(w)\mu_2(dw)\right|.\]
Without loss of generality, in the above formula we could assume the test function $\phi\in C_b^1(H)$ and $\phi(0) =0$. Then $\mathrm{Lip}_d(\phi)\leq 1$ implies that $\|\nabla\phi(w)\|\leq (\delta^{-1}+\beta)V(w)$. Also by Proposition \ref{verifyLyapunov} we have
\[|\phi(w)|\leq 1+\beta\|w\|V(w)\leq 1+\beta C V^{\kappa}(w)\leq 1+\beta CV^{\kappa}(w).\]
Now combining  Proposition \ref{verifyLyapunov} and Proposition \ref{gradientinequalityprop}, one has
\begin{align*}
\left\|\nabla \mathcal{P}_{s, s+t} \phi(w)\right\| &\leq C(\eta, a) V^p(w)\left(\sqrt{\left(\mathcal{P}_{s, s+t}|\phi|^{2}\right)(w)}+ e^{-at} \sqrt{\left(\mathcal{P}_{s, s+t}\|\nabla \phi\|^{2}\right)(w)}\right)\\
&\leq  C(\eta, a)  V^p(w)\left[\Big(1+\beta^2 C^2\mathbf{E}V^{2\kappa}\big(\Phi_{s,s+t}(w)\big)\Big)^{\frac12} +e^{-at}(\delta^{-1}+\beta)\Big(\mathbf{E}V^2\big(\Phi_{s,s+t}(w)\big) \Big)^{\frac12}\right]\\
&\leq C(\eta, a) V^{\kappa\alpha(t)+p}(w)(1+e^{-at}\delta^{-1}) = \delta^{-1}V^{\kappa\alpha(t)+p}(w)(\delta C(\eta, a) + C(\eta, a)e^{-at}).
\end{align*}
For any $\alpha_2\in(0,1)$,  choose large $T_0>0$ so that $C(\eta, a)e^{-at}<\frac{\alpha_2}{2}$ for all $t\geq T_0$. From the formula for $\alpha(t)$ in Proposition \ref{verifyLyapunov}, we see that there is a large time $T>T_0$ such that for all $t\geq T$, one has $\kappa\alpha(t)+p<1$.  Choosing $r = \max\{r_0, \kappa\alpha(t)+p\}<1$ and letting $\delta$ be small such that $\delta C(\eta, a)<\frac{\alpha_2}{2}$, then we have
\[\left\|\nabla \mathcal{P}_{s, s+t} \phi(w)\right\| \leq \delta^{-1}V^{r}(w)\alpha_2. \]
Note that for any $w_1, w_2\in H$ and any $\varepsilon>0$, there is a differentiable path $\gamma: [0,1]\rightarrow H$ with $\gamma(0) = w_1$ and $\gamma(1) = w_2$ such that
\[\rho_r(w_1, w_2)\leq \int_{0}^{1}V^r(\gamma(\tau))\|\dot{\gamma}(\tau)\|d\tau \leq \rho_r(w_1, w_2) +\varepsilon.\]
Then
\begin{align*}
&\left|\mathcal{P}_{s, s+t}\phi(w_1)-\mathcal{P}_{s, s+t}\phi(w_2) \right| = \left|\int_{0}^{1}\left\langle\nabla \mathcal{P}_{s, s+t}\phi(\gamma(\tau)) , \dot{\gamma}(\tau)\right\rangle d\tau\right|\\
&\leq \delta^{-1}\alpha_2 \int_{0}^{1}V^r(\gamma(\tau)) \|\dot{\gamma}(\tau)\| d\tau \leq \delta^{-1}\alpha_2\rho_r(w_1, w_2) + \delta^{-1}\alpha_2\varepsilon \leq \alpha_2d(w_1, w_2) + \delta^{-1}\alpha_2\varepsilon,
\end{align*}
where in the last step we use the fact that $\rho_r(w_1, w_2)<\delta$ implies $d(x, y) = \delta^{-1}\rho_r(w_1, w_2) + \beta\rho(w_1, w_2)$. Since $\varepsilon>0$ is arbitrary, we have for any $w_1, w_2\in H$
\[\sup_{\mathrm{Lip}_d(\phi)\leq 1}\left|\mathcal{P}_{s, s+t}\phi(w_1)-\mathcal{P}_{s, s+t}\phi(w_2)\right|\leq \alpha_2 d(w_1, w_2).\]
Hence by the Monge-Kantorovich daulity,
\[d\left(\mathcal{P}_{s, s+n\mathcal{T}}^{*} \delta_{{w_1}}, \mathcal{P}_{s, s+n\mathcal{T}}^{*} \delta_{{w_2}}\right) \leq \alpha_{2} d({w_1}, {w_2}).\]
The proof is complete.
\end{proof}

\begin{proof}[Proof of Lemma \ref{contractlemma3}.]
For $L, \delta>0$, and $r\in[r_0, 1)$, Lemma 3.11 of \cite{HM08} shows that the set $S = \{(w_1,w_2): \rho_r(w_1, w_2)\geq \delta, \rho(w_1, w_2)<L\}$ is a bounded set in $H\times H$. So there exists $R = R(L,
\delta, r)>0$ such that $S\subset \{(w_1,w_2): \|w_1\|, \|w_2\|\leq R\}$.  By weak irreducibility from Proposition \ref{contractionirreducible}, we know there is $n_0 = n_0(L, \delta, r)$ such that for every $n\geq n_0$, there is positive constant $a>0$ so that for any $(w_1, w_2)\in S$, there is a coupling $(X_{s, s+n\mathcal{T}}, Y_{s, s+n\mathcal{T}})$ of
the transition probabilities $\mathcal{P}_{s, s+n\mathcal{T}}(w_1, \cdot)$ and $\mathcal{P}_{s, s+n\mathcal{T}}(w_2, \cdot)$, such that $\mathbf{P}\left(\rho_r(X_{s, s+n\mathcal{T}}, Y_{s, s+n\mathcal{T}})< \frac{\delta}{2}\right)>a>0$.
Note that there is a constant $C>0$ such that for any $w\in H$
\[\rho(w, 0)\leq \int_{0}^{1}V(\tau w)\|w\|d\tau \leq \|w\|V(\|w\|)\leq C V^{\kappa}(w).\]
Therefore
\begin{align*}
\mathbf{E}\rho(X_{s, s+n\mathcal{T}}, Y_{s, s+n\mathcal{T}})
&\leq \mathbf{E}\rho(X_{s, s+n\mathcal{T}}, 0) + \mathbf{E}\rho(0, Y_{s, s+n\mathcal{T}})\\
&\leq C\left(\mathbf{E}V^{\kappa}\left(X_{s, s+n\mathcal{T}}\right) + \mathbf{E} V^{\kappa}\left(Y_{s, s+n\mathcal{T}}\right)\right)\\
&= C\left(\mathbf{E}V^{\kappa}\left(\Phi_{s, s+n\mathcal{T}}(w_1)\right) + \mathbf{E} V^{\kappa}\left(\Phi_{s, s+n\mathcal{T}}(w_2)\right)\right)\\
&\leq C(V^{\kappa\alpha(n\mathcal{T})}(w_1)+ V^{\kappa\alpha(n\mathcal{T})}(w_2))\leq R_n,
\end{align*}
where $R_n = CV^{\kappa\alpha(n\mathcal{T})}(R)$. For given random variable $X$ and a measurable set  $A$, denote $\mathbf{E}(X; A) = \mathbf{E}X\mathbb{I}_{A}$.  Then
\begin{align*}
&\mathbf{E}d(X_{s, s+n\mathcal{T}}, Y_{s, s+n\mathcal{T}}) = \mathbf{E}\Big(1\wedge \frac{\rho_r(X_{s, s+n\mathcal{T}}, Y_{s, s+n\mathcal{T}})}{\delta}\Big) + \beta \mathbf{E}\rho(X_{s, s+n\mathcal{T}}, Y_{s, s+n\mathcal{T}})\\
&= \mathbf{E}\Big(1\wedge \frac{\rho_r(X_{s, s+n\mathcal{T}}, Y_{s, s+n\mathcal{T}})}{\delta}; \rho_r(X_{s, s+n\mathcal{T}}, Y_{s, s+n\mathcal{T}})<\frac{\delta}{2}\Big)\\
&+ \mathbf{E}\Big(1\wedge \frac{\rho_r(X_{s, s+n\mathcal{T}}, Y_{s, s+n\mathcal{T}})}{\delta}; \rho_r(X_{s, s+n\mathcal{T}}, Y_{s, s+n\mathcal{T}})\geq \frac{\delta}{2}\Big) + \beta \mathbf{E}\rho(X_{s, s+n\mathcal{T}}, Y_{s, s+n\mathcal{T}}). \\
&\leq \frac12 + \frac12 \mathbf{P}\left(\rho_r(X_{s, s+n\mathcal{T}}, Y_{s, s+n\mathcal{T}})\geq \frac{\delta}{2}\right)+ \beta R_n\leq \frac12 + \frac12(1-a) + \beta R_n  = 1 - \frac{a}{2} +  \beta R_n.
\end{align*}
Letting $\beta$ small enough so that $\alpha_3: = 1 - \frac{a}{2} +  \beta R_n<1$, then since $\rho_r(w_1, w_2)\geq \delta$ implies $d(w_1, w_2)\geq 1$, we have
\[d\left(\mathcal{P}_{s, s+n\mathcal{T}}^{*} \delta_{{w_1}}, \mathcal{P}_{s, s+n\mathcal{T}}^{*} \delta_{{w_2}}\right) \leq \mathbf{E}d(X_{s, s+n\mathcal{T}}, Y_{s, s+n\mathcal{T}})\leq \alpha_3 d(w_1, w_2), \]
which completes the proof.
\end{proof}


\end{document}